\newcommand{\ig}[2]{\vcenter{\xy (0,0)*{\includegraphics[scale=#1]{fig/#2}} \endxy}}
\newcommand{\igv}[2]{\vcenter{\xy (0,0)*{\reflectbox{\includegraphics[scale=#1, angle=180]{fig/#2}}} \endxy}}
\newcommand{\igh}[2]{\vcenter{\xy (0,0)*{\reflectbox{\includegraphics[scale=#1]{fig/#2}}} \endxy}}
\newcommand{\ighv}[2]{\vcenter{\xy (0,0)*{\includegraphics[scale=#1, angle=180]{fig/#2}} \endxy}}
\newcommand{\bigon}[3] {{
\labellist
\tiny\hair 2pt
 \pinlabel {$#1$} [ ] at 26 5
 \pinlabel {$#2$} [ ] at 2 26
 \pinlabel {$#3$} [ ] at 37 26
\endlabellist
\centering
\ig{1}{bigon}
}}
\newcommand{\squareone}[9] {{
\labellist
\tiny\hair 2pt
 \pinlabel {$#1$} [ ] at 3 6
 \pinlabel {$#2$} [ ] at 3 66
 \pinlabel {$#3$} [ ] at 37 6
 \pinlabel {$#4$} [ ] at 37 66
 \pinlabel {$#5$} [ ] at 20 15
 \pinlabel {$#6$} [ ] at 20 60
 \pinlabel {$#7$} [ ] at 3 39
 \pinlabel {$#8$} [ ] at 37 39
\endlabellist
\centering
\ig{#9}{square1}
}}
\newcommand{\squaretwo}[9] {{
\labellist
\tiny\hair 2pt
 \pinlabel {$#1$} [ ] at 3 6
 \pinlabel {$#2$} [ ] at 3 66
 \pinlabel {$#3$} [ ] at 37 6
 \pinlabel {$#4$} [ ] at 37 66
 \pinlabel {$#5$} [ ] at 20 15
 \pinlabel {$#6$} [ ] at 20 60
 \pinlabel {$#7$} [ ] at 3 39
 \pinlabel {$#8$} [ ] at 37 39
\endlabellist
\centering
\ig{#9}{square2}
}}
\newcommand{\mergesplit}[6] {{
\labellist
\tiny\hair 2pt
 \pinlabel {$#1$} [ ] at 4 12
 \pinlabel {$#3$} [ ] at 34 12
 \pinlabel {$#2$} [ ] at 4 60
 \pinlabel {$#4$} [ ] at 34 60
 \pinlabel {$#5$} [ ] at 30 38
\endlabellist
\centering
\ig{#6}{mergesplit}
}}
\newcommand{\rungNE}[6] {{
\labellist
\tiny\hair 2pt
 \pinlabel {$#1$} [ ] at 7 5
 \pinlabel {$#2$} [ ] at 32 5
 \pinlabel {$#3$} [ ] at 7 20
 \pinlabel {$#4$} [ ] at 32 20
 \pinlabel {$#5$} [ ] at 19 18
\endlabellist
\centering
\ig{#6}{rung}
}}
\newcommand{\rungNW}[6] {{
\labellist
\tiny\hair 2pt
 \pinlabel {$#1$} [ ] at 7 5
 \pinlabel {$#2$} [ ] at 32 5
 \pinlabel {$#3$} [ ] at 7 20
 \pinlabel {$#4$} [ ] at 32 20
 \pinlabel {$#5$} [ ] at 19 18
\endlabellist
\centering
\ig{#6}{rung2}
}}
\newcommand{\longsquiggly}{\xymatrix{{}\ar@{~>}[r]&{}}}
\DeclareMathOperator{\Kar}{Kar}
\DeclareMathOperator{\core}{core}
\newcommand{\id}{\operatorname{id}}
\newcommand{\Hom}{\text{\rm Hom}}
\newcommand{\End}{\operatorname{End}}
\newcommand{\Z}{\mathbb Z}
\newcommand{\C}{\mathbb C}
\newcommand{\inv}{^{-1}}
\newcommand{\R}{\mathbb R}
\newcommand{\snum}{\delta}
\newtheorem{thm}{Theorem}[section] 
\newtheorem{prop}[thm]{Proposition}
\newtheorem{lem}[thm]{Lemma}
\newtheorem{cor}[thm]{Corollary}
\theoremstyle{definition}
\newtheorem{defn}[thm]{Definition}
\newtheorem{notation}[thm]{Notation}
\newtheorem{ex}[thm]{Example}
\theoremstyle{remark}
\newtheorem{rem}[thm]{Remark}
\numberwithin{equation}{section}
\newcommand{\mi}{\underline}
\newcommand{\ma}{\overline}
\newcommand{\expr}{\leftrightharpoons} 
\newcommand{\JIcosets}{W_J\backslash W/W_I}
\newcommand{\SC}{\mathcal{SC}}
\newcommand{\HC}{\mathcal{H}}
\newcommand{\mt}{\emptyset}
\DeclareMathOperator{\Fl}{Fl}
\newcommand{\uw}{\underline{w}}
\newcommand{\ox}{\overline{x}}
\newcommand{\oy}{\overline{y}}
\newcommand{\oz}{\overline{z}}
\DeclareMathOperator{\BS}{BS}
\DeclareMathOperator{\Cox}{Cox}
\DeclareMathOperator{\leftdes}{LD}
\DeclareMathOperator{\rightdes}{RD}
\def\hat{\widehat}
\DeclareMathOperator{\Webs}{Webs}
\newcommand{\SCWebAlg}{\SC(S_\star)}
\newcommand{\SCWebDiag}{\SC_{\Webs}} 
\newcommand{\un}{\underline{n}}
\newcommand{\um}{\underline{m}}
\newcommand{\hg}{\mathfrak{h}}
\DeclareMathOperator{\cube}{cube}
\begin{document}
\title[]{A singular Coxeter presentation}

\author[]{Ben Elias}
\address{University of Oregon.}
\email{belias@uoregon.edu}

\author[]{Hankyung Ko}
\address{Uppsala University}
\email{hankyung.ko@math.uu.se}

\maketitle

\begin{abstract}
We enlarge a Coxeter group into a category, with one object for each finite parabolic subgroup, encoding the combinatorics of double cosets. This category, the singular Coxeter monoid, is connected to the geometry of partial flag varieties. Our main result is a presentation of this category by generators and relations. We also provide a new description of reduced expressions for double cosets. We describe all the braid relations between such reduced expressions, and prove an analogue of Matsumoto's theorem. This gives a proper development of ideas first introduced by Geordie Williamson. In type $A$ we also equip the singular Coxeter monoid with a diagrammatic presentation using webs.
\end{abstract}


\section{Introduction}

Weyl groups are ubiquitous in representation theory and geometry, and their Coxeter presentations are extremely useful for understanding the length function, the Bruhat order, and
other features arising in the geometry of the flag variety. In this paper, we enlarge a Coxeter group into a category, with one object for each finite parabolic subgroup, encoding the combinatorics of double cosets. This category, the \emph{singular}\footnote{For more on what the confusing word ``singular'' means, see Remark \ref{rmk:singular}. For now, singular should connote ``relating to double cosets for parabolic subgroups in $W$, rather than to elements of $W$.'' It also connotes a change in categorical type, similar to the change from a group to a groupoid or from a monoid to a category.} \emph{Coxeter monoid}, is connected to the geometry of partial flag varieties. Our main result is a presentation of this category by generators and relations. We also provide a new description of \emph{reduced expressions for a double coset}. We describe all the braid relations between such reduced expressions, and prove an analogue of Matsumoto's theorem. This gives a proper development of ideas first introduced by Geordie Williamson in his PhD thesis \cite[\S 1.3]{GWthesis}.

This paper is primarily combinatorial, so we begin this introduction with the combinatorics. Readers who wish to be motivated first by geometry are welcome to skip ahead to
\S\ref{subsec-intro-geometry}, and return to the rest of the introduction afterwards.

\begin{notation} Let $(W,S)$ be a Coxeter system. For $I\subset S$, we denote by $W_I$ the subgroup of $W$ generated by $I$. Then $(W_I,I)$ is a Coxeter system, and $W_I$ is
called a \emph{(standard) parabolic subgroup} of $(W,S)$. When $W_I$ is finite, we say that $I$ is \emph{finitary}, and we write $w_I$ for the longest element of $W_I$. The identity element of $W$ is denoted by $e$.
\end{notation}

\begin{notation} Let $I$ and $J$ be finitary. It is well-known (see Lemma \ref{doublecosetlem}) that each double coset $p \in \JIcosets$ has a unique maximal element and a unique
minimal element in the Bruhat order. Write $\ma{p}$ for the maximal element and $\mi{p}$ for the minimal element. We refer to double cosets in $\JIcosets$ as $(J,I)$-cosets.
\end{notation}

\begin{notation} To avoid needing to write set braces constantly for subsets of $S$, we use shorthand. We write $s$ for $\{s\}$, $st$ for $\{s,t\}$, $Is$ for $I \sqcup \{s\}$, and $I \setminus s$ for $I \setminus \{s\}$. Whether $st$ represents an element of $W$ or a subset of $S$ should be clear from context. \end{notation}

\subsection{Reduced expressions for double cosets: definition and examples}

So we can jump into the fun, let us make a temporary definition that will be refined later.

\begin{defn} \label{def:redexpintro}
A \emph{(singular, multistep) expression} is the data of a sequence of finitary subsets of $S$ of the form
\begin{subequations} \label{introreddef}
\begin{equation} \label{introredsequence} [[I_0 \subset K_1 \supset I_1 \subset \ldots \subset K_m \supset I_m]]. \end{equation}
It is \emph{reduced} if
\begin{equation} \label{hkdef} \begin{split}
    \ell(w_{K_1} w_{I_1}\inv w_{K_2} \cdots w_{I_{m-1}}\inv w_{K_m}) &= \ell(w_{K_1}) - \ell(w_{I_1}) + \ell(w_{K_2}) - \ldots - \ell(w_{I_{m-1}}) + \ell(w_{K_m})\\
\end{split} \end{equation}
\end{subequations}
holds, where $\ell(w)$ denotes the Coxeter length of $w\in W$. 
In this case it expresses the $(I_0,I_m)$-coset 
\begin{equation} \label{rexexpresses} p= W_{I_0}w_{K_1} w_{I_1}\inv w_{K_2} \cdots w_{I_{m-1}}\inv w_{K_m} W_{I_m}.\end{equation}
By convention, if we write an expression as $[[K_1 \supset I_1 \subset \ldots]]$, this means that $I_0 = K_1$, and similarly $[[\ldots \subset K_m]]$ means that $I_m = K_m$. This does not affect \eqref{hkdef}.
\end{defn}

Note that if $[[I_0 \subset K_1 \supset I_1 \subset \ldots \subset K_m \supset I_m]]$ is a reduced expression for $p$, then the maximal element $\ma{p}$ satisfies
\begin{equation} \label{mapintroreddef} \ma{p} = w_{K_1} w_{I_1}\inv w_{K_2} \cdots w_{I_{m-1}}\inv w_{K_m}. \end{equation}

\begin{ex}\label{regular} 
For an ordinary reduced expression
$s_1 s_2 \cdots s_m$ in $(W,S)$ we assign a singular reduced expression, which bounces between finitary subsets of size zero and one: \begin{equation} \label{ordtosing} s_1 s_2 \cdots s_m
\longsquiggly [[\mt, s_1, \mt, s_2, \mt, \ldots, \mt, s_m, \mt]]. \end{equation} 
In this case, we have $w_{I_i}=e$ so that the condition \eqref{hkdef} becomes
\[\ell(s_1s_2\cdots s_m) = \ell(s_1)+\ell(s_2) +\cdots \ell(s_m).\]
This agrees with the definition of a reduced expression in $(W,S)$, making the correspondence \eqref{ordtosing} a bijection. 
The ordinary expression expresses the element $w = s_1 \cdots s_m$, while the singular expression expresses the $(\mt, \mt)$-coset $\{w\}$.
\end{ex}

\begin{ex} The double coset $\{w\}$ may have singular reduced expressions which do not come from \eqref{ordtosing}. For example, if $m_{st} = 3$ then $\{sts\}$ also has the
reduced expression $[[\mt, st, \mt]]$. \end{ex}

\begin{ex} When $S$ is finitary, there is a unique $(\mt,S)$-coset, containing all of $W$. Then $[[\mt,S]]$ is a reduced expression for this coset. However, $[[\mt,s,\mt,S]]$ is
not a reduced expression for any $s \in S$. \end{ex}

\begin{ex} Suppose $W$ has type $A_3$ with $S = \{s,t,u\}$ and $m_{su} = 2$. Then $[[st,s,su,s,st]]$ is a reduced expression for the $(st,st)$-coset containing the longest element
$w_S$. This is because \begin{equation} w_S = stsuts = (sts) \cdot (s\inv) \cdot (su) \cdot (s\inv) \cdot (sts), \end{equation} and the lengths add appropriately. Another reduced
expression for this coset is $[[st,stu,st]]$. Meanwhile, we claim that $[[st,stu,tu]]$ is the unique reduced expression for the $(st,tu)$-coset containing $w_S$. \end{ex}

In this paper we introduce the \emph{(singular) braid relations}, and prove in Theorem \ref{thm:matsumoto} that any two singular reduced expressions for a double coset are related
by the singular braid relations. However, while multistep expressions are the most convenient for defining reduced expressions, it makes the application of relations slightly annoying.
To transform $[[\mt,st,\mt]]$ into $[[\mt,s,\mt,t,\mt,s,\mt]]$, we need to be able to go from $\mt$ to $st$ in two steps, first adding $s$ and then adding $t$. For a variety of reasons, we prefer single-step expressions in this paper.

\begin{defn} \label{def:exprintro} A \emph{(singular, single-step) expression} $I_{\bullet} = [I_0, I_1, \ldots, I_d]$ is a sequence of finitary subsets of $S$, where for each $1
\le i \le d$ we have either $I_i = I_{i-1} \sqcup s$ or $I_i = I_{i-1} \setminus s$ for some $s \in S$. The number $d$ is called the \emph{width} of the expression\footnote{The
\emph{length} of an expression will be defined in \S\ref{ss:length},  and better reflects the length function in $W$.}. A single-step expression is \emph{reduced} if the corresponding
multi-step expression is a reduced expression.  \end{defn}

The singular reduced expressions from \eqref{ordtosing} are already single-step expressions. The multi-step expression $[[\mt,st,\mt]]$ corresponds to four distinct single-step
expressions, \begin{equation} \label{otherfour} [\mt,s,st,s,\mt], \quad [\mt,t,st,s,\mt], \quad [\mt, s, st, t, \mt], \quad [\mt,t,st,t,\mt].\end{equation} From now on, all expressions are single-step unless stated otherwise.

\begin{rem} An equivalent criterion for reducedness for single-step expressions is stated in Theorem \ref{thm:betterred}. That these two versions are equivalent is
proven in Lemma \ref{lem:sameredasintro}.
 \end{rem}

\begin{ex} Let $S = \{s,t\}$ with $m_{st} = 3$, so that $W$ is the symmetric group $S_3$. Let us contrast ordinary reduced expressions with singular reduced expressions. See \S\ref{subsec:coxeterintro} and \S\ref{sec:coxcomplex} for more details on the constructions below.
\begin{equation} \Cox(S_3) := \qquad {
\labellist
\small\hair 2pt
 \pinlabel {$e$} [ ] at 35 5
 \pinlabel {$s$} [ ] at 5 30
 \pinlabel {$t$} [ ] at 70 30
 \pinlabel {$st$} [ ] at 5 75
 \pinlabel {$ts$} [ ] at 70 75
 \pinlabel {$sts$} [ ] at 35 100
\endlabellist
\centering
\ig{1}{CoxA2worient}
} \qquad \Cox_{\mt}(S_3) := \qquad {
\labellist
\tiny\hair 2pt
 \pinlabel {$\{e\}$} [ ] at 35 6
 \pinlabel {$\{s\}$} [ ] at 5 30
 \pinlabel {$\{t\}$} [ ] at 70 30
 \pinlabel {$\{st\}$} [ ] at 5 75
 \pinlabel {$\{ts\}$} [ ] at 70 75
 \pinlabel {$\{sts\}$} [ ] at 35 100
 \pinlabel {$\{e,s\}$} [ ] at 19 14
 \pinlabel {$\{e,t\}$} [ ] at 55 14
 \pinlabel {$\{s,st\}$} [ ] at -2 55
 \pinlabel {$\{t,ts\}$} [ ] at 75 55
 \pinlabel {$\{st,sts\}$} [ ] at 18 91
 \pinlabel {$\{ts,sts\}$} [ ] at 57 91
\endlabellist
\centering
\ig{1.6}{SingCoxA2worient}
} \end{equation}
The picture on the left is the (completed) dual Coxeter complex. Vertices correspond to elements of $W$. Oriented paths starting at the bottom vertex $e$ are reduced expressions. Any two reduced expressions are related by the braid relation, seen here as a $2$-cell.

The picture on the right is the \emph{singular (dual) Coxeter complex} where the left parabolic subgroup is trivial. Vertices correspond to $(\mt,I)$-cosets for various finitary $I$, and the color of the vertex represents
the choice of $I$. Oriented paths starting at the bottom vertex $\{e\}$ are reduced expressions. Any two reduced expressions are related by the braid relations,
seen here as $2$-cells. Altogether $\{sts\}$ has six reduced expressions: the two corresponding to $sts$ and $tst$ as in \eqref{ordtosing}, and the four in
\eqref{otherfour}.

To spell this out in symbols rather than pictures, let us describe the singular braid relations which transform $sts$ into $tst$, or more precisely,
\begin{subequations} \label{ex:braidintro}
\begin{equation} \label{introbraidstart} [ \mt, s, \mt, t, \mt, s, \mt ] \expr [\mt, t, \mt, s, \mt, t, \mt]. \end{equation}
Thoughout this paper we use $\expr$ to represent the equivalence of expressions. We start with the most interesting relation, our true analogue of the braid relation,
\begin{equation} \label{introbraidreln} [s, \mt, t, \mt, s] \expr [s, st, s], \end{equation}
which is a relation between two reduced expressions for the $(s,s)$-coset containing $sts$. Applying \eqref{introbraidreln} within the left-hand side of \eqref{introbraidstart} we obtain
\[ [ \mt, s, \mt, t, \mt, s, \mt ] \expr [ \mt, s, st, s, \mt]. \]
Now we can apply two relatively simple relations, the \emph{up-up} and \emph{down-down relations}, namely
\begin{equation} \label{introeasyreln} [\mt, s, st] \expr [\mt, t, st], \qquad [st, s, \mt] \expr [st, t, \mt]. \end{equation}
to obtain
\[ [ \mt, s, st, s, \mt] \expr [ \mt, t, st, t, \mt]. \]
We conclude with the analogue of the braid relation again, this time in the form
\begin{equation} [t, \mt, s, \mt, t] \expr [t, st, t], \end{equation}
to deduce \eqref{introbraidstart}.
\end{subequations}
\end{ex}

As seen in the previous example, singular expressions and their relations are more ``local'' or ``zoomed in'' than ordinary expressions. Each generator of $W$ (multiplication by $s
\in S$) is split into two smaller pieces (via $[\mt,s,\mt]$, a composition of $[\mt,s]$ with $[s,\mt]$). The braid relation $sts = tst$ follows from four smaller relations.

\begin{rem} \label{rmk:DHP1}  Fix $I \subset S$ finitary. The theory of reduced expressions for double cosets gives rise to a partial order on the set of all $(I,J)$-cosets as $J$ varies, analogous to the weak Bruhat order. When $I = \mt$, the Hasse diagram of this partial order is the 1-skeleton of $\Cox_{\mt}$ above. After writing this paper we discovered (thanks to a tip from Nathan Reading) that this partial order (when $I = \mt$) was studied from a combinatorial perspective by Dermenjian-Hohlweg-Pilaud in \cite{DHP}, where it is called the \emph{facial weak order}. They prove that the facial weak order is a lattice, and explicitly describe the meet and join \cite[Theorem 3.19]{DHP}. They investigate other properties of this lattice of interest to combinatorialists (such as the relationship with Cambrian lattices \cite{Reading}). Even prettier pictures of the singular Coxeter complex $\Cox_{\mt}$ can be found in \cite[p.7-8]{DHP}. Also see Remark \ref{rmk:DHP2} below. \end{rem}

\begin{rem} Singular (reduced) expressions of a very special form, like \eqref{ordtosing} but concatenated with $[[\mt,I]]$, are studied by Contou-Carrere and by Gaussent (see \cite{Gaussent}), where they are also related to the geometry of Bott-Samelson resolutions. \end{rem}

\subsection{Reduced expressions for double cosets: theory}

All the braid relations we need will be generalizations of either \eqref{introbraidreln} or \eqref{introeasyreln}. The up-up and down-down relations are easy to generalize:
\begin{equation} \label{introassoc} [I, Is, Ist] \expr [I, It, Ist], \qquad [Ist, Is, I] \expr [Ist, It, I] \end{equation}
whenever $Ist$ is finitary. We call the generalizations of \eqref{introbraidreln} the \emph{switchback relations}. They are described explicitly
in \S\ref{ss:specialswitch} and \S\ref{ssstypes}, and are governed by relatively simple and interesting combinatorics. There is one switchback relation for each finite Coxeter system $(W,S)$ and each pair $s \ne t \in
S$ (though one can reduce to the case where $W$ is irreducible).

\begin{ex} For a finite dihedral group, the regular braid relation is implied by four singular braid relations in much the same way as \eqref{ex:braidintro}. For example, when
$m_{st} = 4$ the switchback relation is \begin{equation} [s, \mt, t, \mt, s, \mt, t] \expr [s, st, t]. \end{equation} \end{ex}

\begin{ex} In type $E_8$, with simple reflections numbered as in \S\ref{sss:E8}, one switchback relation will be
\begin{equation} \label{E8rotation} [\hat{3}, \hat{32}, \hat{2}, \hat{27}, \hat{7}, \hat{72}, \hat{2}, \hat{23}, \hat{3}, \hat{38}, \hat{8}] \expr [\hat{3}, S, \hat{8}]. \end{equation}
Here $\hat{a}$ is shorthand for $S \setminus s_a$ and $\hat{ab}$ is shorthand for $S \setminus \{s_a, s_b\}$. \end{ex}

\begin{rem} If vertical height represents the size of a subset of $S$, then the switchback relation states that the path going over the top of the mountain agrees with the zigzagging path below\footnote{There were many other good suggestions for the name of this relation: the sawtooth or serriform relation, the waffle cone relation, the hankerchief relation, the iceberg relation, etcetera. The Loch Lomond relation was a close contenter, after ``you'll take the high road and I'll take the low road," the main issue being that, for us, the two roads reach Scotland at precisely the same time.}. \[ \ig{1}{switchback} \] 
\end{rem}

\begin{rem} The sequence of simple reflections $327238$ appearing in \eqref{E8rotation} is an example of a \emph{rotation sequence}, which we define in Definition \ref{def:useq}. Since this paper first appeared, we learned that rotation sequences and related concepts seem to appear in work in progress of Wemyss and Iyama (see \cite[\S 3.2 and references therein]{WemyssSurvey}, where they study singularities using mutations and wall-crossings. \end{rem}

\begin{rem} It is a wonderful feature of ordinary Coxeter theory that all relations in Coxeter groups come from finite rank $2$ parabolic subgroups. This is in stark contrast to
the singular Coxeter monoid, which has switchback relations in arbitrary rank! However, even the switchback relation is planar in some sense, see \S\ref{moregeometry2}. Also, the
generating relations in the singular Coxeter monoid satisfy the property that they involve expressions $[I_0, I_1, \ldots, I_d]$ where the size of $I_i$ lies between $k$ and $k+2$
for some $k$. \end{rem}

In Theorem \ref{thm:matsumoto}, we prove that any two reduced expressions are related by the braid relations, which we call the \emph{singular Matsumoto theorem}. Matsumoto's
theorem lies tacitly beneath so much of Coxeter combinatorics that it is hard to point to a particular ``application'' of the theorem, rather than to areas of study or
general techniques (e.g. positive lifts to braid groups, reduced expression graphs, etcetera) which it makes possible.

\begin{rem} \label{rmk:Zam} The \emph{reduced expression graph} of an element $w \in W$ is the graph whose vertices are reduced expressions, and whose edges are braid relations.
Matsumoto's theorem states that this graph is connected. The cycles in such a graph are classified; the most interesting ones are the Zamolodchikov cycles associated to the longest
elements of rank 3 Coxeter groups. See \cite{EWFenn} for more details. One can also define reduced expression graphs for any double coset $p$. Classifying the cycles in this graph
is a very interesting open problem, related to finding the $3$-cells in the singular Coxeter complex. \end{rem}

\begin{rem} Matsumoto's theorem is used in a crucial way in the diagrammatic description of the Hecke category and its basis, the double leaves basis \cite{LibLL, EWGr4sb}. The
singular Matsumoto theorem was a crucial missing piece in a construction of singular double leaves for the singular Hecke category (see \S\ref{subsec-intro-geometry}),
which will appear in forthcoming work with Nicolas Libedinsky. \end{rem}


The original definition of a (reduced) singular expression is due to Williamson \cite[Definition 1.3.2]{GWthesis}, and is reformulated as our Definition \ref{def:reduced}. He
proves \cite[Proposition 1.3.4]{GWthesis} that every double coset has a reduced expression. This original definition has a very different flavor, is somewhat technical, and is not
obviously equivalent to the definition above. Many basic and essential properties are obfuscated (and were not previously known). For example, it is not clear that $[I_0, \ldots,
I_d]$ being reduced implies that the contiguous subword $[I_k, \ldots, I_\ell]$ is reduced for $0 \le k < \ell \le d$. This paper provides four equivalent criteria for an expression to be
reduced (Definition \ref{def:redexpintro}, Theorem \ref{thm:betterred}, Proposition \ref{p.rexlength}, Proposition~\ref{thm:inandout}) which we have found to be more useful in practice. We give more
details on the original definition in \S\ref{subsec-paths} and following.

In \S\ref{sec-newdesc} and \S\ref{sec-lotsoprops} we prove many properties of singular reduced expressions, some of which are in analogy with properties of ordinary reduced
expressions. In \S\ref{ss:locality} we prove that reduced expressions are closed under taking contiguous subwords. In \S\ref{ss:length} we introduce a length function on
expressions and on double cosets. We prove that reduced expressions minimize the length function amongst all expressions (see \S\ref{subsec-intro-singcox}) for a given double
coset. In \S\ref{ss:extension} we prove that any reduced expression can be extended to a reduced expression for a coset containing the longest element (in a finite Coxeter group).
In \S\ref{subsec:products} we prove basic facts about products of Coxeter groups.

We also prove several results which have a new flavor. In \S\ref{subsec:getbig} and \S\ref{subsec:getsmall} we examine the questions: for a given $(J,I)$-coset $p$, for
which simple reflections $s \notin J$ admits a reduced expression for $p$ starting with $[J,Js,\ldots]$? How big can $K_1$ be in a multistep reduced expression $[[J \subset K_1
\supset \ldots]]$ for $p$? For which simple reflections $s \in J$ can a reduced expression start with $[J,J\setminus s, \ldots]$? How small can $I_1$ be in $[[J \supset I_1 \subset
\ldots ]]$? We give precise answers to these questions, proving the existence of reduced expressions which start by getting as big as possible, or as small as possible. These theorems are surprisingly useful.

\subsection{The singular Coxeter monoid} \label{subsec-intro-singcox}

In ordinary Coxeter theory, expressions represent products of generating elements inside a group $W$. The singular Coxeter monoid is a category where singular expressions represent
products of generating morphisms. This allows us to discuss expressions (not necessarily reduced) for a double coset, and relations between these. The singular Coxeter monoid is a singular version, not of the group $W$, but of the Coxeter monoid $(W,*,S)$.

\begin{defn}
Let $(W,S)$ be a Coxeter system. The \emph{Coxeter monoid} $(W,*,S)$ (sometimes called the \emph{Coxeter star monoid} or the \emph{$0$-Hecke monoid}\footnote{The linearization of $(W,*,S)$ is the \emph{$0$-Hecke algebra}. We do not linearize $(W,*,S)$ in this paper.}) has a presentation with generators $S$, and the following relations.
\begin{subequations}
\begin{equation} \text{ The $*$-quadratic relation:} \quad s*s = s \text{ for each } s \in S. \end{equation}
\begin{equation} \text{ The braid relation:} \quad \underbrace{s * t * \cdots}_{m} = \underbrace{t * s * \cdots}_{m}, \text{ for each } s, t \in S \text{ with } m = m_{st} < \infty. \end{equation} \end{subequations}
\end{defn}

One can prove that the elements of $(W,*,S)$ are in natural bijection with elements of $W$, and in what follows we do not distinguish the former from the latter. Thus an element
$w\in W$ can be written as $w=s*t*\cdots *u$ for some $s,t,\cdots, u\in S$. The string $[s,t,\cdots ,u]$ is called a \emph{$*$-expression} of $w$. While the set of $*$-expressions
for $w$ does not agree with the set of ordinary expressions for $w$, the reduced expressions (i.e., expressions of shortest length) in both contexts agree.

\begin{defn} \label{defn:SC} The \emph{singular Coxeter monoid} $\SC=\SC(W,S)$ is the category defined as follows.
\begin{subequations}
\begin{itemize}
    \item $\operatorname{Ob}(\SC)$ consists of the finitary subsets $I\subset S$.
    \item For $I,J \in \operatorname{Ob}(\SC)$, $\Hom_{\SC}(I,J)$ is the set of $(J,I)$-cosets, i.e.  \begin{equation} \Hom_\SC(I,J) := W_J\backslash W /W_I. \end{equation}
    \item For $I,J,K\in \operatorname{Ob}(\SC)$ the composition `$*$' is given by
    \begin{equation} \label{SCcompformula} W_K x W_J * W_J y W_I := W_K (\ox*\oy) W_I,\end{equation}
    where $\ox$ and $\oy$ are the longest elements in $W_KxW_J$ and $W_JyW_I$ respectively.
\end{itemize}
\end{subequations}
\end{defn}

In fact, the element $\ox * \oy$ from \eqref{SCcompformula} is the maximal element in its double coset, see Lemma \ref{lem:maxofcoset}. Thus one can think about the morphisms in
$\SC$ as being the maximal elements of double cosets, with composition given by $*$. This makes it easy to verify that $\SC$ is a well-defined category (details can be found in \S\ref{subsec-cosets}). The identity element of the object $I$ is $W_I$ itself, viewed as an $(I,I)$-coset. Note that $\End_{\SC}(\mt) \cong (W,*,S)$ as monoids.

\begin{rem} By sending a double coset to its maximal element, one can construct a faithful functor from $\SC$ to $(W,*,S)$ (the latter being viewed as a category with one object).
In fact, $\SC$ is obtained from $(W,*,S)$ as the idempotent completion or Karoubi envelope, see Theorem \ref{thm:Karoubi}. \end{rem}

A given subset of $W$ can appear as a double coset in multiple ways, for different choices of $I$ and $J$. In this paper, whenever we speak about a double coset $p$, it is implicit
that $p$ carries with it the data of a pair $(J,I)$ such that $p$ is a $(J,I)$-coset, just as a morphism in a category implicitly carries the data of its source and target.

\begin{ex} \label{ex:ambiguous} If $I$ is finitary and $s \in I$ then $t = w_I s w_I$ is another simple reflection (possibly equal to $s$) in $I$. Then $\{s w_I, w_I\} = \{w_I t,
w_I\}$ is the underlying set of an $(s,\mt)$-coset $p$ and an $(\mt,t)$-coset $q$ and an $(s,t)$-coset $r$. We treat $p$, $q$, and $r$ as distinct double cosets. \end{ex}

\begin{rem} Double cosets and their enumeration were studied in \cite{BKPST}, though (unlike us) they did not wish to distinguish between a subset $p \subset W$ viewed as a
$(J,I)$-coset, or viewed as a $(J', I')$-coset, for different parabolic subgroups. \end{rem}

\subsection{Presenting the singular Coxeter monoid}

\begin{notation} Let $I, J \subset S$ be finitary. The \emph{minimal} $(J,I)$-coset is the double coset containing the identity element $e$. If $S$ is finitary, the \emph{maximal} $(J,I)$-coset is the double coset containing the longest element $w_S$. \end{notation}

The current paper gives a presentation of $\SC$ by generators and relations, see Theorem \ref{thm:presentation}. There is one pair of generators for each $s \notin I$ with $Is$
finitary. Our generators are the morphisms \[ Is \to I, \qquad I \to Is, \] given by the minimal $(I,Is)$- and $(Is,I)$-cosets, respectively. Both double cosets have the underlying
set $W_{Is}$. Their maximal element is $w_{Is}$, and their minimal element is $e$.

\begin{defn}
To an expression $I_{\bullet} = [I_0, I_1, \ldots, I_d]$ we can associate a composition 
\[ I_d \to \cdots \to I_1 \to I_0 \]
of generating morphisms in $\SC$. If $p$ is the $(I_0,I_d)$-coset given by this composition, we say that $[I_0, I_1, \ldots, I_d]$ \emph{expresses} $p$, and we write
\[ [I_0, I_1, \ldots, I_d] \expr p. \]
We also write $I_{\bullet} \expr I'_{\bullet}$ when they express the same double coset.
By convention, a width zero expression $[I]$ expresses the identity element of the object $I$, i.e. the minimal $(I,I)$-coset.
\end{defn}

If $[I_0, I_1, \ldots, I_d] \expr p$ and $\ma{p}$ is the maximal element of $p$, then by \eqref{SCcompformula} we can describe $\ma{p}$ as a star product of $d$ elements. However, using the fact that
\begin{equation} w_I * w_J = w_J = w_J * w_I \qquad \text{ when } I \subset J, \end{equation}
it is not hard to prove the simpler formula
\begin{equation} \label{mapstarintro} \ma{p} = w_{I_0} * w_{I_1} * \cdots * w_{I_d}. \end{equation}
One can think of $p$ as the unique $(I_0, I_d)$-coset containing the element in \eqref{mapstarintro}.

Any $*$-expression $[s,t,\cdots ,u]$ has a lift to $\SC$ as a singular expression $[\mt, s, \mt, t, \mt, \cdots, \mt, u, \mt]$. It helps to spell this out in a small example.

\begin{ex} \label{stuintro} Consider the ordinary expression $[s,t,u]$ with $s, t, u \in S$ distinct, and its corresponding singular expression $[\mt, s, \mt, t, \mt, u, \mt]$. An
ordinary expression gives rise to a sequence of elements via composition one index at a time, in this example giving $e$, $s$, $st$, $stu$. Similarly, a singular expression gives
rise to a sequence of double cosets\footnote{In Definition \ref{good subex}, we call this the \emph{forward path} of the expression.} (whose maximal element is obtained by
composing \eqref{mapstarintro} one index at a time). In this example we get the sequence \begin{equation} \label{introforwardexample} \{1\} \subset \{1, s\} \supset \{s\} \subset
\{s, st \} \supset \{st\} \subset \{st, stu\} \supset \{stu\}. \end{equation} Each of the singleton cosets above is an $(\mt, \mt)$-coset. The first doubleton is an $(\mt,
s)$-coset, the second an $(\mt, t)$-coset, and the third an $(\mt,u)$-coset. \end{ex}

Now we discuss the relations in our presentation of $\SC$. The quadratic relation $s * s = s$ in $(W,*,S)$ follows from a more local relation $[s,\mt,s] \expr [s]$ in $\SC$. Our \emph{general $*$-quadratic relation} is
\begin{equation} [Is,I,Is] \expr [I], \end{equation}
whenever $Is$ is finitary.

The relations in $\SC$ are the braid relations (the up-up, down-down, and switchback relations), and the general $*$-quadratic relation. In Theorem \ref{thm:reducing} we prove an even stronger statement, that any expression can be transformed into a reduced expression using the braid relations
and using the $*$-quadratic relation only in the length-reducing direction.

\begin{rem} One could define a category by generators and relations similar to $\SC$, but whose whose presentation consists only of the braid relations without the quadratic
relation. Whether this is a reasonable singular analogue of the braid monoid is currently unclear. \end{rem}

\begin{rem} The category $\SC$ is filtered by the length function. After linearizing, we can take the associated graded of $\SC$, another variant with a similar presentation: the braid relations are unchanged, and the quadratic relation is replaced by $[Is,I,Is] = 0$. This is analogous to replacing the relation $s * s = s$ with the relation $\partial_s \circ \partial_s = 0$, a relation which holds for Demazure operators on the cohomology ring of the flag variety. In forthcoming work with Libedinsky, we interpret the associated graded category of $\SC$ using Demazure operators acting on various (equivariant) cohomologies. \end{rem}

In type $A$, one can encode an expression diagrammatically as a \emph{web}, a special kind of labeled trivalent graph. This idea is developed in \S\ref{sec:typeAwebs}, where we
also discuss the connection between $\SC$ and the Cautis-Kamnitzer-Morrison web category \cite{CKM}. 

\subsection{The singular Coxeter complex}\label{subsec:coxeterintro}

The (completed) dual Coxeter complex $\Cox(W)$ is a contractible CW complex with one $k$-cell for each coset in $W / W_I$, for each finitary $I \subset S$ of size $k$. Thus the
$0$-cells correspond to elements of $W$, the $1$-cells to right multiplication by a simple reflection, and the $2$-cells to braid relations. For a comprehensive reference on the dual Coxeter complex see \cite[\S 12.3]{AbBr}. We repeat our picture from above in type $A_2$.

\begin{subequations}
\begin{equation}\label{A2coxintro} \Cox(S_3) := \qquad {
\labellist
\small\hair 2pt
 \pinlabel {$e$} [ ] at 35 5
 \pinlabel {$s$} [ ] at 5 30
 \pinlabel {$t$} [ ] at 70 30
 \pinlabel {$st$} [ ] at 5 75
 \pinlabel {$ts$} [ ] at 70 75
 \pinlabel {$sts$} [ ] at 35 100
\endlabellist
\centering
\ig{1}{CoxA2worient}
} \qquad \Cox_{\mt}(S_3) := \qquad {
\labellist
\tiny\hair 2pt
 \pinlabel {$\{e\}$} [ ] at 35 6
 \pinlabel {$\{s\}$} [ ] at 5 30
 \pinlabel {$\{t\}$} [ ] at 70 30
 \pinlabel {$\{st\}$} [ ] at 5 75
 \pinlabel {$\{ts\}$} [ ] at 70 75
 \pinlabel {$\{sts\}$} [ ] at 35 100
 \pinlabel {$\{e,s\}$} [ ] at 19 14
 \pinlabel {$\{e,t\}$} [ ] at 55 14
 \pinlabel {$\{s,st\}$} [ ] at -2 55
 \pinlabel {$\{t,ts\}$} [ ] at 75 55
 \pinlabel {$\{st,sts\}$} [ ] at 18 91
 \pinlabel {$\{ts,sts\}$} [ ] at 57 91
\endlabellist
\centering
\ig{1.6}{SingCoxA2worient}
} \end{equation}

For any Coxeter system $(W,S)$, we introduce a new $2$-dimensional CW complex $\Cox_{\mt}(W)$, constructed as follows. There will be one $0$-cell for each $(\mt,I)$ coset, for all
finitary $I$. In other words, there is one $0$-cell in $\Cox_{\mt}(W)$ for each cell of any dimension in $\Cox(W)$. The $1$-cells in $\Cox_{\mt}$ correspond to composition with a
generator in $\SC$; generators either add or subtract a simple reflection, which colors the edge. However, we only include as edges those compositions \emph{which occur in a
reduced expression} (this also explains the orientation). For example, there is no edge between the purple vertex and $\{t,ts\}$ above. Because of these missing edges $W$ does not act on $\Cox_{\mt}$ on the left, as it does on $\Cox$. The $2$-cells in $\Cox_{\mt}$ come from the braid relations. Above, we've colored the up-up and down-down relations khaki, and the switchback relations fuschia.

As we have drawn it, $\Cox_\mt$ appears to have a straight line through the purple vertex, including the blue vertex $v_1 = \{e,s\}$ and the red vertex $v_2 = \{ts,sts\}$. This is
no accident! One can think of $\Cox_{\mt}$ as describing the Coxeter hyperplane arrangement in $\R^{|S|}$, with one vertex for each facet (an $(\mt, \mt)$ coset for each chamber,
an $(\mt,s)$ coset for each $s$-wall, etcetera). The purple vertex represents the origin in this example. Here is $\Cox_{\mt}$ with the Coxeter arrangment overlaid upon it.
\begin{equation} \label{overlay} \ig{1}{SingCoxA2woverlay} \end{equation}
Each reflection hyperplane has a positive side (containing $w_0$, when $W$ is finite) and a negative side (containing $e$). In Proposition~\ref{thm:inandout} we prove that an edge which goes onto a hyperplane is reduced if and only if it comes from the negative side, and an edge which goes off of a hyperplane is reduced if and only if it goes to the positive side. This principle explains all the arrows above, and also the lack of edge between the purple vertex and $\{t,ts\}$.

Understanding the extra linear structure on singular Coxeter complexes is rather useful, because reduced expressions are close to geodesics! Recall that a \emph{flat} of a
hyperplane arrangement is a subspace obtained as the intersection of some of the hyperplanes. In a reduced expression, once you leave a flat you can never return (you enter from
the negative side of some hyperplane, and leave from the positive side). So if two vertices lie on the same flat $F$, then any reduced path between them is constrained to stay in
$F$. For example, there is a unique reduced path from $v_1$ to $v_2$, which passes through the origin, since both vertices lie on a one-dimensional flat.

Our switchback relation may seem complicated, but it can be easily summarized. Take two distinct one-dimensional flats, and the appropriate vertices $v_1$ and $v_3$ on each.
For example, take the blue vertex $v_1 = \{e,s\}$ and the blue vertex $v_3 = \{st,sts\}$. Then there are two reduced paths from $v_1$ to $v_3$: the path through the origin, and the
path which avoids the origin but stays within the two-dimensional span of $v_1$ and $v_3$.

The switchback relation $[s, \mt, t, \mt, s] \expr [s,st,s]$ from \eqref{introbraidreln} is a relation between $(s,s)$-cosets, not between $(\mt, s)$-cosets. Of course, when \eqref{introbraidreln} is applied locally within a longer expression like \eqref{introbraidstart}, it does produce a relation between paths in $\Cox_{\mt}$. However, the reader may still want to visualize the switchback relation itself. Indeed, there is a CW-complex $\Cox_s(S_3)$ whose vertices are $(s,I)$-cosets, drawn as below.
\begin{equation} \Cox_s(S_3) := \qquad 
	{
	\labellist
	\tiny\hair 2pt
	 \pinlabel {$\{e,s\}$} [ ] at 35 8
	 \pinlabel {$\{e,s\}$} [ ] at 4 18
	 \pinlabel {$\{e,s,t,st\}$} [ ] at -5 35
	 \pinlabel {$\{t,st\}$} [ ] at 2 58
	 \pinlabel {$\{t,st,ts,sts\}$} [ ] at 3 73
	 \pinlabel {$\{ts,sts\}$} [ ] at 35 88
	 \pinlabel {$\{ts,sts\}$} [ ] at 60 63
	\endlabellist
	\centering
	\ig{1.6}{SingCoxA2s}
	} \qquad \Cox_{st}(S_3) := \qquad \ig{1.6}{SingCoxA2st}
\end{equation}
\end{subequations}
The switchback relation from \eqref{introbraidreln} is the fuschia 2-cell above. One can also define $\Cox_J$ for any finitary $J$, whose vertices are $(J,I)$-cosets for various $I$. Also above is $\Cox_{st}(S_3)$, where every vertex is the double coset with underlying set $S_3$.
We give examples for the symmetric group $S_4$ in \S\ref{sec:coxcomplex}.

The reader may note that $\Cox_J$ seems to embed inside $\Cox_{\mt}$, preserving the orientations. We prove this in Proposition~\ref{singcoxembeds}. One can think about $\Cox_J$ as a
fundamental domain for the parabolic subgroup $W_J$ inside $\Cox_{\mt}$, even though technically there is no left action. Each vertex in $\Cox_J$ matches a vertex in $\Cox_{\mt}$
with the same maximal element. For example, the unique reduced path from $v_1$ to $v_2$ in $\Cox_{\mt}$ discussed above implies that there is a unique reduced path between the
corresponding vertices in $\Cox_s$, whence a unique reduced expression for the $(s,t)$-coset $\{ts,sts\}$. See Proposition~\ref{prop:sss}\eqref{uniqrex} for a generalization.

We have not discussed $k$-cells for $k \ge 3$ inside the singular Coxeter complex. This is because we do not yet understand the combinatorics. One expects that what we have
described above is only the $2$-skeleton of a polytopal CW complex $\Cox_{\mt}(W)$, homeomorphic to and refining $\Cox(W)$, but a description of the higher cells is currently
missing. The $3$-cells should correspond to the generating cycles in reduced expression graphs, discussed in Remark \ref{rmk:Zam}.

\begin{rem} \label{rmk:DHP2} As noted in Remark \ref{rmk:DHP1}, the 1-skeleton of the singular Coxeter complex $\Cox_{\mt}$ also appears in work of Dermenjian-Hohlweg-Pilaud \cite{DHP}. Although their pictures like \cite[p.8]{DHP} appear to be part of a higher-dimensional CW complex, there is no suggestion of higher cells in that paper, which only discusses the 1-skeleton. In particular, our braid relations (the 2-cells) are new, and the higher cells remain mysterious. \end{rem}

\subsection{Motivation from geometry} \label{subsec-intro-geometry}

This section is designed to be readable even if the rest of the introduction was skipped.

Let $(W,S)$ be a Coxeter system of a Weyl group associated to a semisimple Lie group $G$ with Borel subgroup $B$. The Bruhat decomposition states that the $B$-orbits on the flag
variety $G/B$ are in bijection with $W$, and the closure relation corresponds to the Bruhat order. Thus it is not surprising that many geometric features of flag varieties can be
combinatorially encoded inside the poset $(W, \le)$. As the classic example, Kazhdan-Lusztig polynomials encode the intersection cohomology of Schubert varieties, and have a
combinatorial definition in terms of the Hecke algebra.

Geometric constructions can be associated not just to elements of $W$ but to expressions as well. One effective tool to study intersection cohomology (and perverse sheaves) is to
study the cohomology rings of resolutions of singularities. To any expression $\uw = [s_1, \ldots, s_d]$ (i.e. any word in $S$) one has an associated Bott-Samelson variety
$\BS(\uw)$, equipped with a map to $\Fl$. When $\uw$ is a reduced expression for $w \in W$, then $\BS(\uw)$ is a resolution of singularities for the Schubert variety of $w$ inside
$\Fl$. Many geometric features of $\BS(\uw)$ are encoded inside the theory of reduced expressions and subexpressions. Just as flag varieties have an affine paving indexed by $W$,
$\BS(\uw)$ has an affine paving indexed by subexpressions of $\uw$.

Ultimately, one is studying $B$-equivariant perverse sheaves on the flag variety, or $B \times B$-equivariant perverse sheaves on $G$. This is a monoidal category under
convolution, whose Grothendieck group is the Hecke algebra. It is this convolution structure which explains why multiplication in the Hecke algebra is useful, and it also explains
why expressions appear frequently in the combinatorics; one can think of Bott-Samelson varieties as convolutions over sequences of simple reflections.

The complicated geometric category of perverse sheaves has a simpler algebraic model constructed by Soergel \cite{Soer90} using so-called \emph{Soergel bimodules}. More recently,
Soergel bimodules can be replaced by a combinatorially-defined monoidal category, the \emph{diagrammatic Hecke category} $\HC$ \cite{EKho, EWGr4sb}. The objects in $\HC$ are
expressions, and one can construct a basis of morphisms between two expressions using the combinatorics of subexpressions, the \emph{double leaves basis} \cite{LibLL, EWGr4sb}.

What about $P_I$-orbits in $G/P_J$, where $P_I$ and $P_J$ are parabolic subgroups of $G$ associated to subsets $I, J \subset S$? As $I$ and $J$ vary, these orbits are parametrized
by double cosets $W_I\backslash~W/W_J$ for the parabolic subgroups $W_I, W_J \subset W$. What is the algebraic object encoding the intersection cohomologies of the closures of
these orbits? What is the algebraic/combinatorial category encoding equivariant perverse sheaves? These questions were studied at length by Williamson \cite{GWthesis} in his
thesis, which was rewritten into the article \cite{WillSingular}.

Just as convolution turned $B \times B$-equivariant perverse sheaves into a monoidal category, convolution turns parabolic-equivariant sheaves into a 2-category. There is one
object for each subset $I \subset S$, and the morphism category from $J$ to $I$ is the category of $P_I \times P_J$-equivariant perverse sheaves on $G$. To encode intersection
cohomology, Williamson introduces the \emph{Hecke algebroid}\footnote{It is called the Hecke category in \cite[Definition 2.2.1]{GWthesis} and called the Schur algebroid in \cite{WillSingular}.}, a category whose objects are also in bijection with subsets $I \subset
S$. Just as $B \times B$-equivariant sheaves are a special case of $P_I \times P_J$-equivariant sheaves corresponding to the case $I = J = \mt$, the endomorphism algebra of $\mt$
inside the Hecke algebroid is precisely the Hecke algebra. More interestingly, all morphism spaces $\Hom(I,J)$ inside the Hecke algebroid have a Kazhdan-Lusztig basis, in bijection
with double cosets $W_I\backslash~W/W_J$.

Williamson defines analogues for double cosets of the notion of (reduced) expressions\footnote{They are called \emph{(reduced) right translation sequences} in \cite[Definition 1.3.2]{GWthesis}.}. He uses reduced expressions to construct resolutions of singularities for general Schubert varieties\footnote{This is implicit in the thesis, which does connect (without proof) singular Soergel bimodules to perverse sheaves.}. Unlike the ordinary theory of reduced expressions which has an
incredibly rich literature, Williamson's reduced expressions for double cosets have received very little attention, and many basic properties were unknown until now. We warn the reader that the article \cite{WillSingular} omits some of the development on this theme which appeared in the original thesis \cite{GWthesis}.

Williamson also defines the analogue of Soergel bimodules, the 2-category of \emph{singular Soergel bimodules} \cite[\S 3]{GWthesis}. We mention at this point that (singular)
Soergel bimodules can be defined not just for Weyl groups but for any Coxeter system \cite{Soer07}. They only have a connection to geometry for Weyl groups and
other crystallographic groups, but many desirable properties which one can prove for geometric reasons in the Weyl group case can be proven in other ways for arbitrary Coxeter
groups, e.g. \cite{EWHodge}. When working with infinite Coxeter groups, the objects in the Hecke algebroid or the 2-category of singular Soergel bimodules are parametrized by
subsets $I \subset S$ whose parabolic subgroup $W_I$ is finite, in which case we call $I$ \emph{finitary}. Singular Soergel bimodules and their geometric analogues (especially in
affine type) have already played a major role in geometric and modular representation theory, see \cite{ELosev, EQuantumI, RicWil} for example.

The primary goal of this paper is to introduce the braid relations between expressions for double cosets, and to establish an analogue of Matsumoto's theorem, for arbitrary Coxeter
groups. We also provide a presentation of the singular Coxeter monoid, an algebraic category underlying the Hecke algebroid in the same way that the Coxeter group
underlies the Hecke algebra. The technology we develop is a crucial step towards the combinatorial study of the (diagrammatic) singular Hecke category.

\begin{rem} \label{rmk:singular} In the literature, the (sometimes misleading) words \emph{singular} and \emph{parabolic} are used (not interchangeably) as adjectives to describe
objects related to $W_I\backslash~W/W_J$. Examples include singular or parabolic category $\mathcal{O}$, singular Soergel bimodules, etcetera. Usually the adjective singular
involves a transformation in categorical type, similar to the transformation from a monoid to a category. For example, ordinary Soergel bimodules are a monoidal category, whereas
singular Soergel bimodules are a 2-category, with objects indexed by finitary parabolic subgroups. The exception is the Hecke algebroid, which might have been called the ``singular
Hecke algebra.'' We prefer the ``singular'' notation to the ``-oid'' notation in this paper to avoid conflicts with the literature, such as the unrelated concept of
Weyl groupoids. \end{rem}

\subsection{Outline of paper}

In \S\ref{sec-basicstuff} we recall basic facts about $*$-multiplication, double cosets, and Williamson's theory of singular expressions. We tried to place all the previously known definitions and results in \S\ref{sec-basicstuff}. In addition, \S\ref{sec-basicstuff} proves basic properties of $\SC$, and relates them to singular expressions.

In \S\ref{sec-newdesc} we give several new equivalent descriptions of singular reduced expressions. In \S\ref{sec-lotsoprops} we use these new decsriptions to prove many properties of singular reduced expressions.

In \S\ref{sec-relating} we study relations between expressions. We construct the switchback relation and the other relations, and verify that they hold in $\SC$. We prove the singular Matsumoto theorem in \S\ref{mats}, and the section culminates with a proof that our presentation of $\SC$ is correct in \S\ref{ss:presentation}. In \S\ref{ssstypes} we explicitly elaborate on the switchback relation in all finite types.

In \S\ref{sec:typeAwebs} we connect $\SC$ with webs in type $A$. We construct a monoidal category built from $\SC$ for all symmetric groups. Our main result states that this
monoidal category and the Cautis-Kamnitzer-Morrison category of type $A$ webs are isomorphic after taking the associated graded with respect to our length filtration.

In \S\ref{sec:coxcomplex} we provide additional details about singular Coxeter complexes. Our intent is not to prove theorems, but mostly to showcase this new construction and its
mysteries.

\subsection{Acknowledgments}

The first-named author suggested the singular Matsumoto theorem as a thesis problem to his erstwhile student Janelle Currey. It was during this collaboration that we realized the
utility of separating reducedness from the forward path, and other early insights. Janelle also drew inspiring pictures of the singular Coxeter complexes (much nicer than the
reproductions in \S\ref{sec:coxcomplex}), which led to Proposition \ref{singcoxembeds}. We wish to express our deep gratitude to Janelle for her initial work on this project.

For assistance in naming the switchback relation, the first author wishes to thank pu-band-random and the Goremands. Special accolades go to Mark Daniels for the winning name.
Thanks also to Chris Randall for the runner-up suggestion of Loch Lomond. When comparing reduced expressions we now casually refer to the high road and the low road.

We would also like to thank Sara Billey, Patricia Hersh, Nicolas Libedinsky, Rafael Mrden, Nathan Reading, Vic Reiner, and Geordie Williamson for useful conversations.  The first author was supported by NSF grants DMS-1553032 and DMS-1800498. This paper was written while the first author was visiting the IAS, a visit supported by NSF grant DMS-1926686.

\section{Singular expressions and $\SC$} \label{sec-basicstuff} 


\subsection{Properties of $*$-multiplication} \label{ss:basics}

We recall some notation. Fix a Coxeter system $(W,S)$.  We denote by $\leq$ the Bruhat order on $W$ and $\ell(w)$ the Coxeter length of $w\in W$. 
For $I\subset S$, the parabolic subgroup generated by $I$ is denoted by $W_I$, and $W^I$ (resp., $^I W$) denotes the set of shortest representatives for the cosets $W/W_I$ (resp., $W_I\setminus W$). Even though we regularly use the $*$-product, when we discuss cosets like $x W_I$ we always refer to cosets in the group $W$.

\begin{notation} We use the notation $w=x.y$ as a shorthand for the statement: $w=xy$ and $\ell(w)=\ell(x)+\ell(y)$. \end{notation}

From the definition of $*$-multiplication, we see that
\begin{equation} \label{basicstar} x*s = x.s \text{ when } xs>x, \quad x * s = x \text{ when } xs < x.\end{equation} 
We now state some basic and well-known results.

\begin{lem} \label{starissubexp} For any $x, y \in W$, $x * y = x . y'$ for some $y' \le y$. In particular, if $y \in W_I$ then $x * y \in x W_I$. \end{lem}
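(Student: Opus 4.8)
The plan is to induct on $\ell(y)$. The base case $y = e$ is immediate, since $x * e = x = x.e$ and $e \le e$. The only ingredients I expect to need are the basic relations \eqref{basicstar}, the associativity of $*$ (as $(W,*,S)$ is a monoid), and the subword characterization of the Bruhat order.

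For the inductive step, suppose $y \ne e$ and choose a right descent $s$, so that $y = y''.s$ with $y'' = ys$ and $\ell(y'') = \ell(y) - 1$. By the first relation in \eqref{basicstar} we have $y = y'' * s$, and associativity gives $x * y = (x * y'') * s$. Applying the inductive hypothesis to $y''$ yields $x * y'' = x.z$ for some $z \le y''$; in particular $\ell(xz) = \ell(x) + \ell(z)$. It then remains to analyze $(xz) * s$ via \eqref{basicstar}, in two cases.

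If $(xz)s < xz$, then $(xz)*s = xz = x.z$, and I take $y' = z$; here $z \le y'' \le y$, the latter by the subword property since $y''$ is obtained from $y$ by deleting its last letter. If instead $(xz)s > xz$, then $(xz)*s = xz.s$, and I would first argue $zs > z$: were $zs < z$, some reduced word for $z$ would end in $s$, so (concatenating a reduced word for $x$, using $\ell(xz) = \ell(x)+\ell(z)$) a reduced word for $xz$ would end in $s$, forcing $(xz)s < xz$, a contradiction. Hence $\ell(zs) = \ell(z)+1$, so $\ell(xzs) = \ell(xz)+1 = \ell(x)+\ell(zs)$, i.e.\ $x*y = x.(zs)$; I set $y' = zs$. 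Since a reduced word for $zs$ is a reduced word for $z$ followed by $s$, and $z \le y''$, the subword property applied to the reduced word (reduced word for $y''$)$\cdot s$ of $y$ gives $zs \le y$.

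The main bookkeeping obstacle is confirming $y' \le y$ in the length-increasing case; everything there rests on the observation that $z$ is a reduced suffix of $xz$, which simultaneously forces $zs > z$ and lets the subword property promote $z \le y''$ to $zs \le y$. Finally, for the displayed ``in particular'': parabolic subgroups are downward closed in Bruhat order (if $v \le w \in W_I$ then the support of $v$ is contained in that of $w$, hence in $I$, by the subword property), so from $y \in W_I$ and $y' \le y$ we get $y' \in W_I$, whence $x*y = xy' \in xW_I$.
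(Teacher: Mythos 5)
Your proof is correct and is essentially the paper's own argument: both peel off the letters of a reduced word for $y$ one at a time, apply \eqref{basicstar} at each step, and conclude via the subword characterization of the Bruhat order, with your version merely packaging the iteration as an induction on $\ell(y)$. The only extra content is your explicit check that $zs > z$ in the length-increasing case (equivalently, that the accumulating subword $s_{i_1}\cdots s_{i_k}$ remains reduced), a point the paper leaves implicit.
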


\begin{proof} Choosing a reduced expression $y = s_1 s_2 \cdots s_d$, we know that $y = s_1 * s_2 * \cdots * s_d$. Also, $x * s$ is either equal to $x . s$ or to $x$. Thus
\begin{equation} x * s_1 * s_2 * \cdots * s_d = x . s_{i_1} . s_{i_2} . \cdots . s_{i_k} \end{equation}
for some $k \le d$ and $1 \le i_1 < i_2 < \ldots < i_k \le d$.  Then $s_{i_1} s_{i_2} \cdots s_{i_k}$ is a reduced expression for some $y' \in W$. Using the ``subexpression definition'' of the Bruhat order, one sees that $y' \le y$. \end{proof}

\begin{lem} \label{dotvsstar} For $x, y \in W$ one has $xy = x.y$ if and only if $xy = x*y$. \end{lem}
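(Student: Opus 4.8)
The plan is to prove the two implications separately, since one is essentially immediate from Lemma~\ref{starissubexp} while the other requires following the $*$-product along a reduced word for $y$.

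I would first dispatch the implication $xy = x*y \Rightarrow xy = x.y$. By Lemma~\ref{starissubexp} we may write $x*y = x.y'$ for some $y' \le y$; unwinding the dot notation, this says $x*y = xy'$ (ordinary product in $W$) together with $\ell(x*y) = \ell(x) + \ell(y')$. Assuming $xy = x*y$, we get $xy = xy'$, and left-cancellation in the group $W$ forces $y = y'$. Substituting back, $\ell(xy) = \ell(x*y) = \ell(x) + \ell(y') = \ell(x) + \ell(y)$, which is exactly the statement $xy = x.y$.

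For the converse $xy = x.y \Rightarrow xy = x*y$, I would fix a reduced expression $y = s_1 \cdots s_d$, so that $y = s_1 * \cdots * s_d$ and hence $x*y = x * s_1 * \cdots * s_d$. The key observation is that the hypothesis $\ell(xy) = \ell(x) + d$ forces $\ell(x s_1 \cdots s_i) = \ell(x) + i$ for every $0 \le i \le d$: the upper bound is subadditivity of length, and the lower bound follows from $\ell(x s_1 \cdots s_i) + \ell(s_{i+1} \cdots s_d) \ge \ell(xy)$ together with $\ell(s_{i+1}\cdots s_d) \le d - i$. Consequently each right multiplication strictly increases length, i.e. $x s_1 \cdots s_{i-1} s_i > x s_1 \cdots s_{i-1}$, so by \eqref{basicstar} the $*$-product agrees with the dot product at every step. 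Composing these equalities gives $x * s_1 * \cdots * s_d = x s_1 \cdots s_d = xy$, i.e. $x*y = xy$.

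The only real content lies in the converse, and within it the main obstacle is precisely the ``no length drop'' claim: one must rule out the possibility that some intermediate $*s_i$ truncates, which would replace $y$ by a proper subword $y' < y$ in the sense of Lemma~\ref{starissubexp}. The length bookkeeping above handles this cleanly, with no case analysis on the Coxeter relations. It is worth emphasizing that Lemma~\ref{starissubexp} alone does not suffice for the converse, since it only guarantees $y' \le y$; the hypothesis that lengths add is exactly what pins down $y' = y$.
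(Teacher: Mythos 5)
Your proof is correct and, for the implication $x*y = xy \Rightarrow xy = x.y$, is exactly the paper's argument via Lemma~\ref{starissubexp} and cancellation in $W$. The converse is the direction the paper explicitly leaves to the reader as well-trodden, and your length-bookkeeping along a reduced word for $y$ is a standard and valid way to fill it in.
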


\begin{proof} That $xy = x.y$ implies $x*y = xy$ is fairly well-trodden (being built into the implicit bijection between $(W,*,S)$ and $W$), so we leave it to the reader. For the
converse, Lemma \ref{starissubexp} says there is some $y'$ such that $x*y = x.y'$. If $x*y = xy$ then $y = y'$ (since $W$ is a group), so $xy = x*y = x.y$. \end{proof}
	
\begin{notation} Let $\leftdes(w) \subset S$ (resp. $\rightdes(w) \subset S$) denote the left (resp. right) descent set of an element $w \in W$. Recall that descent sets are always finitary. \end{notation}

\begin{lem} \label{lem:dontchangeleftdescent} For any $w, x \in W$, $\leftdes(w) \subset \leftdes(w * x)$ and $\rightdes(x) \subset \rightdes(w * x)$. \end{lem}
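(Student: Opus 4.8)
The two inclusions are not handled by the same tool, since the star product treats its two factors asymmetrically, so I would prove them separately. The left-descent inclusion will follow almost immediately from Lemma \ref{starissubexp}, which presents $w*x$ as a \emph{reduced} product having $w$ as its left factor; the right-descent inclusion needs a different argument, peeling off the last generator of $x$ and invoking \eqref{basicstar}.

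For the left descents, I would start from Lemma \ref{starissubexp}, which gives $w*x = w.x'$ for some $x' \le x$; that is, $\ell(wx') = \ell(w) + \ell(x')$. If $s \in \leftdes(w)$ then $sw < w$, so $w$ has a reduced word beginning with $s$. Concatenating such a word with any reduced word for $x'$ produces a word for $wx'$ of length $\ell(w)+\ell(x') = \ell(w*x)$, which is therefore reduced and begins with $s$; hence $s \in \leftdes(w*x)$. This is just the general principle that a reduced product $a.b$ retains the left descents of $a$, applied to $a=w$ and $b=x'$.

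For the right descents, I expect the same tactic to fail, and this is the one point requiring care: Lemma \ref{starissubexp} only controls the \emph{right} factor, replacing $x$ by some $x' \le x$, and a right descent of $x$ need not survive in $x'$. Instead I would fix $t \in \rightdes(x)$, choose a reduced word $x = s_1 \cdots s_d$ with $s_d = t$ (so $x = s_1 * \cdots * s_d$), set $u := w * s_1 * \cdots * s_{d-1}$, and use associativity of $*$ to write $w*x = u*t$. Then \eqref{basicstar} splits into two cases: if $ut>u$ then $w*x = u.t$ and $(w*x)t = u < w*x$; if $ut<u$ then $w*x = u$ and $(w*x)t = ut < u = w*x$. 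In either case $t \in \rightdes(w*x)$, giving $\rightdes(x) \subseteq \rightdes(w*x)$.

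I do not anticipate a real obstacle; the only genuine subtlety is the asymmetry noted above, namely that the ``subexpression'' description of $w*x$ in Lemma \ref{starissubexp} is adapted to the left factor and must be replaced by the direct, generator-by-generator computation of \eqref{basicstar} when analyzing right descents. Alternatively, once the left-descent inclusion is proved, the right-descent inclusion would follow formally from the anti-automorphism identity $(w*x)^{-1} = x^{-1}*w^{-1}$ together with $\rightdes(z) = \leftdes(z^{-1})$; but since that identity is not yet recorded in the text, the direct argument above seems cleaner.
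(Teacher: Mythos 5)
Your proof is correct. The right-descent half is essentially the paper's argument: the paper observes that \eqref{basicstar} gives $x*s=x$ if and only if $s\in\rightdes(x)$, and then concludes by associativity, $(w*x)*s = w*(x*s) = w*x$; your version peels off the last letter of a reduced word for $x$ and does the two-case analysis of $u*t$ by hand, which is the same computation one step earlier in the chain. Where you genuinely diverge is the left-descent half. The paper's ``proven similarly'' means the mirror-image argument: $s\in\leftdes(w)$ iff $s*w=w$, hence $s*(w*x)=(s*w)*x=w*x$, hence $s\in\leftdes(w*x)$ — no appeal to Lemma \ref{starissubexp} is needed, and the two inclusions are treated symmetrically. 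Your route instead invokes Lemma \ref{starissubexp} to write $w*x=w.x'$ and then uses the (correct, standard) fact that a length-additive product $a.b$ inherits the left descents of $a$ via concatenation of reduced words. Both are valid; the paper's is shorter and stays entirely inside the monoid $(W,*,S)$, while yours makes visible the concrete reduced-word picture and correctly flags the asymmetry of Lemma \ref{starissubexp} — though that asymmetry is an artifact of your chosen tool rather than of the statement, which is perfectly symmetric under $w\mapsto w^{-1}$.
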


\begin{proof} It is an immediate consequence of \eqref{basicstar} that $x*s = x$ if and only if $s \in \rightdes(x)$. Now the result follows from associativity:
\begin{equation} s \in \rightdes(x) \iff x*s = x \implies (w*x)*s = w*(x*s)= w * x \iff s \in \rightdes(w * x). \end{equation}
The statement about left descents is proven similarly.
\end{proof}

\begin{lem} \label{descentvspreserved} If $I \subset S$ is finitary, then $w * w_I = w$ if and only if $I \subset \rightdes(w)$. \end{lem}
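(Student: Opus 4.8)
The plan is to prove both implications directly from associativity of $*$, using the characterization of right descents extracted in the proof of Lemma~\ref{lem:dontchangeleftdescent}, namely that for any $x \in W$ and $s \in S$ we have $x * s = x$ if and only if $s \in \rightdes(x)$. The only extra input needed is that $\rightdes(w_I) = I$: since $w_I$ is the longest element of $W_I$, every $s \in I$ satisfies $w_I s < w_I$, so $I \subset \rightdes(w_I)$, and of course no $s \notin I$ can be a right descent of an element of $W_I$.

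For the implication $(\Leftarrow)$, I would assume $I \subset \rightdes(w)$ and choose an ordinary reduced expression $w_I = s_1 s_2 \cdots s_d$ of the longest element, noting that every $s_i$ lies in $I$ and that $w_I = s_1 * s_2 * \cdots * s_d$ (reduced expressions agree with reduced $*$-expressions, as already used in the proof of Lemma~\ref{starissubexp}). Then by associativity
\[ w * w_I = (\cdots((w * s_1) * s_2)\cdots) * s_d. \]
Since each $s_i \in I \subset \rightdes(w)$, the first multiplication gives $w * s_1 = w$, and an immediate induction (each intermediate term is still $w$, and each $s_i$ remains a right descent of $w$) yields $w * w_I = w$.

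For the implication $(\Rightarrow)$, I would assume $w * w_I = w$ and fix an arbitrary $s \in I$. Because $s \in \rightdes(w_I)$, we have $w_I * s = w_I$, so associativity gives
\[ w * s = (w * w_I) * s = w * (w_I * s) = w * w_I = w. \]
Hence $s \in \rightdes(w)$, and since $s \in I$ was arbitrary this shows $I \subset \rightdes(w)$, completing the proof.

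There is no serious obstacle here; the argument is a short formal manipulation. The only point requiring a moment of care is the bookkeeping fact $\rightdes(w_I) = I$ together with the equivalence $x * s = x \iff s \in \rightdes(x)$, both of which are available from the preceding lemmas, and the observation that a reduced expression for $w_I$ involves only simple reflections from $I$ so that the successive $*$-multiplications in the $(\Leftarrow)$ direction never leave $w$.
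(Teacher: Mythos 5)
Your proof is correct and follows essentially the same route as the paper: the $(\Leftarrow)$ direction is identical (expand $w_I$ as a $*$-product of simple reflections in $I$ and apply $w*s=w$ for $s\in\rightdes(w)$ repeatedly), and your $(\Rightarrow)$ direction simply unpacks the associativity argument that the paper invokes by citing Lemma~\ref{lem:dontchangeleftdescent}.
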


\begin{proof} By Lemma \ref{lem:dontchangeleftdescent}, the right descent set of $w * w_I$ contains $\rightdes(w_I) = I$, which proves one direction. Conversely, if $I \subset \rightdes(w)$ then 
\[ w * w_I = w * s_{i_1} * \cdots * s_{i_d} = w. \]
Here, $s_{i_1} \cdots s_{i_d}$ is a reduced expression for $w_I$, consisting only of simple reflections in $I$, and the second equality holds by \eqref{basicstar}. \end{proof}

\subsection{$\SC$ as Karoubi envelope} \label{ss:karoubi}

We now discuss maximal elements of double cosets in the context of star multiplication. In Lemma \ref{doublecosetlem} we recall that each $(J,I)$-coset $p$ has a unique (maximal) element $\ma{p}$ satisfying the property that $J \subset \leftdes(\ma{p})$ and $I \subset \rightdes(\ma{p})$.

\begin{lem} \label{lem:maxofcoset} For any $(I,J)$-coset $p$ and any $(J,K)$-coset $q$, $\ma{p} * \ma{q}$ is the maximal element in its $(I,K)$-coset. \end{lem}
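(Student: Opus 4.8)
The plan is to characterize maximal elements of double cosets purely through their descent sets, and then to verify that $\ma{p} * \ma{q}$ has exactly the descent containments forced upon the maximal element of an $(I,K)$-coset. Write $a = \ma{p}$ and $b = \ma{q}$. By the recollection in Lemma \ref{doublecosetlem}, being the maximal element of the $(I,J)$-coset $p$ means that $a$ is the unique element of $p$ with $I \subset \leftdes(a)$ and $J \subset \rightdes(a)$; likewise $b$ is the unique element of the $(J,K)$-coset $q$ with $J \subset \leftdes(b)$ and $K \subset \rightdes(b)$. From this I extract the two facts I actually need: $I \subset \leftdes(a)$ and $K \subset \rightdes(b)$.

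Next I feed these into the monotonicity of descent sets under $*$-multiplication, Lemma \ref{lem:dontchangeleftdescent}. Applying it with $w = a$ and $x = b$ gives $\leftdes(a) \subset \leftdes(a*b)$ and $\rightdes(b) \subset \rightdes(a*b)$. Combining with the previous step, this yields $I \subset \leftdes(a*b)$ and $K \subset \rightdes(a*b)$.

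To conclude, I invoke the uniqueness half of Lemma \ref{doublecosetlem}: inside any $(I,K)$-coset there is exactly one element whose left descent set contains $I$ and whose right descent set contains $K$, and that element is by definition the maximal element. Since $a*b$ lies in the $(I,K)$-coset $W_I(a*b)W_K$ and satisfies both descent containments, it must be the maximal element of that coset, which is precisely the claim.

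I do not anticipate a genuine obstacle here, as the content is essentially bookkeeping with descent sets. The one point worth flagging is that the intermediate subset $J$ plays no explicit role in the computation: the matching of $J$ is what makes the composition $p * q$ in $\SC$ well-posed, but the descent argument only consumes the outer data $I \subset \leftdes(a)$ and $K \subset \rightdes(b)$. The single place where care is warranted is the variance in Lemma \ref{lem:dontchangeleftdescent} (left descents propagate from the left factor $a$, right descents from the right factor $b$); a side error there would be the only way to derail this short argument, so I would double-check that invocation explicitly.
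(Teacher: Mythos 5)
Your proof is correct and is essentially the paper's own argument: the paper likewise applies Lemma \ref{lem:dontchangeleftdescent} to conclude $I \subset \leftdes(\ma{p}*\ma{q})$ and $K \subset \rightdes(\ma{p}*\ma{q})$, and then invokes the descent-set characterization of maximality (equation \eqref{beingmaximal} of Lemma \ref{doublecosetlem}). Your invocation of the variance in Lemma \ref{lem:dontchangeleftdescent} is the right one, so there is nothing to correct.
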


In the language of \eqref{SCcompformula}, this would say that for any $x, y \in W$ one has $\overline{(\ox * \oy)} = \ox * \oy$. 

\begin{proof} By Lemma \ref{lem:dontchangeleftdescent}, $\ma{p} * \ma{q}$ has $I$ in its left descent set and $K$ in its right descent set. Thus $\ma{p} * \ma{q}$ is maximal in its double coset. \end{proof}
	

\begin{prop} The category $\SC$ is well-defined. \end{prop}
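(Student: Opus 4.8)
The plan is to verify the two categorical axioms: that composition $*$ is well-defined (independent of the choices implicit in the formula \eqref{SCcompformula}, and landing in the correct $\Hom$-set), and that $*$ is associative. The unit axiom (that $W_I$ acts as identity at object $I$) is easy and I would dispatch it quickly using Lemma \ref{descentvspreserved}.

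First I would address well-definedness of the composite. The formula \eqref{SCcompformula} sends a pair of cosets $p = W_K x W_J$ and $q = W_J y W_I$ to $W_K(\ox * \oy)W_I$, where $\ox, \oy$ are the maximal elements. Since $\ox$ and $\oy$ are canonically determined by $p$ and $q$ (they are the unique maximal elements, by the Lemma \ref{doublecosetlem} recalled before Lemma \ref{lem:maxofcoset}), the element $\ox * \oy$ is unambiguously defined, so there is no genuine choice to worry about; the notation $x, y$ is just representative-picking. The output $W_K(\ox*\oy)W_I$ is manifestly a $(K,I)$-coset, which is the required $\Hom$-set $\Hom_\SC(I,K)$. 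So this step is essentially immediate once one observes that $\ox, \oy$ depend only on $p, q$.

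The main obstacle is associativity. Here I would exploit the reformulation provided by Lemma \ref{lem:maxofcoset}: the morphisms of $\SC$ can be identified with the maximal elements of double cosets, with composition simply given by $*$. Concretely, Lemma \ref{lem:maxofcoset} shows that $\ox * \oy$ is already maximal in its own $(K,I)$-coset, so composing $p$ and $q$ and then taking the maximal element of the result just returns $\ox * \oy$ itself. Under this identification, the composite of three composable morphisms $\ma{p}, \ma{q}, \ma{r}$ in either bracketing is computed as $\ma{p} * (\ma{q} * \ma{r})$ versus $(\ma{p} * \ma{q}) * \ma{r}$, and associativity in $\SC$ reduces to associativity of the $*$-product on $W$, which is part of the definition of the Coxeter monoid $(W,*,S)$. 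The only subtlety is confirming that the intermediate maximal-element computations do not disrupt this: but Lemma \ref{lem:maxofcoset} guarantees that $\ma{p}*\ma{q}$ is \emph{already} the maximal element of the intermediate coset, so no information is lost when passing through the composite, and the bracketings agree on the nose.

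I therefore expect the proof to be short: the bulk of the content has been front-loaded into Lemma \ref{lem:maxofcoset}, which makes the morphisms literally maximal elements under $*$. Associativity of $*$ on $W$ then transports directly to associativity in $\SC$, and the unit and well-definedness checks are routine.
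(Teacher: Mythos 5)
Your proposal is correct and follows essentially the same route as the paper: the identity axiom via Lemma \ref{descentvspreserved}, and associativity by using Lemma \ref{lem:maxofcoset} to identify composition with the $*$-product on maximal elements, so that associativity of $*$ in $(W,*,S)$ transports directly. No gaps.
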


\begin{proof} Let $I$ be finitary. If $I \subset \leftdes(x)$ then $w_I * x = x$ by Lemma \ref{descentvspreserved}. Thus the minimal $(I,I)$-coset, whose maximal element is $w_I$,
serves as an identity element inside $\End_{\SC}(I)$.

Given double cosets $W_I x W_J$, $W_J y W_K$, and $W_K z W_L$, with maximal elements $\ox$, $\oy$, and $\oz$ respectively, associativity is the statement that
\begin{equation} W_I\; \overline{(\ox * \oy)} * \oz\; W_K = W_I \;\ox * \overline{(\oy * \oz)}\; W_K. \end{equation}
This follows from the associativity of $*$, since $\overline{(\ox * \oy)} = \ox * \oy$ by the previous lemma. \end{proof}

After this point we never use the notation $\ox$, $\oy$ again, preferring the notation $\ma{p}$ for a double coset $p$. For an element $x \in w$, the notatation $\ox$, understood
as the maximal element of the $(I,J)$-coset containing $x$, will depend strongly on the choice of $(I,J)$. Meanwhile, $p$ is a double coset already, and $(I,J)$ are understood as
part of its data.


There is a standard construction one can apply to any category, which formally adjoins the image of any idempotent as a new object. Applying this process to all idempotents in a
category (typically preadditive, though not in our case), one obtains the so-called \emph{Karoubi envelope}\footnote{Standard online references, such as Wikipedia, are decent sources for learning about the Karoubi envelope.}.

\begin{lem} Let $w \in W$ satisfy $w * w = w$. Then $w = w_I$ for some finitary $I \subset S$. \end{lem}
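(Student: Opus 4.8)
The plan is to show that the only idempotents of the Coxeter monoid $(W,*,S)$ are the longest elements $w_I$ of finitary parabolic subgroups. The natural candidate for $I$ is the right descent set $I := \rightdes(w)$, which is finitary. The crux is to prove that $w$ lies in $W_I$ and has full right descent set inside $W_I$; both of these follow once I show that every letter of a reduced expression for $w$ is a right descent of $w$.

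First I would fix a reduced expression $w = s_1 s_2 \cdots s_d$. Since reduced expressions agree with reduced $*$-expressions, $w = s_1 * \cdots * s_d$, so by associativity $w * w = w * s_1 * s_2 * \cdots * s_d$. Setting $w_0 := w$ and $w_i := w_{i-1} * s_i$, equation \eqref{basicstar} shows that at each step either $w_i = w_{i-1}$ (when $s_i$ is absorbed) or $\ell(w_i) = \ell(w_{i-1}) + 1$; in particular the lengths are non-decreasing. Since $w_d = w * w = w$ has the same length as $w_0 = w$, every step must be length-preserving, forcing $w_i = w_{i-1} = w$ for all $i$, and hence $w * s_i = w$, i.e. $s_i \in \rightdes(w) = I$ for every $i$.

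Once every letter of the reduced expression lies in $I$, it follows that $w \in W_I$. It remains to identify $w$ with $w_I$. Here I would use that for $w \in W_I$ the right descent set computed in $W$ coincides with the one computed in $W_I$, so $w$ is an element of the finite group $W_I$ whose right descent set is all of $I$; the unique such element is the longest element $w_I$, giving $w = w_I$. The main obstacle is not any single hard computation but rather marshalling the standard parabolic facts cleanly --- namely that a reduced word for an element of $W_I$ uses only letters of $I$, and that having full right descent set characterizes the longest element. As an alternative to the last step, one can feed $I \subset \rightdes(w)$ into Lemma \ref{descentvspreserved} to obtain $w * w_I = w$, and combine this with $w \leq w_I$ (valid once $w \in W_I$ is known) to conclude $w = w_I$.
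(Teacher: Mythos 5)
Your proposal is correct and follows essentially the same route as the paper: set $I=\rightdes(w)$, expand $w*w = w*s_1*\cdots*s_d$ along a reduced word, use \eqref{basicstar} to force $w*s_i=w$ for every letter (your length-monotonicity argument just makes explicit what the paper leaves terse), conclude $w\in W_I$, and identify $w$ with $w_I$ as the unique element of $W_I$ whose right descent set contains $I$. No gaps.
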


\begin{proof} Let $I = \rightdes(w)$. We know that $w_I$ is the unique element of $W_I$ whose right descent set contains $I$. Thus $w = w_I$ if and only if $w \in W_I$. Pick a reduced expression $w = s_{i_1} \cdots s_{i_d}$. Following the proof of Lemma \ref{starissubexp}, one has 
\[ w * w = w * s_{i_1} * \cdots * s_{i_d}. \]
Using \eqref{basicstar} repeatedly, $w * w = w$ if and only if $w * s_{i_k} = w$ for all $1 \le k \le d$, if and only if $s_{i_k} \in I$ for all $1 \le k \le d$, in which case $w \in W_I$. \end{proof}

Thus the Karoubi envelope of $(W,*,S)$ (viewed as a category with one object) has one object for each finitary parabolic subgroup. Let us denote this Karoubi envelope by
$\Kar(W)$, and denote the objects by $I$ as in $\SC$.

\begin{defn} Let $F : \SC \to \Kar(W)$ be the functor which is defined on objects as $I \mapsto I$. On morphisms, a $(J,I)$-coset $p \in \Hom_{\SC}(I,J)$ is sent by $F$ to the element $\ma{p} \in \Hom_{\Kar(W)}(I,J)$. \end{defn}

\begin{thm} \label{thm:Karoubi} The functor $F$ is well-defined, and is an equivalence of categories between $\SC$ and $\Kar(W)$. \end{thm}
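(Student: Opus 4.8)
The plan is to show $F$ is well-defined, then establish that it is full, faithful, and essentially surjective (in fact bijective on objects), which together give an equivalence of categories. Essential surjectivity is free: by the preceding lemmas, $\Kar(W)$ has exactly one object for each finitary parabolic subgroup $I \subset S$, and $F$ sends the object $I$ of $\SC$ to the object $I$ of $\Kar(W)$, so $F$ is a bijection on objects.

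For well-definedness and functoriality, I would argue as follows. A morphism $I \to J$ in $\Kar(W)$ is, by the standard description of the Karoubi envelope, an element $w \in W$ satisfying $w_J * w = w = w * w_I$; equivalently, using Lemma~\ref{descentvspreserved}, an element $w$ with $J \subset \leftdes(w)$ and $I \subset \rightdes(w)$. By Lemma~\ref{doublecosetlem}, these are precisely the maximal elements $\ma{p}$ of $(J,I)$-cosets $p$. This shows $F$ lands in the correct Hom-set and is a bijection on morphisms, hence \emph{fully faithful}. To see $F$ respects composition, one checks that for a $(J,I)$-coset $p$ and a $(K,J)$-coset $q$, the definition \eqref{SCcompformula} of $*$ in $\SC$ gives $F(p * q) = \ma{p * q} = \ma{p} * \ma{q}$; but this is exactly Lemma~\ref{lem:maxofcoset}, which asserts that $\ma{p} * \ma{q}$ is already maximal in its double coset and therefore equals $\ma{p * q}$. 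Finally $F$ sends the identity $W_I$ of the object $I$ in $\SC$ to $w_I$, which is the idempotent cutting out the object $I$ in $\Kar(W)$, i.e.\ the identity endomorphism there.

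The main content is thus the identification of morphism sets, and this is where I expect the only real subtlety to lie: one must match the Karoubi-envelope description of a morphism $I \to J$ (an element $w$ factoring through the idempotents $w_I, w_J$ under $*$) with the description of a $(J,I)$-coset via its maximal element, and confirm that $*$-composition of these representatives corresponds to $*$-composition of maximal elements. Every ingredient for this is already available: Lemma~\ref{descentvspreserved} converts the idempotent-factorization condition into a descent-set condition, Lemma~\ref{doublecosetlem} (recalled in the excerpt) says maximal elements of $(J,I)$-cosets are characterized by exactly that descent-set condition, and Lemma~\ref{lem:maxofcoset} handles compatibility with composition. So the proof is largely an assembly of these lemmas, with the bookkeeping on source/target objects being the part most prone to error.

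Once bijectivity on objects and full faithfulness are in hand, the conclusion that $F$ is an equivalence of categories is immediate from the standard criterion, and since $F$ is moreover a bijection on objects it is in fact an isomorphism of categories.
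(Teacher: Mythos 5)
Your proposal is correct and follows essentially the same route as the paper: both identify $\Hom_{\Kar(W)}(I,J)$ with the set of elements having $J$ in their left descent set and $I$ in their right descent set via Lemma~\ref{descentvspreserved}, recognize these as exactly the maximal elements of $(J,I)$-cosets via \eqref{beingmaximal}, and invoke Lemma~\ref{lem:maxofcoset} for compatibility with composition. The only quibble is the ordering slip you anticipated yourself (the composite of a $(J,I)$-coset $p$ with a $(K,J)$-coset $q$ should be written $q*p$, giving $F(q*p)=\ma{q}*\ma{p}$), which does not affect the substance.
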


\begin{proof} By the Karoubi construction, $\Hom_{\Kar(W)}(I,J)$ is equal to those elements of $W$ which are preserved by composition on the right with $w_I$ and composition on the
left with $w_J$. By Lemma \ref{descentvspreserved}, this is the same as those elements with $I$ in their right descent set and $J$ in their left descent set. Any such element is
maximal in its $(J,I)$-coset, and each $(J,I)$-coset has a unique such element. Thus $F$ does send each double coset to a well-defined morphism, and $F$ is a bijection on Hom sets.

Clearly $\id_J \in \End_{\SC}(J)$ is sent by $F$ to $w_J \in \End_{\Kar(W)}(J)$, which is the identity element there. If $p$ is a $(J,I)$-coset and $q$ is a $(K,J)$-coset, then by Lemma~\ref{lem:maxofcoset}, $F(q
* p) = \ma{q * p} = \ma{q} * \ma{p} = F(q) \circ F(p)$. Thus $F$ is a functor. \end{proof}

\subsection{Maximal and minimal elements in double cosets} \label{subsec-cosets}

Recall that $W^I$ denotes the set of minimal representatives inside the right cosets $W / W_I$. If $I \subset J$ then $W^I \cap W_J$ has a longest element $x$ which satisfies $x .
w_I = w_J$. We often write \begin{equation} x = w_J w_I\inv \end{equation} (even though $w_I\inv = w_I$) to emphasize the fact that the lengths add in the formula $x . w_I = w_J$.
Similarly, $w_I\inv w_J$ is the longest element of $^I W \cap W_J$.

We record some useful properties of parabolic double cosets in $W$.

\begin{lem}\label{doublecosetlem} 
Let $J,I\subset S$ be finitary and let $p\in \JIcosets$.
\begin{enumerate}
    \item \label{doublecosetinterval} The subposet $p\subset (W,\leq)$ is an interval, that is, there are elements $\ma{p},\mi{p}\in p$ such that $p=\{x\in W\ |\ \mi{p}\leq x\leq \ma{p}\}$.
    \item \label{double rex decomposition} (Howlett's theorem) Let $K = {J\cap \mi{p} I \mi{p}\inv}$ and $L = {I \cap \mi{p}\inv J \mi{p}}$. The following map is bijective.
    \begin{equation} \label{keybij}
        \begin{split}
            (W^K\cap W_J)\times W_I  &\to p\\
            (x,y) &\mapsto x.\mi{p}.y
        \end{split}
    \end{equation}
    Similarly, the following map is bijective.
	\begin{equation} \label{keybij2}
        \begin{split}
            W_J \times ({}^{L}W \cap W_I)  &\to p\\
            (x,y) &\mapsto x.\mi{p}.y
        \end{split}
    \end{equation}
	\item One has
	\begin{equation} \label{maxformula} \ma{p} = (w_J w_K\inv) . \mi{p} . w_I = w_J . \mi{p} . (w_{L}\inv w_I). \end{equation}
	\item If $z$ is any element of $p$, then there exist $x, x' \in W_J$ and $y, y' \in W_I$ such that
	\begin{subequations}
	\begin{equation} \label{elementfrommin} z = x . \mi{p} . y, \end{equation}
	\begin{equation} \label{maxfromelement} \ma{p} = x' . z . y'. \end{equation}
	\end{subequations}
	\item If $z \in p$ then 
\begin{subequations}
	\begin{equation} \label{beingmaximal} z = \ma{p} \iff J \subset \leftdes(z), I \subset \rightdes(z), \end{equation}
	\begin{equation} \label{beingminimal} z = \mi{p} \iff J \cap \leftdes(z) = \emptyset, I \cap \rightdes(z) = \emptyset. \end{equation}
\end{subequations}
\end{enumerate}
\end{lem}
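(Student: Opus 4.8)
The plan is to establish the five parts in an order where each builds on the previous, treating part~(2) (Howlett's theorem) as the technical core from which everything else follows.

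First I would prove the minimal-element half of part~(5), namely \eqref{beingminimal}. If $z \in p$ has some $s \in J \cap \leftdes(z)$, then $sz < z$ lies in the same double coset, so $z$ is not of minimal length; the argument for $I \cap \rightdes(z)$ is symmetric on the right. Conversely, the standard theory of minimal coset representatives produces a unique element $\mi{p} \in {}^J W \cap W^I$ inside $p$, and this is exactly the element satisfying $J \cap \leftdes(\mi{p}) = \emptyset$ and $I \cap \rightdes(\mi{p}) = \emptyset$; it is the unique minimal-length element of $p$. This pins down $\mi{p}$ and proves \eqref{beingminimal}.

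The heart of the proof is part~(2). The key group-theoretic input, due to Howlett (Kilmoyer), is that $W_J \cap \mi{p}\, W_I\, \mi{p}\inv$ is the standard parabolic subgroup $W_K$ with $K = J \cap \mi{p} I \mi{p}\inv$; I would cite this rather than reprove it. Granting it, surjectivity of \eqref{keybij} comes from factoring any $a \in W_J$ as $a = x.u$ with $x \in W^K \cap W_J$ and $u \in W_K$, then writing $u = \mi{p} v \mi{p}\inv$ for some $v \in W_I$ to slide $u$ through $\mi{p}$: $a\,\mi{p}\,b = x\,\mi{p}\,(vb)$ with $vb \in W_I$. Injectivity follows because $x\mi{p}y = x'\mi{p}y'$ forces $x'\inv x \in W_J \cap \mi{p} W_I \mi{p}\inv = W_K$, and distinct minimal representatives in $W^K \cap W_J$ lie in distinct $W_K$-cosets, so $x = x'$ and then $y = y'$. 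Length additivity $\ell(x.\mi{p}.y) = \ell(x) + \ell(\mi{p}) + \ell(y)$ uses $\mi{p} \in W^I$ (giving $\ell(\mi{p}y) = \ell(\mi{p}) + \ell(y)$) together with the minimality of $\mi{p}$ on the left; the bijection \eqref{keybij2} is proved symmetrically using $L$ and ${}^L W$. I expect this step, and in particular the length-additivity bookkeeping for distinguished double coset representatives, to be the main obstacle.

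The remaining parts then fall out. For the maximal element, the unique longest output of \eqref{keybij} is obtained at $x = w_J w_K\inv$ (the longest element of $W^K \cap W_J$) and $y = w_I$, which yields the first equality of \eqref{maxformula}; \eqref{keybij2} yields the second. This element has $J \subseteq \leftdes$ and $I \subseteq \rightdes$, and any element with this property is forced to be of maximal length, proving \eqref{beingmaximal}. For part~(4), \eqref{elementfrommin} is immediate from \eqref{keybij} since $W^K \cap W_J \subseteq W_J$, and \eqref{maxfromelement} follows by completing the factorization of $z$ to the longest one on both sides, absorbing the difference into $W_J$ and $W_I$. Finally, for the interval statement~\eqref{doublecosetinterval}, the length-additive factorization $z = x.\mi{p}.y$ exhibits $\mi{p}$ as a subword of a reduced expression for $z$, so $\mi{p} \leq z$, and likewise $z \leq \ma{p}$; the reverse inclusion $[\mi{p}, \ma{p}] \subseteq p$ I would obtain from the lifting property of the Bruhat order, verifying by a short exchange-condition argument that no $z$ with $\mi{p} \leq z \leq \ma{p}$ can escape the double coset.
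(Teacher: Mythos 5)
Your proposal is correct in outline, but it takes a genuinely different route from the paper on the two foundational parts. The paper does not prove (1) or (2) at all: it cites Reading for the interval property and Garsia--Stanton for Howlett's decomposition, and then spends its effort deriving (3), (4), (5) from the bijection \eqref{keybij} --- exactly as you do (longest inputs give $\ma{p}$; completing $x$ and $y$ to $w_J w_K\inv$ and $w_I$ gives \eqref{maxfromelement}; length-additivity in $\ma{p} = x'.z.y'$ forces $x'=y'=e$ for \eqref{beingmaximal}). So your derivations of (3)--(5) match the paper, while your treatment of (1) and (2) replaces citations with proofs. What your approach buys is self-containedness; what it costs is that the two steps you flag as hard are precisely the ones with real content. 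In particular, for the length additivity $\ell(x.\mi{p}.y)=\ell(x)+\ell(\mi{p})+\ell(y)$ with $x\in W^K\cap W_J$ and $y\in W_I$, the phrase ``minimality of $\mi{p}$ on the left'' is not yet an argument: $\mi{p}y$ is generally \emph{not} minimal in its left $W_J$-coset (e.g.\ for $y$ in the right redundancy $W_L$ one has $\mi{p}y = u\mi{p}$ with $u\in W_K$ nontrivial), so one must actually show that $J\cap\leftdes(\mi{p}y)\subset K$ and then invoke $x\in W^K$; this is where Kilmoyer's theorem gets used a second time. Similarly, the inclusion $[\mi{p},\ma{p}]\subset p$ in part (1) needs the lifting property run as a genuine induction, not just the subword property (which only gives the easy containment $p\subset[\mi{p},\ma{p}]$). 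Neither of these is a gap in the sense of a wrong idea --- both are standard and provable along the lines you sketch --- but if you intend to prove rather than cite them, those two paragraphs need to be written out in full; as the paper's own choice to cite suggests, they are the entire technical weight of the lemma.
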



\begin{proof}
See \cite[Proposition 31]{Reading} for (1) and \cite[Proposition 8.3]{GarsiaStanton} for (2).
To give the reader some additional context, we elaborate on the bijection in \eqref{keybij}. If $w = x . \mi{p} . y$ then the shortest element in the right coset $w W_I$ is $x . \mi{p} = w y\inv$, and the shortest element in the left coset $W_J (x . \mi{p})$ is $\mi{p}$. This gives an effective way to compute $x$ and $y$ from a given element $w$.


Since \eqref{keybij} (resp. \eqref{keybij2}) preserves length up to the constant $\ell(\mi{p})$, the longest element of $p$ is the image of the $(x,y)$ where $x$ and $y$ are the longest elements in their domain. This proves \eqref{maxformula}.

The bijection \eqref{keybij} immediately proves the existence of $x$ and $y$ such that \eqref{elementfrommin} holds. Moreover, we can assume that $x \in W^K \cap W_J$. By basic Coxeter theory, there exist $x'$ and $y'$ such that $x' . x = (w_J w_K\inv)$ and $y . y' = w_I$. Then combining \eqref{elementfrommin} and \eqref{maxformula} we obtain \eqref{maxfromelement}.

It is clear that $J \subset \leftdes(\ma{p})$ and $I \subset \rightdes(\ma{p})$. Conversely, suppose $J \subset \leftdes(z)$ and $I \subset \rightdes(z)$. Writing $\ma{p} = x' . z
. y'$ as in \eqref{maxfromelement}, we see that $x' = y' = e$, or else the lengths will not add. The analogous statement for $\mi{p}$ is straightforward. \end{proof}

\begin{rem} A final warning. A subset $p \subset W$ can be an $(I,J)$-coset in multiple different ways, as noted in Example \ref{ex:ambiguous}. The elements $\mi{p}$ and $\ma{p}$ do not depend on the choice of $(I,J)$ among such valid choices, though the elements $x$ and $y$ appearing in formulas like \eqref{keybij} will depend on this choice. Again, we never discuss a double coset $p$ without the pair $(I,J)$ being established by the context. \end{rem}

Let $p$ be a $(J,I)$-coset. The subset $K = J \cap \mi{p} I \mi{p}\inv$ associated to $p \in \JIcosets$ will appear many times in this paper, and we call it the \emph{(left)
redundancy}. The parabolic subgroup $W_K \subset W_J$ accounts for the redundancy between left multiplication by $W_J$ and right multiplication by $W_I$ on $\mi{p}$. Similarly,
$L = I \cap \mi{p}\inv J \mi{p}$ is called the \emph{right redundancy}. The following lemma helps explain why redundancy is a parabolic subgroup.

\begin{lem}\label{J WJ same} (Kilmoyer's theorem)
Let $I,J\subset S$ be finitary and $p \in \JIcosets$. Then 
\begin{equation} \label{K=} \mi{p} I \mi{p}\inv \cap W_J = \mi{p} I \mi{p}\inv \cap J = \mi{p} W_I \mi{p}\inv \cap J. \end{equation}
\end{lem}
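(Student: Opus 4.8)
The plan is to identify all three sets with the left redundancy $K = J \cap \mi{p} I \mi{p}\inv$ (which is literally the middle set, since intersection is commutative), by recording two trivial inclusions and then proving the two reverse inclusions via length additivity for minimal double coset representatives. Throughout write $w = \mi{p}$. By \eqref{beingminimal}, $w$ has no left descent in $J$ and no right descent in $I$, so $w \in {}^J W \cap W^I$. The single tool I plan to use is the standard fact that $w \in {}^J W$ forces $vw = v.w$ (i.e. $\ell(vw) = \ell(v) + \ell(w)$) for all $v \in W_J$, and symmetrically $w \in W^I$ forces $wv = w.v$ for all $v \in W_I$.

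The two easy inclusions are formal: since $J \subseteq W_J$, the middle set $w I w\inv \cap J$ sits inside the first set $w I w\inv \cap W_J$; and since $I \subseteq W_I$ gives $w I w\inv \subseteq w W_I w\inv$, the middle set sits inside the third set $w W_I w\inv \cap J$. So it remains only to prove that the first and third sets are contained in the middle one.

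First I would handle the first set. Take $s \in I$ with $r := w s w\inv \in W_J$; I must show $r \in J$. Since $w \in W^I$ and $s \in I$ we have $ws = w.s$, hence $\ell(rw) = \ell(ws) = \ell(w) + 1$. On the other hand $rw = w s w\inv w = ws$, and since $r \in W_J$ and $w \in {}^J W$ we have $rw = r.w$, so $\ell(rw) = \ell(r) + \ell(w)$. Comparing the two computations of $\ell(rw)$ gives $\ell(r) = 1$, so $r$ is a simple reflection lying in $W_J$, i.e. $r \in J$. This proves $w I w\inv \cap W_J \subseteq w I w\inv \cap J$, so the first and middle sets agree. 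The third set is handled by the mirror-image argument: take $t \in J$ with $s' := w\inv t w \in W_I$; since $t \in J$ and $w \in {}^J W$ we have $tw = t.w$ with $\ell(tw) = \ell(w)+1$, while $ws' = tw$ and $w \in W^I$ give $ws' = w.s'$, hence $\ell(s') = 1$ and $s' \in I$. Thus $t = w s' w\inv \in w I w\inv$, giving $w W_I w\inv \cap J \subseteq w I w\inv \cap J$.

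The main point, rather than an obstacle, is recognizing that length additivity of minimal double coset representatives is exactly the mechanism that collapses an a priori arbitrary reflection or group element down to a simple reflection: once one writes $r = wsw\inv$ (resp. $s' = w\inv t w$) and evaluates the product $rw = ws$ (resp. $ws' = tw$) in two ways, the length bookkeeping forces length $1$. I anticipate no serious difficulty; the only care needed is to apply the left-sided additivity ($w \in {}^J W$) and the right-sided additivity ($w \in W^I$) on the correct sides in the two inclusions.
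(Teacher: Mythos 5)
Your proof is correct and follows essentially the same route as the paper: identify the candidate conjugate as an element of $W_J$ (resp.\ $W_I$), compute the length of the product $\mi{p}s = x\mi{p}$ in two ways using the additivity coming from $\mi{p}$ being a minimal double coset representative, and conclude the element has length one. The only difference is that the paper proves just the first equality and leaves the second to the reader, whereas you carry out the mirror-image argument explicitly.
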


\begin{proof}
We prove the first equality, and leave the second to the reader. Let $s \in I$ be such that $x:= \mi{p} s \mi{p}\inv \in W_J$. Since $\mi{p} < \mi{p} . s$, the element $\mi{p} s = x \mi{p}$ has length $\ell(\mi{p}) + 1$. Since $x \in W_J$, $x \mi{p} = x . \mi{p}$, so $x \mi{p}$ has length $\ell(\mi{p}) + \ell(x)$. It follows that $\ell(x) = 1$, so $x \in J$.
\end{proof}

One major use of \eqref{keybij} will be to compare minimal and maximal elements of different double cosets when one is contained inside the other. The moral of the next lemma is: when we change the parabolic subgroup on the right, then we can get between minimal (resp. maximal) elements only using multiplication on the right.

\begin{lem} \label{lem:onlyneedright} Let $J, I, I'$ be finitary with $I \subset I'$. Suppose that $p$ is a $(J,I)$-coset and $q$ is a $(J,I')$-coset with $I \subset I'$ and $p \subset q$. Then there exist $y, y' \in W_{I'}$ such that $\mi{p} = \mi{q} . y$ and $\ma{p} . y' = \ma{q}$. \end{lem}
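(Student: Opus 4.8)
The plan is to exploit the bijection \eqref{keybij} for both cosets and to show that the minimal and maximal elements are related by elements of $W_{I'}$ acting on the right. Since $I \subset I'$, the left parabolic subgroup $W_J$ is the same for both $p$ and $q$, which is exactly the situation the preceding remark advertises: changing only the right parabolic subgroup should allow us to move between minimal (resp.\ maximal) elements using only right multiplication.

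First I would handle the minimal elements. Since $p \subset q$, we have $\mi{p} \in q$. By \eqref{beingminimal} applied to the coset $q$, the element $\mi{q}$ is characterized by $J \cap \leftdes(\mi{q}) = \emptyset$ and $I' \cap \rightdes(\mi{q}) = \emptyset$. Now $\mi{p}$ satisfies $J \cap \leftdes(\mi{p}) = \emptyset$ (being minimal in $p$, hence having no left $J$-descents). Applying \eqref{elementfrommin} to the element $\mi{p} \in q$ (viewed as a $(J,I')$-coset), we can write $\mi{p} = x . \mi{q} . y$ with $x \in W_J$ and $y \in W_{I'}$. The absence of left $J$-descents on $\mi{p}$, combined with the fact that $x \in W_J$ and the lengths add, forces $x = e$; concretely, if $x \ne e$ then $x$ contributes a left descent in $J$ to $\mi{p}$, contradicting minimality. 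Hence $\mi{p} = \mi{q} . y$ with $y \in W_{I'}$, as desired.

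For the maximal elements I would argue dually. The element $\ma{p} \in p \subset q$, and I want $\ma{q} = \ma{p} . y'$ for some $y' \in W_{I'}$. Using \eqref{maxfromelement} for the coset $q$ applied to the element $z = \ma{p}$, there exist $x' \in W_J$ and $y' \in W_{I'}$ with $\ma{q} = x' . \ma{p} . y'$. Here the key observation is that $J \subset \leftdes(\ma{p})$ already (since $\ma{p}$ is maximal in the $(J,I)$-coset $p$), so left multiplication by any nontrivial $x' \in W_J$ cannot increase the length on the left — formally, $x' . \ma{p}$ with $x' \in W_J$ and added lengths would require $x'$ to have no descents overlapping $\leftdes(\ma{p})^c \cap J$, but since all of $J$ is already a left descent set of $\ma{p}$, we must have $x' = e$. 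This gives $\ma{q} = \ma{p} . y'$ with $y' \in W_{I'}$.

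The main obstacle is making the ``$x = e$'' and ``$x' = e$'' steps rigorous from the descent-set conditions, i.e.\ correctly invoking that $x \in W_J$ with $x.\mi{q}$ or $x'.\ma{p}$ having additive length forces $x$ trivial. The cleanest route is to lean directly on the descent characterizations \eqref{beingminimal} and \eqref{beingmaximal}: for minimality, any nontrivial left factor $x \in W_J$ would introduce an element of $J$ into $\leftdes(\mi{p})$, and for maximality one notes that $\ma{p}$ and $\ma{q}$ share the full left descent set $J$, so the left factor relating them must be trivial by the length-additivity built into the ``$.$'' notation. I would phrase both arguments symmetrically, deriving the statement about $\mi{p}$ from \eqref{elementfrommin} and \eqref{beingminimal}, and the statement about $\ma{p}$ from \eqref{maxfromelement} and \eqref{beingmaximal}.
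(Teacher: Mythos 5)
Your proof is correct and follows essentially the same route as the paper: both apply \eqref{elementfrommin} to $\mi{p}$ viewed inside $q$ and \eqref{maxfromelement} to $\ma{p}$ viewed inside $q$, then kill the left factor $x$ (resp.\ $x'$) in $W_J$ using minimality of $\mi{p}$ (resp.\ the fact that $J \subset \leftdes(\ma{p})$) together with length-additivity. The extra care you take in justifying the ``$x=e$'' steps via the descent characterizations \eqref{beingminimal} and \eqref{beingmaximal} is sound and only spells out what the paper's terser argument leaves implicit.
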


\begin{proof} By \eqref{elementfrommin} we know that $\mi{p} = x . \mi{q} . y$ for some $x \in W_J$ and $y \in W_{I'}$. However, left multiplication by any non-identity element of $W_I$ will increase the length of $\mi{p}$, a contradiction if $x \ne e$ (since multiplication by $x^{-1}$ will both increase and decrease length). 

By \eqref{maxfromelement} we know that $\ma{q} = x' . \ma{p} . y'$ for some $x' \in W_J$ and $y' \in W_{I'}$. However, $J \subset \leftdes(\ma{p})$, so $x' = e$. \end{proof}

\subsection{Expressions and paths} \label{subsec-paths}

Most of the definitions\footnote{Only the names have been changed to protect the innocent.} in this section were made in \cite[Chapter 2, Section 1.3]{GWthesis}.

\begin{defn}\label{def:expression}
A \emph{(singular) expression} is a sequence of finitary subsets $I_\bullet = [I_0,\cdots, I_d]$ in $S$ such that $I_i$ and $I_{i-1}$ differ by one element (i.e., either $I_{i-1} = I_i\sqcup\{s\}$ or $I_{i-1}\sqcup\{s\}= I_i$ for some $s\in S$) for each $i$. An expression with $I_0 = J$ will be called a \emph{$J$-expression}.
\end{defn}

As noted in the introduction, an expression represents a composition of generating morphisms in the category $\SC$. We will temporarily ignore this fact, and first focus on the analogue of subexpressions.

\begin{defn}\label{subex}
Let $I_\bullet$ be a $J$-expression. A \emph{path} subordinate to $I_\bullet$ is a sequence of double cosets $p_{\bullet} = [p_0,\cdots ,p_d]$, where $p_i\in W_J\setminus W / W_{I_i}$, satisfying two conditions: \begin{itemize} \item $e\in p_0$, \item $p_{i-1}\cap p_i\neq \emptyset$ for all $1 \le i \le d$. \end{itemize}The \emph{endpoint} of a path $[p_0,\cdots,p_d]$ is the $(J,I_d)$-coset $p_d$. The \emph{(left) redundancy sequence} of the path is the sequence $(K_0, \ldots, K_d)$ where $K_i = J \cap \mi{p_i} I_i \mi{p_i}\inv$. 
\end{defn}

There is a dichotomy which runs throughout this paper: in an expression, either $I_i = I_{i-1} \sqcup s$ or $I_i = I_{i-1} \setminus s$ (for some $s \in S$). If $s \notin I$, then a $(J,I)$-coset is either entirely contained in or disjoint from any $(J,Is)$-coset. Because of this, the condition that $p_{i-1} \cap p_i \neq \emptyset$ in a path can be rephrased more clearly as follows. \begin{itemize} \item If $I_i = I_{i-1} \sqcup s$ then $p_{i-1} \subset p_i$. Note that $p_i$ is uniquely determined as the double coset containing $p_{i-1}$. \item If $I_i = I_{i-1} \setminus s$ then $p_i \subset p_{i-1}$. There may be many choices of $p_i$, depending on how $p_{i-1}$ splits into double cosets upon restriction from $W_{I_{i-1}}$ to $W_{I_i}$. \end{itemize}

We give several examples. In each, we point out one special path, the forward path, see Definition \ref{good subex}.

\begin{ex} Let $m_{st} = 3$ and consider the expression $[\mt, s, \mt, t, \mt, s, \mt]$. There are eight paths subordinate to this expression, corresponding to the eight subexpressions of $sts$. Here are two paths, and the subexpressions they correspond to.
\[ s^1 t^0 s^0: \qquad (\{e\}, \{e,s\}, \{s\}, \{s, st\}, \{s\}, \{e, s\}, \{s\}) \]
\[ s^0 t^1 s^1: \qquad (\{e\}, \{e,s\}, \{e\}, \{e, t\}, \{t\}, \{t, ts\}, \{ts\}) \]
Note that every path is a sequence of $(\mt, I)$-cosets for various $I$, so the redunancy path is constantly the empty set. This last example is the forward path.
\[ s^1 t^1 s^1: \qquad (\{e\}, \{e,s\}, \{s\}, \{s, st\}, \{st\}, \{st, sts\}, \{sts\}) \]
\end{ex}

\begin{ex} \label{ex:rando} Let $m_{st} = 3$ and consider the expression $[s, \mt, s, st, t, \mt, s]$. There are three paths subordinate to this expression. Note that every path is a sequence of $(s,I)$-cosets for various $I$. We also record their redundancy sequences.
\[ (\{e,s\}, \{e,s\}, \{e,s\}, W_{st}, \{e,s,t,st\}, \{e,s\}, \{e, s\}) \quad \text{redundancy } = (s, \mt, s, s, \mt, \mt, s) \]
\[ (\{e,s\}, \{e,s\}, \{e,s\}, W_{st}, \{e,s,t,st\}, \{t,st\}, \{t, ts, st, sts\}) \quad \text{redundancy } = (s, \mt, s, s, \mt, \mt, \mt) \]
\[ (\{e,s\}, \{e,s\}, \{e,s\}, W_{st}, \{ts,sts\}, \{ts,sts\}, \{t,ts, st, sts\}) \quad \text{redundancy } = (s, \mt, s, s, s, \mt, \mt) \]
We write $W_{st} = \{e,s,t,st,ts,sts\}$ for the entire parabolic subgroup, as a double coset. The third path above is the forward path.
\end{ex}

\begin{ex} Let $m_{st} = 3$ and consider the expression $[\mt, s, st, s, \mt]$. There are six paths subordinate to this expression, one for each element $w$ of the parabolic subgroup $W_{st}$. The corresponding path is
\[ (\{e\}, \{e,s\}, W_{st}, \{w, ws\}, \{w\}). \]
The path associated to $w = sts$ is the forward path of the expression.
\end{ex}

\begin{defn}\label{good subex}
A path $p_\bullet$ is said to be \emph{forward} if for each $i$ with $I_{i+1}\subset I_{i}$, the double coset $p_{i+1}$ is \emph{maximal} in $p_i$, meaning that $\ma{p_{i+1}}=\ma{p_i}$ (see Lemma \ref{doublecosetlem} \eqref{doublecosetinterval}). 
An expression has a unique forward path. We say that $p$ \emph{is the endpoint of an expression} $I_\bullet$ if the forward path of $I_\bullet$ has the endpoint $p$. We say \emph{$I_\bullet$ expresses $p$} and write $I_\bullet \expr p$.
\end{defn}

A singular expression is the analogue of a $*$-expression. A path is the analogue of a subexpression\footnote{Perhaps it is better to say that a path is the analogue of the Bruhat stroll of a subexpression, see \cite[\S 2.4]{EWGr4sb}. In the combinatorics of the Hecke category, the primary use of subexpressions is based on their Bruhat strolls, see e.g. the Deodhar defect formula, or the light leaf construction \cite[\S 2.4 and \S 6.1]{EWGr4sb}.}. The forward path is the analogue of the unique subexpression which is the
expression itself. If the singular expression represents a sequence of generating morphisms in $\SC$, then the endpoint of the forward path is the overall composition, as we now
prove. Recall that a double coset is called \emph{minimal} if it contains the identity element.

\begin{prop} \label{prop:expressesinSC} Let $I_\bullet$ be a $J$-expression, with forward path $p_{\bullet}$.  For each $i$, let $q_i$ be the minimal $(I_{i-1},I_i)$-coset. Let $\id_J$ be the minimal $(J,J)$-coset. Then for all $0 \le i \le d$ we have the following equality of morphisms in $\SC$:
	\begin{equation} p_i = \id_J * q_1 * \cdots * q_i. \end{equation} \end{prop}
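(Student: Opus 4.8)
The plan is to induct on $i$, peeling off the last generator using associativity of $\SC$. The base case $i=0$ is immediate: the forward path begins with $p_0$ equal to the $(J,J)$-coset containing $e$, which is the identity $\id_J$, and the right-hand side is the empty composite, also $\id_J$. For the inductive step, assuming $p_{i-1} = \id_J * q_1 * \cdots * q_{i-1}$, associativity of $\SC$ lets me regroup $\id_J * q_1 * \cdots * q_i = (\id_J * q_1 * \cdots * q_{i-1}) * q_i = p_{i-1} * q_i$, so the whole claim reduces to the single identity $p_i = p_{i-1} * q_i$.

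To verify $p_i = p_{i-1} * q_i$ I would pass to maximal elements. Since the functor $F \colon \SC \to \Kar(W)$ of Theorem \ref{thm:Karoubi} is faithful and sends a double coset to its maximal element, two parallel morphisms of $\SC$ agree if and only if they have the same maximal element. By Lemma \ref{lem:maxofcoset} (equivalently, functoriality of $F$) one has $\ma{p_{i-1} * q_i} = \ma{p_{i-1}} * \ma{q_i}$, so it suffices to show $\ma{p_{i-1}} * \ma{q_i} = \ma{p_i}$. Here $q_i$ is the minimal $(I_{i-1},I_i)$-coset, whose underlying set is $W_{M_i}$ for $M_i = I_{i-1}\cup I_i$ the larger of the two subsets, so $\ma{q_i} = w_{M_i}$.

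I would then split into the two cases of the expression dichotomy, which match exactly the two clauses defining a forward path in Definition \ref{good subex}. If $I_i = I_{i-1}\sqcup s$, then $\ma{q_i} = w_{I_i}$, and I would argue that $\ma{p_{i-1}} * w_{I_i}$ is maximal in its $(J,I_i)$-coset: it has $J$ in its left descent set and $I_i$ in its right descent set by Lemma \ref{lem:dontchangeleftdescent}, hence is maximal by Lemma \ref{doublecosetlem}\eqref{beingmaximal}. It also lies in $p_i$: by Lemma \ref{starissubexp} it lies in $\ma{p_{i-1}} W_{I_i} \subset p_{i-1} W_{I_i}$, and since the forward path takes $p_i$ to be the unique $(J,I_i)$-coset containing $p_{i-1}$ (a set stable under right multiplication by $W_{I_i}$), this is contained in $p_i$. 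A maximal element of its own coset lying inside $p_i$ must equal $\ma{p_i}$. In the other case $I_i = I_{i-1}\setminus s$, we have $\ma{q_i} = w_{I_{i-1}}$ and $I_{i-1}\subset \rightdes(\ma{p_{i-1}})$, so Lemma \ref{descentvspreserved} gives $\ma{p_{i-1}} * w_{I_{i-1}} = \ma{p_{i-1}}$; meanwhile forwardness chooses $p_i$ maximal in $p_{i-1}$, i.e. $\ma{p_i} = \ma{p_{i-1}}$ by Definition \ref{good subex}, so the two agree.

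The step I expect to require the most care is the up-case membership claim $\ma{p_{i-1}} * w_{I_i} \in p_i$, where I must combine the subexpression description of $*$ (Lemma \ref{starissubexp}) with the fact that an up-step in a forward path sends $p_{i-1}$ into the unique enclosing $(J,I_i)$-coset; the down-case is essentially a restatement of forwardness. Conceptually the main point is simply that the two clauses defining a forward path correspond precisely to the two behaviors of $*$-composition with a generator, one governed by descent-monotonicity (Lemma \ref{lem:dontchangeleftdescent}) and the other by descent-absorption (Lemma \ref{descentvspreserved}).
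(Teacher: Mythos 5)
Your proposal is correct and follows essentially the same route as the paper: both reduce the claim to the single identity $p_i = p_{i-1} * q_i$ and split into the same two cases using Lemma \ref{starissubexp} for the up-step and Lemma \ref{descentvspreserved} for the down-step. The only cosmetic difference is that the paper concludes by observing that two $(J,I_i)$-cosets are equal as soon as they intersect nontrivially (both contain $\ma{p_{i-1}}$), whereas you take the slightly longer path of verifying that the maximal elements coincide via the descent-set characterization \eqref{beingmaximal}.
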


\begin{proof} By definition of a path, $p_0 = \id_J$. We need only confirm that $p_i = p_{i-1} * q_i$ for each $1 \le i \le d$. To show that two double cosets are equal, we need only show they intersect nontrivially. There are two cases to check: when $I_{i-1} \subset I_i$ and when $I_{i-1} \supset I_i$ .
	
If $I_{i-1} \subset I_i$ then $\ma{q_i} = w_{I_i}$. By definition, $p_{i-1} * q_i$ is the coset containing $\ma{p_{i-1}} * w_{I_i}$, and this element is contained in $\ma{p_{i-1}}
W_{I_i}$ by Lemma \ref{starissubexp}. Thus $p_{i-1} * q_i$ contains $\ma{p_{i-1}}$. Since $p_i$ is the unique double coset containing $p_{i-1}$, it also contains $\ma{p_{i-1}}$,
and thus equals $p_{i-1} * q_i$.

If $I_{i-1} \supset I_i$ then $p_i$ contains $\ma{p_{i-1}}$ by definition of a forward path. Meanwhile, $\ma{q_i} = w_{I_{i-1}}$, and $\ma{p_{i-1}}$ already has $I_{i-1}$ in its right descent set. Thus $\ma{p_{i-1}} * w_{I_{i-1}} = \ma{p_{i-1}}$. Again, $p_{i-1} * q_i = p_i$, as desired.
 \end{proof}

\begin{cor} \label{cor:starformap} Let $I_{\bullet} = [I_0, \ldots, I_d]$ and $I_{\bullet}\expr p$. Then
\begin{equation} \label{starformap} \ma{p} = w_{I_0} * w_{I_1} * \cdots * w_{I_d}. \end{equation}
\end{cor}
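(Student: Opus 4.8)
The plan is to deduce Corollary \ref{cor:starformap} directly from Proposition \ref{prop:expressesinSC}, which has just been established. The proposition identifies the endpoint coset as $p = p_d = \id_J * q_1 * \cdots * q_d$, where each $q_i$ is the minimal $(I_{i-1}, I_i)$-coset. The strategy is to apply the functor $F : \SC \to \Kar(W)$ from Theorem \ref{thm:Karoubi}, or equivalently to use the composition formula \eqref{SCcompformula} together with Lemma \ref{lem:maxofcoset}, to translate this equation about double cosets into an equation about their maximal elements in $(W,*,S)$. Since $F$ sends a coset to its maximal element and is a functor, applying $F$ to $p = \id_J * q_1 * \cdots * q_d$ yields $\ma{p} = \ma{\id_J} * \ma{q_1} * \cdots * \ma{q_d}$.

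It then remains to identify each factor $\ma{q_i}$. By definition, $q_i$ is the minimal $(I_{i-1}, I_i)$-coset, whose underlying set is the parabolic subgroup $W_{I_{i-1} \cup I_i} = W_{\max(I_{i-1}, I_i)}$ (one of $I_{i-1}, I_i$ contains the other, since they differ by a single element). Its maximal element is therefore $w_{\max(I_{i-1}, I_i)}$, i.e. $w_{I_i}$ when $I_{i-1} \subset I_i$ and $w_{I_{i-1}}$ when $I_i \subset I_{i-1}$. This does not immediately match the clean formula $\ma{p} = w_{I_0} * w_{I_1} * \cdots * w_{I_d}$, so the remaining work is to reconcile the two expressions using the absorption identity $w_I * w_J = w_J = w_J * w_I$ when $I \subset J$, exactly as flagged in \eqref{mapstarintro} of the introduction.

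Concretely, I would argue that for each adjacent pair the product $w_{I_{i-1}} * w_{I_i}$ collapses to $w_{\max(I_{i-1},I_i)} = \ma{q_i}$, because whichever of $w_{I_{i-1}}, w_{I_i}$ is longer absorbs the shorter on either side. Thus in the telescoping star product $w_{I_0} * w_{I_1} * \cdots * w_{I_d}$, associativity lets me group each factor with its neighbor so that every $\ma{q_i}$ is recovered while the doubled-up copies of the smaller longest elements are absorbed. Equivalently, and perhaps more cleanly, I would prove by induction on $d$ that $\ma{p_i} = w_{I_0} * \cdots * w_{I_i}$, using the inductive step $\ma{p_i} = \ma{p_{i-1}} * \ma{q_i}$ from Proposition \ref{prop:expressesinSC} (via Lemma \ref{lem:maxofcoset}) together with the observation that $\ma{p_{i-1}} * \ma{q_i} = \ma{p_{i-1}} * w_{I_{i-1}} * w_{I_i} = \ma{p_{i-1}} * w_{I_i}$, since $I_{i-1} \subset \rightdes(\ma{p_{i-1}})$ forces $\ma{p_{i-1}} * w_{I_{i-1}} = \ma{p_{i-1}}$ by Lemma \ref{descentvspreserved}.

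The main obstacle is purely bookkeeping: matching the index conventions so that the absorption of $w_{I_{i-1}}$ into $\ma{p_{i-1}}$ is justified in \emph{both} the up-step and down-step cases. In the up-step ($I_{i-1} \subset I_i$) one has $\ma{q_i} = w_{I_i}$ directly and the identity $\ma{p_{i-1}} * w_{I_{i-1}} = \ma{p_{i-1}}$ is the relevant simplification; in the down-step ($I_i \subset I_{i-1}$) one has $\ma{q_i} = w_{I_{i-1}}$, and one must check that the factor $w_{I_i}$ appearing in the desired formula is likewise absorbed. Both cases reduce to the single descent-set fact from Lemma \ref{descentvspreserved}, so the induction goes through uniformly; there is no genuine difficulty beyond verifying this absorption carefully, and the corollary follows.
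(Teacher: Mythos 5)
Your proposal is correct and follows essentially the same route as the paper: both apply Theorem \ref{thm:Karoubi} (functoriality of $\ma{(-)}$ under $*$) to the factorization of Proposition \ref{prop:expressesinSC}, identify $\ma{q_i} = w_{I_{i-1}} * w_{I_i}$, and telescope using the absorption identity $w_I * w_J = w_J = w_J * w_I$ for $I \subset J$. Your inductive reformulation via Lemma \ref{descentvspreserved} is just a more carefully bookkept version of the same collapse.
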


\begin{proof} This is just a more explicit unraveling of Proposition \ref{prop:expressesinSC}, rephrased using Theorem \ref{thm:Karoubi}. Using the notation of Proposition \ref{prop:expressesinSC}, it is easy to see that $\ma{q_i} = w_{I_{i-1}} * w_{I_i}$, and $\ma{\id_J} = w_{I_0}$. By Theorem \ref{thm:Karoubi}, $\ma{q * q'} = \ma{q} * \ma{q'}$ for two composable double cosets. Thus
\[ \ma{p} = w_{I_0} * (w_{I_0} * w_{I_1}) * (w_{I_1} * w_{I_2}) * \cdots * (w_{I_{d-1}} * w_{I_d}) = w_{I_0} * w_{I_1} * \cdots * w_{I_d} \]
as desired.
\end{proof}

\subsection{Williamson's definition of reduced expressions}

Any $p\in \JIcosets$ has an expression. In fact, any $p$ has a reduced expression in the following sense (see Proposition~\ref{p has rex}).

\begin{defn}\label{def:reduced}
Let $I_{\bullet}$ be a $J$-expression, let $p_\bullet$ be a subordinate path, and let $K_{\bullet}$ be the redundancy sequence of $p_{\bullet}$. Then $p_{\bullet}$ is \emph{reduced} if, for each $i$ with $I_{i}\subset I_{i+1}$, one has
\begin{subequations} \label{reducedconditions} 
\begin{equation} \label{reducedminimal} \text{the double coset } p_i \text{ is \emph{minimal} in } p_{i+1}, \text{ meaning } \mi{p_i} = \mi{p_{i+1}}, \end{equation}
\begin{equation} \label{sameK} K_i = K_{i+1}. \end{equation}
\end{subequations}
We say that $I_{\bullet}$ is \emph{reduced} if its forward path is \emph{reduced}.
\end{defn}

The reader may wish to recall Lemma \ref{J WJ same} and the preceding discussion for more on redundancy. We will give several other ways to think about the mysterious condition
\eqref{sameK} below. The reader may also wish to skip immediately to Lemma~\ref{lem:reducedmaxs}, which gives an efficient condition for checking whether a path is reduced.

If $I_{i+1} = I_i \sqcup s$ and \eqref{reducedminimal} holds, then \eqref{sameK} is equivalent to
$\mi{p_i} s \mi{p_i}\inv \notin J$. This is because $\mi{p_i} s \mi{p_i}\inv$ is the only possible element in $K_{i+1} \setminus K_i$. Note that when $J = \mt$ one need only worry
about the first condition \eqref{reducedminimal}.

\begin{ex} \label{ex:nonrexpart1} The expression $[\mt,s,\mt,t, st]$ is not reduced. Its forward path is
\[ (\{e\}, \{e,s\}, \{s\}, \{s,st\}, W_{st}), \]
and $\{s,st\}$ is not the minimal $(\mt,t)$-coset inside the $(\mt,st)$-coset $W_{st}$. \end{ex}

\begin{ex} When $m_{st} = 3$ there are six reduced expressions for the $(\mt, \mt)$-coset $\{sts\}$:
\[ [\mt,s,\mt,t,\mt,s,\mt], \quad [\mt,t,\mt,s,\mt,t,\mt], \]
\[ [\mt,s,st,s,\mt], \quad [\mt,s,st,t,\mt], \quad [\mt,t,st,s,\mt], \quad [\mt,t,st,t,\mt]. \]
\end{ex}

\begin{ex} \label{ex:vsstandardrex}
The expression $[\mt,s_1,\mt,s_2,\cdots,\mt,s_d]$ is reduced if and only if $s_1\cdots s_d$ is a reduced expression in $W$. 
\end{ex}

\begin{ex} \label{ex:smts} As in the previous example, the expression $[\mt, s, \mt ,s]$ is not reduced. Its forward path is $(\{e\}, \{e,s\}, \{s\}, \{e,s\})$, and reducedness fails because of
\eqref{reducedminimal}: $\{s\}$ is not minimal inside $\{e,s\}$. Meanwhile, the expression $[s,\mt, s]$ is also not reduced. Its forward path is $(\{e,s\}, \{e,s\}, \{e,s\})$, and
this time the reducedness fails because of \eqref{sameK}: the redundancy increases from $K_1 = \mt$ to $K_2 = s$. See Remark \ref{rmk:primal} for more commentary on this phenomenon. \end{ex}

When examining whether an expression is reduced or not, it helps to examine conditions \eqref{reducedconditions} one index at a time.

\begin{defn} \label{defn:pqred} Let $J, I, I' \subset S$ be finitary, where $I \subset I'$ or $I' \subset I$. Let $p$ be a $(J,I)$-coset and $q$ be a $(J,I')$-coset with $p \cap q \ne \emptyset$. 
Let $K = J \cap \mi{p} I \mi{p}\inv$ and $K' = J \cap \mi{q} I' \mi{q}\inv$. We say that $[p,q]$ is \emph{reduced} if either 
\begin{itemize} \item $I \supset I'$, or \item $I \subset I'$ and $K = K'$ and $\mi{p} = \mi{q}$. \end{itemize}
We say that $[q,p]$ is \emph{reduced} if $\ma{p} = \ma{q}$.

Now use the notation of Definition \ref{def:reduced}. We say that $p_{\bullet}$ is \emph{reduced at index $i$} if \emph{$[p_i, p_{i+1}]$ is reduced}. When $p_{\bullet}$ is the forward path of $I_{\bullet}$, we also say that $I_{\bullet}$ is \emph{reduced at index $i$}. \end{defn}

\begin{ex} The expression $[s, \mt, s, st, t, \mt, s]$ from Example \ref{ex:rando} is not reduced. It is not reduced at index $1$ (the first instance of  $[\mt, s]$) because $K_1 =
\mt$ and $K_2 = s$. It is not reduced at index $5$ (the second instance of  $[\mt, s]$) because $\mi{p_5} = ts$ and $\mi{p_6} = t$. It is reduced at all other indices. \end{ex}

The existence of reduced expressions for arbitrary double cosets was proven by Williamson.

\begin{prop}\label{p has rex}
Each parabolic double coset in $W$ has a reduced expression.
\end{prop}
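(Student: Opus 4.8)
The plan is to prove existence of a reduced expression by induction on the length of $\ma{p}$ (equivalently, on $\ell(\ma{p}) - \ell(\mi{p})$, the ``size'' of the interval $p$), peeling off one generator at a time while maintaining reducedness. Let $p$ be a $(J,I)$-coset. The base case is when $p$ is the identity coset for some object, i.e. $p$ is the minimal $(J,J)$-coset $W_J$, which is expressed by the width-zero expression $[J]$; this is trivially reduced since there are no up-steps to check. For the inductive step, the key observation is that since $J \subset \leftdes(\ma{p})$ and $I \subset \rightdes(\ma{p})$ by \eqref{beingmaximal}, I want to strip away a simple reflection and reduce to a smaller coset.

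The first step is to find a simple reflection to remove. I would argue that if $p$ is not the identity coset then $\mi{p} \ne \ma{p}$, so there exists $s \in \rightdes(\ma{p})$ with $s \notin I$ (if every right descent of $\ma{p}$ lay in $I$, then combined with $I \subset \rightdes(\ma{p})$ we would have $\rightdes(\ma{p}) = I$; one then checks via \eqref{maxformula} and Lemma \ref{descentvspreserved} that this forces $\ma{p} = \mi{p} \cdot w_I$ with $\mi{p}$ minimal, and iterating on the left reduces to the identity coset). Set $I' = I \sqcup s$, which is finitary since $s \in \rightdes(\ma{p})$ guarantees $W_{I'}$ embeds in $W$ as a finite parabolic (the relevant finiteness comes from $\ma{p} \in W$). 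Let $q$ be the $(J, I')$-coset containing $p$; concretely $q$ is the unique $(J,I')$-coset with $p \subset q$, and it is the endpoint of appending a down-step $I' \supset I$ after any expression for $q$.

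The second step is to verify the two things I need: that $q$ is strictly smaller than $p$ in the induction, and that the expression for $p$ obtained by appending the down-step $[\ldots, I', I]$ to a reduced expression for $q$ is itself reduced. For the first, I claim $\ma{q} = \ma{p}$: indeed $\ma{p} \in q$ and $\ma{p}$ has both $J$ in its left descent set and $I' \subset \rightdes(\ma{p})$, so by \eqref{beingmaximal} applied to the $(J,I')$-coset $q$, $\ma{p}$ is maximal in $q$. Thus $\ma{q} = \ma{p}$, and $\mi{q} \le \mi{p}$ with the interval for $q$ strictly larger, so the inductive hypothesis applies to $q$ (I must be slightly careful: the induction should be on $\ell(\ma{p}) - \ell(\mi{p})$, and one checks $\ell(\mi{q}) < \ell(\mi{p})$ using Lemma \ref{lem:onlyneedright}, which gives $\mi{p} = \mi{q} \cdot y$ with $y \in W_{I'}$ nontrivial because $s \notin \rightdes(\mi{p})$). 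For the second point, the appended step is a down-step $[q, p]$ in the sense of Definition \ref{defn:pqred}, and such a step is reduced exactly when $\ma{p} = \ma{q}$, which is precisely what I just established. Since prepending a reduced down-step to a reduced forward path keeps all the up-step conditions \eqref{reducedconditions} unchanged (they only constrain indices where $I_i \subset I_{i+1}$), the resulting expression for $p$ is reduced.

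The main obstacle is the bookkeeping around \emph{which} coset $q$ to choose and confirming the induction genuinely decreases: one must ensure that removing $s \in \rightdes(\ma{p}) \setminus I$ produces a $(J,I')$-coset $q$ with $\ma{q} = \ma{p}$ but a \emph{strictly smaller} minimal element, so that the total interval size drops. This is where Lemma \ref{lem:onlyneedright} does the real work, guaranteeing $\mi{p} = \mi{q} \cdot y$ with $y \in W_{I'} \setminus W_I$ nontrivial, hence $\ell(\mi{q}) < \ell(\mi{p})$ while $\ma{q} = \ma{p}$. Everything else — the reducedness of the appended down-step and the invariance of the up-step conditions — is immediate from Definition \ref{defn:pqred} once the maximal elements are matched.
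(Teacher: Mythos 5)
Your overall strategy — build the expression from the right by repeatedly splitting off a final down-step $[I',I]$ with $I'=Is$ for some $s\in\rightdes(\ma{p})\setminus I$ — is sound as far as it goes: the verification that $\ma{q}=\ma{p}$ via \eqref{beingmaximal}, that $[q,p]$ is then reduced, and that appending a down-step cannot violate \eqref{reducedconditions} is all correct. (For the record, the paper does not prove this proposition itself; it cites Williamson's thesis, so any self-contained argument is genuinely new.) The fatal problem is the case you relegate to a parenthesis: when $\rightdes(\ma{p})=I$ there is nothing to strip, and your claim that this forces $\ma{p}=\mi{p}\cdot w_I$ is false. Take $W=S_3$ with $S=\{s,t\}$, $J=\{t\}$, $I=\{s\}$, and $p=W_tW_s=\{e,s,t,ts\}$. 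Then $\ma{p}=ts$ has $\rightdes(ts)=\{s\}=I$ and $\leftdes(ts)=\{t\}=J$, yet $\mi{p}\cdot w_I=s\ne ts$: by \eqref{maxformula} one only gets $\ma{p}=(w_Jw_K\inv).\mi{p}.w_I$ with $K=J\cap\mi{p}I\mi{p}\inv=\mt\ne J$. This coset is stuck on \emph{both} sides (neither $\rightdes(\ma{p})\setminus I$ nor $\leftdes(\ma{p})\setminus J$ is nonempty), it is not an identity coset, and its reduced expression is $[t,\mt,s]$, which ends with an \emph{up}-step. Your procedure only ever terminates expressions with down-steps, so it can never produce this expression, and ``iterating on the left'' cannot rescue it. This stuck case — cosets whose maximal element has full descent sets, as in Proposition \ref{thm:favoritebig} — is precisely the hard core of the existence statement; the real work is going down below the redundancy $J\cap\mi{p}I\mi{p}\inv$ and coming back up, as in Proposition \ref{thm:favoritesmall}, and your argument never does this.

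There is also a bookkeeping error in the induction itself: you pass from $p$ to $q$ with $\ma{q}=\ma{p}$ and $\ell(\mi{q})<\ell(\mi{p})$, so the quantity $\ell(\ma{p})-\ell(\mi{p})$ you propose to induct on strictly \emph{increases}, while $\ell(\ma{p})$ is unchanged; neither of your two (non-equivalent) measures decreases. A measure that does decrease under your move is $\ell(\mi{p})$ (or $\ell(\ma{p})-\ell(I)$), with base case the \emph{minimal} $(J,I)$-coset rather than the identity coset. But that base case is exactly the counterexample above and is itself nontrivial: one must exhibit the reduced expression $[[J\supset J\cap I\subset I]]$ and check that the lengths add in $w_Jw_{J\cap I}\inv\cdot w_I$. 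The proof is repairable along these lines, but the omitted case is the substance of the proposition, not a detail.
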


\begin{proof}
Our definition of a reduced expression agrees with that of a reduced right translation sequence in \cite[Chapter 2, Section 1.3]{GWthesis}. 
The desired result is proven in \cite[Chapter 2, Proposition 1.3.4]{GWthesis}. 
\end{proof}

The following recap may help the reader keep track of various definitions. An expression $I_\bullet$ may have many subordinate paths $p_\bullet$.

When $I_i \supset I_{i+1}$, then $p_{i+1}$ is one of the many $(J,I_{i+1})$-cosets inside the $(J,I_i)$-coset $p_i$, a non-deterministic choice. However, a forward path forces a single deterministic choice for $p_{i+1}$, the maximal such double coset. Conversely, $p_i$ is uniquely determined by $p_{i+1}$.

When $I_i \subset I_{i+1}$, then $p_{i+1}$ is uniquely determined by $p_i$, but $p_i$ is not determined by $p_{i+1}$. For a reduced path, \eqref{reducedminimal} says that $p_i$ is
uniquely determined by $p_{i+1}$, as the minimal double coset. So a reduced forward path is deterministic in both directions: $p_i$ and $p_{i+1}$ determine each other uniquely.
However, reducedness also has second requirement \eqref{sameK}. One use of \eqref{sameK} is to prevent infinite loops like $[s, \mt, s, \mt, s, \mt, \ldots]$, where the underlying
sets of the double cosets in the path $p_i$ are constant, see Example \ref{ex:smts}.

\begin{lem} Let $I_{\bullet}$ be a $J$-expression. If $I_i \subset I_{i+1}$ and $[p_i, p_{i+1}]$ is reduced then $p_i \subsetneq p_{i+1}$. \end{lem}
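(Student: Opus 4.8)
The plan is to establish the inclusion $p_i \subseteq p_{i+1}$ essentially for free, and then upgrade it to a strict inclusion by a cardinality count whose only real input is the redundancy condition hidden in reducedness. Since $I_i \subset I_{i+1}$ is an up-step, write $I_{i+1} = I_i \sqcup s$. By the dichotomy following Definition \ref{subex}, a $(J,I_i)$-coset is either contained in or disjoint from any given $(J,I_{i+1})$-coset; the standing hypothesis $p_i \cap p_{i+1} \neq \emptyset$ (part of Definition \ref{defn:pqred}, and automatic for consecutive terms of a path) then forces $p_i \subseteq p_{i+1}$. So all the content is in showing $p_i \neq p_{i+1}$.

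For properness I would count orders. Writing $K_i, K_{i+1}$ for the left redundancies of $p_i, p_{i+1}$, Howlett's theorem (Lemma \ref{doublecosetlem} \eqref{double rex decomposition}) furnishes the bijections \eqref{keybij}, so that
\[ |p_i| = |W^{K_i} \cap W_J| \cdot |W_{I_i}|, \qquad |p_{i+1}| = |W^{K_{i+1}} \cap W_J| \cdot |W_{I_{i+1}}|. \]
Reducedness of $[p_i, p_{i+1}]$ in the up direction gives exactly $K_i = K_{i+1}$ (Definition \ref{defn:pqred}), so the first factors agree; and $W_{I_i}$ is a proper finite subgroup of $W_{I_{i+1}}$, whence $|W_{I_i}| < |W_{I_{i+1}}|$. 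Therefore $|p_i| < |p_{i+1}|$, and the inclusion is strict.

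The point I expect to matter most --- and the reason the lemma is true at all --- is that strictness is forced by the equality of redundancies, not by the minimality clause $\mi{p_i} = \mi{p_{i+1}}$. Example \ref{ex:smts} is the cautionary case: for $[s, \mt, s]$ the redundancy jumps from $\mt$ to $s$, the two cosets coincide as subsets of $W$, and the count correctly returns equal cardinalities. If one wished to avoid counting, an alternative would be to produce an element of $p_{i+1} \setminus p_i$ by hand: with $m := \mi{p_{i+1}}$, the characterization \eqref{beingminimal} gives $s \notin \rightdes(m)$, so $ms \in p_{i+1}$, and one checks $ms \notin p_i$ using $K_i = K_{i+1}$ together with Kilmoyer's theorem (Lemma \ref{J WJ same}). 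I prefer the cardinality argument, which concentrates the entire difficulty into the evident inequality $|W_{I_i}| < |W_{I_{i+1}}|$.
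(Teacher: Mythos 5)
Your proof is correct, but it takes a genuinely different route from the paper's. The paper argues by exhibiting an explicit element of $p_{i+1}\setminus p_i$: from $K_i=K_{i+1}$ (plus Kilmoyer's theorem, Lemma \ref{J WJ same}) it deduces $\mi{p_i}s\mi{p_i}\inv\notin W_J$, hence $\mi{p_i}s\notin W_J\mi{p_i}$, and then uses the \emph{length-preserving} property of the bijection \eqref{keybij} to conclude $\mi{p_i}s\notin p_i$, while $\mi{p_i}s$ obviously lies in $p_{i+1}=W_J\mi{p_i}W_{I_{i+1}}$ --- this is essentially the "element by hand" alternative you sketch at the end, though the paper rules out $\mi{p_i}s\in p_i$ by a length count rather than by invoking $K_i=K_{i+1}$ a second time. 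Your main argument instead uses \eqref{keybij} purely as a counting device: $|p|=|W^{K}\cap W_J|\cdot|W_I|$, the left factors agree because the redundancies agree, and $|W_{I_i}|<|W_{I_{i+1}}|$ forces strictness. Both proofs rest on Howlett's theorem and on the same half of the reducedness condition (the redundancy equality \eqref{sameK}, not the minimality clause), and your observation that Example \ref{ex:smts} is exactly the case where $K_i\subsetneq K_{i+1}$ compensates for the growth of $W_{I}$ is the right diagnosis of why the hypothesis is needed. What the paper's version buys is an explicit witness element (useful elsewhere in the development); what yours buys is brevity and the fact that it isolates the single inequality $|W_{I_i}|<|W_{I_{i+1}}|$ as the source of strictness. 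One small point worth making explicit in your write-up: the cardinalities involved are all finite because $J$, $I_i$, $I_{i+1}$ are finitary (so $W_J$, $W_{I_i}$, $W_{I_{i+1}}$, and hence the double cosets, are finite sets), even though $W$ itself may be infinite.
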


\begin{proof} Let $I_{i+1} = I_i \sqcup s$. Then $\mi{p_i} s \mi{p_i}\inv \notin W_J$, thus $\mi{p_i} s \notin W_J \mi{p_i}$. Using the length-preserving bijection \eqref{keybij}, one deduces that $\mi{p_i}s \notin p_i$. But $\mi{p_i} s \in p_{i+1} = W_J \mi{p_i} W_{I_{i+1}}$. \end{proof}

Thus part of being reduced is the statement (about the forward path) that ``when a double coset is supposed to get bigger, it actually does get bigger.'' However, this condition is strictly weaker than reducedness.

\begin{ex} \label{nonrex} Let $m_{st} = 3$ and consider the expression $[s,\mt,t,st]$ with 
	\[ p_{\bullet} = (\{e,s\}, \{e,s\}, \{e,s,t,st\}, W_{st}), \quad K_{\bullet} = (s, \mt, \mt, s). \]
This expression is not reduced at index $2$ because the redundancy increases, though $\{e,s,t,st\} \subsetneq W_{st}$. Note that the expression is reduced at index $0$, even though $\{e,s\} = \{e,s\}$; in a reduced expression, ``when a double coset is supposed to get smaller, it is permitted to stay the same size.''
\end{ex}

\begin{rem} \label{rmk:primal} If one places $\mt$ at the start of the expression in Example \ref{nonrex}, one obtains the non-reduced expression $[\mt,s,\mt,t,st]$ studied in Example \ref{ex:nonrexpart1}. These two non-reduced expressions are both non-reduced at the ``same'' index $[t,st]$ but for different reasons, one because of \eqref{sameK} and one because of \eqref{reducedminimal}. One can think that \eqref{reducedminimal} is the primal condition for reducedness, while \eqref{sameK} exists to ensure that reduced expressions are closed under precomposition, including $J$-expressions inside $\mt$-expressions. In Proposition \ref{embedding of expression} we will prove that a $J$-expression is reduced if and only if it is reduced after precomposition with $[\mt, s_1, s_1 s_2, \ldots, J]$ for some enumeration $J = \{s_1, \ldots, s_{|J|}\}$. \end{rem}

\begin{rem} Paths for $J$-expressions are strolls in the real hyperplane arrangement of the Coxeter group, staying within the fundamental domain for $W_J$ (equivalently, strolls in the $J$-singular dual Coxeter complex $\Cox_J$). Reduced expressions will be oriented strolls. Condition \eqref{sameK} can be interpreted as the statement that one never returns to a reflection hyperplane after leaving it. For more on this geometric intuition, see \S\ref{moregeometry}. \end{rem}

\section{New descriptions of reduced expressions} \label{sec-newdesc}

Studying a singular expression in terms of its forward path makes it harder to study contiguous subwords and other structures. For example, the forward path of
$[I_0, \ldots, I_d]$ is a sequence of $(I_0, I_i)$-cosets, while the forward path of a contiguous subword $[I_k, \ldots, I_{\ell}]$ is a sequence of $(I_k, I_i)$-cosets. Comparing the apples with the oranges takes some work. This is one of the main reasons to find alternatives for Williamson's definition of reduced expressions.

\subsection{Another criterion for one-step reducedness}

First, we give two easy lemmas.

\begin{lem}\label{lem:KK'}
Let $J, I, I' \subset S$ be finitary with $I\subset I'$. For a $(J,I)$-coset $p$ and $(J,I')$-coset $q$ with $p \subset q$, we have 
\begin{equation} K = J \cap \mi{p}I\mi{p}\inv \subset K' = J\cap \mi{q}I'\mi{q}\inv.\end{equation}
\end{lem}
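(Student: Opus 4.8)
The statement is an inclusion of the two redundancy subgroups, and the natural strategy is to relate $\mi{p}$ and $\mi{q}$ first, then pass from simple-reflection descriptions to full-parabolic descriptions where the inclusion $W_I \subset W_{I'}$ becomes transparent. The plan is to combine Lemma \ref{lem:onlyneedright} (to compare the minimal elements) with Kilmoyer's theorem, Lemma \ref{J WJ same} (to rewrite $K$ and $K'$).

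First I would apply Lemma \ref{lem:onlyneedright}, whose hypotheses ($J,I,I'$ finitary, $I \subset I'$, $p$ a $(J,I)$-coset, $q$ a $(J,I')$-coset, $p \subset q$) match ours exactly. This yields an element $y \in W_{I'}$ with $\mi{p} = \mi{q}.y$; for this argument I only need the group equality $\mi{p} = \mi{q}y$, not the length statement. Since $y \in W_{I'}$ and $W_{I'}$ is a group, conjugation by $y$ preserves $W_{I'}$, so I obtain the key identity
\[ \mi{p} W_{I'} \mi{p}\inv = \mi{q}\, y W_{I'} y\inv\, \mi{q}\inv = \mi{q} W_{I'} \mi{q}\inv. \]

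Next I would use Kilmoyer's theorem to replace the simple-reflection sets $I$ and $I'$ in the definitions of $K$ and $K'$ by the full parabolic subgroups. Applying Lemma \ref{J WJ same} to $p$ gives $J \cap \mi{p} I \mi{p}\inv = J \cap \mi{p} W_I \mi{p}\inv$, so $K = J \cap \mi{p} W_I \mi{p}\inv$; applying it to $q$ gives $K' = J \cap \mi{q} W_{I'} \mi{q}\inv$. Now the inclusion is immediate: from $W_I \subset W_{I'}$ I get $\mi{p} W_I \mi{p}\inv \subset \mi{p} W_{I'} \mi{p}\inv = \mi{q} W_{I'} \mi{q}\inv$, whence
\[ K = J \cap \mi{p} W_I \mi{p}\inv \subset J \cap \mi{q} W_{I'} \mi{q}\inv = K', \]
as desired.

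This argument is short, and there is no real obstacle once the right inputs are assembled; the only point requiring care is invoking Kilmoyer's theorem in the correct direction on both sides, so that $K$ and $K'$ are genuinely expressed via $\mi{p} W_I \mi{p}\inv$ and $\mi{q} W_{I'} \mi{q}\inv$ rather than via the bare reflection sets (for which the conjugation step would not obviously land back inside the set). The essential idea is simply that Lemma \ref{lem:onlyneedright} forces $\mi{p}$ and $\mi{q}$ into the same right $W_{I'}$-coset, so that conjugating the nested parabolics by either minimal element gives the same subgroup.
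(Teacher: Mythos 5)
Your proof is correct and is essentially identical to the paper's: the paper likewise invokes Lemma \ref{lem:onlyneedright} to get $\mi{p}W_{I'}=\mi{q}W_{I'}$ and then uses Kilmoyer's theorem \eqref{K=} to write $K = J\cap \mi{p}W_I\mi{p}\inv \subset J\cap \mi{p}W_{I'}\mi{p}\inv = J\cap \mi{q}W_{I'}\mi{q}\inv = K'$. No differences worth noting.
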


\begin{proof}
By Lemma \ref{lem:onlyneedright} we have $\mi{p} W_{I'} = \mi{q} W_{I'}$. By \eqref{K=}, we have
\begin{equation} K = J\cap \mi{p}W_I\mi{p}\inv \subset J\cap \mi{p}W_{I'}\mi{p}\inv = J\cap \mi{q}W_{I'}\mi{q}\inv = K'.\end{equation}
\end{proof}

\begin{lem}
Whenever $I$ and $J$ are finitary with $I \subset J$ or $J \subset I$, one has
\begin{equation} \label{dottransfer} w_I . (w_{I \cap J}\inv w_J) = (w_I w_{I \cap J}\inv) . w_{I \cap J} . (w_{I \cap J}\inv w_J) = (w_I w_{I \cap J}\inv) . w_J. \end{equation}
\end{lem}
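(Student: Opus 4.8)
The plan is to read the displayed equation as a statement about re-associating a single triple ``dot'' product. Write $M = I \cap J$. The hypothesis that $I \subset J$ or $J \subset I$ means precisely that $M$ is the smaller of the two sets, so that $M \subset I$ and $M \subset J$ both hold, and moreover $M \in \{I, J\}$. From the discussion preceding the lemma, since $M \subset I$ the element $w_I w_M\inv$ is the longest element of $W^M \cap W_I$ and satisfies the length-additive factorization $(w_I w_M\inv) . w_M = w_I$; dually, since $M \subset J$ the element $w_M\inv w_J$ is the longest element of ${}^M W \cap W_J$ and satisfies $w_M . (w_M\inv w_J) = w_J$. These are exactly the two factorizations recalled just before the lemma.

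Granting for a moment that the middle expression $(w_I w_M\inv) . w_M . (w_M\inv w_J)$ is a valid triple dot product (that is, the three Coxeter lengths genuinely add), the two claimed equalities follow formally. Indeed, validity of a triple dot product forces validity of each of its two parenthesizations, since the chain $\ell(xyz) \le \ell(xy) + \ell(z) \le \ell(x)+\ell(y)+\ell(z)$ collapses to equalities as soon as the outer terms agree, and symmetrically for the grouping $\ell(x) + \ell(yz)$. Re-associating on the left, $((w_I w_M\inv) . w_M) . (w_M\inv w_J) = w_I . (w_M\inv w_J)$ by the first factorization, which is the left-hand expression; re-associating on the right, $(w_I w_M\inv) . (w_M . (w_M\inv w_J)) = (w_I w_M\inv) . w_J$ by the second factorization, which is the right-hand expression.

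The one genuine point to check, and the main obstacle, is the validity of the middle triple product. Here I would exploit $M \in \{I,J\}$ to collapse it. If $M = I$, then $w_I w_M\inv = e$, and the triple product degenerates to $w_M . (w_M\inv w_J) = w_J$, which is valid by the second factorization; if $M = J$, then $w_M\inv w_J = e$, and it degenerates to $(w_I w_M\inv) . w_M = w_I$, valid by the first factorization. In either case the middle term is a legitimate dot product, and the second paragraph then yields both equalities. (One could instead verify length-additivity uniformly by noting that $w_I w_M\inv \in W^M$ has no right descent in $M$ while $w_M\inv w_J \in {}^M W$ has no left descent in $M$, and combining this with the fact that $w_M \in W_M$; but the case split is shorter and relies only on the two factorizations already in hand.)
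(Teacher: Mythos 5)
Your proof is correct and rests on exactly the observation the paper uses: since $I \subset J$ or $J \subset I$, one of $w_I w_{I\cap J}\inv$ or $w_{I\cap J}\inv w_J$ is the identity, which collapses the triple product and makes both equalities immediate. The paper simply declares the lemma obvious and records that same remark, so your argument is a (carefully spelled-out) version of the intended one.
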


\begin{proof} This is obvious. Note also that either $(w_{I \cap J}\inv w_J)$ or $(w_I w_{I \cap J}\inv)$ is the identity. \end{proof}

Here is a useful condition on maximal elements which is equivalent to being reduced. Like in Lemma \ref{lem:onlyneedright}, when we change the parabolic subgroup on the right, we should be able to compare cosets using only right multiplication. Recall that Definition \ref{defn:pqred} stated what it means for $[p,q]$ to be reduced.

\begin{lem}\label{lem:reducedmaxs}
Let $J, I, I' \subset S$ be finitary, with $I \subset I'$. For a $(J,I)$-coset $p$ and $(J,I')$-coset $q$ with $p \subset q$, the following are equivalent.
\begin{enumerate}
\item $[p,q]$ is reduced.
\item $\ma{q} = \ma{p}(w_I\inv w_{I'})$.
\item $\ma{p}(w_I\inv w_{I'}) = \ma{p}.(w_I\inv w_{I'})$, that is, the lengths add in this product.
\end{enumerate}
\end{lem}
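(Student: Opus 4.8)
The plan is to establish the equivalences by circling through the identity $w_{I'} = w_I . (w_I\inv w_{I'})$ (lengths adding, by definition of $w_I\inv w_{I'}$ as the longest element of ${}^I W \cap W_{I'}$), together with the maximal-element formula \eqref{maxformula} and Lemmas \ref{lem:onlyneedright} and \ref{lem:KK'}. I would first dispatch the formal equivalence (2) $\Leftrightarrow$ (3). For (2) $\Rightarrow$ (3), Lemma \ref{lem:onlyneedright} supplies some $y' \in W_{I'}$ with $\ma{p} . y' = \ma{q}$; assuming $\ma{q} = \ma{p}(w_I\inv w_{I'})$, cancelling $\ma{p}$ on the left in the group $W$ forces $y' = w_I\inv w_{I'}$, so the lengths add, which is (3). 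For (3) $\Rightarrow$ (2), I would note that $\ma{p}(w_I\inv w_{I'}) \in q$ since $\ma{p} \in p \subset q$ and right multiplication by an element of $W_{I'}$ preserves the $(J,I')$-coset $q$; then I would check this element is maximal via \eqref{beingmaximal}. Writing $\ma{p} = z . w_I$ with $z = \ma{p} w_I\inv$ (legal as $I \subset \rightdes(\ma{p})$), the product becomes $z . w_{I'}$ once (3) guarantees lengths add, giving $I' \subset \rightdes$; and a one-line length estimate ($\ell(s\ma{p}x) \le \ell(s\ma{p}) + \ell(x) < \ell(\ma{p}x)$ for $s \in J$) shows left descents in $J$ are preserved under right multiplication when lengths add, giving $J \subset \leftdes$.

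Next I would prove (1) $\Rightarrow$ (2) by direct substitution. With $\mi{p} = \mi{q}$ and $K = K'$, formula \eqref{maxformula} gives $\ma{p} = (w_J w_K\inv) . \mi{p} . w_I$ and $\ma{q} = (w_J w_K\inv) . \mi{p} . w_{I'}$, so
\[ \ma{p}(w_I\inv w_{I'}) = (w_J w_K\inv)\,\mi{p}\, w_I (w_I\inv w_{I'}) = (w_J w_K\inv)\,\mi{p}\, w_{I'} = \ma{q}, \]
which is (2); no length bookkeeping is needed here.

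The substantive direction is (2)\,\&\,(3) $\Rightarrow$ (1). Using (3) and $w_I . (w_I\inv w_{I'}) = w_{I'}$, I would rewrite (2) as $\ma{q} = (w_J w_K\inv) . \mi{p} . w_{I'}$ with all lengths adding, and compare it with the canonical factorization $\ma{q} = (w_J w_{K'}\inv) . \mi{q} . w_{I'}$ from \eqref{maxformula}. The minimal representative $m$ of the right coset $\ma{q} W_{I'}$ then equals both $(w_J w_K\inv)\mi{p}$ and $(w_J w_{K'}\inv)\mi{q}$; since $w_J w_K\inv, w_J w_{K'}\inv \in W_J$ and $\mi{p}, \mi{q} \in {}^J W$, uniqueness of the minimal representative of the left coset $W_J m$ forces $\mi{p} = \mi{q}$, which is \eqref{reducedminimal}. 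Comparing the two length expressions for $\ma{q}$ and cancelling $\ell(\mi{p}) = \ell(\mi{q})$ and $\ell(w_{I'})$ yields $\ell(w_J w_K\inv) = \ell(w_J w_{K'}\inv)$, i.e. $\ell(w_K) = \ell(w_{K'})$. Combined with the inclusion $K \subset K'$ from Lemma \ref{lem:KK'} and the fact that a strict inclusion of finite parabolic subgroups strictly increases the length of the longest element, this forces $K = K'$, which is \eqref{sameK}, completing (1).

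I expect the main obstacle to be this last implication: extracting $\mi{p} = \mi{q}$ cleanly from uniqueness of minimal coset representatives for two different factorizations of $\ma{q}$, and then converting the numerical identity $\ell(w_K) = \ell(w_{K'})$ into the set equality $K = K'$ using Lemma \ref{lem:KK'}. The descent-preservation step inside (3) $\Rightarrow$ (2) is the other place needing a little care, though it is routine.
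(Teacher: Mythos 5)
Your proposal is correct and follows essentially the same route as the paper: both hinge on comparing the two factorizations of $\ma{q}$ coming from \eqref{maxformula}, the containment $K\subset K'$ from Lemma \ref{lem:KK'}, and verifying maximality of $\ma{p}(w_I\inv w_{I'})$ via \eqref{beingmaximal} for (3)$\Rightarrow$(2). The only differences are organizational — you prove (2)$\Leftrightarrow$(3) up front via Lemma \ref{lem:onlyneedright} and then deduce (1) from both together using uniqueness of minimal coset representatives, where the paper goes (2)$\Rightarrow$(1) directly by squeezing lengths against the Bruhat-order inequalities $\mi{q}\le\mi{p}$ and $x'\le x$ — but the substance is the same.
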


\begin{proof}
Let $K = J \cap \mi{p} I \mi{p}\inv$ and $K' = J \cap \mi{q} I' \mi{q}\inv$. By \eqref{maxformula} we have $\ma{q} = x' . \mi{q} . w_{I'}$ and $\ma{p} = x . \mi{p} . w_I$, where $x = w_J w_K\inv$ and $x' = w_J w_{K'}\inv$.

We first prove that (1) implies (2) and (3). If $[p,q]$ is reduced then $K = K'$ and $\mi{p} = \mi{q}$. Thus $x = x'$ and 
\[ \ma{q} = (x . \mi{p}) . w_{I'} = (\ma{p} w_I\inv) . w_{I'} = \ma{p} . (w_I\inv w_{I'}), \]
as desired\footnote{Note the crucial use of the mysterious condition \eqref{sameK}, which essentially implied the left multiplication played no significant role when comparing $\ma{p}$ and $\ma{q}$.}. The remainder of the proof will show that (2) implies (1), and (3) implies (2).



Suppose that $\ma{q} = \ma{p} w_I\inv w_{I'}$. Then $x' \mi{q} w_{I'} = x \mi{p} w_{I'}$, so $x'.\mi{q} = x.\mi{p}$. But $\mi{q} \le \mi{p}$ because of the containment of cosets, and $x' \le x$ because $K \subset K'$ by Lemma \ref{lem:KK'}. By comparing lengths, we see that $\mi{p} = \mi{q}$ and $x = x'$, thus $K = K'$. So $[p,q]$ is reduced. Thus (2) implies (1).

Now suppose that the lengths add in the expression $\ma{p}.(w_I\inv w_{I'})$. Since $I$ is in the right descent set of $\ma{p}$, we can use \eqref{dottransfer} to show that
\begin{equation} \ma{p}.(w_I\inv w_{I'}) = (\ma{p} w_I\inv) . w_I . (w_I\inv w_{I'}) = (\ma{p} w_I\inv) . w_I'. \end{equation}
Thus the lengths also add in $(\ma{p} w_I\inv) . w_I'$. By Lemma \ref{dotvsstar}, this means that $\ma{p} w_I\inv w_{I'} = (\ma{p} w_I\inv) * w_{I'}$. By Lemma \ref{lem:dontchangeleftdescent} this means that $J \subset \leftdes(\ma{p} w_I\inv w_{I'})$ and $I' \subset \rightdes(\ma{p} w_I\inv w_{I'})$. Since $\ma{p} w_I\inv w_{I'}$ is clearly in $q$, Lemma \ref{beingmaximal} implies that it is equal to $\ma{q}$. Thus (3) implies (2).
\end{proof}

We can summarize the above Lemma as
\begin{equation} \label{reducedmaxformula} [p,q] \text{ is reduced} \quad \iff \ma{q} = \ma{p} . (w_I\inv w_{I'}) =(\ma{p}w_I\inv).w_{I'}. \end{equation}

\begin{cor} \label{cor:updatemax} Let $I_{\bullet}$ be an expression and let $p_{\bullet}$ be its forward path. Then $I_{\bullet}$ is reduced if and only if for $0 \le i < d$ we have
\begin{equation} \label{updatemax} \ma{p_{i+1}} = \ma{p_i} \text{ when } I_{i+1} \subset I_i, \qquad \ma{p_{i+1}} = \ma{p_i} . w_{I_i}\inv w_{I_{i+1}} \text{ when } I_i \subset I_{i+1}. \end{equation}
\end{cor}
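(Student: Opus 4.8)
The plan is to deduce this from Lemma~\ref{lem:reducedmaxs} by unwinding the definition of reducedness one index at a time. Recall from Definition~\ref{def:reduced} that $I_\bullet$ is reduced if and only if its forward path $p_\bullet$ is reduced, and from Definition~\ref{defn:pqred} that this holds if and only if $p_\bullet$ is reduced at every index, i.e. $[p_i,p_{i+1}]$ is reduced for each $0\le i<d$. Thus it suffices to check that, at each index, the condition ``$[p_i,p_{i+1}]$ is reduced'' coincides with the relevant line of \eqref{updatemax}; the corollary then follows by taking the conjunction over all $i$, with each direction of the biconditional coming from the corresponding index-wise equivalence.

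First I would dispose of the down steps, where $I_{i+1}\subset I_i$. Here $p_{i+1}$ is the coset with the smaller parabolic subgroup and $p_i$ the one with the larger, so in the notation of Definition~\ref{defn:pqred} the ordered pair $[p_i,p_{i+1}]$ is of the form $[q,p]$, and ``$[p_i,p_{i+1}]$ is reduced'' means exactly $\ma{p_{i+1}}=\ma{p_i}$, which is the first line of \eqref{updatemax}. Note that this equality is in fact automatic: by Definition~\ref{good subex} a forward path satisfies $\ma{p_{i+1}}=\ma{p_i}$ at every down step, so recording it as a condition is harmless and merely keeps the statement symmetric.

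For the up steps, where $I_i\subset I_{i+1}$, the coset $p_i$ has the smaller parabolic $I_i$, the coset $p_{i+1}$ has the larger parabolic $I_{i+1}$, and $p_i\subset p_{i+1}$ because $p_\bullet$ is a path. I would then apply Lemma~\ref{lem:reducedmaxs} with $(J,I,I')=(I_0,I_i,I_{i+1})$, $p=p_i$, and $q=p_{i+1}$: the equivalence of its items (1) and (2), summarized in \eqref{reducedmaxformula}, states precisely that ``$[p_i,p_{i+1}]$ is reduced'' holds if and only if $\ma{p_{i+1}}=\ma{p_i}\,(w_{I_i}\inv w_{I_{i+1}})$, which is the second line of \eqref{updatemax}. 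Combining the two cases completes the argument.

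The only real care needed is bookkeeping: correctly matching the ordered pair $[p_i,p_{i+1}]$ to the $[p,q]$ versus $[q,p]$ conventions of Definition~\ref{defn:pqred} according to whether the step is up or down, and instantiating the parameters of Lemma~\ref{lem:reducedmaxs} so that $p$ is the coset with the smaller parabolic. I would sanity-check these assignments against the worked example following Definition~\ref{defn:pqred}, where the up-step indices are diagnosed via the failure of $K_i=K_{i+1}$ and $\mi{p_i}=\mi{p_{i+1}}$. There is no genuine mathematical obstacle beyond this, since all the substance has already been extracted into Lemma~\ref{lem:reducedmaxs}.
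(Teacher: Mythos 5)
Your proposal is correct and follows essentially the same route as the paper, which states the corollary without proof as an immediate consequence of Lemma~\ref{lem:reducedmaxs} (summarized in \eqref{reducedmaxformula}): reducedness is checked index by index, down steps are vacuous for a forward path since $\ma{p_{i+1}}=\ma{p_i}$ by Definition~\ref{good subex}, and up steps are exactly the equivalence of items (1) and (2) in the lemma. Your careful bookkeeping of the $[p,q]$ versus $[q,p]$ conventions is precisely the content the paper leaves implicit.
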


\subsection{Another criterion for reducedness}

We first give a reinterpretation of Corollary \ref{cor:starformap}, giving a formula for $\ma{p}$ associated to any reading of the expression.

\begin{prop} \label{thm:betterforward}
Let $I_{\bullet} = [I_0, \ldots, I_d]$ be a singular expression with $I_{\bullet} \expr p$. Define elements $y_i$ and $z_i$ in $W$ by the formulas
\begin{equation} \label{yzdef} y_i = w_{I_i} w_{I_i \cap I_{i+1}}\inv, \qquad z_i = w_{I_{i-1} \cap I_{i}}\inv w_{I_i}. \end{equation}
For $0 \le k \le d$, define $a_k \in W$ as
\begin{equation} \label{adef} a_k := y_0 * y_1 * \cdots * y_{k-1} * w_{I_k} * z_{k+1} * \cdots * z_{d-1} * z_d, \end{equation}
Then for any $0 \le k \le d$ we have
\begin{equation} \label{mapyz} \ma{p} = a_k. \end{equation}
\end{prop}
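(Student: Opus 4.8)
The plan is to start from the formula $\ma{p} = w_{I_0} * w_{I_1} * \cdots * w_{I_d}$ provided by Corollary \ref{cor:starformap}, and to transform this single star product into $a_k$ for each fixed $k$ by rewriting its factors one at a time. The point is that, compared with the Corollary's product, $a_k$ has the factors $w_{I_0}, \ldots, w_{I_{k-1}}$ replaced by $y_0, \ldots, y_{k-1}$ and the factors $w_{I_{k+1}}, \ldots, w_{I_d}$ replaced by $z_{k+1}, \ldots, z_d$, while the middle factor $w_{I_k}$ is left untouched. If each such replacement is justified by a local identity and the replacements are applied in a non-interfering order, then $\ma{p} = a_k$ follows simultaneously for every $k$, which also makes the claimed $k$-independence automatic.

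The two local moves I would establish are
\begin{equation*}
w_{I_i} * w_{I_{i+1}} = y_i * w_{I_{i+1}}, \qquad w_{I_{i-1}} * w_{I_i} = w_{I_{i-1}} * z_i.
\end{equation*}
Both come from the parabolic factorizations built into the definitions \eqref{yzdef}. Since $I_i \cap I_{i+1} \subseteq I_i$, the element $y_i = w_{I_i} w_{I_i \cap I_{i+1}}\inv$ satisfies $w_{I_i} = y_i . w_{I_i \cap I_{i+1}}$ with additive lengths, and likewise $w_{I_i} = w_{I_{i-1} \cap I_i} . z_i$. By Lemma \ref{dotvsstar} these dot products are star products, so $w_{I_i} = y_i * w_{I_i \cap I_{i+1}}$ and $w_{I_i} = w_{I_{i-1} \cap I_i} * z_i$. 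Substituting, and then absorbing the smaller parabolic via $w_{I_i \cap I_{i+1}} * w_{I_{i+1}} = w_{I_{i+1}}$ and $w_{I_{i-1}} * w_{I_{i-1} \cap I_i} = w_{I_{i-1}}$ (Lemma \ref{descentvspreserved} and its evident left-handed analogue), yields the two displayed moves.

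With these in hand, I would fix $k$ and rewrite $w_{I_0} * \cdots * w_{I_d}$ as follows. First, working left to right, convert the factors in positions $0, \ldots, k-1$ into $y$'s: replace $w_{I_0}$ by $y_0$ using its right neighbor $w_{I_1}$, then $w_{I_1}$ by $y_1$ using $w_{I_2}$, and so on up to $w_{I_{k-1}}$ using $w_{I_k}$. Each consumed right neighbor has index $\le k$ and is therefore still an unconverted $w$. Next, working right to left, convert the factors in positions $d, \ldots, k+1$ into $z$'s: replace $w_{I_d}$ by $z_d$ using its left neighbor $w_{I_{d-1}}$, and so on down to $w_{I_{k+1}}$ using $w_{I_k}$; each consumed left neighbor has index $\ge k$ and is still an unconverted $w$. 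Throughout, the pivot $w_{I_k}$ is never rewritten, so it remains available as the neighbor for both the last left-move and the last right-move (the boundary cases $k=0$ and $k=d$ are handled uniformly, one of the two phases simply being empty). Associativity of $*$ lets me perform each replacement inside the larger product, and the result is exactly $a_k$.

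The main thing to be careful about — essentially the only obstacle — is the bookkeeping of the order of the moves: whenever a move is applied, the neighbor it consumes ($w_{I_{i+1}}$ for a left-move, $w_{I_{i-1}}$ for a right-move) must not yet have been converted. The left-to-right / right-to-left scheme around the fixed pivot $w_{I_k}$ is designed precisely so that every consumed neighbor still carries its original value. A secondary point is to confirm that the factorizations $w_{I_i} = y_i . w_{I_i \cap I_{i+1}}$ and $w_{I_i} = w_{I_{i-1}\cap I_i} . z_i$ genuinely have additive length, so that Lemma \ref{dotvsstar} applies; this is immediate from the description of $w_J w_K\inv$ (resp.\ $w_K\inv w_J$) as a longest coset representative recalled before Lemma \ref{doublecosetlem}.
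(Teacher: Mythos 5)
Your proposal is correct and is essentially the paper's own argument: both rest on the formula $\ma{p} = w_{I_0} * w_{I_1} * \cdots * w_{I_d}$ from Corollary \ref{cor:starformap} together with the local identities $w_{I_{i-1}} * z_i = w_{I_{i-1}} * w_{I_i} = y_{i-1} * w_{I_i}$, the paper converting $a_k$ into the star product by working outward from $w_{I_k}$ while you run the same rewrites in the opposite direction from the ends toward the pivot. You merely supply more detail (the length-additive factorizations, Lemma \ref{dotvsstar}, and the absorption of the smaller parabolic) for the step the paper leaves to the reader.
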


Note that when $I_{i-1} \supset I_i$ we have $z_i = e$, and when $I_i \subset I_{i+1}$ we have $y_i = e$.

\begin{proof} For any $1 \le i \le d$ we claim that
\begin{equation}  w_{I_{i-1}} * z_i = w_{I_{i-1}} * w_{I_i} = y_{i-1} * w_{I_i}. \end{equation}
This is easy, and left to the reader; see also \eqref{dottransfer}.

Let us show that \eqref{adef} agrees with \eqref{starformap}. Starting at $w_{I_k}$ and working outward, we have
\[ \cdots * y_{k-2} * y_{k-1} * w_{I_k} * z_{k+1} * z_{k+2} * \cdots \; = \; \cdots y_{k-2} * w_{I_{k-1}} * w_{I_k} * w_{I_{k-1}} * z_{k+2} * \cdots \; = \; \ldots = w_{I_0} * \cdots * w_{I_d}, \]
as desired. \end{proof}

When we use \eqref{mapyz} for a given value of $k$, we say that we are using the expression for $\ma{p}$ which is \emph{centered at $k$}. We now give an formulation of reducedness which is adapted to any reading of the expression.

\begin{thm} \label{thm:betterred} Continue the notation of Proposition~\ref{thm:betterforward}.
We say that the lengths add in \eqref{adef} if
\begin{equation} \label{eq:yzred} a_k = y_0 . y_1 . \cdots . y_{k-1} . w_{I_k} . z_{k+1} . \cdots . z_{d-1} . z_d. \end{equation}
The following are equivalent. \begin{enumerate}
\item $I_{\bullet}$ is reduced.
\item The lengths add in \eqref{adef} for some $0 \le k \le d$.
\item The lengths add in \eqref{adef} for all $0 \le k \le d$.
\end{enumerate}
\end{thm}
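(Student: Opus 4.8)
The plan is to establish two facts: that condition (1) is equivalent to the lengths adding in $a_0$ (the instance $k=0$ of \eqref{eq:yzred}), and that the validity of \eqref{eq:yzred} does not depend on $k$ at all, so that (2) and (3) are in fact the identical statement. The enabling tool is a general principle about star products: for any $u_1, \ldots, u_n \in W$ one has $\ell(u_1 * \cdots * u_n) \le \sum_i \ell(u_i)$, with equality if and only if $u_1 * \cdots * u_n = u_1 . u_2 . \cdots . u_n$ (ordinary product, all lengths adding). This follows by induction from Lemma \ref{starissubexp} and Lemma \ref{dotvsstar}: writing $P = u_1 * \cdots * u_{n-1}$, the equality $\ell(P * u_n) = \sum_i \ell(u_i)$ forces both $\ell(P) = \sum_{i<n}\ell(u_i)$ and $\ell(P * u_n) = \ell(P) + \ell(u_n)$, and the latter forces $P * u_n = P . u_n$ by Lemma \ref{starissubexp}. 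Consequently ``the lengths add in $a_k$'' is equivalent to the single numerical equation $\ell(\ma{p}) = \Lambda_k$, where $\Lambda_k := \sum_{j<k}\ell(y_j) + \ell(w_{I_k}) + \sum_{j>k}\ell(z_j)$ is the sum of the purported factor lengths and we have used $a_k = \ma{p}$ from Proposition \ref{thm:betterforward}.

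Next I would show that $\Lambda_k$ is independent of $k$. Comparing $\Lambda_k$ with $\Lambda_{k+1}$, the factors $y_0, \ldots, y_{k-1}$ and $z_{k+2}, \ldots, z_d$ are shared, so it suffices to check $\ell(w_{I_k}) + \ell(z_{k+1}) = \ell(y_k) + \ell(w_{I_{k+1}})$. From \eqref{yzdef} and the standard length-additive factorizations $w_{I_k \cap I_{k+1}} . z_{k+1} = w_{I_{k+1}}$ and $y_k . w_{I_k \cap I_{k+1}} = w_{I_k}$ (cf.\ the discussion preceding Lemma \ref{doublecosetlem}), both sides equal $\ell(w_{I_k}) + \ell(w_{I_{k+1}}) - \ell(w_{I_k \cap I_{k+1}})$. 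Hence $\Lambda_k = \Lambda$ is constant in $k$, the condition $\ell(\ma{p}) = \Lambda_k$ is the same equation for every $k$, and the equivalence (2) $\Leftrightarrow$ (3) is immediate.

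It then remains to identify (1) with the instance $\ell(\ma{p}) = \Lambda_0$. Here I would first record that $\ma{p_i} = w_{I_0} * z_1 * \cdots * z_i$ for the forward-path cosets $p_i$: starting from $\ma{p_i} = w_{I_0} * w_{I_1} * \cdots * w_{I_i}$ (Corollary \ref{cor:starformap} applied to the prefix $[I_0, \ldots, I_i]$), one replaces $w_{I_j}$ by $z_j$ from right to left using the identity $w_{I_{j-1}} * w_{I_j} = w_{I_{j-1}} * z_j$ from the proof of Proposition \ref{thm:betterforward}. In particular $\ma{p_i} = \ma{p_{i-1}} * z_i$, so by the general principle ``lengths add in $a_0$'' is equivalent to the stepwise statement that $\ma{p_i} = \ma{p_{i-1}} . z_i$ (lengths adding) for all $1 \le i \le d$. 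Finally I would match this with reducedness via Corollary \ref{cor:updatemax}: when $I_i \subset I_{i-1}$ one has $z_i = e$ and the condition reads $\ma{p_i} = \ma{p_{i-1}}$, which holds automatically for a forward path and imposes nothing, consistent with the asymmetric shape of Definition \ref{def:reduced}; when $I_{i-1} \subset I_i$ one has $z_i = w_{I_{i-1}}\inv w_{I_i}$ and the condition is exactly the reducedness clause $\ma{p_i} = \ma{p_{i-1}} . w_{I_{i-1}}\inv w_{I_i}$ recorded in Corollary \ref{cor:updatemax}. This yields (1) $\Leftrightarrow$ ``lengths add in $a_0$'', completing the chain (1) $\Leftrightarrow$ (2) $\Leftrightarrow$ (3).

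I expect the main obstacle to be the bookkeeping of the second paragraph: verifying that the purported total length $\Lambda_k$ is genuinely independent of $k$. This is the step that decouples the choice of basepoint $k$ from reducedness and makes the ``centered at $k$'' formulation robust, and it is precisely where the definitions of $y_k$ and $z_{k+1}$ from \eqref{yzdef} conspire to give equal partial lengths. Everything else is either the general star-product principle (a routine induction) or a direct translation through Corollary \ref{cor:updatemax}; the one subtle point in that translation is the $z_i = e$ degeneracy at the down-steps, which must be seen to reproduce exactly the one-sided (up-direction only) shape of Definition \ref{def:reduced}.
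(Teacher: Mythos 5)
Your proof is correct and follows essentially the same route as the paper: the $k$-independence comes from the identity $w_{I_k}.z_{k+1}=y_k.w_{I_{k+1}}$ (you take lengths of it to show $\Lambda_k$ is constant, the paper swaps the factorizations directly), and the equivalence of (1) with the $k=0$ case reduces to the one-step criterion of Lemma~\ref{lem:reducedmaxs}/Corollary~\ref{cor:updatemax}, which you reach by unrolling into a conjunction of stepwise conditions where the paper uses induction on the width. Your explicit ``star-product lengths are subadditive, with equality iff the dot product forms'' principle is a clean articulation of what the paper uses implicitly, and your observation that $z_i=e$ at down-steps reproduces the one-sided shape of Definition~\ref{def:reduced} matches the paper's treatment.
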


\begin{proof}  It is an immediate consequence of \eqref{dottransfer} that
\begin{equation} \label{baroof} w_{I_k} . z_{k+1} = y_k . w_{I_{k+1}}, \end{equation}
and in particular, these lengths always add. Since $a_k$ and $a_{k+1}$ have the same expression outside of \eqref{baroof}, this proves that $a_k = a_{k+1}$, and that the lengths add in $a_k$ if and only if they add in $a_{k+1}$. Hence it suffices to prove that $I_{\bullet}$ is reduced if and only if the lengths add in \eqref{eq:yzred} for $k=0$. We prove this statement by induction on the width $d$. When $d=0$, $I_{\bullet}$ is automatically reduced, and $\ma{p} = w_{I_0} = a_0$. Let us now fix an expression $I_{\bullet}$ of width $d$ and assume the result for all expressions of width $d-1$.

Suppose that $I_{\bullet}$ is reduced. Since the forward path of $[I_0, \ldots, I_{d-1}]$ agrees with the first $d-1$ steps of the forward path for $I_{\bullet}$, this subsequence is also reduced. By induction, the lengths add in the following expression for $\ma{p_{d-1}}$.
\begin{equation} \label{hellohankyung} \ma{p_{d-1}} = w_{I_0} . z_1 . \cdots . z_{d-1}.\end{equation}
By \eqref{updatemax}, the fact that $[p_{d-1},p_d]$ is reduced implies that
\begin{equation} \label{nicetoseeyou}  \ma{p_d} = \ma{p_{d-1}} . z_d. \end{equation}
Combining these, we deduce that $\ma{p_d} = a_0$ and the lengths add, as desired.

Conversely, suppose that the lengths add in the expression for $a_0$ in \eqref{eq:yzred}. By induction, this implies that $[I_0, \ldots, I_{d-1}]$ is reduced, and
\eqref{hellohankyung} holds. Moreover, the lengths add in $\ma{p_{d-1}} . z_d$, which by Lemma \ref{lem:reducedmaxs} implies that $[p_{d-1},p_d]$ is reduced. Thus $I_{\bullet}$ is
reduced. \end{proof}

\begin{rem} Recall the discussion from \S\ref{subsec-paths}, where we observed that the forward path is associated to the left reading of a singular expression. In contrast, one should think that the element $a_k$ is associated with any reading which starts at $I_k$ and moves outward contiguously. Because Theorem \ref{thm:betterred} is adapted to all readings of an expression, we think of it as an intrinsic definition of reducedness. It should be no surprise that $a_0$ played a special role in the proof, since it corresponds to the left reading, and is thus most obviously connected with the definition of reduced expressions. \end{rem}

\begin{lem} \label{lem:sameredasintro} The criterion for reducedness given in Theorem \ref{thm:betterred} is equivalent to the criterion for reducedness stated in the introduction in equation \eqref{introreddef}. \end{lem}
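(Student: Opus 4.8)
The plan is to unwind both criteria into statements about a single group element and its length, and then match them term-by-term via a telescoping computation. By Theorem~\ref{thm:betterred} (applied at $k=0$), the criterion stated there is equivalent to the statement that the lengths add in
\[ a_0 = w_{I_0} . z_1 . z_2 . \cdots . z_d, \qquad z_i = w_{I_{i-1}\cap I_i}\inv w_{I_i}, \]
so it suffices to show that this ``lengths add in $a_0$'' condition is literally \eqref{hkdef} for the multistep expression associated to $I_\bullet$ via Definition~\ref{def:exprintro}. Recall that the associated multistep expression $[[I_0 \subset K_1 \supset \cdots \supset I_m]]$ is obtained by grouping maximal runs of up-steps and down-steps, so that the $K_j$ are the successive peaks and the (multistep) $I_j$ the successive valleys of the single-step expression, subject to the boundary conventions $I_0 = K_1$ (if $I_\bullet$ begins with a down-step) and $I_m = K_m$ (if it ends with an up-step).

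First I would verify the group-element identity $a_0 = w_{K_1} w_{I_1}\inv w_{K_2} \cdots w_{I_{m-1}}\inv w_{K_m}$, matching the element in \eqref{mapintroreddef}. This is a pure telescoping computation in $W$, with no length claim yet: by the remark after Proposition~\ref{thm:betterforward}, each down-step contributes $z_i = e$, while within a maximal up-run $I_a \subset I_{a+1} \subset \cdots \subset I_b$ the factors telescope, $z_{a+1} z_{a+2} \cdots z_b = w_{I_a}\inv w_{I_b}$. Multiplying the leading $w_{I_0}$ into the first up-run (or, when $I_\bullet$ begins with a down-step, noting $w_{I_0} = w_{K_1}$) yields exactly the alternating product above, and the two boundary conventions are absorbed automatically.

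Next I would compute the sum of lengths. Since $\ell(z_i) = 0$ on down-steps and $\ell(z_i) = \ell(w_{I_i}) - \ell(w_{I_{i-1}})$ on up-steps (because $w_{I_i} = w_{I_{i-1}} . (w_{I_{i-1}}\inv w_{I_i})$ when $I_{i-1}\subset I_i$), the lengths telescope inside each up-run to $\ell(w_{K_j}) - \ell(w_{I_{j-1}})$, where $I_{j-1}$ is the valley preceding the peak $K_j$. Adding the leading $\ell(w_{I_0})$ and cancelling gives
\[ \ell(w_{I_0}) + \sum_{i=1}^d \ell(z_i) = \sum_{j=1}^m \ell(w_{K_j}) - \sum_{j=1}^{m-1} \ell(w_{I_j}), \]
which is precisely the right-hand side of \eqref{hkdef}. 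Since ``lengths add in $a_0$'' means exactly that $\ell(a_0)$ equals this sum, while $\ell(a_0) = \ell(w_{K_1} w_{I_1}\inv \cdots w_{K_m})$ is the left-hand side of \eqref{hkdef} by the previous paragraph, the two criteria coincide.

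I expect the only delicate point to be bookkeeping the single-step/multistep correspondence cleanly — in particular checking that the boundary conventions $I_0 = K_1$ and $I_m = K_m$ do not spoil the telescoping — together with the standard but worth-stating fact that additivity of the total length, $\ell(g_1 \cdots g_n) = \sum_i \ell(g_i)$, is equivalent to additivity at every partial product. This last fact is what legitimizes reading ``lengths add in $a_0$'' as the single total-length condition that matches \eqref{hkdef}. Everything else is routine telescoping and requires no input beyond Theorem~\ref{thm:betterred} and basic parabolic Coxeter combinatorics.
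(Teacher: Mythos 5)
Your proposal is correct and follows essentially the same route as the paper: the paper's proof likewise observes that the $z_i$ telescope to $w_{I_0}\inv w_{I_d}$ along maximal up-runs and are trivial along down-runs, and then matches $a_0$ and its length sum against \eqref{mapintroreddef} and \eqref{hkdef}. You have simply written out the telescoping and the boundary-convention bookkeeping that the paper leaves as "easy to verify."
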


\begin{proof} Consider any singular expression of the form $[I_0 \subset I_1 \subset \ldots \subset I_d]$. Then it is easy to verify that $z_1 . z_2 . \cdots . z_d = w_{I_0}\inv
w_{I_d}$. Similarly, consider any singular expression of the form $[I_0 \supset I_1 \supset \ldots \supset I_d]$. It is easy to verify that $z_1 . z_2 . \cdots . z_d = e$. 

Now let $I_{\bullet}$ be arbitrary. Combining these observations, it is straightforward to match the expression for $a_0$ from \eqref{eq:yzred} with the expression for $\ma{p}$ in
\eqref{mapintroreddef}. That the lengths add in \eqref{eq:yzred} is equivalent to \eqref{hkdef}. \end{proof}

\subsection{Locality of (reduced) expressions}\label{ss:locality}

\begin{defn} If $I_\bullet$ and $I'_\bullet$ are two $J$-expressions with the same endpoint, we say that $I_\bullet$ and $I'_\bullet$ are \emph{equivalent}, and write $I_\bullet\expr I'_\bullet$. \end{defn}

We identify equivalence classes of expressions with double cosets, and freely use ``transitivity'' when examining statements like $I_\bullet \expr p \expr I'_\bullet$.

\begin{notation} We use interval notation for contiguous subwords of singular expressions: if $I_{\bullet} = [I_0,\ldots,I_d]$ is an expression, and $0 \le m \le n \le d$, then we write
\[ I_{[m,n]}:= [I_m, \ldots, I_n]. \]
\end{notation}

The following statement is the \emph{locality of expressions}.

\begin{prop} \label{prop:locality1}
Let $I_{\bullet} = [I_0,\ldots,I_d]$ be an expression, and fix $0 \le m \le n \le d$. If $H_{\bullet} = [I_m = H_0, H_1, \ldots, H_{r-1}, H_r = I_n]$ is another $I_m$-expression satisfying $H_{\bullet} \expr I_{[m,n]}$, then 
\begin{equation} I_{\bullet} \expr I'_{\bullet} := [I_0, \ldots, I_m, H_1, \ldots, H_{r-1}, I_n, \ldots, I_d]. \end{equation}
\end{prop}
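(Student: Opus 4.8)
The plan is to reduce the global statement about $I_\bullet \expr I'_\bullet$ to a local comparison of maximal elements, using the intrinsic characterization of the endpoint supplied by Corollary~\ref{cor:starformap}. Two expressions with the same source are equivalent precisely when they express the same double coset, and by the discussion following Definition~\ref{good subex} together with Corollary~\ref{cor:starformap}, a $J$-expression $K_\bullet$ expresses the unique $(J, \text{(last set)})$-coset whose maximal element is the star product $w_{K_0} * w_{K_1} * \cdots$. So it suffices to show that $I_\bullet$ and $I'_\bullet$ have the same source object (both start at $I_0$ and end at $I_d$, which is immediate from the construction of $I'_\bullet$) and the same maximal element of the endpoint coset.

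First I would record what the hypothesis $H_{\bullet} \expr I_{[m,n]}$ gives us. Both are $I_m$-expressions, so by Corollary~\ref{cor:starformap} applied to each,
\begin{equation}
w_{I_m} * w_{I_{m+1}} * \cdots * w_{I_n} \;=\; w_{H_0} * w_{H_1} * \cdots * w_{H_r},
\end{equation}
since $H_0 = I_m$ and $H_r = I_n$. This is the single algebraic identity carrying all the content of the hypothesis; everything else is bookkeeping about associativity of $*$.

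Next I would compute the maximal element of the endpoint of each of $I_\bullet$ and $I'_\bullet$ via Corollary~\ref{cor:starformap} and compare. For $I_\bullet$ we get
\begin{equation}
\ma{p} = w_{I_0} * \cdots * w_{I_{m-1}} * \bigl(w_{I_m} * \cdots * w_{I_n}\bigr) * w_{I_{n+1}} * \cdots * w_{I_d},
\end{equation}
where the inner bracket is inserted using associativity of $*$ (which holds in the Coxeter monoid $(W,*,S)$). For $I'_\bullet = [I_0, \ldots, I_m, H_1, \ldots, H_{r-1}, I_n, \ldots, I_d]$, the same corollary yields the identical outer factors together with the inner block $w_{I_m} * w_{H_1} * \cdots * w_{H_{r-1}} * w_{I_n}$, which is exactly $w_{H_0} * \cdots * w_{H_r}$. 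By the displayed identity from the previous paragraph, the two inner blocks agree, so $\ma{p}$ coincides for both expressions. Since an expression's endpoint is the unique coset with the given source and maximal element, we conclude $I_\bullet \expr I'_\bullet$.

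The only genuine obstacle is a bookkeeping one: I must check that $I'_\bullet$ is a legitimate single-step expression, i.e. that the splice is syntactically valid. The concatenation points are $[\ldots, I_m, H_1, \ldots]$ and $[\ldots, H_{r-1}, I_n, \ldots]$; since $H_\bullet$ is itself an $I_m$-expression with $H_0 = I_m$ and $H_r = I_n$, consecutive sets in $H_\bullet$ already differ by one element, and the boundary transitions $I_m \to H_1$ and $H_{r-1} \to I_n$ are internal steps of $H_\bullet$, hence also valid. The transitions $I_{m-1}\to I_m$ and $I_n \to I_{n+1}$ are inherited from $I_\bullet$. Thus $I'_\bullet$ is a well-formed expression, and the proof is complete. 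I would note that this argument establishes only equivalence of expressions (equality of endpoints); the stronger statement that reducedness is preserved under such local substitutions is a separate matter, presumably addressed elsewhere in \S\ref{ss:locality}.
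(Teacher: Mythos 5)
Your proof is correct and follows essentially the same route as the paper's: the paper cites Proposition \ref{prop:expressesinSC} to view the expressed coset as an associative composition of the three blocks in $\SC$ and then substitutes the middle block, while you phrase the identical argument through Corollary \ref{cor:starformap} (itself just the unraveling of that proposition) in terms of star products of longest elements. The extra observation that a coset is pinned down by its source, target, and maximal element is correct and harmless, though the paper avoids needing it by working directly with composition of morphisms.
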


\begin{proof} By Proposition \ref{prop:expressesinSC}, the double coset expressed by $I_{\bullet}$ is just a composition in $\SC$ of generators. If $[I_0, \ldots, I_m] \expr p$ and $[I_m, \ldots, I_n] \expr p'$ and $[I_n, \ldots, I_d] \expr p''$, then $I_{\bullet} \expr p * p' * p''$. But $H_{\bullet} \expr I_{[m,n]} \expr p'$ as well, so $I'_{\bullet} \expr p * p' * p''$. \end{proof}

The following statement is the \emph{locality of reduced expressions}.

\begin{prop}\label{prop:locality2}
Use the same setup and notation as in Proposition \ref{prop:locality1}. If $I_{\bullet}$ is reduced then $I_{[m,n]}$ is reduced. If $I_{\bullet}$ and $H_{\bullet}$ are reduced, then $I'_{\bullet}$ is reduced.\end{prop}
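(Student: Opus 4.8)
The plan is to use the intrinsic characterization of reducedness from Theorem~\ref{thm:betterred}, namely that an expression is reduced if and only if the lengths add in the expression \eqref{adef} for $a_k$, for some (equivalently, every) choice of $k$. The key observation is that this criterion is genuinely \emph{local}: the element $a_k$ is built as a $*$-product (which after checking lengths add becomes a $.$-product) of the factors $y_i$ and $z_i$, and these factors depend only on consecutive pairs $(I_{i}, I_{i+1})$ of the expression. So the plan is to extract the sub-statement of ``lengths add'' corresponding to the contiguous block $I_{[m,n]}$.

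First I would prove that $I_{[m,n]}$ is reduced. By Theorem~\ref{thm:betterred} applied to $I_{\bullet}$, the lengths add in the expression \eqref{eq:yzred} for $a_0$, which is a product
\[ \ma{p} = y_0 . y_1 . \cdots . y_{d-1} . z_1 . \cdots . z_d \]
(written schematically; with each $y_i$ or $z_i$ equal to $e$ as appropriate). The point is that whenever lengths add in a product $g_1 . g_2 . \cdots . g_N$, they also add in any contiguous subproduct $g_a . \cdots . g_b$, since $\ell(g_a \cdots g_b) \le \sum_{i=a}^b \ell(g_i)$ always, while the global equality forces equality in every contiguous sub-block (otherwise the total length would be strictly smaller). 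The factors $y_i, z_i$ for $m \le i \le n$ are exactly the factors defining $a_0$ for the subexpression $I_{[m,n]}$ (note $y_i, z_i$ depend only on $I_i$ and its neighbors inside the block, except possibly at the two endpoints $i=m, i=n$, which I must check do not cause trouble — the definitions of $y_m$ and $z_n$ involve $I_{m\pm1}$, but these do not appear in the subproduct for $I_{[m,n]}$, so one uses the reading of $a_k$ centered at an interior index to be safe). Hence lengths add in the $a_\bullet$ expression for $I_{[m,n]}$, and so $I_{[m,n]}$ is reduced by Theorem~\ref{thm:betterred}.

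For the second statement, I would argue that $I_{\bullet}$ and $I'_{\bullet}$ express the same double coset $p$ (this is exactly Proposition~\ref{prop:locality1}) and have the same width-contribution to length, so reducedness transfers. More precisely: since $I_{\bullet}$ is reduced and $I_{[m,n]} \expr H_{\bullet}$, both $I_{[m,n]}$ and $H_{\bullet}$ are reduced expressions for the same coset $p'$, hence by Corollary~\ref{cor:starformap} (and the fact \eqref{mapstarintro} that $\ma{p'} = w_{H_0} * \cdots * w_{H_r}$ depends only on the coset) they produce $\ma{p'}$ with the same length decomposition. I would then glue: writing $I'_{\bullet}$ as the concatenation of $I_{[0,m]}$, $H_{\bullet}$, and $I_{[n,d]}$, I verify that the lengths add in the global $a_0$-expression for $I'_{\bullet}$ by combining the length-additivity on each of the three blocks (each reduced) with the length-additivity across the two splice points. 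The main obstacle I anticipate is precisely controlling these two splice points: I must show that if lengths add separately in $[I_0, \ldots, I_m]$, in $H_{\bullet}$, and in $[I_n, \ldots, I_d]$, then they add in the whole concatenation. This is not automatic from reducedness of the pieces alone, but it follows because $I_{\bullet}$ itself was reduced: the ``boundary'' products at indices $m$ and $n$ (encoded in $\ma{p_m}$ and $\ma{p_n}$ via the update formula \eqref{updatemax}) are the same for $I_\bullet$ and $I'_\bullet$, since $H_\bullet$ and $I_{[m,n]}$ share the same maximal element and endpoints. I would formalize this either by a direct induction on $r$ (the width of $H_\bullet$) using Corollary~\ref{cor:updatemax} to update maximal elements one step at a time, checking at each step that $\ma{p_i}$ grows by the prescribed $. w_{H_{i-1}}\inv w_{H_i}$, or more cleanly by invoking Theorem~\ref{thm:betterred} with $a_k$ centered at an index $k$ lying inside the unchanged block $I_{[n,d]}$ (or $I_{[0,m]}$), so that the forward reading through the spliced region reproduces the reduced product for $H_\bullet$.
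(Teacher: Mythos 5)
Your proposal is correct and follows essentially the same route as the paper: both parts rest on the length-additivity criterion of Theorem~\ref{thm:betterred}, reading the product $a_k$ centered at an index inside $[m,n]$ so that the factorization for $I_{[m,n]}$ appears as a contiguous subproduct (whence lengths add in it), and then for the splice, substituting the length-additive factorization of $\ma{p'}$ coming from the reduced $H_{\bullet}$ in place of the one coming from $I_{[m,n]}$. The ``splice point'' worry you flag is resolved exactly as you suggest in your cleaner alternative, which is the paper's argument.
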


\begin{proof} Let $I_{\bullet}$ be reduced, and use the notation of Theorem \ref{thm:betterred}. We will compare $I_{\bullet}$ and $I_{[m,n]}$ by using \eqref{mapyz} centered at $m$. By Theorem \ref{thm:betterred}, the lengths add in \eqref{eq:yzred} for $k=m$. This product \eqref{eq:yzred} has a subword where the lengths add:
\begin{equation} \label{amton} a := w_{I_m} . z_{m+1} . \cdots . z_n. \end{equation}
Again by Theorem \ref{thm:betterred}, this implies that $I_{[m,n]}$ is reduced. Moreover, $I_{[m,n]} \expr p'$ where $\ma{p'} = a$.

Now let $z'_i$ and $y'_i$ and $a'_k$ be the elements of $W$ associated to the expression $I'_{\bullet}$, and $z''_i$, $y''_i$ be associated with $H_{\bullet}$. Note that $z''_i = z'_{m+i}$. If $H_{\bullet}$ is also reduced then we have another expression for $a$, as
\begin{equation} \label{afromH} a = w_{I_m} . z''_1 . \cdots . z''_r = w_{I_m} . z'_{m+1} . \cdots . z'_{m+r}. \end{equation}
Substituting \eqref{afromH} in place of \eqref{amton} inside the product formula \eqref{eq:yzred} for $a_m$, one obtains the product formula \eqref{eq:yzred} for $a'_m$. Thus the lengths add in the formula for $a'_m$, and Theorem \ref{thm:betterred} implies that $I'_{\bullet}$ is reduced. \end{proof}

\begin{rem} In a previous version of this manuscript, we proved this result directly from Williamson's definition of reduced expressions, by studying forward paths in depth. That
proof took an additional four pages. Again, we recall the discussion from the end of \S\ref{subsec-paths}: intrinsic constructions involving singular expressions (e.g.
multiplication in $\SC$, or the criterion for reducedness from Theorem \ref{thm:betterred}) are more useful for proving statements about subwords than constructions involving the
left reading (e.g. forward paths). \end{rem}

\subsection{The length function(s)} \label{ss:length}

The (weak left) Bruhat graph of a Coxeter system $(W,S)$ is a ranked poset, where each $w \in W$ is ranked by its length. A length
function is an easy way to detect whether a given edge goes up or down in the Bruhat order. In other words, $ws > w$ if and only if $\ell(ws) > \ell(w)$, otherwise $ws < w$. In
addition to the length function on elements of $W$, one also as a length function on (ordinary) expressions. The length of an expression for $w$ is always at least the length of
$w$, and equality holds if and only if the expression is reduced.

We transport these ideas to the setting of double cosets, defining the length of a double coset, and the length of a (singular) expression. Three crucial features our length function will satisfy are these: \begin{itemize}
\item The length of an expression for $p$ is always at least the length of $p$, with equality if and only if the expression is reduced.
\item The length of an expression will strictly increase with the addition of each index.
\item The length of an identity expression (i.e. a width zero expression) is zero.
\end{itemize}
These constraints imply that the length of an $(I,J)$-coset $p$ can not be determined by the underlying set of $p$, but also needs the data of $(I,J)$. For example, the $(s,s)$-coset $p = \{e,s\}$ is the identity element in $\SC$ of the object $s$, and has a length zero expression $[s]$. The $(s,\mt)$-coset $q = \{e,s\}$ has the longer reduced expression $[s,\mt]$, so we should have $\ell(q) > \ell(p)$ even though they have the same underlying set.

\begin{defn}\label{def:lengths}
Henceforth we write $\ell_W$ for the usual length function on $W$. If $J$ is finitary we write $\ell(J)$ for $\ell_W(w_J)$.

For $p\in \JIcosets$ we let
\begin{equation} \label{eq:lengthofp} \ell^+(p) = \ell_W(\ma{p}) - \ell(J), \quad \ell^-(p) = \ell_W(\mi{p}) + \ell(J) - \ell(J \cap \mi{p} I \mi{p}\inv), \quad \ell(p) =\ell^+(p) + \ell^-(p).\end{equation}
We refer to these as the \emph{$+$-length}, the \emph{$-$-length}, and the \emph{length} of $p$ respectively.

Given an expression $I_\bullet = [I_0,\ldots, I_n]$ with the forward path $p_\bullet$, we let $d_0^+ = d_0^- = 0$. We let
\begin{equation} d_i^+=
\begin{cases}
\ell(I_i) - \ell(I_{i-1}) & \text{ if } I_{i-1}\subset I_i \\
0 & \text{ if } I_{i-1}\supset I_i
\end{cases},\quad 
d_i^-=
\begin{cases}
0 & \text{ if } I_{i-1}\subset I_i \\
\ell(I_{i-1}) - \ell(I_i) & \text{ if } I_{i-1}\supset I_i
\end{cases},
\end{equation}
for $i\geq 1$.
We define the \emph{($\pm$-)lengths} of $I_\bullet$ as
\begin{equation}\ell^\pm (I_\bullet)=\sum_{i=0}^n d_i^\pm,\ \text{ and }   \ell(I_\bullet)=\ell^+(I_\bullet)+\ell^-(I_\bullet) .\end{equation}
\end{defn}

Be warned! If $w \in W$ and we let $p = \{w\}$ be an $(\emptyset, \emptyset)$-coset, then $\ell(p) = 2 \ell_W(w)$!

\begin{ex}\label{exleninc}
Let $I_\bullet = [\emptyset,\{s_1\},\{s_1,s_2\},\cdots ,\{s_1,\cdots s_d\}]$ with forward path $p_{\bullet}$. Then for all $0 \le i \le d$ we have
\begin{equation} \ell^+(p_i) = \ell(I_i) = \ell^+(I_{[0,i]}), \quad \ell^-(p_i) = 0 = \ell^-(I_{[0,i]}). \end{equation}
\end{ex}

\begin{ex}\label{exlenth}
Let $I_\bullet$ be an expression of the form $[\emptyset,s_1,\emptyset,s_2,\emptyset,\cdots,s_d,\emptyset]$.
Then $\ell^+(I_\bullet) = \ell^-(I_\bullet) = d$. Meanwhile, if $p$ is the endpoint of the expression, i.e. $p = \{w\}$ where $w = s_1 \cdots s_d$, then $\ell_W(w) \le d$ with equality if and only if $I_{\bullet}$ is reduced. Thus $\ell^+(I_{\bullet}) = \ell^+(p)$ if and only if $I_{\bullet}$ is reduced if and only if $\ell^-(I_{\bullet}) = \ell^-(p)$.
\end{ex}

\begin{prop}\label{p.rexlength}
Let $I_\bullet = [I_0, \ldots, I_n]$ be an expression and let $p$ be its endpoint. Then $\ell^+(I_{\bullet}) \ge \ell^+(p)$ and $\ell^-(I_{\bullet}) \ge \ell^-(p)$. Moreover, the following are equivalent.
\begin{itemize} \item $I_\bullet$ is reduced. \item $\ell^+(I_\bullet)=\ell^+(p)$ and $\ell^-(I_\bullet)=\ell^-(p)$. \item $\ell(I_\bullet)=\ell(p)$. \end{itemize}
\end{prop}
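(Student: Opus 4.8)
The plan is to prove the $+$-length statement by a direct subadditivity argument, identify the equality case with reducedness via Theorem~\ref{thm:betterred}, and then obtain the entire $-$-length statement essentially for free by exploiting a telescoping symmetry between $\ell^+$ and $\ell^-$.

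First I would handle the $+$-length. Write $n$ for the width and recall from Proposition~\ref{thm:betterforward} (centered at $k=0$) that $\ma{p} = w_{I_0} * z_1 * \cdots * z_n$, where $z_i = w_{I_{i-1}\cap I_i}\inv w_{I_i}$. When $I_{i-1} \supset I_i$ we have $z_i = e$, while when $I_{i-1} \subset I_i$ we have $\ell_W(z_i) = \ell(I_i) - \ell(I_{i-1}) = d_i^+$; hence $\sum_{i=1}^n \ell_W(z_i) = \ell^+(I_\bullet)$. Applying Lemma~\ref{starissubexp} gives $\ell_W(x * y) \le \ell_W(x) + \ell_W(y)$ with equality iff $x * y = x.y$, so iterating over the product yields
\[ \ell_W(\ma{p}) \le \ell(I_0) + \sum_{i=1}^n \ell_W(z_i) = \ell(I_0) + \ell^+(I_\bullet), \]
which rearranges to $\ell^+(p) \le \ell^+(I_\bullet)$. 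Equality holds precisely when the lengths add all the way through $w_{I_0} * z_1 * \cdots * z_n$, i.e.\ $\ma{p} = w_{I_0} . z_1 . \cdots . z_n$, which is exactly the $k=0$ instance of the reducedness criterion in Theorem~\ref{thm:betterred}. Thus $\ell^+(p) = \ell^+(I_\bullet)$ if and only if $I_\bullet$ is reduced.

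Next I would establish two telescoping identities. On the expression side, a case check ($I_{i-1} \subset I_i$ versus $I_{i-1} \supset I_i$) shows $d_i^+ - d_i^- = \ell(I_i) - \ell(I_{i-1})$ in both cases, so summing gives $\ell^+(I_\bullet) - \ell^-(I_\bullet) = \ell(I_n) - \ell(I_0)$. On the coset side, with $J = I_0$, $I = I_n$, and $K = J \cap \mi{p} I \mi{p}\inv$, the maximal-element formula \eqref{maxformula} gives $\ell_W(\ma{p}) = (\ell(J) - \ell(K)) + \ell_W(\mi{p}) + \ell(I)$; substituting into the definitions of $\ell^\pm(p)$ yields $\ell^+(p) - \ell^-(p) = \ell(I) - \ell(J) = \ell(I_n) - \ell(I_0)$. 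Comparing the two identities shows that the $+$-defect and the $-$-defect coincide:
\[ \ell^+(I_\bullet) - \ell^+(p) = \ell^-(I_\bullet) - \ell^-(p). \]

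With this the statement assembles immediately. Since the right-hand side above equals the already-established nonnegative $+$-defect, we get $\ell^-(I_\bullet) \ge \ell^-(p)$, finishing the inequalities. Moreover a single nonnegative quantity (the common defect) controls everything: it vanishes iff $\ell^+(I_\bullet) = \ell^+(p)$, iff $\ell^-(I_\bullet) = \ell^-(p)$, and since $\ell(I_\bullet) - \ell(p)$ equals twice this defect, iff $\ell(I_\bullet) = \ell(p)$ as well; by the first step it vanishes iff $I_\bullet$ is reduced. This gives the equivalence of the three bulleted conditions. I expect the only genuine content to be the $+$-length equality case, where the identification of reducedness with ``the lengths add in the $*$-product'' rests on Theorem~\ref{thm:betterred}; the $-$-length side and the three-way equivalence are then pure bookkeeping via the defect symmetry, so the main thing to get right is the setup of the two telescoping identities.
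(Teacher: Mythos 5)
Your argument is correct, and it takes a genuinely different route from the paper's. The paper proves the proposition by working locally along the forward path $p_\bullet$: for each index $i$ it establishes $d_i^+ \ge \ell^+(p_i)-\ell^+(p_{i-1})$ and $d_i^- \ge \ell^-(p_i)-\ell^-(p_{i-1})$, with equality in both if and only if $[p_{i-1},p_i]$ is reduced, splitting into the cases $I_{i-1}\supset I_i$ and $I_{i-1}\subset I_i$ and tracking minimal elements and redundancies via Lemma~\ref{lem:KK'}, \eqref{maxformula}, and Corollary~\ref{cor:updatemax}; summing then gives the statement, together with the sharper byproduct that every prefix $I_{[0,k]}$ satisfies the same dichotomy. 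You instead argue globally: the $+$-inequality falls out of subadditivity of $\ell_W$ under $*$ applied to the product formula $\ma{p}=w_{I_0}*z_1*\cdots*z_n$ of Proposition~\ref{thm:betterforward}, using $\ell_W(z_i)=d_i^+$, and the equality case is precisely the $k=0$ criterion of Theorem~\ref{thm:betterred}; the entire $-$-side is then bookkeeping, since $\ell^+(I_\bullet)-\ell^-(I_\bullet)$ and $\ell^+(p)-\ell^-(p)$ both telescope to $\ell(I_n)-\ell(I_0)$ (the latter via $\ell_W(\ma{p})=\ell(J)-\ell(K)+\ell_W(\mi{p})+\ell(I)$ from \eqref{maxformula}), so the $+$- and $-$-defects coincide. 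Both of your telescoping identities check out, and the single common defect does control all three bulleted conditions at once. What each approach buys: yours is shorter, avoids the two-case analysis and any explicit handling of the redundancy sequence, and makes the three-way equivalence transparent; the paper's version localizes the failure of reducedness to individual indices, which is the form of the statement that is reused elsewhere and which your global argument does not directly recover.
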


\begin{proof}
Let $p_{\bullet}$ be the forward path of $I_{\bullet}$. We will prove (for $1 \le i \le n$) that $d_i^+ \ge \ell^+(p_i) - \ell^+(p_{i-1})$ and $d_i^- \ge \ell^-(p_i) - \ell^-(p_{i-1})$, and that $[p_{i-1},p_i]$ is reduced if and only if both inequalities are equalities. Since $d_0^+ = \ell^+(p_0)$ and $d_0^- = \ell^-(p_0)$ by definition, we deduce that
\begin{equation} \sum_{i=0}^k d_i^+ \ge \ell^+(p_k), \quad \sum_{i=0}^k d_i^- \ge \ell^-(p_k), \quad \text{ with  both equalities if and only if } I_{[0,k]} \text{ is reduced}. \end{equation}
Setting $k = n$ we get the desired result.

Let $K = J \cap \mi{p_{i-1}} I_{i-1} \mi{p_{i-1}}\inv$ and $K' = J \cap \mi{p_i} I_i \mi{p_i}\inv$ for the rest of this proof.

When $I_{i-1} \supset I_i$ then $[p_{i-1},p_i]$ is always reduced. By definition of a forward path, $\ell^+(p_i) - \ell^+(p_{i-1}) = 0 = d_i^+$.  By \eqref{maxformula} we have
\begin{equation} (w_J w_K\inv) . \mi{p_{i-1}} . w_{I_{i-1}}  = \ma{p_{i-1}} = \ma{p_i} = (w_J w_{K'}\inv) . \mi{p_i} . w_{I_i}. \end{equation}
Thus 
\begin{equation} \ell^-(p_i) + \ell(I_i) = \ell_W(\mi{p_i}) + \ell(I_i) + \ell(J) - \ell(K') = \ell_W(\mi{p_{i-1}}) + \ell(I_{i-1}) + \ell(J) - \ell(K) = \ell^-(p_{i-1}) + \ell(I_{i-1}), \end{equation}
from which we conclude that
\begin{equation} d_i^- = \ell(I_{i-1}) - \ell(I_i) = \ell^-(p_i) - \ell^-(p_{i-1}). \end{equation}

If $I_{i-1} \subset I_i$ then Lemma \ref{lem:KK'} implies that $K \subset K'$. Since $p_{i-1} \subset p_i$ we have $\mi{p_i} \le \mi{p_{i-1}}$, with equality if and only if $p_{i-1}$ is minimal in $p_i$ (by definition). Thus
\begin{subequations}
\begin{equation} \ell_W(\mi{p_i}) - \ell_W(\mi{p_{i-1}}) \le 0, \text{ with equality if and only if $p_{i-1}$ is minimal in } p_i, \end{equation}
\begin{equation} \ell(K) - \ell(K') \le 0, \text{ with equality if and only if } K = K'. \end{equation}
Combining these we have
\begin{equation} d_i^- = 0 \ge (\ell_W(\mi{p_i}) - \ell_W(\mi{p_{i-1}})) + (\ell(K) - \ell(K')) = \ell^-(p_i) - \ell^-(p_{i-1}), \end{equation}
with equality if and only if $[p_{i-1}, p_i]$ is reduced.
\end{subequations}

Meanwhile, we know from Proposition \ref{prop:expressesinSC} that $\ma{p_i} = \ma{p_{i-1}} * w_{I_i}$, and since $I_{i-1}$ is in the right descent set of $\ma{p_{i-1}}$, we also have
\begin{equation} \ma{p_i} = \ma{p_{i-1}} * (w_{I_{i-1}}\inv w_{I_i}). \end{equation}
In particular, $\ell_W(\ma{p_i}) - \ell_W(\ma{p_{i-1}}) \le d_i^+ = \ell(I_i) - \ell(I_{i-1})$. Moreover, Corollary~\ref{cor:updatemax} states that $[p_{i-1}, p_i]$ is reduced if and only if $\ma{p_i} = \ma{p_{i-1}} . (w_{I_{i-1}}\inv w_{I_i})$, in which case $\ell_W(\ma{p_i}) - \ell_W(\ma{p_{i-1}}) = d_i^+$. 
\end{proof}

\section{Additional properties of reduced expressions} \label{sec-lotsoprops}

\subsection{Comparing forward paths with subexpressions}

\begin{prop}\label{prop:subexp}
Let  $I_\bullet = [I_0,\cdots, I_d]$ be a reduced expression, and pick $0 \le m \le n \le d$. The forward paths $p_\bullet$ of $I_\bullet$ and $q_\bullet$ of $I_{[m,n]}$ satisfy
\begin{equation}\label{maxsgeneral}
    \ma{p_i}=(\ma{p_m} w_{I_m}\inv). \ma{q_i} = \ma{p_m} . (w_{I_m}\inv \ma{q_i}).
\end{equation}
for $m\le i\le n$.
\end{prop}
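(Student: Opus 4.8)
The plan is to establish both equalities in \eqref{maxsgeneral} at once by writing each of the three maximal elements $\ma{p_i}$, $\ma{p_m}$, and $\ma{q_i}$ via the ``centered'' product formula of Proposition~\ref{thm:betterforward}, with \emph{every} formula centered at the common index $m$. First I would invoke the locality of reduced expressions (Proposition~\ref{prop:locality2}): since $I_\bullet$ is reduced, the contiguous subwords $I_{[0,i]}$, $I_{[0,m]}$, and $I_{[m,i]}$ are all reduced, so Theorem~\ref{thm:betterred} applies to each and guarantees that the relevant lengths add. Here $p_i$ is the endpoint of $I_{[0,i]}$, $p_m$ of $I_{[0,m]}$, and $q_i$ of $I_{[m,i]}$.

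Next I would set up the elements $y_j = w_{I_j} w_{I_j \cap I_{j+1}}\inv$ and $z_j = w_{I_{j-1}\cap I_j}\inv w_{I_j}$ of Proposition~\ref{thm:betterforward}. The key observation is that these depend only on the consecutive pair $(I_{j-1},I_j)$ (resp.\ $(I_j,I_{j+1})$), so the same $z_j$ serves both the full expression and any contiguous subword containing that pair. Centering each subword at $m$ and applying Theorem~\ref{thm:betterred} then yields the three reduced products
\begin{align*}
\ma{p_i} &= y_0 . \cdots . y_{m-1} . w_{I_m} . z_{m+1} . \cdots . z_i, \\
\ma{p_m} &= y_0 . \cdots . y_{m-1} . w_{I_m}, \\
\ma{q_i} &= w_{I_m} . z_{m+1} . \cdots . z_i,
\end{align*}
where in each line all lengths add. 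From the second line, $\ma{p_m} w_{I_m}\inv = y_0 . \cdots . y_{m-1}$, and from the third, $w_{I_m}\inv \ma{q_i} = z_{m+1} . \cdots . z_i$, both again with lengths adding.

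It then remains only to splice these together. Comparing Coxeter lengths, $\ell_W(\ma{p_m} w_{I_m}\inv) + \ell_W(\ma{q_i}) = \ell_W(\ma{p_i})$ and $\ell_W(\ma{p_m}) + \ell_W(w_{I_m}\inv \ma{q_i}) = \ell_W(\ma{p_i})$, while as group elements both $(\ma{p_m}w_{I_m}\inv)\ma{q_i}$ and $\ma{p_m}(w_{I_m}\inv \ma{q_i})$ equal $\ma{p_i}$ (both are the concatenated word $y_0 \cdots y_{m-1} w_{I_m} z_{m+1} \cdots z_i$); hence the lengths add in both groupings and \eqref{maxsgeneral} follows. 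The only genuine obstacle is bookkeeping: one must confirm that the three centered formulas share a common set of $y_j$, $z_j$ and a common center $m$, so that $\ma{p_i}$ is literally the concatenation of the reduced words for $\ma{p_m}w_{I_m}\inv$, $w_{I_m}$, and $w_{I_m}\inv\ma{q_i}$; once that alignment is verified the conclusion is immediate. As an alternative route I could instead induct on $i$ from the base case $i=m$ (where $\ma{q_m}=w_{I_m}$, so $(\ma{p_m}w_{I_m}\inv).w_{I_m}=\ma{p_m}$), using Corollary~\ref{cor:updatemax} to update $\ma{p_i}$ and $\ma{q_i}$ in lockstep at each step, trivially when $I_i \supset I_{i+1}$ and by right-multiplication by $w_{I_i}\inv w_{I_{i+1}}$ when $I_i \subset I_{i+1}$.
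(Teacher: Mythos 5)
Your proposal is correct and follows essentially the same route as the paper: invoke locality (Proposition~\ref{prop:locality2}) to get reducedness of the relevant subwords, then apply the product formula \eqref{mapyz} centered at $m$ to $I_{[0,i]}$ and $I_{[m,i]}$ (and, in your version, also to $I_{[0,m]}$, which the paper obtains by truncating the first formula), and splice the length-additive factorizations together. The alignment of the $y_j$, $z_j$ across subwords that you flag as the ``only genuine obstacle'' is exactly the point the paper relies on, and it holds because these elements depend only on consecutive pairs $(I_{j-1},I_j)$.
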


\begin{proof} By Proposition \ref{prop:locality2}, $I_{[m,n]}$ is reduced, and the same for any other interval in $I_{\bullet}$. Pick $i$ such that $m \le i \le n$. Because $I_{[0,i]}$ is a reduced expression for $p_i$, \eqref{mapyz} centered at $m$ gives the formula
\begin{equation} \ma{p_i} = y_0 . \cdots . y_{m_1} . w_{I_m} . z_{m+1} . \cdots . z_i, \end{equation}
from which we deduce
\[ \ma{p_i} = \ma{p_m} . z_{m+1} . \cdots . z_i. \]
Meanwhile, \eqref{mapyz} for $I_{[m,i]}$ centered at $m$ gives
\[ \ma{q_i} = w_{I_m} . z_{m+1} . \cdots . z_i. \]
From these observations \eqref{maxsgeneral} follows easily. \end{proof}

\begin{rem} One should not expect nice formulas which compare the forward path of a non-reduced expression with its contiguous subwords. For example, when $S$ is finitary, take
any expression $I_{\bullet} = [I_0, \ldots, I_d]$ and extend it to the non-reduced expression $[[S, I_0, \ldots, I_d, S]]$. Every double coset in the forward path has the longest
element $w_S$ as its maximal element, so one loses effectively all the information in $I_{\bullet}$. \end{rem}

\subsection{Concatenation}

\begin{prop} \label{prop:concatreduced} Let $I_\bullet= [I_0,\ldots,I_m] \expr p$ and $K_\bullet=[K_0,\ldots,K_n] \expr q$ be reduced expressions with $K_0 = I_m =: J$. Let
\begin{equation} I_{\bullet} \circ K_{\bullet} := [I_0,\ldots,I_m=K_0,\ldots,K_n] \end{equation}
denote the concatenation. Then
\begin{equation} \label{yay} I_{\bullet} \circ K_{\bullet} \text{ is reduced } \iff \ma{p} w_J\inv \ma{q} = (\ma{p} w_J\inv) . \ma{q} = \ma{p} . (w_J\inv \ma{q})  \iff \ma{p} w_J\inv \ma{q} = \ma{p * q}. \end{equation}
\end{prop}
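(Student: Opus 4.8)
The plan is to apply the intrinsic reducedness criterion of Theorem~\ref{thm:betterred} to the concatenated expression $L_\bullet := I_\bullet \circ K_\bullet = [L_0, \ldots, L_{m+n}]$, reading it centered at the join index $m$ (where $L_m = J$). First I would record that the endpoint of $L_\bullet$ is $p * q$: concatenation of expressions realizes composition in $\SC$ (this is built into Proposition~\ref{prop:expressesinSC} and exploited in Proposition~\ref{prop:locality1}), so $\ma{p * q} = a_m$ by Proposition~\ref{thm:betterforward}. Writing $y_i, z_i$ for the elements attached to $L_\bullet$, the $y_i$ for $i < m$ depend only on $L_i = I_i$ and $L_{i+1} = I_{i+1}$ and hence coincide with the $y_i$ of $I_\bullet$; symmetrically the $z_{m+j}$ coincide with the $z_j$ of $K_\bullet$. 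Setting $A := y_0 * \cdots * y_{m-1}$ and $B := z_{m+1} * \cdots * z_{m+n}$, Proposition~\ref{thm:betterforward} applied to $I_\bullet$ (centered at its last index) and to $K_\bullet$ (centered at its first index) yields $A * w_J = \ma{p}$ and $w_J * B = \ma{q}$.

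Next I would feed in the hypothesis that $I_\bullet$ and $K_\bullet$ are reduced. By Theorem~\ref{thm:betterred} the lengths add in each of these two $*$-products, so $\ma{p} = A . w_J$ and $\ma{q} = w_J . B$; in particular, as group elements, $A = \ma{p} w_J\inv$ and $B = w_J\inv \ma{q}$. Associativity of $*$ then presents the centered expression in two symmetric ways,
\[ \ma{p * q} = a_m = A * w_J * B = \ma{p} * (w_J\inv \ma{q}) = (\ma{p} w_J\inv) * \ma{q}. \]

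The crux is now a single length comparison at the join, governed by Lemma~\ref{dotvsstar}. By Theorem~\ref{thm:betterred}, $L_\bullet$ is reduced if and only if the lengths add in $a_m$; since the two blocks $A.w_J = \ma{p}$ and $w_J.B = \ma{q}$ already factor with additive lengths, this is equivalent to the lengths adding in $\ma{p} * (w_J\inv \ma{q})$, i.e.\ to $\ma{p} * (w_J\inv \ma{q}) = \ma{p} . (w_J\inv \ma{q})$. By Lemma~\ref{dotvsstar} the group product $\ma{p}(w_J\inv \ma{q}) = \ma{p} w_J\inv \ma{q}$ equals the $*$-product $\ma{p} * (w_J\inv \ma{q}) = a_m = \ma{p * q}$ precisely when those lengths add. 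This simultaneously delivers the length-additivity $\ma{p} . (w_J\inv \ma{q})$ (the right half of the first condition) and the identity $\ma{p} w_J\inv \ma{q} = \ma{p * q}$ (the third condition). Running the mirror-image argument with $a_m = (\ma{p} w_J\inv) * \ma{q}$ produces the remaining equality $(\ma{p} w_J\inv) . \ma{q}$. Hence all three statements in \eqref{yay} are equivalent to ``the lengths add in $a_m$'', and thus to the reducedness of $L_\bullet$.

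I expect the only genuine obstacle to be bookkeeping discipline: one must keep the $*$-product $a_m$ (which always equals $\ma{p*q}$) rigorously distinct from the group-theoretic product $\ma{p} w_J\inv \ma{q}$, and use the reducedness of the two halves precisely to collapse the central $w_J$, so that the sole remaining obstruction to reducedness of $L_\bullet$ is the single instance of Lemma~\ref{dotvsstar} at the join. The identification of the $y_i$ and $z_j$ of $L_\bullet$ with those of its two halves is routine but should be stated, as it is what justifies $A * w_J = \ma{p}$ and $w_J * B = \ma{q}$.
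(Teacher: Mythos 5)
Your proof is correct and follows essentially the same route as the paper's: both reduce the claim to Theorem~\ref{thm:betterred} by observing that the $y_i,z_j$ data of the concatenation restricts to that of the two halves, and both close the loop with $\ma{p*q}=\ma{p}*\ma{q}$ and Lemma~\ref{dotvsstar} at the join. The only (cosmetic) difference is that you center the reading at the join index $m$ while the paper centers at $0$ and identifies $\ma{p}*(w_J\inv\ma{q})$ directly with the $a_0$-product of $I_\bullet\circ K_\bullet$; Theorem~\ref{thm:betterred} makes these readings interchangeable.
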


\begin{proof} Let us use Theorem \ref{thm:betterred} to prove the first implication. Write $z_i = w_{I_{i-1} \cap I_{i}}\inv w_{I_i}$ and $z'_i = w_{K_{i-1} \cap K_{i}}\inv
w_{K_i}$, and use \eqref{mapyz} centered at $k=0$ to describe both $\ma{p}$ and $\ma{q}$. Then \[ \ma{p} * (w_J\inv \ma{q}) = (w_{I_0} . z_1 . \cdots . z_m) * (w_J\inv w_J . z'_1 .
\cdots . z'_n) = (w_{I_0} . z_1 . \cdots . z_m) * (z'_1 . \cdots . z'_n), \] which is exactly the product in \eqref{adef} for $I_{\bullet} \circ K_{\bullet}$. Thus the lengths add
if and only if $I_{\bullet} \circ K_{\bullet}$ is reduced.

Now we prove the second implication.
By Proposition \ref{prop:expressesinSC}, we know that $\ma{p * q} = \ma{p} * \ma{q}$. Since $J$ is in the right descent set of $\ma{p}$ and in the left descent set of $\ma{q}$, we have
\[ (\ma{p} w_J\inv) * \ma{q} = \ma{p} * \ma{q} = \ma{p} * (w_J\inv \ma{q}). \]
Finally, Lemma \ref{dotvsstar} tells us that $\ma{p} w_J\inv \ma{q} = (\ma{p} w_J\inv) * \ma{q}$ if and only if $\ma{p} w_J\inv \ma{q} = (\ma{p} w_J\inv) . \ma{q}$.
\end{proof}

\subsection{Extension to the longest element} \label{ss:extension}

In ordinary Coxeter theory, a reduced expression for $w \in W$ can be extended to a longer reduced expression if and only if $w$ is not the longest element. In particular, a reduced expression for $w$ can always be extended to a reduced expression for the longest element. We prove the analogous statements here. 

\begin{prop}\label{p to w0}
Let $(W,S)$ be a finite Coxeter group with longest element $w_S$. Let $I_{\bullet} = [I_0, \ldots, I_m = J] \expr p$ be reduced. Then there exists an expression $K_{\bullet} = [J = K_0, \ldots, K_n]$ such that $I_{\bullet} \circ K_{\bullet}$ is a reduced expression for the $(I_0, K_n)$-coset containing $w_S$.  In fact, we can find such a reduced expression where $K_n = K'$ if and only if $K' \subset K := \rightdes(w_J \ma{p}\inv w_S)$. \end{prop}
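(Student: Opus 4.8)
The plan is to reduce the entire statement to three earlier results: the concatenation criterion (Proposition~\ref{prop:concatreduced}), the existence of reduced expressions (Proposition~\ref{p has rex}), and the maximality criterion \eqref{beingmaximal}. First I would note that if $I_\bullet \circ K_\bullet$ is reduced then, by locality of reduced expressions (Proposition~\ref{prop:locality2}), its contiguous subword $K_\bullet$ is reduced; so it suffices to work with a reduced $K_\bullet$, expressing some $(J,K_n)$-coset $q$. Since $w_S$ is the longest element of $W$, the coset $p*q$ expressed by $I_\bullet \circ K_\bullet$ contains $w_S$ exactly when $\ma{p*q} = w_S$. By Proposition~\ref{prop:concatreduced}, $I_\bullet \circ K_\bullet$ is reduced iff $\ma{p} w_J\inv \ma{q} = (\ma{p} w_J\inv).\ma{q} = \ma{p*q}$. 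Combining, $I_\bullet \circ K_\bullet$ is a reduced expression for the coset containing $w_S$ precisely when $\ma{p} w_J\inv \ma{q} = w_S$ with the lengths adding.

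Second I would solve this for $\ma{q}$. Cancelling in the group $W$ forces $\ma{q} = w_J \ma{p}\inv w_S$. Setting $u = \ma{p} w_J\inv$, so that $\ma{p} = u.w_J$ with $u \in W^J$ and $\ma{q} = u\inv w_S$, the lengths in $u.\ma{q} = w_S$ add automatically, since $\ell(u) + \ell(u\inv w_S) = \ell(u) + (\ell(w_S) - \ell(u)) = \ell(w_S)$. Hence the task becomes: produce a reduced $(J,K_n)$-expression $K_\bullet$ whose endpoint $q$ has maximal element $w_J \ma{p}\inv w_S$. Conversely, once $\ma{q}$ takes this value, Proposition~\ref{prop:concatreduced} guarantees that $I_\bullet \circ K_\bullet$ is reduced and expresses the coset containing $w_S$.

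Third I would identify the possible endpoints $K_n = K'$. By the maximality criterion \eqref{beingmaximal}, a $(J,K')$-coset with maximal element $z := w_J \ma{p}\inv w_S$ exists iff $J \subset \leftdes(z)$ and $K' \subset \rightdes(z)$. The second condition reads $K' \subset K$ by the very definition of $K$. For the first, I would use $\ell(w_S x) = \ell(w_S) - \ell(x)$ to compute $\rightdes(w_S u) = S \setminus \rightdes(u)$, and hence $\leftdes(z) = \leftdes(u\inv w_S) = \rightdes(w_S u) = S \setminus \rightdes(u)$; since $u \in W^J$ has $\rightdes(u) \cap J = \emptyset$, this set contains $J$. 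So the first condition always holds, and such a coset $q$ exists (and is unique) iff $K' \subset K$. Proposition~\ref{p has rex} then furnishes the required reduced $(J,K')$-expression $K_\bullet$. This proves the ``if'' direction and, taking for instance $K' = \mt$ or $K' = K$, the existence statement; the ``only if'' direction is exactly the forcing $\ma{q} = z$ from the second step, which yields $K' = K_n \subset \rightdes(z) = K$.

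The step I expect to be most delicate is the descent computation $J \subset \leftdes(w_J \ma{p}\inv w_S)$ in the third paragraph: everything hinges on the descent-flipping behaviour of $w_S$ and on $u = \ma{p} w_J\inv$ lying in $W^J$. I would therefore take care to record the identity $\rightdes(w_S u) = S \setminus \rightdes(u)$ cleanly, and to apply \eqref{beingmaximal} to the correct pair $(J,K')$ (rather than to $(I_0,K')$), since it is the source object $K'$ of the coset $q$ that is being constrained.
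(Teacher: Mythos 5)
Your proposal is correct and follows essentially the same route as the paper's proof: reduce via Proposition~\ref{prop:concatreduced} to finding a $(J,K')$-coset $q$ with $\ma{q}=w_J\ma{p}\inv w_S$, then verify $J\subset\leftdes(w_J\ma{p}\inv w_S)$ using the descent-complementation property of $w_S$ together with $\leftdes(x)=\rightdes(x\inv)$ and the fact that $\ma{p}w_J\inv$ is minimal in its right $W_J$-coset, and finally invoke Proposition~\ref{p has rex} and the additivity of lengths. The only (immaterial) difference is that you phrase the complementation as $\rightdes(w_S u)=S\setminus\rightdes(u)$ where the paper uses $\leftdes(x)\sqcup\leftdes(xw_S)=S$.
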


\begin{proof} Suppose that $K_{\bullet} \expr q$ and $p * q$ has maximal element $w_S$. By Proposition \ref{prop:concatreduced}, this happens if and only if $\ma{p} w_J\inv .
\ma{q} = w_S$. In particular $\ma{q} = w := w_J \ma{p}\inv w_S$. Let $K = \rightdes(w)$. If $K' \not\subset K$, then $w$ can not be the maximal element of any $(I_0,K)$
coset, so no such $q$ exists.

Now suppose $K' \subset \rightdes(w)$. We claim that $w$ is maximal in some $(J,K')$-coset $q$. By \eqref{beingmaximal}, it is equivalent to prove that $J \subset \leftdes(w)$ and $K' \subset \rightdes(w)$. The latter holds by assumption. We prove that $J \subset \leftdes(w)$ using two standard facts about descent sets.

The first fact is that, for any $x \in W$ one has
\begin{equation} \leftdes(x) \sqcup \leftdes(x w_S) = S. \end{equation}
In other words, the left descent sets of $x$ and $x w_S$ are complementary. This is because multiplication by $w_S$ will act on inversion sets by taking the complement. Thus $\leftdes(w)$ is complementary to $\leftdes(w_J \ma{p}\inv)$.

The second fact is that $\leftdes(x) = \rightdes(x\inv)$. Thus $\leftdes(w_J \ma{p}\inv) = \rightdes(\ma{p} w_J\inv)$. Since $\ma{p}$ has $J$ in its right descent set, $\ma{p}
w_J\inv$ is minimal in its right $W_J$ coset, so $J \cap \rightdes(\ma{p} w_J\inv) = \mt$. Thus the complement $\leftdes(w)$ contains $J$.

So, let $q$ be the $(J,K')$-coset such that $\ma{q} = w$. By Proposition \ref{p has rex}, there is some reduced expression $K_{\bullet}$ for $q$. It may help to write $w = (\ma{p} w_J\inv)\inv w_S$, in that this accurately reflects the fact that lengths add in the expression $(\ma{p} w_J\inv) . w = w_S$. Because the lengths add, \eqref{yay} states that the concatenation is reduced, and that $p * q$ has maximal element $w_S$.
\end{proof}

\begin{cor} Let $(W,S)$, $I_{\bullet}$, $p$, and $K$ be as in Proposition \ref{p to w0}. Then there is a reduced expression extending $I_{\bullet}$ which passes through the
parabolic subgroup $S$ if and only if $\ma{p} = w_J$. \end{cor}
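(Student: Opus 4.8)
The plan is to derive the corollary from three results already established: the extension result of Proposition~\ref{p to w0}, the concatenation criterion of Proposition~\ref{prop:concatreduced}, and the locality of reduced expressions (Proposition~\ref{prop:locality2}). First I record the meaning of the hypothesis: a reduced extension ``passes through the parabolic subgroup $S$'' precisely when $S$ occurs as one of the subsets of the extending expression. At such an index the corresponding coset in the forward path is the unique $(I_0,S)$-coset, namely all of $W$, whose maximal element is $w_S$. Thus the whole statement reduces to detecting, via the maximal element, when a reduced extension can reach the full group.

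For the ``if'' direction I would assume $\ma{p} = w_J$ and apply Proposition~\ref{p to w0} directly. In the notation there we have $w = w_J \ma{p}\inv w_S = w_J w_J\inv w_S = w_S$, so $K = \rightdes(w) = \rightdes(w_S) = S$. Since trivially $S \subset K = S$, Proposition~\ref{p to w0} furnishes a reduced extension $I_\bullet \circ K_\bullet$ with $K_n = S$, which passes through $S$ at its final index.

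For the ``only if'' direction I would assume $I_\bullet \circ K_\bullet$ is a reduced extension with $K_i = S$ for some $i$, and aim to force $\ma{p} = w_J$ using a length comparison. By locality (Proposition~\ref{prop:locality2}) the contiguous subword $[I_0, \ldots, J = K_0, K_1, \ldots, K_i = S]$ is again reduced. Writing $q'$ for the $(J,S)$-coset expressed by $[K_0, \ldots, K_i]$, we have $\ma{q'} = w_S$. Since the concatenation $I_\bullet \circ [K_0,\ldots,K_i]$ is reduced, Proposition~\ref{prop:concatreduced} gives
\[ \ma{p}\, w_J\inv w_S = (\ma{p}\, w_J\inv) . w_S, \]
so that $\ell_W(\ma{p}\, w_J\inv w_S) = \ell_W(\ma{p}\, w_J\inv) + \ell_W(w_S)$. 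Because $w_S$ is the longest element of $W$, the left-hand side is at most $\ell_W(w_S)$; hence $\ell_W(\ma{p}\, w_J\inv) = 0$, i.e. $\ma{p} = w_J$.

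I expect no serious obstacle. The only point needing care is the bookkeeping in the ``only if'' direction: one must extract exactly the contiguous subword from $I_0$ through the first occurrence of $S$ and confirm it is of the form $I_\bullet \circ (\text{reduced})$ so that Proposition~\ref{prop:concatreduced} applies, and this is immediate from Proposition~\ref{prop:locality2}. The genuine content is the maximality of $\ell_W(w_S)$, which is what converts the combinatorial condition ``passing through $S$'' into the equation $\ma{p} = w_J$.
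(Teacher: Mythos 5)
Your proof is correct and follows essentially the same route as the paper: truncate at the occurrence of $S$ via locality (Proposition~\ref{prop:locality2}) and then invoke the extension/concatenation machinery. The only cosmetic difference is in the ``only if'' direction, where you apply Proposition~\ref{prop:concatreduced} and the maximality of $\ell_W(w_S)$ directly, while the paper routes the same fact through the descent-set criterion $K=\rightdes(w_J\ma{p}\inv w_S)=S$ of Proposition~\ref{p to w0}; these are equivalent repackagings of the same elementary property of the longest element.
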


\begin{proof} If there is a reduced expression passing through $S$, then by truncating (see Proposition \ref{prop:locality2}) we obtain a reduced expression ending in $S$. By necessity, the maximal element of its endpoint is $w_S$. Proposition \ref{p to w0} says that such a reduced expression exists if and only if $K = S$. But then $w_J \ma{p}\inv w_S = w_S$, or equivalently $\ma{p} = w_J$. \end{proof}

\begin{ex} Suppose that $S = \{s,t\}$ and $m_{st} = 3$. The reduced expression $[\mt, s, \mt] \expr \{s\}$ can be extended to $[\mt, s, \mt, t, \mt, s] \expr \{st,sts\}$ and further to $[\mt,s,\mt,t,\mt,s,\mt] \expr \{sts\}$. However, $[\mt,s,\mt]$ can not be extended to any reduced expression containing the finitary subset $st$. Meanwhile, $[\mt,s] \expr \{e,s\}$ can be extended to the reduced expression $[\mt,s,st]$. \end{ex}

\subsection{Reversing the word} \label{ss:reversing}

In this section we use the fact that $(W,*,S)$ has an antiinvolution which sends $w \mapsto w\inv$. The notation $w\inv$ represents the inverse element in the group $W$; the Coxeter monoid $(W,*,S)$ does not have inverses.

Given an expression $I_\bullet = [I_0,\cdots,I_d]$, the sequence $[I_d,\cdots,I_0]$ is an expression and is denoted by $I\inv_\bullet$. If $p$ is a $(J,I)$-coset, then the unique $(I,J)$-coset with maximal element $\ma{p}\inv$ agrees, as a set, with $p\inv := \{w\inv\ |\ w\in p\}$. 

\begin{prop}\label{invrex}
Let $I_\bullet \expr p$. Then $I\inv_\bullet\expr p\inv$, and $I\inv_\bullet$ is reduced if and only if $I_\bullet$ is reduced. 
\end{prop}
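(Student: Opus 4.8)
The plan is to treat the two assertions separately, handling ``$I^{-1}_\bullet \expr p^{-1}$'' first and then the reducedness equivalence, in both cases exploiting how reversal interacts with the antiinvolution $w \mapsto w^{-1}$ of $(W,*,S)$.

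For the first assertion I would start from the formula $\ma{p} = w_{I_0} * w_{I_1} * \cdots * w_{I_d}$ of Corollary \ref{cor:starformap}. Applying the antiinvolution $\tau$ (which reverses $*$-products and sends each $w \mapsto w^{-1}$) and using $\tau(w_{I_j}) = w_{I_j}^{-1} = w_{I_j}$, I obtain $\ma{p}^{-1} = \tau(\ma{p}) = w_{I_d} * \cdots * w_{I_0}$. By Corollary \ref{cor:starformap} applied to $I^{-1}_\bullet = [I_d, \ldots, I_0]$, the right-hand side is exactly the maximal element of the $(I_d, I_0)$-coset expressed by $I^{-1}_\bullet$. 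Thus $I^{-1}_\bullet$ expresses the coset whose maximal element is $\ma{p}^{-1}$, which by the remark preceding the statement is $p^{-1}$. This settles the first claim with essentially no computation.

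For the reducedness equivalence, the cleanest route is through the multistep criterion \eqref{hkdef} (equivalent to reducedness of the single-step expression by Definition \ref{def:redexpintro}, and to the intrinsic criterion by Lemma \ref{lem:sameredasintro}). Reversing a single-step expression reverses its multistep form: local maxima remain maxima and local minima remain minima, so $[[I_0 \subset K_1 \supset \cdots \supset I_m]]$ becomes $[[I_m \subset K_m \supset \cdots \supset I_0]]$. I would then observe that the element appearing inside the length function on the left of \eqref{hkdef} for the reversed expression, namely $w_{K_m} w_{I_{m-1}}^{-1} w_{K_{m-1}} \cdots w_{I_1}^{-1} w_{K_1}$, is precisely the inverse of the element $w_{K_1} w_{I_1}^{-1} \cdots w_{K_m}$ for the original (each $w_J$ being an involution), hence has the same length. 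The right-hand side of \eqref{hkdef} is an alternating sum of the lengths $\ell(w_{K_j})$ and $\ell(w_{I_j})$, which is unchanged when its terms are listed in the reverse order. Therefore \eqref{hkdef} holds for $I^{-1}_\bullet$ if and only if it holds for $I_\bullet$, giving the equivalence.

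The argument has no serious obstacle; its only delicate point is the bookkeeping of the index reversal together with the conventions in Definition \ref{def:redexpintro} for expressions that begin or end with a descent (so that $I_0 = K_1$ or $I_m = K_m$), which I would check does not affect either side of \eqref{hkdef}. As an alternative to the multistep computation, one can argue purely through the length function: reversal swaps up-steps and down-steps, so $\ell^{\pm}(I^{-1}_\bullet) = \ell^{\mp}(I_\bullet)$ and hence $\ell(I^{-1}_\bullet) = \ell(I_\bullet)$; meanwhile \eqref{maxformula} gives $\ell^+(p^{-1}) = \ell^-(p)$ and $\ell^-(p^{-1}) = \ell^+(p)$, so $\ell(p^{-1}) = \ell(p)$. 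Then Proposition \ref{p.rexlength}, which characterizes reducedness by the equality $\ell(I_\bullet) = \ell(p)$, immediately transfers reducedness from $I_\bullet$ to $I^{-1}_\bullet$.
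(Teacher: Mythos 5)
Your proposal is correct and follows essentially the same route as the paper: the first claim via the antiinvolution applied to $\ma{p} = w_{I_0} * \cdots * w_{I_d}$, and the reducedness equivalence by applying the antiinvolution to a length-additive factorization of $\ma{p}$ and observing that it becomes the corresponding factorization for the reversed expression. The paper phrases the second step through the $a_0$ criterion of Theorem \ref{thm:betterred} rather than the multistep condition \eqref{hkdef}, but Lemma \ref{lem:sameredasintro} makes these the same argument, and your alternative via $\ell^{\pm}$ and Proposition \ref{p.rexlength} is also valid.
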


\begin{proof}
From the formula \eqref{starformap} it is clear that $I\inv_{\bullet} \expr p\inv$. Applying the antiinvolution $w \mapsto w\inv$ to the expression $a_0$ for $\ma{p}$ in \eqref{adef} (centered at the first index), we get the expression for $\ma{p\inv}$ (centered at the last index). The lengths add in $a_0$ if and only if they add after applying the antiinvolution, so Theorem \ref{thm:betterred} implies the result. \end{proof}

\subsection{Addable elements}

Let $I_{\bullet}$ be a reduced expression. We call a simple reflection $s \notin I$ \emph{addable} if $I_{\bullet} \circ [I,Is]$ is still reduced. The next proposition states
that if both $s$ and $t$ are addable individually, then they are addable together.

\begin{prop} \label{prop:addable} Suppose that $I_{\bullet} := [J = I_0, I_1, \ldots, I_d = I]$ is a reduced expression, and $s, t \notin I$ with $Ist$ finitary. Then the following are equivalent. \begin{enumerate} \item $I_{\bullet} \circ [I, Is, Ist]$ is reduced.
	\item $I_{\bullet} \circ [I,Is]$ is reduced and $I_{\bullet} \circ [I,It]$ is reduced. \end{enumerate}
\end{prop}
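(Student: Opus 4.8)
The plan is to reduce both conditions to a single statement about the minimal coset representative $v := \ma{p}\, w_I\inv$, where $p$ denotes the $(J,I)$-coset expressed by $I_{\bullet}$. Since $\ma{p}$ is maximal in its coset, we have $I \subset \rightdes(\ma{p})$ by \eqref{beingmaximal}, so $v \in W^I$ and $\ma{p} = v . w_I$ with the lengths adding. First I would record the (easy) fact that each of $[I,Is]$, $[I,It]$, and $[I,Is,Ist]$ is itself a reduced expression for the corresponding minimal coset: these consist only of up-steps starting from the minimal $(I,I)$-coset, so along the forward path one has $\mi{}=e$ throughout and the redundancy is constantly equal to $I$, whence both reducedness conditions of Definition \ref{def:reduced} hold trivially. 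Their endpoints are the minimal $(I,Is)$-, $(I,It)$-, and $(I,Ist)$-cosets, with maximal elements $w_{Is}$, $w_{It}$, and $w_{Ist}$ respectively (all finitary since $Ist$ is).

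Next I would apply Proposition \ref{prop:concatreduced} to each concatenation, taking the shared endpoint subset (called $J$ there) to be our $I$. The criterion \eqref{yay} says that each concatenation is reduced if and only if the lengths add in $(\ma{p}\, w_I\inv) . \ma{q}$, where $\ma{q}$ is the maximal element of the appended piece. Substituting $\ma{p}\, w_I\inv = v$ together with the maximal elements from the previous step, the three reducedness conditions become, respectively,
\[ \ell_W(v\, w_{Is}) = \ell_W(v) + \ell(Is), \quad \ell_W(v\, w_{It}) = \ell_W(v) + \ell(It), \quad \text{and} \quad \ell_W(v\, w_{Ist}) = \ell_W(v) + \ell(Ist). \]

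I would then invoke the standard fact that, for a finitary $K$, the lengths add in $v\, w_K$ if and only if $v \in W^K$, if and only if $v$ has no right descent lying in $K$. Because $v \in W^I$ already has no right descent in $I$, the three displayed conditions simplify, respectively, to $vs > v$; to $vt > v$; and to ``$vs>v$ and $vt>v$.'' Hence condition (1) of the proposition is equivalent to ``$vs>v$ and $vt>v$,'' while condition (2) is equivalent to the conjunction of $vs>v$ and $vt>v$ — literally the same statement — giving (1) $\iff$ (2).

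The substantive input is the translation of reducedness of a concatenation into length-additivity, which is precisely Proposition \ref{prop:concatreduced}; once this is available, the elementary parabolic identity $W^{Ist} = W^{Is} \cap W^{It}$ (for elements of $W^I$) finishes the argument immediately, so there is no genuine obstacle. The only points requiring care are correctly identifying the maximal elements $w_{Is}, w_{It}, w_{Ist}$ of the appended expressions and confirming that those short expressions are themselves reduced, as both are needed for Proposition \ref{prop:concatreduced} to apply.
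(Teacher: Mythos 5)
Your proof is correct, but it takes a genuinely different route from the paper's. The paper proves $(1)\Rightarrow(2)$ by combining the up-up relation $[I,Is,Ist]\expr[I,It,Ist]$ with locality of reduced expressions (Proposition \ref{prop:locality2}), and proves $(2)\Rightarrow(1)$ by working directly with Williamson's Definition \ref{def:reduced}: it tracks minimal elements via \eqref{beingminimal} and checks the redundancy condition \eqref{sameK} for the coset $m$ containing $\mi{p}$. You instead treat both directions uniformly: each of the three concatenations is fed into Proposition \ref{prop:concatreduced}, which converts reducedness into length-additivity of $v\,w_K$ for $v=\ma{p}w_I\inv$ and $K\in\{Is,It,Ist\}$, and then the standard equivalence ``lengths add in $v\,w_K$ iff $\rightdes(v)\cap K=\mt$'' (which the paper itself invokes, e.g.\ before Lemma \ref{lem:easyrex}) collapses everything to the single statement ``$vs>v$ and $vt>v$.'' Your approach buys symmetry and avoids all bookkeeping with minimal elements and redundancy, at the cost of routing through the concatenation criterion rather than exhibiting the role of the braid relations; the paper's argument, by contrast, showcases locality and the up-up relation in the easy direction. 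All the ingredients you use (reducedness of the short up-only expressions, their maximal elements, and Proposition \ref{prop:concatreduced}) are correctly verified, so there is no gap.
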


\begin{proof} Suppose that $I_{\bullet} \circ [I,Is,Ist]$ is reduced. By Proposition \ref{prop:locality2}, $I_{\bullet} \circ [I,Is]$ is reduced. Note that $[I,Is,Ist] \expr [I,It,Ist]$ and both are reduced expressions. Then by Proposition \ref{prop:locality2}, both $I_{\bullet} \circ [I,It,Ist]$ and $I_{\bullet} \circ [I,It]$ are reduced as well.
	
Let $I_{\bullet} \expr p$. Now suppose that $I_{\bullet} \circ [I,Is] \expr q$ and $I_{\bullet} \circ [I,It] \expr r$ are both reduced. Note that $\mi{p} = \mi{q} = \mi{r}$. Let
$m$ be the $(J,Ist)$-coset containing $\mi{p}$. By \eqref{beingminimal} applied to $q$ and $r$, $\leftdes(\mi{p}) \cap J = \mt$ and $\rightdes(\mi{p}) \cap Ist = \mt$.
Thus (again by \eqref{beingminimal}), $\mi{p} = \mi{m}$. By \eqref{sameK} applied to $I_{\bullet} \circ [I,Is]$ and $I_{\bullet} \circ [I,It]$, respectively, we see that $s \notin \mi{q}^{-1} J \mi{q}$ and $t \notin \mi{r}^{-1} J \mi{r}$. Thus the redundancy of $m$ agrees with the redundancy of $p$ (and $q$ and $r$). Hence $[q,m]$ is reduced, so $I_{\bullet} \circ [I,Is,Ist]$ is reduced. \end{proof}

\subsection{Going in}

Let $I_{\bullet} = [I_0, \ldots, I_d]$ be an expression. We think of \emph{going in} as the statement that, if $I_0 \subset I_1$ (resp. $I_d \subset I_{d-1}$), then removing $I_0$ (resp. $I_d$) does not change the maximal elements in the forward path, nor does it alter whether the expression is reduced. We state this in two separate lemmas, mostly for notational reasons.

\begin{lem}\label{in end}
Let $I_\bullet = [I_0,\ldots, I_d]$ be an expression such that $I_d\subset I_{d-1}$. Let $p_{\bullet}$ be its forward path. Let $I_{[0,d-1]} := [I_0, \ldots, I_{d-1}]$, with forward path $q_{\bullet} = [q_0, \ldots, q_{d-1}]$. Then $p_i = q_i$ for all $0 \le i \le d-1$, and $\ma{p_d} = \ma{q_{d-1}}$. Moreover, $I_{[0,d-1]}$ is reduced if and only if $I_{\bullet}$ is reduced.
\end{lem}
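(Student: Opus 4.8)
The plan is to exploit two structural facts that are already available: the forward path is a deterministic left-to-right construction, and reducedness can be checked one index at a time, with down-steps being automatically reduced.

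First I would record that the forward path is built recursively from the left: $p_0$ is the $(J,I_0)$-coset containing $e$, and each subsequent $p_i$ is determined from $p_{i-1}$ together with the pair $(I_{i-1},I_i)$ — as the unique $(J,I_i)$-coset containing $p_{i-1}$ when $I_{i-1}\subset I_i$, and as the maximal $(J,I_i)$-coset inside $p_{i-1}$ when $I_{i-1}\supset I_i$ (Definition \ref{good subex}). Since the first $d-1$ steps of this recursion involve only $I_0,\ldots,I_{d-1}$, which are exactly the sets appearing in $I_{[0,d-1]}$, the two forward paths are forced to coincide: $p_i=q_i$ for all $0\le i\le d-1$.

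Next, because $I_d\subset I_{d-1}$ the final step is a down-step, so by the definition of a forward path $p_d$ is maximal in $p_{d-1}$, that is $\ma{p_d}=\ma{p_{d-1}}$. Combined with $p_{d-1}=q_{d-1}$ this yields $\ma{p_d}=\ma{q_{d-1}}$, as claimed. For the reducedness equivalence I would switch to the index-by-index formulation of Definition \ref{defn:pqred}: $I_\bullet$ is reduced iff $[p_i,p_{i+1}]$ is reduced for every $0\le i<d$. Since the step at index $d-1$ is a down-step, $[p_{d-1},p_d]$ is reduced automatically (the first clause of Definition \ref{defn:pqred}). Hence $I_\bullet$ is reduced iff it is reduced at all indices $0\le i\le d-2$. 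Using $p_i=q_i$ for $i\le d-1$, reducedness at each such index $i$ is literally the same condition for $I_\bullet$ as for $I_{[0,d-1]}$, which gives the desired equivalence in both directions.

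The main point of care — rather than a genuine obstacle — is to invoke the correct characterization of reducedness. Working directly from the forward-path definition would require re-examining the conditions \eqref{reducedminimal} and \eqref{sameK}; instead, phrasing reducedness as ``reduced at every index'' via Definition \ref{defn:pqred} lets the terminal down-step be dispatched instantly as automatically reduced, after which everything reduces to the determinism of the forward path already established in the first step.
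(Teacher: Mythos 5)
Your proof is correct and follows essentially the same route as the paper: the coincidence of forward paths is immediate from the left-to-right determinism of the construction, $\ma{p_d}=\ma{p_{d-1}}$ is the definition of a forward path at a down-step, and the reducedness equivalence holds because a down-step imposes no reducedness condition. No gaps.
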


\begin{proof} The statement that $p_i = q_i$ is trivial, because paths are read from left to right. Because $p_{\bullet}$ is a forward path we know that $\ma{p_d} = \ma{p_{d-1}}$. The `moreover' statement holds because there is no condition on reducedness of $[I_{d-1}, I_d]$ when $I_{d-1} \supset I_d$. \end{proof}

\begin{lem}\label{in start}
Let $I_\bullet = [I_0,\ldots, I_d]$ be an expression such that $I_0\subset I_1$. Let $p_{\bullet}$ be its forward path. Let $I_{[1,d]} := [I_1, \ldots, I_d]$, with forward path $q_{\bullet} = [q_1, \ldots, q_d]$. Then
\begin{equation}\label{maxs}
    \ma{p_i} = \ma{q_i} 
\end{equation}
for each $1\leq i\leq d$. In other words, $p_i$ is the maximal $(I_0, I_i)$-coset inside the $(I_1,I_i)$-coset $q_i$, for all $1 \le i \le d$. Moreover,  $I_{[1,d]}$ is reduced if and only if $I_\bullet$ is reduced.
\end{lem}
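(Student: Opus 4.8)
The plan is to reduce everything to the star-product formula for maximal elements of forward paths from Corollary \ref{cor:starformap}, which ignores the left parabolic subgroup in exactly the way needed to compare $p_\bullet$ with $q_\bullet$. First I would record the two formulas $\ma{p_i} = w_{I_0} * w_{I_1} * \cdots * w_{I_i}$ and $\ma{q_i} = w_{I_1} * w_{I_2} * \cdots * w_{I_i}$: the former by applying Corollary \ref{cor:starformap} to the truncation $I_{[0,i]}\expr p_i$, the latter by applying it to $I_{[1,i]}\expr q_i$. Since $I_0 \subset I_1$, the identity $w_{I_0} * w_{I_1} = w_{I_1}$ recalled in the introduction lets me absorb the leading factor, so that $\ma{p_i} = \ma{q_i}$ for all $1 \le i \le d$. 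This is exactly \eqref{maxs}.

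For the reformulation in terms of containment, I would note that $W_{I_0} \subset W_{I_1}$, so every $(I_1, I_i)$-coset is a union of $(I_0, I_i)$-cosets. Writing $p_i = W_{I_0}\ma{p_i}W_{I_i}$ and using $\ma{p_i} = \ma{q_i} \in q_i$, I get $p_i \subset q_i$; and $p_i$ is the maximal $(I_0, I_i)$-coset inside $q_i$ precisely because it shares the maximal element $\ma{p_i} = \ma{q_i}$ with $q_i$.

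The ``moreover'' statement is where a little care with indexing is needed, but no new ideas are required. I would invoke Corollary \ref{cor:updatemax}, which characterizes reducedness of a forward path purely through an update rule on the maximal elements (depending only on whether each step goes up or down, and on the sets $I_{i-1}, I_i$). Reducedness of $I_{[1,d]}$ is the conjunction of these conditions for indices $1 \le i < d$ applied to $q_\bullet$, whereas reducedness of $I_\bullet$ is the conjunction of the same conditions for $0 \le i < d$ applied to $p_\bullet$. Because $\ma{p_i} = \ma{q_i}$ for $i \ge 1$, the conditions for $1 \le i < d$ match term by term. The only discrepancy is the extra condition of $I_\bullet$ at index $0$, namely $\ma{p_1} = \ma{p_0} . (w_{I_0}\inv w_{I_1})$ (the up-step case of Corollary \ref{cor:updatemax}); this holds automatically, since $\ma{p_0} = w_{I_0}$, $\ma{p_1} = w_{I_1}$, and $w_{I_1} = w_{I_0} . (w_{I_0}\inv w_{I_1})$ is the standard length-additive factorization recalled in \S\ref{subsec-cosets}. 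Hence the two reducedness conditions coincide.

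The only real pitfall I anticipate is bookkeeping: verifying that the update condition at the initial up-step is genuinely automatic, so that its presence for $I_\bullet$ but absence for $I_{[1,d]}$ does not disturb the equivalence, and keeping the two index conventions aligned throughout. An equally short alternative route to the ``moreover'' would use Theorem \ref{thm:betterred}: since $I_0 \subset I_1$ forces $y_0 = e$ in \eqref{yzdef}, the product $a_1$ centered at index $1$ for $I_\bullet$ is literally equal to the product centered at the initial index of $I_{[1,d]}$, so length-additivity of one is length-additivity of the other.
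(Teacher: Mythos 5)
Your proof is correct and essentially matches the paper's: the paper's entire proof is that the claim ``follows immediately from Proposition~\ref{thm:betterforward} and Theorem~\ref{thm:betterred} because of the simple observation that $y_0 = e$,'' which is exactly the observation you absorb via $w_{I_0} * w_{I_1} = w_{I_1}$ and which you state explicitly as your alternative route at the end. Your main route through Corollary~\ref{cor:updatemax} for the ``moreover'' (checking that the index-$0$ condition is automatic) is a harmless repackaging of the same argument, and the bookkeeping you worry about does check out.
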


\begin{proof} This follows immediately from Proposition~\ref{thm:betterforward} and Theorem~\ref{thm:betterred} because of the simple observation that $y_0 = e$. \end{proof}

\begin{rem} One could have also proven Lemma \ref{in end} using the observation that $z_d = e$, using a rocket launcher to pop a balloon. However, proving Lemma \ref{in start} directly from Definition \ref{def:reduced} is more difficult. \end{rem}
	
By combining Lemmas \ref{in start} and \ref{in end}, we can effectively embed any (reduced) expression inside a (reduced) expression from $\mt$ to $\mt$, without changing the
maximal elements of the cosets in the forward path.

\begin{defn} Let $I_{\bullet} = [J=I_0, I_1, \ldots, I_d]$ be a $J$-expression, and fix an enumeration $J = \{s_1, \ldots, s_m\}$ of $J$. Define the $\mt$-expression $\iota_J I_{\bullet}$ to be
\begin{equation} \iota_J I_{\bullet} := [\mt, s_1, s_1 s_2, \ldots, J, I_1, \ldots, I_d]. \end{equation}
We think of $\iota_J$ as a function from $J$-expressions to $\mt$-expressions. Note that $\iota_J$ depends on the enumeration of $J$, though this is suppressed from the notation.

Similarly, let $I_{\bullet} = [I_0, I_1, \ldots, I_d = L]$ be an expression ending in $L$, and fix an enumeration $L = \{t_1, \ldots, t_n\}$ of $L$. Define an expression $\kappa_L I_{\bullet}$ ending in $\mt$ to be
\begin{equation} \kappa_L I_{\bullet} := [I_0, \ldots, I_{d-1},  L, \ldots, t_1 t_2, t_1, \mt]. \end{equation}
\end{defn}

\begin{prop}\label{embedding of expression}
Let $J, L\subset S$ be finitary, and pick enumerations of $J$ and $L$. For any expression $I_\bullet = [J = I_0, I_1, \ldots, I_{d-1}, I_d = L]$, one has
\begin{equation} I_{\bullet} \text{ reduced } \iff \iota_J I_{\bullet} \text{ reduced } \iff \kappa_L I_{\bullet} \text{ reduced } \iff \iota_J \kappa_L I_{\bullet} \text{ reduced}. \end{equation}
Furthermore, the maximal elements in the forward paths of these four expressions agree where they overlap, regardless of whether they are reduced or not.
Finally, the endpoints of all four expressions all have the same maximal element (while being four different kinds of double cosets).\end{prop}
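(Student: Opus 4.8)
The plan is to reduce the whole proposition to iterated applications of Lemma~\ref{in start} and Lemma~\ref{in end}, exploiting the fact that the prefix $[\mt, s_1, s_1 s_2, \ldots, J]$ inserted by $\iota_J$ is strictly increasing, while the suffix $[L, \ldots, t_1 t_2, t_1, \mt]$ inserted by $\kappa_L$ is strictly decreasing. Neither operation touches the interior of $I_\bullet$, so the two can be analyzed independently and then combined.

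First I would strip the prefix. In $\iota_J I_\bullet = [\mt, s_1, \{s_1,s_2\}, \ldots, J, I_1, \ldots, I_d]$ the first two entries satisfy $\mt \subset \{s_1\}$, so Lemma~\ref{in start} applies: removing the leading $\mt$ changes neither the maximal elements of the forward path (on the surviving indices) nor whether the expression is reduced. The resulting expression again begins with a strict inclusion $\{s_1\} \subset \{s_1,s_2\}$, so Lemma~\ref{in start} applies once more; iterating $|J|$ times peels off the entire prefix and recovers $I_\bullet$. This yields ``$\iota_J I_\bullet$ reduced $\iff I_\bullet$ reduced,'' with matching forward-path maximal elements wherever the two overlap, even though $\iota_J I_\bullet$ is a $\mt$-expression and $I_\bullet$ a $J$-expression. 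Symmetrically, in $\kappa_L I_\bullet$ the last two entries satisfy $\mt \subset \{t_1\}$, i.e. the final set is contained in the penultimate one, so Lemma~\ref{in end} removes the trailing $\mt$ while recording that the endpoint's maximal element is unchanged; iterating $|L|$ times recovers $I_\bullet$ and gives ``$\kappa_L I_\bullet$ reduced $\iff I_\bullet$ reduced.'' For $\iota_J \kappa_L I_\bullet$ I would simply compose the two strippings (in either order, since they act at opposite ends) to return to $I_\bullet$, which closes the chain of equivalences and the statement about overlapping maximal elements.

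For the final assertion that all four endpoints share one maximal element, I would invoke Corollary~\ref{cor:starformap}: the endpoint maximal element of any $[I_0, \ldots, I_e]$ is $w_{I_0} * \cdots * w_{I_e}$. The increasing prefix contributes $w_{\mt} * w_{s_1} * \cdots * w_J = w_J = w_{I_0}$, using the absorption identity $w_I * w_K = w_K$ for $I \subset K$, and the decreasing suffix contributes $w_L * w_{\{t_1,\ldots,t_{n-1}\}} * \cdots * w_{\mt} = w_L = w_{I_d}$, using $w_K * w_I = w_K$ for $I \subset K$. Hence each of the four star-products collapses to $w_{I_0} * \cdots * w_{I_d} = \ma{p}$, independently of reducedness. (This also falls out of the overlap statement: $\iota_J I_\bullet$ shares its final index with $I_\bullet$, and Lemma~\ref{in end} guarantees the endpoint maximal element of $\kappa_L I_\bullet$ equals that of $I_\bullet$.)

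The only genuine hazard, and the step I expect to handle most carefully, is the bookkeeping: verifying that the hypotheses of Lemmas~\ref{in start} and~\ref{in end} really persist at every intermediate stage of the peeling (each intermediate expression must still begin, resp. end, with a strict inclusion), and interpreting ``where they overlap'' with the correct index shift, since the four expressions have different widths and their forward paths consist of four genuinely different kinds of double cosets even though the maximal elements coincide.
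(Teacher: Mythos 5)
Your proposal is correct and is essentially the paper's own proof, which consists precisely of applying Lemma~\ref{in start} $|J|$ times and Lemma~\ref{in end} $|L|$ times to peel off the increasing prefix and decreasing suffix. The extra care you take with the persistence of the hypotheses and the star-product computation of the endpoint's maximal element via Corollary~\ref{cor:starformap} is sound but not needed beyond what those two lemmas already record.
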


\begin{proof}
Apply Lemma~\ref{in start} $|J|$ times and/or Lemma~\ref{in end} $|L|$ times.
\end{proof}

The upshot of Proposition \ref{embedding of expression} is that the $J$-singular Coxeter complex $\Cox_J$ will embed into the $\mt$-singular Coxeter complex $\Cox_{\mt}$, see
Proposition~\ref{singcoxembeds}.

\begin{rem} Let $I_{\bullet}$ be a $J$-expression, and let $\iota_J I_{\bullet}$ and $\tilde{\iota}_J I_{\bullet}$ be $\mt$-expressions arising from two different enumerations of
$J$. Then $\iota_J I_{\bullet}$ and $\tilde{\iota}_J I_{\bullet}$ are related by the up-up and down-down (braid) relations. \end{rem}
	
In an earlier proof of Lemma \ref{in start} we used the following result. Currently we do not use the result in this paper, but it seems useful so we record it for posterity.

\begin{lem}\label{lastlemma}
Let $I, I', J,J'\subset S$ be finitary, such that $I \subset I'$ and $J \subset J'$. Let $q$ be a $(J',I)$-coset. Let $q'$ be the $(J',I')$-coset containing $q$. Now let $r$ be the maximal $(J,I)$-coset contained in $q$, and let $r'$ be the maximal $(J,I')$-coset contained in $q'$.  Then $r \subset r'$. \end{lem}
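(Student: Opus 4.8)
The plan is to reduce the set-containment $r \subset r'$ to a statement about nonempty intersection, and then to exhibit an explicit common element. Since $I \subset I'$, every $(J,I)$-coset is either entirely contained in, or disjoint from, any given $(J,I')$-coset; this is the dichotomy recorded just after Definition~\ref{subex}, applied once for each element of $I' \setminus I$. Hence it suffices to show $r \cap r' \neq \emptyset$, and I claim that the maximal element $\ma{q}$ of $q$ lies in both $r$ and $r'$.

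That $\ma{q} \in r$ is immediate from the definitions: $r$ is the maximal $(J,I)$-coset inside $q$ (Definition~\ref{good subex}), which means precisely that $\ma{r} = \ma{q}$, and in particular $\ma{q} \in r$. The real content is to show $\ma{q} \in r'$.

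For this I would first identify $r'$ explicitly. As the maximal $(J,I')$-coset inside $q'$, it contains $\ma{q'}$; and because $J \subset J'$ forces $J \subset \leftdes(\ma{q'})$ while $I' \subset \rightdes(\ma{q'})$, equation~\eqref{beingmaximal} shows that $\ma{q'}$ is already the maximal element of the $(J,I')$-coset $W_J \ma{q'} W_{I'}$. Therefore $r' = W_J \ma{q'} W_{I'}$. Next I apply Lemma~\ref{lem:onlyneedright}, with the left parabolic held fixed at $J'$, to the pair $q \subset q'$ (here the right parabolic grows from $I$ to $I'$): it yields some $y' \in W_{I'}$ with $\ma{q}.y' = \ma{q'}$. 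Consequently $\ma{q} = \ma{q'}(y')\inv \in \ma{q'} W_{I'} \subset W_J \ma{q'} W_{I'} = r'$, as desired.

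Combining the two claims gives $\ma{q} \in r \cap r'$, so $r$ meets $r'$, and the dichotomy then forces $r \subset r'$. I do not anticipate a serious obstacle here; the only points requiring care are the bookkeeping of which parabolic plays which role when invoking Lemma~\ref{lem:onlyneedright} (the left parabolic is the fixed $J'$, while the right parabolic is what changes), and confirming that $r'$ is genuinely the coset $W_J \ma{q'} W_{I'}$ rather than some other $(J,I')$-coset sitting inside $q'$.
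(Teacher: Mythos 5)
Your proof is correct and follows essentially the same route as the paper's: both reduce the containment $r \subset r'$ to showing that the two relevant $(J,I')$-cosets coincide (equivalently, meet), and both rest on Lemma~\ref{lem:onlyneedright} applied to $q \subset q'$ with the left parabolic $J'$ held fixed. The only cosmetic difference is that you exhibit the common element $\ma{q}$ via the maximal-element half of that lemma, while the paper's displayed computation routes through the minimal elements and \eqref{maxformula}; the substance is the same.
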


\begin{proof}
To restate the lemma, set $r = W_J \ma{q} W_I$ and $r' = W_J \ma{q'} W_{I'}$. To show that $r \subset r'$, it is equivalent to show that 
\[ W_J \ma{r} W_{I'} = W_J \ma{r'} W_{I'}. \] Since $\ma{r} = \ma{q}$ and $\ma{r'} = \ma{q'}$ by assumption, we need only show that
\begin{equation} W_J \ma{q} W_{I'} = W_J \ma{q'} W_{I'}. \end{equation} 	

From Lemma \ref{lem:onlyneedright} we know that $\mi{q} W_{I'} = \mi{q'} W_{I'}$ is an equality of cosets in $W / W_{I'}$, and similarly $\ma{q} W_{I'} = \ma{q'} W_{I'}$. By Lemma \ref{lem:KK'} applied to $q \subset q'$ we have $K \subset K'$, where
\begin{equation}\label{KK'is}
K := J'\cap \mi{q}I\mi{q}\inv, \qquad K' := J'\cap \mi{q'}I'\mi{q'}\inv. \end{equation}
Now \eqref{maxformula} (this time using the second formulation) gives that $\ma{q} = w_{J'} . \mi{q} . y$ and $\ma{q'} = w_{J'} . \mi{q'} . y'$ for some $y \in W_I$ and $y' \in W_{I'}$. We now see that
\begin{equation}
W_J\ma{q}W_{I'} = W_J w_{J'} \mi{q} y W_{I'}  = W_J w_{J'} \mi{q} W_{I'} = W_J w_{J'} \mi{q'} W_{I'} = W_J w_{J'} \mi{q'} y' W_{I'} = W_J \ma{q'} W_{I'}, \end{equation}
as desired.
\end{proof}

\subsection{Going out}

Let $I_{\bullet} = [I_0, \ldots, I_d]$ be an expression. \emph{Going out} explores the relationship between $I_{\bullet}$ and $I_{[1,d]}$ when $I_0 \supset I_1$ (resp. between $I_{\bullet}$ and $I_{[0,d-1]}$ when $I_{d-1} \subset I_d$). We state this in two separate lemmas, mostly for notational reasons.

\begin{lem}\label{out start}
Let $I_\bullet = [I_0,\cdots, I_d]$ be a \emph{reduced} expression such that $I_0\supset I_1$. Let $p_\bullet$ be its forward path and $K_{\bullet}$ its (left) redundancy sequence. Let $I_{[1,d]} := [I_1,\cdots,I_d]$, with forward path $[q_1,\cdots,q_d]$ and (left) redundancy sequence $[L_1, \ldots, L_d]$. 
Then the expression $I_{[1,d]} = [I_1,\cdots,I_d]$ is reduced. Moreover,
\begin{subequations} \label{outstarteqs}
\begin{equation}\label{mins}
    \mi{p_i} = \mi{q_i},
\end{equation}
\begin{equation}\label{shiftmax}
    \ma{p_i} = (w_{I_0}w_{I_1}\inv).\ma{q_i}, 
\end{equation}
and 
\begin{equation} \label{sameredundancy}
	K_i = L_i
\end{equation}
\end{subequations}
for each $1\leq i\leq d$.
\end{lem}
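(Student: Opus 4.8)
The plan is to establish the three identities in the order \eqref{shiftmax}, \eqref{mins}, \eqref{sameredundancy}, using the centered-reading technology rather than unwinding forward paths by hand. Reducedness of $I_{[1,d]}$ is immediate: it is a contiguous subword of the reduced expression $I_\bullet$, so Proposition~\ref{prop:locality2} applies. For \eqref{shiftmax} I would invoke Proposition~\ref{prop:subexp} with $m=1$ and $n=d$, which gives $\ma{p_i} = (\ma{p_1}w_{I_1}\inv).\ma{q_i}$ for all $1\le i\le d$. Since $p_0=\id_{I_0}$ has $\ma{p_0}=w_{I_0}$, and the first step $I_0\supset I_1$ of a forward path leaves the maximal element unchanged, we have $\ma{p_1}=w_{I_0}$; substituting yields $\ma{p_i}=(w_{I_0}w_{I_1}\inv).\ma{q_i}$ with the length addition built in. Set $v:=w_{I_0}w_{I_1}\inv\in W_{I_0}$, so $\ell(\ma{p_i})=\ell(v)+\ell(\ma{q_i})$ and $v.w_{I_1}=w_{I_0}$.

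For \eqref{mins} the idea is to exhibit a single element that is minimal in both $p_i$ and $q_i$. Because $\ma{q_i}$ has $I_1$ in its left descent set (by \eqref{beingmaximal}), I factor $\ma{q_i}=w_{I_1}.c$ with $c:=w_{I_1}\ma{q_i}\in{}^{I_1}W$ the minimal representative of $W_{I_1}\ma{q_i}$. Using $v.w_{I_1}=w_{I_0}$, the relation $\ma{p_i}=v.\ma{q_i}$ rewrites as $\ma{p_i}=w_{I_0}.c$ with lengths still adding, whence in fact $c\in{}^{I_0}W$, i.e. $c$ has no left descent in $I_0$. Now $p_i=W_{I_0}cW_{I_i}$ and $q_i=W_{I_1}cW_{I_i}$. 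A short length argument shows that passing to the minimal representative $c'$ of the right coset $cW_{I_i}$ (which only shortens $c$ on the right) cannot create a left descent: any $s\in I_0$ with $sc'<c'$ would force $sc<c$. Thus $c'$ has no left descent in $I_0$ and no right descent in $I_i$, so by \eqref{beingminimal} it is the minimal element of both $W_{I_0}cW_{I_i}=p_i$ and $W_{I_1}cW_{I_i}=q_i$. Hence $\mi{p_i}=c'=\mi{q_i}$.

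Finally \eqref{sameredundancy} falls out once \eqref{shiftmax} and \eqref{mins} are available. I apply the first formula of \eqref{maxformula} to both cosets, getting $\ma{p_i}=(w_{I_0}w_{K_i}\inv).\mi{p_i}.w_{I_i}$ and $\ma{q_i}=(w_{I_1}w_{L_i}\inv).\mi{q_i}.w_{I_i}$. Left-multiplying the second by $v$ and using $v.(w_{I_1}w_{L_i}\inv)=w_{I_0}w_{L_i}\inv$ (lengths add, since $L_i\subset I_1\subset I_0$) together with $\ma{p_i}=v.\ma{q_i}$ and $\mi{p_i}=\mi{q_i}$ gives $(w_{I_0}w_{K_i}\inv).\mi{p_i}.w_{I_i}=(w_{I_0}w_{L_i}\inv).\mi{p_i}.w_{I_i}$. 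Cancelling $\mi{p_i}.w_{I_i}$ on the right inside the group $W$ forces $w_{K_i}=w_{L_i}$, hence $K_i=L_i$.

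The main obstacle is \eqref{mins}. The containment $q_i\subset p_i$ only yields $\mi{p_i}\le\mi{q_i}$, and the reverse inequality is precisely where one must use that the left parabolic shrank from $I_0$ to $I_1$ in a length-additive (reduced) way; isolating the common factor $c$ and proving $c\in{}^{I_0}W$ is the crux, after which everything is bookkeeping. I note in particular that \eqref{sameredundancy} needs no separate verification that $K_i\subset I_1$ — this drops out of the maximal-element comparison above.
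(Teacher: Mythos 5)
Your proposal is correct, and the opening moves (reducedness of $I_{[1,d]}$ via Proposition~\ref{prop:locality2}, and \eqref{shiftmax} via the centered reading, whether through Proposition~\ref{prop:subexp} or directly through $y_0=w_{I_0}w_{I_1}\inv$ in Theorem~\ref{thm:betterred}) coincide with the paper's. Where you genuinely diverge is in \eqref{mins} and \eqref{sameredundancy}. The paper applies \eqref{maxformula} to both cosets, cancels $w_{I_i}$ on the right to get the single equation $(w_{I_0}w_{K_i}\inv).\mi{p_i}=(w_{I_0}w_{L_i}\inv).\mi{q_i}$ with lengths adding on both sides, and then runs a squeeze: $\mi{p_i}\le\mi{q_i}$ because $q_i\subset p_i$, and $L_i\subset K_i$ by the left-handed version of Lemma~\ref{lem:KK'}, so equality of total lengths forces equality factor by factor, yielding \eqref{mins} and \eqref{sameredundancy} in one stroke. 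You instead identify the minimal element constructively: writing $\ma{q_i}=w_{I_1}.c$ and upgrading to $\ma{p_i}=w_{I_0}.c$ via \eqref{shiftmax}, you show $c\in{}^{I_0}W$, pass to the minimal representative $c'$ of $cW_{I_i}$, check that this cannot introduce left descents in $I_0$, and conclude from \eqref{beingminimal} that $c'$ is simultaneously $\mi{p_i}$ and $\mi{q_i}$; \eqref{sameredundancy} then follows by plain group cancellation in \eqref{maxformula}. Your route is slightly longer but more self-contained -- it needs neither the Bruhat comparison $\mi{p_i}\le\mi{q_i}$ nor Lemma~\ref{lem:KK'} on the left, only descent-set bookkeeping -- and it has the pedagogical virtue of exhibiting the common minimal element explicitly. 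The paper's squeeze argument is more economical and mirrors the proof of Lemma~\ref{lem:reducedmaxs}, which is presumably why the authors chose it. Both are sound.
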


There are major differences between Lemma \ref{out start} and Lemma \ref{in start}. We only have a one-sided implication for reducedness; $I_{[1,d]}$ being reduced does not imply
that $I_{\bullet}$ is reduced. Also, the formulas \eqref{mins} and \eqref{shiftmax} only hold when $I_{\bullet}$ is reduced, whereas in Lemma \ref{in start} we gave a formula
\eqref{maxs} which held regardless of whether $I_{\bullet}$ was reduced or not. Before giving the proof, we give a counterexample when $I_{\bullet}$ is not reduced.

\begin{ex} Fix $s \in S$. The expression $I_{\bullet} = [s, \mt, s, \mt]$ is not reduced, with forward path $(p_0 = \{e,s\}, \{e,s\}, \{e,s\}, p_3 = \{e,s\})$. The expression $I_{[1,3]} = [\mt, s, \mt]$ is reduced, with forward path $(q_1 = \{e\}, \{e, s\}, q_3 = \{s\})$. Note that $\mi{p_3} = e \ne s = \mi{q_3}$, and $\ma{p_2} = s \ne s s = w_{I_0} w_{I_1}\inv \ma{q_2}$. Also note that $K_2 = s$ while $L_2 = \mt$. \end{ex}

\begin{proof}[Proof of Lemma \ref{out start}] That $I_{[1,d]}$ is reduced follows from Proposition \ref{prop:locality2}, and \eqref{shiftmax} follows from Theorem \ref{thm:betterred} and the observation that $y_0 = w_{I_0} w_{I_1}\inv$.

By \eqref{maxformula}, we know that
\[ \ma{p_i} = (w_{I_0} w_{K_i}\inv) . \mi{p_i} . w_{I_i}, \qquad \ma{q_i} = (w_{I_1} w_{L_i}\inv) . \mi{q_i} . w_{I_i}. \]
Using \eqref{shiftmax} and the fact that $w_{I_0} = (w_{I_0} w_{I_1}\inv) . w_{I_1}$, we see that
\[ (w_{I_0} w_{K_i}\inv) . \mi{p_i} . w_{I_i} = \ma{p_i} = (w_{I_0}w_{I_1}\inv).\ma{q_i} = (w_{I_0} w_{L_i}\inv) . \mi{q_i} . w_{I_i}. \]
Dividing both sides by $w_{I_i}$ on the right, we get
\begin{equation} \label{foobar1} (w_{I_0} w_{K_i}\inv). \mi{p_i} = (w_{I_0}w_{L_i}\inv). \mi{q_i}. \end{equation}

Now we use the same method as in Lemma \ref{lem:reducedmaxs}. Since $\mi{p_i}$ and $\mi{q_i}$ are in the same left $W_{I_0}$ coset we have $q_i \subset p_i$, and thus $\mi{p_i} \le \mi{q_i}$ in the Bruhat order. By Lemma \ref{lem:KK'} (applied on the left rather than the right) we have $L_i \subset K_i$, and thus $(w_{I_0} w_{K_i}\inv) \le (w_{I_0} w_{L_i}\inv)$ in the Bruhat order.  But then the equality in \eqref{foobar1} implies that $\mi{p_i} = \mi{q_i}$ and $L_i = K_i$, as desired. \end{proof}

As before, the analogous statement for the tail of an expression is trivial.

\begin{lem} \label{out end}
Let $I_\bullet = [I_0,\cdots, I_d]$ be a reduced expression such that $I_{d-1} \subset I_d$. Then the expression $I_{[0,d-1]} = [I_0,\cdots,I_{d-1}]$ is reduced. It has the same forward path and the same redundancy sequence, for indices between $0$ and $d-1$. \end{lem}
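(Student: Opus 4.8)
The plan is to treat both assertions as essentially bookkeeping, exploiting the fact that the forward path is computed deterministically from left to right. First I would note that the forward path $q_\bullet = [q_0, \ldots, q_{d-1}]$ of $I_{[0,d-1]}$ is built by the same inductive rule as the forward path $p_\bullet = [p_0, \ldots, p_d]$ of $I_\bullet$: one sets $q_0 = W_{I_0} = p_0$, and for each step $q_{i+1}$ is determined from $q_i$ exactly as $p_{i+1}$ is determined from $p_i$ (as the unique containing coset when $I_i \subset I_{i+1}$, or the maximal contained coset when $I_i \supset I_{i+1}$). Since this rule never references indices beyond $i+1$, truncating the last entry leaves the earlier entries untouched, and an easy induction gives $q_i = p_i$ for all $0 \le i \le d-1$. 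The redundancy sequence then agrees automatically, since $K_i = I_0 \cap \mi{p_i} I_i \mi{p_i}\inv$ depends only on $p_i$ (through $\mi{p_i}$) and on $I_i$, both of which are unchanged.

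For reducedness, the plan is to observe that the reducedness conditions \eqref{reducedconditions} for $I_{[0,d-1]}$ at each index $i$ with $0 \le i \le d-2$ and $I_i \subset I_{i+1}$ are literally the same conditions (on the same cosets $p_i$, $p_{i+1}$ and the same redundancies $K_i$, $K_{i+1}$) that already appear in the reducedness of $I_\bullet$. Since $I_\bullet$ is reduced, all of these hold, so $I_{[0,d-1]}$ is reduced. Alternatively, this is an immediate instance of the locality of reduced expressions (Proposition \ref{prop:locality2}) applied with $(m,n) = (0,d-1)$.

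I do not anticipate a genuine obstacle here; in contrast to Lemma \ref{out start}, where dropping an initial down-step forces a careful length and redundancy comparison, here we are dropping a final up-step, whose only effect is to remove one reducedness condition and one (trivially recoverable) path entry. The one point worth stating explicitly is the asymmetry with Lemma \ref{in end}: because $I_{d-1} \subset I_d$ is an up-step, index $d-1$ \emph{does} carry a reducedness condition, so we obtain only the one-directional implication that $I_\bullet$ reduced forces $I_{[0,d-1]}$ reduced, not the converse.
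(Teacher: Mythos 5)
Your proof is correct and takes essentially the same route as the paper, which simply observes that the claim is trivial because forward paths are computed from the left reading, so truncating the final (up-)step removes one reducedness condition and changes nothing else. Your explicit bookkeeping, the appeal to Proposition \ref{prop:locality2}, and the remark that the converse fails all match the paper's treatment.
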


\begin{proof} This is trivial, because paths are constructed using the left reading of an expression. \end{proof}
	
Once again, the converse of Lemma \ref{out end} fails. For example, $I_{\bullet} = [\mt, s, \mt, s]$ is not reduced but $I_{[0,2]} = [\mt,s,\mt]$ is reduced.

\subsection{Our favorite reduced expressions: the high road} \label{subsec:getbig}

We now investigate what can be said about finding a reduced expression for a particular double coset $p$. We first explore reduced expressions which begin by adding as many simple reflections as possible.

\begin{lem} Let $I_{\bullet}$ be a (reduced) expression with forward path $p_{\bullet}$. Then $\leftdes(\ma{p_i})$ is weakly increasing. \end{lem}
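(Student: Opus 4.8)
The plan is to reduce the claim to a single-step inclusion and then invoke Lemma \ref{lem:dontchangeleftdescent}. It suffices to prove $\leftdes(\ma{p_{i-1}}) \subseteq \leftdes(\ma{p_i})$ for each $1 \le i \le d$; weak monotonicity of the whole sequence then follows by transitivity of $\subseteq$. I would also remark at the outset that reducedness plays no role here — the parenthetical ``(reduced)'' in the statement can be dropped — since the argument refers only to the forward path and the star product.

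First I would express $\ma{p_i}$ in terms of $\ma{p_{i-1}}$. By Proposition \ref{prop:expressesinSC}, the forward path satisfies $p_i = p_{i-1} * q_i$, where $q_i$ is the minimal $(I_{i-1}, I_i)$-coset. Passing to maximal elements via Lemma \ref{lem:maxofcoset} (equivalently, applying the functor $F$ of Theorem \ref{thm:Karoubi}, under which $\ma{q * q'} = \ma{q} * \ma{q'}$ for composable double cosets) yields the equality of elements of $W$
\[ \ma{p_i} = \ma{p_{i-1}} * \ma{q_i}. \]

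Then I would apply Lemma \ref{lem:dontchangeleftdescent} with $w = \ma{p_{i-1}}$ and $x = \ma{q_i}$, which gives $\leftdes(\ma{p_{i-1}}) \subseteq \leftdes(\ma{p_{i-1}} * \ma{q_i}) = \leftdes(\ma{p_i})$. This is precisely the desired single-step inclusion, and the proof is complete.

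There is essentially no obstacle; the only point requiring care is that $\ma{p_i} = \ma{p_{i-1}} * \ma{q_i}$ is an honest equality of elements of $W$ (not merely of double cosets), which is exactly the content of Lemma \ref{lem:maxofcoset}. In particular one need not compute $\ma{q_i}$ explicitly (it equals $w_{I_{i-1} \cup I_i}$): all that matters is that $\ma{p_i}$ arises from $\ma{p_{i-1}}$ by a right star multiplication, so that its left descent set can only grow.
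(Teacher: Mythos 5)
Your proof is correct and follows essentially the same route as the paper: write $\ma{p_i}$ as a right star multiple of $\ma{p_{i-1}}$ and apply Lemma \ref{lem:dontchangeleftdescent}. If anything, your citation of Proposition \ref{prop:expressesinSC} (rather than the paper's appeal to \eqref{updatemax}, which is phrased for reduced expressions) makes the observation that reducedness is irrelevant slightly more transparent.
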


\begin{proof} By \eqref{updatemax}, $\ma{p_{i+1}} = \ma{p_i} * x$ for some $x$. By Lemma \ref{lem:dontchangeleftdescent}, $\leftdes(\ma{p_i}) \subset \leftdes(\ma{p_{i+1}})$.
\end{proof}

Now let $p$ be any $(J,I)$-coset. We know that $J \subset \leftdes(\ma{p})$, but this left descent set can be larger still.

\begin{cor} \label{cor:nottoobig} Let $p$ be a $(J,I)$-coset and $K$ be finitary with $J \subset K$. If there is a reduced expression for $p$ of the form $[[J,K]] \circ M_{\bullet}$ for any $M_{\bullet}$, then $K \subset \leftdes(\ma{p})$. \end{cor}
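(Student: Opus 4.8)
The plan is to reduce the claim to the lemma immediately above, which states that $\leftdes(\ma{p_i})$ is weakly increasing along a forward path. Write the given reduced expression as $[J = I_0, I_1, \ldots, I_d]$, where the initial segment realizing $[[J,K]]$ adds one simple reflection at a time, so there is an index $i_0$ with $J = I_0 \subset I_1 \subset \cdots \subset I_{i_0} = K$, after which the expression continues with $M_\bullet$. Let $p_\bullet$ denote the forward path of the whole expression.

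First I would identify $\ma{p_{i_0}}$. Since forward paths are deterministic and read from left to right, the restriction of $p_\bullet$ to the indices $0 \le i \le i_0$ is exactly the forward path of the subexpression $[[J,K]]$. By Corollary \ref{cor:starformap} (equivalently \eqref{mapstarintro}) we have $\ma{p_{i_0}} = w_{I_0} * w_{I_1} * \cdots * w_{I_{i_0}}$, and since the subsets are nested, the identity $w_I * w_{I'} = w_{I'}$ for $I \subset I'$ collapses this telescoping product to $w_K$. Thus $\ma{p_{i_0}} = w_K$, and $\leftdes(w_K) = K$ by the standard fact that the longest element of a finite parabolic subgroup has left descent set equal to its generating set.

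Finally, applying the preceding lemma along the forward path from index $i_0$ to the endpoint $d$ yields $\leftdes(\ma{p}) = \leftdes(\ma{p_d}) \supseteq \leftdes(\ma{p_{i_0}}) = K$, which is the assertion. Alternatively, one can bypass the lemma entirely and argue directly from $\ma{p} = w_{I_0} * \cdots * w_{I_d}$: the partial $*$-product up through index $i_0$ already equals $w_K$, and Lemma \ref{lem:dontchangeleftdescent} shows that further left $*$-multiplication only enlarges the left descent set, so $K = \leftdes(w_K) \subset \leftdes(\ma{p})$.

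I do not anticipate a genuine obstacle. The only points meriting care are confirming that the initial segment of the forward path of the whole expression coincides with the forward path of $[[J,K]]$ (immediate, since paths are built by the left reading), and that $[[J,K]]$ indeed expresses the minimal $(J,K)$-coset, whose maximal element is $w_K$. Note that reducedness of the given expression is not actually needed for this argument; the monotonicity of left descent sets holds for any forward path.
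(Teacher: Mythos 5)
Your proof is correct and follows the same route as the paper: identify that the initial segment $[[J,K]]$ expresses a coset with maximal element $w_K$, whose left descent set is $K$, and then invoke the monotonicity of $\leftdes(\ma{p_i})$ along the forward path (the lemma immediately preceding the corollary). Your added observation that reducedness is not actually needed is a fair point the paper leaves implicit.
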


\begin{proof} Note that $[[J,K]]$ expresses a double coset with maximal element $w_K$. Thus by the previous lemma, the left descent set of $p$ must contain $K$. \end{proof}

So there is no reduced expression for $p$ which begins by adding any simple reflection not in $\leftdes(\ma{p})$. By the next result, there is a reduced expression which does begin with $[[J,\leftdes(\ma{p})]]$.

\begin{prop} \label{thm:favoritebig} Let $p$ be a $(J,I)$-coset, with $M = \leftdes(\ma{p})$ and $N = \rightdes(\ma{p})$. Let $q$ be the $(M,N)$-coset with $\ma{q} = \ma{p}$. If
$I_{\bullet}$ is any reduced expression for $q$, then $[[J, M]] \circ I_{\bullet} \circ [[N,I]]$ is a reduced expression for $p$. \end{prop}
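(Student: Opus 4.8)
The plan is to assemble the expression by two applications of the concatenation criterion, Proposition~\ref{prop:concatreduced}, bookended by the two monotone staircases $[[J,M]]$ and $[[N,I]]$. First I would record the setup. Since $p$ is a $(J,I)$-coset, \eqref{beingmaximal} gives $J \subset \leftdes(\ma{p}) = M$ and $I \subset \rightdes(\ma{p}) = N$, so the purely increasing staircase $[[J,M]]$ and the purely decreasing staircase $[[N,I]]$ are legitimate expressions. Both are reduced: for a purely monotone expression the criterion \eqref{hkdef} (equivalently Theorem~\ref{thm:betterred}) degenerates to the trivial identity $\ell(w_M) = \ell(w_M)$, resp. $\ell(w_N) = \ell(w_N)$. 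By \eqref{rexexpresses} the expression $[[J,M]]$ expresses the minimal $(J,M)$-coset $W_M$, whose maximal element is $w_M$, and $[[N,I]]$ expresses the minimal $(N,I)$-coset $W_N$, whose maximal element is $w_N$.

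Next I would glue $[[J,M]]$ onto $I_{\bullet}$. The matching parabolic is $M$; the left piece has maximal element $w_M$ and $I_{\bullet} \expr q$ has maximal element $\ma{q} = \ma{p}$. The criterion of Proposition~\ref{prop:concatreduced} reads $w_M\, w_M\inv\, \ma{p} = (w_M w_M\inv).\ma{p} = w_M.(w_M\inv \ma{p})$, and the only nontrivial part is that the lengths add in $w_M.(w_M\inv \ma{p})$. This is exactly the standard fact that $M \subset \leftdes(\ma{p})$ forces $\ma{p} = w_M.(w_M\inv \ma{p})$ with lengths adding (the longest element $w_M$ of $W_M$ splits off on the left). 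Hence $[[J,M]] \circ I_{\bullet}$ is reduced, and by the last clause of Proposition~\ref{prop:concatreduced} its endpoint is the $(J,N)$-coset $q'$ with $\ma{q'} = w_M w_M\inv \ma{p} = \ma{p}$. A symmetric application then glues $[[N,I]]$ on the right: the matching parabolic is now $N$, and the criterion reduces to the assertion that the lengths add in $(\ma{p} w_N\inv).w_N$, which holds because $N = \rightdes(\ma{p})$ forces $\ma{p} = (\ma{p} w_N\inv).w_N$ (the longest element $w_N$ splits off on the right). So the full concatenation is reduced, and its endpoint again has maximal element $\ma{p}$.

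Finally I would identify the endpoint. The concatenation starts at $I_0 = J$ and ends at $I_d = I$, so its endpoint is a $(J,I)$-coset, and we have just computed its maximal element to be $\ma{p}$. Since distinct double cosets are disjoint and each carries a unique maximal element (Lemma~\ref{doublecosetlem}), a $(J,I)$-coset is determined by its maximal element, so the endpoint is $p$ itself; combined with the reducedness established above, this proves the claim. I do not anticipate a serious obstacle: everything rests on the two one-line descent-set factorizations feeding the already-proved concatenation criterion. The only point requiring care is bookkeeping the source/target parabolics correctly at each gluing (matching $M$ at the first join and $N$ at the second) so that Proposition~\ref{prop:concatreduced} applies verbatim.
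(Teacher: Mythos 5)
Your proof is correct and is essentially the paper's argument in a slightly different package: the paper applies Theorem~\ref{thm:betterred} once to the whole concatenation, observing that every $y_i$ in $[[J,M]]$ and every $z_i$ in $[[N,I]]$ equals $e$ so that length-additivity reduces to that of $I_\bullet$, while you reach the same conclusion via two applications of Proposition~\ref{prop:concatreduced} powered by the descent-set factorizations $\ma{p} = w_M.(w_M\inv\ma{p})$ and $\ma{p} = (\ma{p}w_N\inv).w_N$. Both routes rest on the same computation, and your bookkeeping of the gluing parabolics and the final identification of the endpoint via uniqueness of maximal elements is sound.
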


\begin{proof} First, observe that there does exist an $(M,N)$-coset $q$ with $\ma{q} = \ma{p}$, since $M \subset \leftdes(\ma{p})$ and $N \subset \rightdes(\ma{p})$. Now we apply
Theorem \ref{thm:betterred} to deduce that $[[J, M]] \circ I_{\bullet} \circ [[N,I]]$ is reduced and expresses $p$. Every $y_i$ inside $[[J,M]]$ is equal to $e$, and every $z_i$
inside $[[N,I]]$ is equal to $e$, so this is straightforward. \end{proof}

This theorem states that $p$ has a reduced expression which gets as large as possible, maintains the same left descent set until it reaches the correct maximal element, and then
descends. Combining Theorem \ref{thm:favoritebig} with Corollary \ref{cor:nottoobig}, we see that the simple reflections which can be added first in a reduced expression for $p$ are
precisely $\leftdes(\ma{p}) \setminus J$.

\subsection{Our favorite reduced expressions: the low road} \label{subsec:getsmall}

Now we explore reduced expressions which begin by subtracting as many simple reflections as possible. Recall the definition of the (left) redundancy sequence from Definition
\ref{subex}.

\begin{lem} Let $I_{\bullet}$ be a reduced expression with left redundancy sequence $K_{\bullet}$. Then $K_{\bullet}$ is weakly decreasing. \end{lem}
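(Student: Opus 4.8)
Let $I_{\bullet}$ be a reduced expression with left redundancy sequence $K_{\bullet}$. Then $K_{\bullet}$ is weakly decreasing, i.e.\ $K_{i+1} \subset K_i$ for all $i$.

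**The plan.** The statement is a per-index claim, so the plan is to reduce it to a single-step comparison between $K_i$ and $K_{i+1}$ and then split into the two cases of the fundamental dichotomy, according to whether $I_i \subset I_{i+1}$ or $I_i \supset I_{i+1}$. Since $I_{\bullet}$ is reduced, every contiguous subword is reduced by Proposition~\ref{prop:locality2}; in particular $[p_i, p_{i+1}]$ is reduced at each index, and I will use this together with the redundancy formulas from Definition~\ref{def:lengths} and the structural lemmas on redundancy.

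Let me sketch the two cases. First suppose $I_i \subset I_{i+1}$. This is the \emph{easy} case: because $I_{\bullet}$ is reduced, reducedness at index $i$ forces the condition \eqref{sameK}, namely $K_i = K_{i+1}$, so certainly $K_{i+1} \subset K_i$. (Recall from the discussion after Definition~\ref{def:reduced} that for a reduced expression the redundancy is not permitted to increase when a coset gets bigger.) The interesting case is $I_i \supset I_{i+1}$, so write $I_i = I_{i+1} \sqcup s$. Here $p_{i+1} \subset p_i$, and since the path is forward we have $\ma{p_{i+1}} = \ma{p_i}$, while $\mi{p_{i+1}} \ge \mi{p_i}$. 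The goal is to show $K_{i+1} = J \cap \mi{p_{i+1}} I_{i+1} \mi{p_{i+1}}\inv \subset K_i = J \cap \mi{p_i} I_i \mi{p_i}\inv$. The natural tool is Lemma~\ref{lem:KK'}: applied to the containment of the $(J,I_{i+1})$-coset $p_{i+1}$ inside the $(J,I_i)$-coset $p_i$ (with the smaller parabolic $I_{i+1} \subset I_i$ on the right), it gives exactly $K_{i+1} \subset K_i$. This is the crux, and I expect it to go through directly once the roles of the two cosets are lined up correctly.

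**The main obstacle.** The delicate point is bookkeeping: the redundancy $K_i = J \cap \mi{p_i} I_i \mi{p_i}\inv$ depends on \emph{both} the minimal element $\mi{p_i}$ and the subset $I_i$, and in the case $I_i \supset I_{i+1}$ \emph{both} of these change between steps $i$ and $i+1$ --- $\mi{p}$ increases while $I$ shrinks. So one cannot simply monotonicity-chase one variable. The resolution is precisely that Lemma~\ref{lem:KK'} is built to handle exactly this combined change: it asserts the inclusion of redundancies for a containment of cosets with the smaller right-parabolic sitting inside the larger, which is the configuration we are in (reading $p_{i+1} \subset p_i$ as the inclusion and $I_{i+1} \subset I_i$ as the parabolic containment). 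I should double-check that the hypotheses of Lemma~\ref{lem:KK'} are met --- namely that $p_{i+1}$ really is a $(J,I_{i+1})$-coset contained in the $(J, I_i)$-coset $p_i$, which holds since forward paths satisfy $p_{i+1} \subset p_i$ when $I_{i+1} \subset I_i$ --- after which both cases combine to give $K_{i+1} \subset K_i$ for every $i$, proving that $K_{\bullet}$ is weakly decreasing.
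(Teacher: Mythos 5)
Your proof is correct and follows essentially the same route as the paper's: the case $I_i \subset I_{i+1}$ is handled by the reducedness condition \eqref{sameK}, and the case $I_i \supset I_{i+1}$ by applying Lemma~\ref{lem:KK'} to the containment $p_{i+1} \subset p_i$ with $I_{i+1} \subset I_i$. The only quibble is a citation slip (the redundancy sequence is defined in Definition~\ref{subex}, not Definition~\ref{def:lengths}); the mathematics is exactly the paper's two-line argument.
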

	
\begin{proof} If $I_i \subset I_{i+1}$ then $K_i = K_{i+1}$, by definition of reduced, see \eqref{sameK}. If $I_i \supset I_{i+1}$ then $K_i \supset K_{i+1}$ (not necessarily strictly), which follows from
Lemma \ref{lem:KK'}. \end{proof}

\begin{cor} \label{cor:nottoosmall} Let $I_{\bullet} = [I_0, I_1, \ldots, I_d]$ be a reduced expression, with left redundancy sequence $K_{\bullet}$. If $I_0 \supset I_1 = I_0 \setminus s$, then $s \notin K_d$. \end{cor}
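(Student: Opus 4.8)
The plan is to compute the redundancy sequence at its first two indices explicitly, and then propagate the conclusion all the way to index $d$ using the monotonicity just established in the previous lemma. Write $J = I_0$, and recall from Definition \ref{subex} that $K_i = J \cap \mi{p_i} I_i \mi{p_i}\inv$, where $p_\bullet$ is the forward path of $I_\bullet$. The only real work is to pin down $p_1$ and its minimal element correctly; everything else is formal.

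First I would identify $p_0$ and $p_1$. Since $e \in p_0$ and $I_0 = J$, the coset $p_0$ is the $(J,J)$-coset $W_J e W_J = W_J$, so $\mi{p_0} = e$ and $K_0 = J \cap I_0 = J$. Because $I_1 = I_0 \setminus s \subset I_0$, the forward-path convention takes $p_1$ to be the maximal $(J,I_1)$-coset inside $p_0 = W_J$; but the left $W_J$-action on $W_J$ is transitive, so $W_J$ contains a unique $(J,I_1)$-coset, namely $W_J$ itself. Hence $p_1 = W_J$, and since the identity $e$ has empty left and right descent sets, \eqref{beingminimal} gives $\mi{p_1} = e$. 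Therefore
\[ K_1 = J \cap \mi{p_1}\, I_1\, \mi{p_1}\inv = J \cap I_1 = I_1 = I_0 \setminus s, \]
so in particular $s \notin K_1$.

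Finally, since $I_{\bullet}$ is reduced, the previous lemma states that $K_{\bullet}$ is weakly decreasing, so $K_d \subset K_1 = I_0 \setminus s$. Consequently $s \notin K_d$, as desired. I do not expect a genuine obstacle here: the whole argument reduces to the base computation $K_1 = I_1$, whose only subtle point is invoking the forward-path definition to conclude $p_1 = W_J$ and $\mi{p_1} = e$, after which monotonicity of $K_\bullet$ finishes the proof immediately.
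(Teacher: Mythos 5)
Your proof is correct and follows the same route as the paper: establish $K_1 = I_1 = I_0 \setminus s$ at the first step (the paper states this without the explicit verification that $p_1 = W_J$ and $\mi{p_1}=e$, which you supply) and then invoke the weak monotonicity $K_d \subset K_1$ from the preceding lemma. No gaps.
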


\begin{proof} If $I_0 \supset I_1$ then $K_1 = I_1$. Since $K_d \subset K_1$, we see that $s \notin K_d$. \end{proof}


\begin{defn} Let $p$ be a $(J,I)$-coset. We say that $p$ has \emph{full redundancy} if $J = \mi{p} I \mi{p}\inv$. \end{defn}

\begin{lem} Let $p$ be a $(J,I)$-coset with full redundancy. Let $I_{\bullet}$ be a reduced expression for $p$. Then the redundancy sequence (see Definition \ref{subex}) of $I_{\bullet}$ is constant. \end{lem}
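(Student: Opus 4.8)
The plan is to combine monotonicity of the redundancy sequence with the fact that full redundancy fixes its value at both ends, and then squeeze. Write $I_{\bullet} = [I_0, \ldots, I_d]$ with $I_0 = J$ and $I_d = I$, let $p_{\bullet}$ be its forward path, and let $K_{\bullet} = (K_0, \ldots, K_d)$ be the left redundancy sequence, so $K_i = J \cap \mi{p_i} I_i \mi{p_i}\inv$.

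First I would identify the two endpoints of $K_{\bullet}$. The initial coset $p_0$ is the minimal $(J,J)$-coset, so $\mi{p_0} = e$, and since $I_0 = J$ we get $\mi{p_0} I_0 \mi{p_0}\inv = J$, whence $K_0 = J \cap J = J$. At the other end $p_d = p$, and the full redundancy hypothesis is exactly the statement $\mi{p} I \mi{p}\inv = J$, so $K_d = J \cap \mi{p} I \mi{p}\inv = J$ as well. (That $\mi{p} I \mi{p}\inv$ is a subset of $S$ contained in $J$ is guaranteed by Kilmoyer's theorem, Lemma \ref{J WJ same}.) Thus both ends of the sequence equal $J$.

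Then I would invoke the lemma, proved just above, that the redundancy sequence of a reduced expression is weakly decreasing, which gives the chain $J = K_0 \supseteq K_1 \supseteq \cdots \supseteq K_d = J$. Every intermediate $K_i$ is squeezed between two copies of $J$, so $K_i = J$ for all $0 \le i \le d$, and $K_{\bullet}$ is constant, as claimed.

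I do not expect a genuine obstacle here: the real work is carried entirely by the monotonicity lemma (which itself rests on Lemma \ref{lem:KK'} for the down-steps and on condition \eqref{sameK} for the up-steps), and the full redundancy hypothesis enters only to force $K_d = J$ so as to match the automatic value $K_0 = J$. The single point worth verifying carefully is that full redundancy yields $K_d = J$ rather than merely some proper subset of $J$, which is immediate from the set equality $\mi{p} I \mi{p}\inv = J$.
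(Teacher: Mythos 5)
Your proof is correct and is essentially the paper's own argument: both endpoints of the redundancy sequence equal $J$ (the paper's proof actually writes $K_0 = I = K_d$, which appears to be a typo for $J$), and weak monotonicity then squeezes the whole sequence. Your version just spells out the endpoint computations more carefully; nothing differs in substance.
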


\begin{proof} If $p$ has full redundancy and $I_{\bullet}$ is a reduced expression for $p$ of width $d$, then $K_0 = I = K_d$. Since the redundancy sequence is weakly decreasing,
it must be constant. \end{proof}

Now we introduce another double coset related to a given double coset $p$, which in some sense contains the ``interesting part'' of $p$.

\begin{defn}
Let $p$ be a $(J,I)$-coset, with left redundancy $K = J \cap \mi{p} I \mi{p}\inv$ and right redundancy $L = I \cap \mi{p}\inv J \mi{p}$. The \emph{core} of $p$ is the $(K,L)$-coset $p^{\core}$ with minimal element $\mi{p^{\core}} = \mi{p}$. \end{defn}

\begin{lem} The core is well-defined. The core has full redundancy. \end{lem}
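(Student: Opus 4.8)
The plan is to treat the two assertions separately, and both are short. For well-definedness I must exhibit a $(K,L)$-coset whose minimal element is $\mi{p}$. The natural candidate is $W_K \mi{p} W_L$, the unique $(K,L)$-coset containing $\mi{p}$; since a coset is determined by any one of its elements, uniqueness is automatic and it remains only to check that $\mi{p}$ really is its minimal element. For this I would invoke the minimality criterion \eqref{beingminimal}: because $\mi{p}$ is the minimal element of the $(J,I)$-coset $p$, we have $J \cap \leftdes(\mi{p}) = \emptyset$ and $I \cap \rightdes(\mi{p}) = \emptyset$. As $K \subseteq J$ and $L \subseteq I$ by construction (in particular $K,L$ are finitary), we immediately deduce $K \cap \leftdes(\mi{p}) = \emptyset$ and $L \cap \rightdes(\mi{p}) = \emptyset$, which by \eqref{beingminimal} applied to the pair $(K,L)$ says exactly that $\mi{p}$ is minimal in $W_K \mi{p} W_L$. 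Hence $p^{\core}$ is well-defined, with $\mi{p^{\core}} = \mi{p}$.

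For full redundancy I must show that $K = \mi{p^{\core}} L \mi{p^{\core}}\inv$, that is, $K = \mi{p} L \mi{p}\inv$. I would compute this directly. Conjugation by $\mi{p}$ is a bijection of $W$ and hence commutes with intersection, so
\[ \mi{p} L \mi{p}\inv = \mi{p}\bigl(I \cap \mi{p}\inv J \mi{p}\bigr)\mi{p}\inv = \bigl(\mi{p} I \mi{p}\inv\bigr) \cap \bigl(\mi{p}\mi{p}\inv J \mi{p}\mi{p}\inv\bigr) = \mi{p} I \mi{p}\inv \cap J. \]
The right-hand side is, by definition, exactly the left redundancy $K = J \cap \mi{p} I \mi{p}\inv$. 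Thus $\mi{p} L \mi{p}\inv = K$, which is precisely the assertion that the $(K,L)$-coset $p^{\core}$ has full redundancy.

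Since both arguments are essentially one line each, I do not expect a genuine obstacle; the only thing to watch is bookkeeping. I would make sure at the outset that $L \subseteq I \subseteq S$ and $K \subseteq J \subseteq S$, so that the redundancies are honestly subsets of $S$ and the criterion \eqref{beingminimal} applies verbatim. It is worth noting explicitly that the conjugation identity above already yields the equality $\mi{p} L \mi{p}\inv = K$ on the nose, so no further comparison of reflection subgroups (for instance via Kilmoyer's Lemma~\ref{J WJ same}) is required here — the bijectivity of conjugation does all the work.
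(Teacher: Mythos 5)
Your proof is correct and follows essentially the same route as the paper: well-definedness via the descent-set criterion \eqref{beingminimal} applied with $K \subseteq J$ and $L \subseteq I$, and full redundancy via the identity $\mi{p} L \mi{p}\inv = K$. The paper merely asserts that identity where you spell out the one-line conjugation computation, but the argument is the same.
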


\begin{proof} We know that $\leftdes(\mi{p}) \cap J = \mt$, which implies that $\leftdes(\mi{p}) \cap K = \mt$. Similar statements hold for the right descent set. So $\mi{p}$ is
the minimal element of its $(K,L)$-coset, by \eqref{beingminimal}. Since $\mi{p} L \mi{p}\inv = K$, we see that $K$ is the left redundancy of $p^{\core}$ as well, so it has full redundancy. \end{proof}

\begin{prop} \label{thm:favoritesmall} Let $p$ be a $(J,I)$-coset, with left redundancy $K = J \cap \mi{p} I \mi{p}\inv$ and right redundancy $L = I \cap \mi{p}\inv J \mi{p}$. Let $[[J,K]]$ denote any expression which removes one reflection at a time to get from $J$ to $K$. Let $[[L,I]]$ denote any expression which adds one reflection at a time to get from $L$ to $I$. If $M_{\bullet}$ is any reduced expression for $p^{\core}$, then
\begin{equation} \label{favorite} [[J,K]] \circ M_{\bullet} \circ [[L,I]] \end{equation}
is a reduced expression for $p$. \end{prop}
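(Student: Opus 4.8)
The plan is to avoid forward paths entirely and instead realize the expression \eqref{favorite} as a concatenation of three reduced pieces, applying Proposition \ref{prop:concatreduced} twice. The whole argument then collapses to a handful of length-additivity checks, each of which I expect to be a direct consequence of the single factorization $\ma{p} = (w_J w_K\inv).\mi{p}.w_I$ supplied by \eqref{maxformula}.

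First I would record the maximal elements of the three pieces. The monotone expressions $[[J,K]]$ (decreasing) and $[[L,I]]$ (increasing) are reduced: in the notation of Proposition \ref{thm:betterforward} all $z_i$ vanish for the former and all $y_i$ vanish for the latter, so the relevant product in Theorem \ref{thm:betterred} telescopes to $w_J$ (resp. $w_I$) with lengths adding. By \eqref{starformap} they express the $(J,K)$-coset $p_1$ with $\ma{p_1} = w_J$ and the $(L,I)$-coset $p_3$ with $\ma{p_3} = w_I$. For the middle piece I use that $p^{\core}$ has full redundancy with left redundancy exactly $K$ and $\mi{p^{\core}} = \mi{p}$; feeding this into \eqref{maxformula} for $p^{\core}$ makes the leading factor $w_K w_K\inv$ disappear, leaving $\ma{p^{\core}} = \mi{p}.w_L$.

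Next I would rewrite the target factorization so it is adapted to the two junctions. Since $L \subset I$, I can split $w_I = w_L.(w_L\inv w_I)$ with lengths adding, so that $\ma{p} = (w_J w_K\inv).\mi{p}.w_L.(w_L\inv w_I)$, with all lengths adding. Now apply Proposition \ref{prop:concatreduced} to $[[J,K]] \circ M_\bullet$ at the junction $K$: reducedness there is equivalent to the lengths adding in $\ma{p_1} w_K\inv \ma{p^{\core}} = (w_J w_K\inv).(\mi{p}.w_L)$, which is exactly a length-additive prefix of the displayed factorization. This produces a reduced $(J,L)$-coset $r$ with $\ma{r} = (w_J w_K\inv).\mi{p}.w_L$. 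Applying Proposition \ref{prop:concatreduced} once more to $r \circ [[L,I]]$ at the junction $L$, reducedness is equivalent to the lengths adding in $(\ma{r}w_L\inv).\ma{p_3} = (w_J w_K\inv).\mi{p}.w_I = \ma{p}$, which holds by \eqref{maxformula}. Hence \eqref{favorite} is reduced, and its endpoint is the $(J,I)$-coset with maximal element $\ma{p}$, which is $p$.

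The main obstacle, such as it is, is the bookkeeping around $\ma{p^{\core}}$: one must correctly invoke that the core's left redundancy collapses to $K$ so that \eqref{maxformula} simplifies to $\mi{p}.w_L$. Once that is in place, both length-additivity conditions are literally sub-products of the one factorization of $\ma{p}$, so no genuinely new length computation is required. A direct single application of Theorem \ref{thm:betterred}, centered at the start of $M_\bullet$, is also possible, but the two-step concatenation keeps the vanishing of the $y_i$'s and $z_i$'s on the monotone ends cleanly separated and is less error-prone.
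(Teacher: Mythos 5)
Your proof is correct and follows essentially the same route as the paper: both arguments hinge on using \eqref{maxformula} twice to get $\ma{p^{\core}} = \mi{p}.w_L$ and the length-additive factorization $\ma{p} = (w_J w_K\inv).\ma{p^{\core}}.(w_L\inv w_I)$, and then conclude via Proposition \ref{prop:concatreduced}. You simply spell out the two applications of the concatenation criterion (and the reducedness of the monotone ends) that the paper leaves implicit.
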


\begin{proof} By \eqref{maxformula}, we know that $\ma{p^{\core}} = \mi{p^{\core}} w_L = \mi{p} w_L$. By \eqref{maxformula} again, we have
\begin{equation} \ma{p} = (w_J w_K\inv) . (\mi{p} w_L) . (w_L\inv w_I) = (w_J w_K\inv) . \ma{p^{\core}} . (w_L\inv w_I). \end{equation}
Now the result follows immediately from Proposition \ref{prop:concatreduced}. \end{proof}

This theorem states that $p$ has a reduced expression which gets as small as possible, maintains the same left redundancy until it reaches the correct minimal element, and then
ascends. Combining Theorem \ref{thm:favoritesmall} with Corollary \ref{cor:nottoosmall}, we see that the simple reflections which can be removed first in a reduced expression for
$p$ are precisely $J \setminus \mi{p} I \mi{p}\inv$.

\subsection{Products of Coxeter groups} \label{subsec:products}

Every Coxeter group is a product of irreducible Coxeter groups. Thankfully, the properties of reduced expressions (as well as braid relations, etcetera) really depend only on each irreducible factor.

\begin{defn} \label{def:plusJ} Let $(W,S)$ and $(W',S')$ be two Coxeter groups. Let $I_{\bullet} = [I_0, I_1, \ldots, I_d]$ be any expression where $I_i \subset S$ for all $0 \le i
\le d$. Let $J \subset S'$ be finitary. We denote by $I_{\bullet}^{(+J)}$ the corresponding expression \begin{equation} I_{\bullet}^{(+J)} = [I_0 \sqcup J, I_1 \sqcup J, \ldots,
I_d \sqcup J] \end{equation} inside the Coxeter group $(W \times W', S \sqcup S')$. Similarly, let $p$ be an $(I_0, I_d)$-coset, where $I_0, I_d \subset S$. The $(I_0 \sqcup J, I_d
\sqcup J)$-coset inside $W \times W'$ containing $\ma{p} . w_J$ will be denoted $p^{(+J)}$. \end{defn}
	
\begin{prop} \label{prop:reduceproduct} Continue the notation of Definition \ref{def:plusJ}. The following properties hold. \begin{enumerate}
\item $\ma{p^{(+J)}} = \ma{p} . w_J$ and $\mi{p^{(+J)}} = \mi{p}$.
\item $\ell^+(p^{(+J)}) = \ell^+(p)$ and $\ell^-(p^{(+J)}) = \ell^-(p)$.
\item $\ell^+(I_{\bullet}^{(+J)}) = \ell^+(I_{\bullet})$ and $\ell^-(I_{\bullet}^{(+J)}) = \ell^-(I_{\bullet})$.
\item $I_{\bullet} \expr p$ if and only if $I_{\bullet}^{(+J)} \expr p^{(+J)}$.
\item $I_{\bullet}$ is reduced if and only if $I_{\bullet}^{(+J)}$ is reduced.
\end{enumerate}
\end{prop}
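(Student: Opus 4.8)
The plan is to reduce the entire proposition to one structural fact: $(W \times W', *, S \sqcup S')$ is the product of the Coxeter monoids $(W,*,S)$ and $(W',*,S')$, so that the $*$-product, the Bruhat order, the length function, the longest elements of parabolics, and the descent sets all decompose across the two factors. Before touching the five claims I would record the elementary consequences I will use repeatedly: $W_{I_i \sqcup J} = W_{I_i} \times W_J$; the longest element $w_{I_i \sqcup J} = (w_{I_i}, w_J)$ with $\ell(I_i \sqcup J) = \ell(I_i) + \ell(J)$; the descent-set decomposition $\leftdes((w,w')) = \leftdes(w) \sqcup \leftdes(w')$ (and the same on the right); and that $*$ is computed componentwise, with $w_J * w_J = w_J$ since $\rightdes(w_J) = J$. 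Throughout I identify $\ma{p}.w_J$ with the pair $(\ma{p}, w_J) \in W \times W'$.

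I would begin with (1). To show $\ma{p^{(+J)}} = \ma{p}.w_J$, I would verify via the maximality criterion \eqref{beingmaximal} that $(\ma{p}, w_J)$ has $I_0 \sqcup J$ in its left descent set and $I_d \sqcup J$ in its right descent set; this is immediate from the descent-set decomposition together with $I_0 \subset \leftdes(\ma{p})$ and $I_d \subset \rightdes(\ma{p})$. Writing $\ma{p} = u\,\mi{p}\,v$ with $u \in W_{I_0}$ and $v \in W_{I_d}$ shows $(\ma{p},w_J) = (u,w_J)(\mi{p},e)(v,e)$, so $(\mi{p},e) \in p^{(+J)}$; the minimality criterion \eqref{beingminimal} then gives $\mi{p^{(+J)}} = \mi{p}$ from $I_0 \cap \leftdes(\mi{p}) = \emptyset$ and $I_d \cap \rightdes(\mi{p}) = \emptyset$.

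Claims (2) and (3) are then bookkeeping with the length formulas of Definition \ref{def:lengths}. For (3) the numbers $d_i^\pm$ are literally unchanged, since $\ell(I_i \sqcup J) - \ell(I_{i-1}\sqcup J) = \ell(I_i) - \ell(I_{i-1})$ and the inclusion pattern $I_{i-1} \subset I_i$ is equivalent to $I_{i-1}\sqcup J \subset I_i \sqcup J$. For (2) the equality $\ell^+(p^{(+J)}) = \ell^+(p)$ is immediate from (1), as the two $\ell(J)$ terms cancel. The equality of $-$-lengths requires identifying the left redundancy of $p^{(+J)}$, and this is the one genuinely delicate step: computing $(\mi{p},e)(I_d \sqcup J)(\mi{p},e)\inv$ componentwise gives $(\mi{p}\,s\,\mi{p}\inv, e)$ for $s \in I_d$ and $(e,s)$ for $s \in J$, and intersecting with $I_0 \sqcup J$ I would invoke Kilmoyer's theorem (Lemma \ref{J WJ same}) to see that the $S$-part contributes exactly the left redundancy $K = I_0 \cap \mi{p} I_d \mi{p}\inv$ of $p$ while all of $J$ survives. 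Hence the redundancy of $p^{(+J)}$ is $K \sqcup J$, the term $\ell(K \sqcup J) = \ell(K) + \ell(J)$ again cancels the extra $\ell(J)$, and $\ell^-(p^{(+J)}) = \ell^-(p)$ follows.

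For (4) I would use Corollary \ref{cor:starformap}, which says $I_\bullet \expr p$ if and only if $\ma{p} = w_{I_0} * \cdots * w_{I_d}$. Evaluating the corresponding product for $I_\bullet^{(+J)}$ componentwise yields $(w_{I_0} * \cdots * w_{I_d},\ w_J * \cdots * w_J) = (w_{I_0} * \cdots * w_{I_d},\ w_J)$, whose first coordinate is $\ma{p}$ exactly when $I_\bullet \expr p$; by (1) this pair is $\ma{p^{(+J)}}$, establishing the equivalence in both directions. Finally, (5) is formal: by Proposition \ref{p.rexlength} an expression is reduced if and only if its length equals that of its endpoint, and (2) together with (3) give $\ell(I_\bullet^{(+J)}) = \ell(I_\bullet)$ and $\ell(p^{(+J)}) = \ell(p)$, while (4) matches up the endpoints, so the two reducedness conditions coincide. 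I expect the redundancy computation inside (2) to be the only real obstacle, since it is the single place where conjugation by $\mi{p}$ interacts across the two factors and forces the appeal to Kilmoyer's theorem; everything else is the componentwise decomposition applied mechanically.
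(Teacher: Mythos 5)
Your proof is correct and follows essentially the same route as the paper's: both rest on the additivity $\ell(I \sqcup J) = \ell(I) + \ell(J)$, the identification of the left redundancy of $p^{(+J)}$ as $K \sqcup J$, Proposition \ref{p.rexlength} for the reducedness claim, and the star-product formula for $\ma{p}$ (your Corollary \ref{cor:starformap} is the same tool as the paper's citation of Proposition \ref{thm:betterforward}). The only difference is that you spell out part (1) via the descent-set criteria \eqref{beingmaximal} and \eqref{beingminimal}, a detail the paper leaves implicit.
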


\begin{proof} The basic point is that $\ell(I \sqcup J) = \ell(I) + \ell(J)$, for any $I \subset S$. One can also check that the left redundancy of $p^{(+J)}$ is just the disjoint
union of $J$ and the left redundancy of $p$. This immediately implies all the statements above about lengths. The statement about reduced expressions follows from Proposition
\ref{p.rexlength}. The statement that $I_{\bullet} \expr p$ if and only if $I_{\bullet}^{(+J)} \expr p^{(+J)}$ is straightforward from Proposition~\ref{thm:betterforward} and
\eqref{adef} in particular: the values of $y_i$ and $z_i$ do not differ between $I_{\bullet}$ and $I_{\bullet}^{(+J)}$, the only difference is $w_{I_k}$ versus $w_{I_k \sqcup J} =
w_{I_k} . w_J$. \end{proof}

We think of Proposition \ref{prop:reduceproduct} as a another form of locality. Propositions \ref{prop:locality1} and \ref{prop:locality2} imply that, whenever
$I_{\bullet} \expr I_{\bullet}'$, then $H_{\bullet} \circ I_{\bullet} \circ K_{\bullet} \expr H_{\bullet} \circ I'_{\bullet} \circ K_{\bullet}$, and this operation plays nicely
with reducedness. Meanwhile, Proposition \ref{prop:reduceproduct} implies that $I_{\bullet}^{(+J)} \expr (I'_{\bullet})^{(+J)}$, and this operation plays nicely with reducedness.

\begin{rem} \label{rem:monoidal} One can glue the categories $\SC(W,S)$ for all Coxeter systems together into a large monoidal category. The objects are triples $(W,S,I)$ where
$(W,S)$ is a Coxeter system and $I \subset S$ is finitary. There are no morphisms between $(W,S,I)$ and $(W',S',I')$ unless $(W,S) = (W',S')$, in which case the Hom spaces are
governed by $\SC(W,S)$. We place a monoidal structure on objects where $(W,S,I) \otimes (W',S',I') := (W \times W', S \sqcup S', I \sqcup I')$. On morphisms, the operation $p
\mapsto p^{(+J)}$ corresponds to taking the tensor product with the identity morphisms of $(W',S',J)$. The second kind of locality discussed in the previous paragraph is locality
for the horizontal (tensor) product, rather than for the vertical (composition) product\footnote{We have chosen to be imprecise in this remark because being precise obfuscates the point. Asking that $(W,S) = (W',S')$ for morphisms to exist is unusual, demanding an equality of sets $S = S'$,  whereas it would be more appropriate to demand and keep track of the data of an isomorphism $(W,S) \to (W',S')$. However, this would produce additional automorphisms of a Coxeter system with Dynkin diagram symmetries. Perhaps the best way to make this construction precise is to let the objects be triples $(W,S = \{s_1, \ldots, s_r\}, I)$ where $S$ is equipped with a total order, and to declare that there are no morphisms between $(W,S,I)$ and $(W',S',I')$ unless the unique ordered map $S \to S'$ induces an isomorphism $W \to W'$, which we use to identify $W$ and $W'$. For this construction the monoidal structure will not be symmetric, which is perhaps for the best.}. \end{rem}

\begin{rem} For a different style of monoidal category built from $\SC(W,S)$ as the Coxeter system varies, see \S\ref{sec:typeAwebs}. It is still Proposition \ref{prop:reduceproduct} which implies that the monoidal structure is well-defined. \end{rem}

\section{Relating expressions} \label{sec-relating}

Recall that we write $I_{\bullet} \expr p$ if the composition of these generating morphisms is the double coset $p$, see Proposition \ref{prop:expressesinSC}. We write $I_{\bullet}
\expr I_{\bullet}'$ if $I_\bullet$ and $I'_\bullet$ express the same double coset $p$. Clearly $\expr$ is an equivalence relation on expressions. Recall from
Proposition~\ref{prop:locality2} that $\expr$ is a \emph{monoidal} relation, meaning that it is preserved by enlarging an expression to the left or right (i.e. it can be applied to
contiguous subwords). We seek a minimal set of relations which, applied locally, imply the equivalence relation $\expr$ on expressions. By Proposition \ref{prop:expressesinSC},
this will also be a list of relations in a presentation of $\SC$.

Before embarking on the study of relations, we provide yet more convenient shorthand for $J$-expressions.
	
\begin{notation} If $J \subset S$ is finitary, we may write
\[ [J + s - t + u - s] := [J, Js, Js \setminus t, Jsu \setminus t, Ju\setminus t].\]
For this to make sense (if $s$, $t$, and $u$ are distinct) we must have $t \in J$ and $s,u \notin J$.

Formally, we denote an expression $I_\bullet = [J,I_1,\cdots,I_d]$ by $[J \varepsilon_1 t_1 \varepsilon_2 t_2\cdots \varepsilon_d t_d]$, where $t_i\in S$ and $\varepsilon_i\in\{+, -\}$. Then $+t_i$ encodes the statement $I_i = I_{i-1}\sqcup \{t_i\}$, and $-t_i$ encodes $I_i = I_{i-1}\setminus \{t_i\}$. \end{notation}

\begin{ex} The expressions in Example~\ref{exleninc} and  Example~\ref{exlenth} can be denoted as 
\[[\emptyset,\{s_1\},\{s_1,s_2\},\cdots ,\{s_1,\cdots s_d\}]=[\emptyset+s_1+s_2+\cdots+s_d]\]
\[[\emptyset,s_1,\emptyset,s_2,\emptyset,\cdots,s_d,\emptyset]=[\emptyset+s_1-s_1+s_2-s_2+\cdots +s_d-s_d].\]
\end{ex}

\subsection{The quadratic relation}\label{ss:nonbraidgen}

\begin{lem} \label{lem:easynonrex} 
For any $J \subset S$ finitary and $s \in J$ we have the \emph{$*$-quadratic relation}
\begin{equation} \label{eq:starquad} [J - s + s] \expr [J]. \end{equation}
The left hand expression is not reduced, and the right hand expression has strictly smaller length.
\end{lem}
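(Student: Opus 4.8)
The plan is to establish three facts in turn: that both sides express the same double coset, that the left-hand side is not reduced, and that its length strictly exceeds that of $[J]$. The result is essentially a direct computation, and the only point requiring a little care is the simplification of a star product, which is handled by Lemma \ref{descentvspreserved}.

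First I would identify the common endpoint. By convention $[J]$ expresses the minimal $(J,J)$-coset, whose maximal element is $w_J$. For $[J-s+s] = [J, J\setminus s, J]$, Corollary \ref{cor:starformap} computes the maximal element of its endpoint as the star product $w_{J} * w_{J\setminus s} * w_J$. I would simplify this by grouping from the left and applying Lemma \ref{descentvspreserved} twice:
\[ w_J * w_{J\setminus s} * w_J = (w_J * w_{J\setminus s}) * w_J = w_J * w_J = w_J, \]
where the first equality uses $J\setminus s \subset J = \rightdes(w_J)$ and the second uses $J \subset \rightdes(w_J)$. Since a $(J,J)$-coset is determined by its maximal element (Lemma \ref{doublecosetlem}), the endpoint of $[J-s+s]$ is again the minimal $(J,J)$-coset, so $[J-s+s] \expr [J]$.

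Next I would compute the length of $[J-s+s]$ directly from Definition \ref{def:lengths}. The single descent $J \supset J\setminus s$ contributes $d_1^- = \ell(J) - \ell(J\setminus s)$ and the single ascent $J\setminus s \subset J$ contributes $d_2^+ = \ell(J) - \ell(J\setminus s)$, so that $\ell([J-s+s]) = 2\bigl(\ell(J) - \ell(J\setminus s)\bigr)$. Because $s \in J$, the longest element $w_{J\setminus s}$ is strictly shorter than $w_J$, whence $\ell(J\setminus s) < \ell(J)$ and this quantity is strictly positive.

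Finally I would feed this into Proposition \ref{p.rexlength}. The endpoint $p$ is the minimal $(J,J)$-coset, for which $\ma{p} = w_J$ and $\mi{p} = e$ give $\ell^+(p) = \ell^-(p) = 0$, hence $\ell(p) = 0$; in particular $\ell([J]) = \ell(p) = 0$. Since $\ell([J-s+s]) = 2(\ell(J) - \ell(J\setminus s)) > 0 = \ell(p)$, Proposition \ref{p.rexlength} simultaneously yields that $[J-s+s]$ is not reduced and that $[J]$ has strictly smaller length. I do not anticipate any genuine obstacle here; the star-product absorption is the only step that is not pure bookkeeping.
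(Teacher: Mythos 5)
Your proof is correct; every computation checks out (the star-product absorption via Lemma \ref{descentvspreserved}, the length $\ell([J-s+s]) = 2(\ell(J)-\ell(J\setminus s)) > 0$, and $\ell^\pm(p)=0$ for the minimal $(J,J)$-coset). The route differs from the paper's in a mild but genuine way. The paper identifies the common endpoint by observing that every coset in the forward path of $[J-s+s]$ has underlying set $W_J$ (rather than via Corollary \ref{cor:starformap}), and it establishes non-reducedness \emph{directly} from Definition \ref{def:reduced}: at the $+s$ step the redundancy strictly increases, violating \eqref{sameK}; the length claim then follows because $[J]$ is visibly reduced and hence length-minimal. You instead derive non-reducedness as a corollary of the length inequality via Proposition \ref{p.rexlength}, letting one computation do double duty. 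Your version is arguably more economical and avoids touching the redundancy condition at all; the paper's version has the advantage of pinpointing exactly which reducedness condition fails and where (a fact it reuses in Example \ref{ex:smts} and Remark \ref{rmk:primal}). Both are complete proofs.
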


\begin{proof} Every coset in the forward path of $[J - s + s]$ has underlying set equal to $W_J$ itself. Thus the two expressions both express the identity $(J,J)$-coset. The right
hand expression is obviously reduced, so it has minimal length among all expressions. The left hand expression is not reduced at the step $+s$ since the redundancy increases.
\end{proof}

In fact, the $*$-quadratic relation will be the only non-braid relation we need to present $\SC$, as we prove in \S\ref{ss:presentation}. 

\subsection{Easy braid relations}\label{ss:braidreleasy}

\begin{lem} \label{lem:upupdowndown} For $s,t\in S$, the following relations hold for all $J$ for which they make sense, and preserve length.
\begin{equation} \label{upup} [J+s+t] \expr [J+t+s] \end{equation}
\begin{equation} \label{downdown} [J-s-t] \expr [J-t-s] \end{equation}
\end{lem}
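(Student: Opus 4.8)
The plan is to use the star-product formula of Corollary~\ref{cor:starformap} to show that both sides of each relation express a double coset with the same underlying pair $(J,\text{endpoint})$ and the same maximal element, and then to verify length preservation directly from Definition~\ref{def:lengths}. The key observation is a \emph{chain collapse}: if $I_0 \subset I_1 \subset \cdots \subset I_d$ then $w_{I_0} * w_{I_1} * \cdots * w_{I_d} = w_{I_d}$, and dually if $I_0 \supset I_1 \supset \cdots \supset I_d$ then $w_{I_0} * w_{I_1} * \cdots * w_{I_d} = w_{I_0}$. The decreasing case is immediate from Lemma~\ref{descentvspreserved}: since $I_{j+1} \subset I_j = \rightdes(w_{I_j})$, one has $w_{I_j} * w_{I_{j+1}} = w_{I_j}$, so associativity absorbs every factor after the first. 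The increasing case follows from the left-handed analogue of absorption, $w_{I} * w_{K} = w_{K}$ for $I \subset K$ (which one checks using Lemma~\ref{starissubexp} to see $w_I * w_K \in W_K$, together with Lemma~\ref{lem:dontchangeleftdescent} to see $K \subset \rightdes(w_I * w_K)$, forcing $w_I * w_K = w_K$); alternatively, the increasing case can simply be deduced from the decreasing case by reversing the word and invoking Proposition~\ref{invrex}.

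Granting the collapse, the up-up relation is immediate. Both $[J+s+t] = [J, Js, Jst]$ and $[J+t+s] = [J, Jt, Jst]$ are increasing chains from $J$ to $Jst$, so by Corollary~\ref{cor:starformap} each expresses a $(J, Jst)$-coset whose maximal element is $w_{Jst}$. A $(J, Jst)$-coset is the subset $W_J\,\ma{p}\,W_{Jst}$, determined by its maximal element via \eqref{beingmaximal}; hence the two expressions express the same double coset, i.e. $[J+s+t] \expr [J+t+s]$. The down-down relation is identical: both $[J-s-t] = [J, J\setminus s, J\setminus\{s,t\}]$ and $[J-t-s]$ are decreasing chains from $J$ to $J\setminus\{s,t\}$, so each expresses the $(J, J\setminus\{s,t\})$-coset with maximal element $w_J$, and these coincide.

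For length preservation, I would compute $\ell^\pm$ directly from Definition~\ref{def:lengths}, where the $d_i^\pm$ depend only on the sequence of sizes. For the up-up relation every step is an addition, so $\ell^- = 0$ on both sides, while the $+$-lengths telescope: $\ell^+([J+s+t]) = (\ell(Js) - \ell(J)) + (\ell(Jst) - \ell(Js)) = \ell(Jst) - \ell(J)$, and likewise $\ell^+([J+t+s]) = (\ell(Jt) - \ell(J)) + (\ell(Jst) - \ell(Jt)) = \ell(Jst) - \ell(J)$, giving equal total length. Dually, for the down-down relation $\ell^+ = 0$ on both sides and the $-$-lengths telescope to $\ell(J) - \ell(J\setminus\{s,t\})$ in either order. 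There is no serious obstacle here: the only point requiring a moment's care is establishing the increasing-chain collapse, which is why reduction to the decreasing case via Proposition~\ref{invrex} is an attractive shortcut.
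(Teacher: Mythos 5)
Your proof is correct, and it fills in exactly the details the paper leaves implicit: the paper's own proof of this lemma is literally ``This is obvious,'' with the intended justification being precisely the star-product formula of Corollary~\ref{cor:starformap} together with the absorption identity $w_I * w_K = w_K = w_K * w_I$ for $I \subset K$ (which the paper states without proof in the introduction). Your length computation via the telescoping of the $d_i^{\pm}$ is also the natural one, so there is nothing to add.
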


\begin{proof} This is obvious. \end{proof}
	
We call \eqref{upup} the \emph{up-up} or \emph{$+$-associativity relation}, and we call \eqref{downdown} the \emph{down-down} or \emph{$-$-associativity relation}.

We also have braid relations analogous to the $m_{su} = 2$ commutation relation in ordinary Coxeter theory. 

\begin{lem} \label{m=2} Let $I = I_1 \sqcup I_2$, where $m_{su} = 2$ for all $s \in I_1$ and $u \in I_2$. Pick some $s \in I_1$ and $u \in I_2$, and let $J = I \setminus s$. Then
\begin{equation} \label{m2braid} [J + s - u] \expr [J - u + s].\end{equation}
This relation preserves length. \end{lem}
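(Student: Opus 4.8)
The plan is to collapse both statements to the direct-product structure that the commutation hypothesis forces. First I would record the structural fact: since $m_{su}=2$ for all $s\in I_1$ and $u\in I_2$, the parabolic subgroup splits as $W_I=W_{I_1}\times W_{I_2}$, every longest element appearing factors as $w_{K_1\sqcup K_2}=w_{K_1}.w_{K_2}$ with $K_i\subset I_i$, and the Coxeter monoid decomposes as $(W_I,*)=(W_{I_1},*)\times(W_{I_2},*)$, so that $*$ is computed componentwise. I would also note at the outset that $[J+s-u]=[J,I,I\setminus u]$ and $[J-u+s]=[J,J\setminus u,I\setminus u]$ are both $(J,I\setminus u)$-expressions, so they have the same source and target object; hence by Theorem~\ref{thm:Karoubi} (morphisms in $\SC$ are determined by their maximal elements) it suffices to check that both endpoints have maximal element $w_I$.

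Next I would compute the maximal elements using the star formula \eqref{starformap}. For the left-hand side, $\ma{p}=w_J*w_I*w_{I\setminus u}$ collapses immediately by the absorption rule $w_K*w_L=w_L$ when $K\subset L$ (applicable since $J\subset I$ and $I\setminus u\subset I$), giving $\ma{p}=w_I$. For the right-hand side, $\ma{p'}=w_J*w_{J\setminus u}*w_{I\setminus u}$; the first product simplifies to $w_J$ because $J\setminus u\subset J$, leaving the single product $w_J*w_{I\setminus u}$.

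This last product is the one real obstacle: neither $J$ nor $I\setminus u$ contains the other, so no containment collapses it, and this is precisely where the direct-product structure pays off. Writing both elements as pairs in $W_{I_1}\times W_{I_2}$, namely $w_J=(w_{I_1\setminus s},\,w_{I_2})$ and $w_{I\setminus u}=(w_{I_1},\,w_{I_2\setminus u})$, I would compute the product coordinatewise as $(w_{I_1\setminus s}*w_{I_1},\,w_{I_2}*w_{I_2\setminus u})$, and now each coordinate collapses by containment ($I_1\setminus s\subset I_1$ and $I_2\setminus u\subset I_2$) to $(w_{I_1},\,w_{I_2})=w_I$. Thus $\ma{p'}=w_I=\ma{p}$, and since $p$ and $p'$ are $(J,I\setminus u)$-cosets with equal maximal element, $p=p'$, i.e. $[J+s-u]\expr[J-u+s]$.

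Finally, for length preservation I would read $\ell^{\pm}$ straight off Definition~\ref{def:lengths}. For $[J+s-u]$ one gets $\ell^+=\ell(I)-\ell(J)$ and $\ell^-=\ell(I)-\ell(I\setminus u)$, while for $[J-u+s]$ one gets $\ell^+=\ell(I\setminus u)-\ell(J\setminus u)$ and $\ell^-=\ell(J)-\ell(J\setminus u)$. Using additivity $\ell(K_1\sqcup K_2)=\ell(K_1)+\ell(K_2)$ over the two commuting factors, both $\ell^+$ values reduce to $\ell(I_1)-\ell(I_1\setminus s)$ and both $\ell^-$ values reduce to $\ell(I_2)-\ell(I_2\setminus u)$, so $\ell^+$, $\ell^-$, and hence $\ell$ agree on the two sides. (As a byproduct, these same computations show both expressions are reduced, so the equality of lengths also follows from Proposition~\ref{p.rexlength}.)
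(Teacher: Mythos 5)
Your proof is correct. Both you and the paper ultimately lean on the same structural fact, namely the splitting $W_I = W_{I_1}\times W_{I_2}$ and the resulting factorizations $w_{K_1\sqcup K_2}=w_{K_1}.w_{K_2}$; the difference is in which criterion you use to identify the two cosets. The paper checks directly that the lengths add in $w_{I\setminus s}\, w_{I\setminus s,u}\inv\, w_{I\setminus u}=w_I$ and invokes Theorem~\ref{thm:betterred}, which in one stroke shows that $[J-u+s]$ is a \emph{reduced} expression for the coset with maximal element $w_I$ --- so the coset identity and the length statement come out of a single computation. You instead compute the endpoint of each expression via the $*$-product formula \eqref{starformap} together with Theorem~\ref{thm:Karoubi}, which has the advantage of not requiring any a priori knowledge that either expression is reduced (the absorption rule $w_K*w_L=w_L$ for $K\subset L$ does all the work), but the $*$-product forgets lengths, so you are forced into the separate $\ell^{\pm}$ bookkeeping from Definition~\ref{def:lengths}. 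That bookkeeping is carried out correctly and both routes are sound. One small caveat: your parenthetical claim that the computations ``show both expressions are reduced'' is not quite complete as written, since Proposition~\ref{p.rexlength} requires comparing $\ell(I_\bullet)$ with $\ell(p)$, and you only computed the former; this does not affect the lemma, whose length assertion you established directly by comparing the two expressions with each other.
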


\begin{proof} The left-hand expression in \eqref{m2braid} is a reduced expression for the double coset $p$ with maximal element $w_I$. Because $W_I = W_{I_1} \times W_{I_2}$ we have $w_I =
w_{I_1} . w_{I_2}$. We also have \[ w_{I \setminus s} = w_{I_1 \setminus s} . w_{I_2}, \] and similar equations for $w_{I \setminus u}$ and $w_{I \setminus s,u}$. So
\begin{equation} w_{I \setminus s} w_{I \setminus s,u}\inv w_{I \setminus u} = (w_{I_1 \setminus s} w_{I_1 \setminus s}\inv w_{I_1}) . (w_{I_2} w_{I_2 \setminus u}\inv w_{I_2 \setminus u}) = w_{I_1} . w_{I_2} = w_I. \end{equation}
The lengths also add in this expression, since $\ell(I \setminus s) - \ell(I \setminus s,u) + \ell(I \setminus u) = \ell(I)$. Thus by Theorem \ref{thm:betterred}, the right-hand side of \eqref{m2braid} is a reduced expression for $p$ as well. \end{proof}

Lemma \ref{m=2} is the easy case of Proposition~\ref{prop:sss} below, which explores reduced expressions equivalent to $[J + s - t]$ in general.

\subsection{Two easy lemmas}

If $s \notin J$ and $Js$ is finitary, it is very easy to confirm that $[J+s]$ is a reduced expression.

\begin{lem} \label{lem:easyrex} Let $Js$ be finitary with $u \in J$ and $s \notin J$. Then $[J - u + s]$ is a reduced expression. \end{lem}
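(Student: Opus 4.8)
The plan is to verify reducedness directly through the intrinsic criterion of Theorem \ref{thm:betterred}, applied to the width-two expression $I_\bullet = [I_0,I_1,I_2] = [J,\,J\setminus u,\,Js\setminus u]$. First I would record the auxiliary elements $y_i,z_i$ from \eqref{yzdef} in Proposition~\ref{thm:betterforward}. Since $I_0 \supset I_1$ we have $z_1 = e$, and since $I_1 \cap I_2 = J\setminus u = I_1$ we get $z_2 = w_{J\setminus u}\inv w_{Js\setminus u}$. Centering \eqref{adef} at $k=0$ then gives $a_0 = w_{I_0} * z_1 * z_2 = w_J * (w_{J\setminus u}\inv w_{Js\setminus u})$. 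By Theorem~\ref{thm:betterred} it suffices to show the lengths add here, that is
\[ \ell_W\!\big(w_J \cdot w_{J\setminus u}\inv w_{Js\setminus u}\big) = \ell_W(w_J) + \ell_W\!\big(w_{J\setminus u}\inv w_{Js\setminus u}\big). \]

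Set $v := w_{J\setminus u}\inv w_{Js\setminus u}$, which by basic Coxeter theory is the longest element of ${}^{J\setminus u}W \cap W_{Js\setminus u}$; the whole problem collapses to proving $w_J v = w_J . v$. The key step I would take is to pass to inverses and show $v\inv \in W^J$. Indeed $v\inv = w_{Js\setminus u} w_{J\setminus u}$ lies in $W_{Js\setminus u} \cap W^{J\setminus u}$, so $\rightdes(v\inv) \cap (J\setminus u) = \emptyset$; moreover $v\inv \in W_{Js\setminus u}$ and $u \notin Js\setminus u$, so $u$ appears in no reduced word for $v\inv$ and hence $u \notin \rightdes(v\inv)$. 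Combining, $\rightdes(v\inv) \cap J = \emptyset$, i.e. $v\inv$ is a minimal-length representative of its coset in $W/W_J$. The standard fact that $x \in W^J$ forces $\ell_W(xw) = \ell_W(x) + \ell_W(w)$ for every $w \in W_J$, applied with $x = v\inv$ and $w = w_J$, yields $\ell_W(v\inv w_J) = \ell_W(v\inv) + \ell_W(w_J)$, which is exactly the displayed identity (using $w_J = w_J\inv$). Thus $[J-u+s]$ is reduced.

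The only real content is the length-additivity $w_J v = w_J . v$, which expresses the geometric fact that the downward step $-u$ and the upward step $+s$ do not interfere: removing $u$ frees room that $s$ occupies without forcing cancellation against $w_J$. The main (mild) obstacle is the clean bookkeeping of descent sets, in particular the observation that $u \notin \rightdes(v\inv)$ precisely because $u$ is absent from the parabolic $W_{Js\setminus u}$ containing $v\inv$; this is what upgrades $v\inv$ from a $W^{J\setminus u}$-minimal representative to a full $W^J$-minimal one. An alternative route would be to present $[J-u+s]$ as the concatenation of the (trivially reduced) single steps $[J, J\setminus u]$ and $[J\setminus u, Js\setminus u]$ and invoke Proposition~\ref{prop:concatreduced}; this reduces to the identical length-additivity statement after simplification, so I would favor the direct computation above.
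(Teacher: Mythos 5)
Your proof is correct and is essentially the paper's argument: both reduce the claim to the length-additivity of $w_J\,(w_{J\setminus u}\inv w_{Js\setminus u})$ and settle it by a descent-set computation on a relative longest element (the paper via Lemma \ref{lem:reducedmaxs}, you via the equivalent width-two instance of Theorem \ref{thm:betterred}). The only difference is cosmetic: the paper inspects $\rightdes(w_J w_{J\setminus u}\inv)$ and shows it misses $Js\setminus u$ because it lies in $\{u\}$, while you inspect $\rightdes(w_{Js\setminus u} w_{J\setminus u})$ and show it misses $J$ because it lies in $\{s\}$ --- mirror images of the same two-line check.
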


In the proofs in this chapter we repeatedly use the fact that an element $x$ is minimal in its right $W_I$ coset if and only if its right descent set is disjoint from $I$, if and
only if $xw_I = x . w_I$.

\begin{proof} We need only check that the sequence is reduced at the $+s$ step. Let $x = w_J w_{J \setminus u}\inv$. By
\eqref{reducedmaxformula} it is enough to check $x w_{Js\setminus u}= x.w_{Js\setminus u}$, which holds if and only if $\rightdes(x) \cap (Js \setminus u) = \mt$. Since $x \in W_J$, $\rightdes(x) \subset J$. Since $x$ is minimal in its right $W_{J \setminus u}$-coset, $\rightdes(x) \cap (J \setminus u) = \mt$. Combining these we deduce the result.
\end{proof}


The second lemma is a callback to Example \ref{nonrex}.

\begin{lem} \label{lem:easynonrex2} If $s \notin J$ and $Js$ is finitary, then $[J - u + s + u]$ is not reduced for all $u \in J$. \end{lem}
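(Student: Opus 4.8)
The plan is to compute the forward path of $[J - u + s + u] = [J, J\setminus u, Js\setminus u, Js]$ directly and to show that reducedness fails at the final $+u$ step because the left redundancy jumps up. The first point I would establish is that every coset in the forward path contains the identity $e$, so that $\mi{p_i} = e$ for all $i$. This is clear at the start, since $p_0$ is the identity $(J,J)$-coset $W_J$. At the down-step removing $u$, the maximal $(J, J\setminus u)$-coset inside $W_J$ is $W_J$ itself (indeed $W_J = W_J\, e\, W_{J\setminus u}$ is a single $(J,J\setminus u)$-double coset, as $W_{J\setminus u}\subset W_J$), so $p_1 = W_J \ni e$. The two subsequent up-steps only enlarge the coset, so $e$ remains in $p_2$ and $p_3$; hence $\mi{p_i}=e$ throughout.

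Next, since $\mi{p_i} = e$, the left redundancy is simply $K_i = J \cap \mi{p_i}\, I_i\, \mi{p_i}\inv = J \cap I_i$. The final step is the up-step $I_2 = Js\setminus u \subset I_3 = Js = I_2 \sqcup u$. Using the remark following Definition \ref{def:reduced}: because \eqref{reducedminimal} holds here ($\mi{p_2} = e = \mi{p_3}$), condition \eqref{sameK} is equivalent to $\mi{p_2}\, u\, \mi{p_2}\inv \notin J$, i.e. to $u \notin J$. But $u \in J$ by hypothesis, so \eqref{sameK} fails and $[J-u+s+u]$ is not reduced at this step. Equivalently and more explicitly, $K_2 = J\cap(Js\setminus u) = J\setminus u$ while $K_3 = J\cap Js = J$ (using $s\notin J$ and $J \subset Js$), and $J\setminus u \neq J$ since $u \in J$; this works uniformly for every $u \in J$.

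The only point requiring care is the claim that the down-step does not lose $e$ from the coset, which is what pins down $\mi{p_i} = e$ and lets us read off the redundancies so cheaply; everything else is immediate. As an alternative route that avoids the forward-path bookkeeping altogether, one could instead compare lengths: the width-one expression $[J+s]$ is reduced (a single up-step out of the identity, with unchanged minimal element $e$ and unchanged redundancy $J$) and expresses the same $(J,Js)$-coset $W_{Js}$, so that coset has $\ell^-(p) = 0$; meanwhile $\ell^-([J-u+s+u]) = \ell(J) - \ell(J\setminus u) > 0$, so by Proposition \ref{p.rexlength} the expression cannot be reduced. Either way the obstruction is the same phenomenon flagged in Example \ref{nonrex}: a double coset that is forced to stay the same size on the $+s$ step, only to have the redundancy grow on the $+u$ step.
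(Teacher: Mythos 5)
Your proof is correct, but it takes a different route from the paper's. The paper's proof is a two-line application of the machinery already in place: it applies $+$-associativity to rewrite $[J-u+s+u] \expr [J-u+u+s]$, then the $*$-quadratic relation $[J-u+u]\expr[J]$ to land on $[J+s]$, and concludes from the strict length drop (via Proposition \ref{p.rexlength}) that the original expression was not reduced. You instead compute the forward path directly from Definition \ref{def:reduced}: you correctly observe that $p_1=W_J$ is the unique (hence maximal) $(J,J\setminus u)$-coset inside $W_J$, so $e$ survives every step, $\mi{p_i}=e$ throughout, the redundancies are just $K_i=J\cap I_i$, and the jump from $K_2=J\setminus u$ to $K_3=J$ violates \eqref{sameK} at the final $+u$ step. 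Your argument is more elementary and self-contained --- it needs neither the relations of \S\ref{sec-relating} nor the length machinery, only the original definition --- and it pinpoints exactly which index fails and why, which is more informative than the paper's proof. What the paper's approach buys is brevity and a demonstration that the failure of reducedness here is a formal consequence of relations already established (which is thematically apt, since this lemma sits inside the section building toward the presentation theorem). Your alternative length-based argument at the end is also correct and is essentially the paper's argument with the braid-relation bookkeeping replaced by a direct computation of $\ell^-$.
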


\begin{proof} We use $+$-associativity and the $*$-quadratic relation:
\begin{equation} [J - u + s + u] \expr [J - u + u + s] \expr [J + s]. \end{equation}
The length decreases when the $*$-quadratic relation is applied.
\end{proof}

\subsection{Switchback relations}\label{ss:specialswitch}

Throughout this section we write $w_0$ for the longest element in a finite Coxeter group, when the choice of Coxeter group is understood.

When $s \notin J$ and $Js$ is finitary, the expression $[J+s-t] \expr p$ is always reduced for any $t \in Js$ (including $t = s$). Note that $\ma{p} = w_{Js}$ is the longest
element of the parabolic subgroup $W_{Js}$. There may or may not be another reduced expression for $p$ which only factors through proper subsets of $Js$. In Proposition
\ref{prop:sss} we give a precise criterion for when such a reduced expression of $p$ exists, and an explicit construction of one when it does. Our \emph{switchback} (braid)
relation will relate $[J +s -t]$ with this other reduced expression. We make the switchback relation even more explicit for each finite Coxeter group (types $A$ through $H$) in
\S\ref{ssstypes}. Here we illustrate the dihedral case, which is nicely visualized in the singular Coxeter complex, see \eqref{A2coxintro} and \eqref{B2cox}.

\begin{ex} Let $S = \{s,t\}$ and $m_{st} = 3$ and $J = t$, so that $Js = S$ has maximal element $w_0 = sts$. Then $[J+s-t]$ is the unique reduced expression for the $(t,s)$-coset containing $w_0$. However, there are two reduced expressions for the $(t,t)$-coset containing $w_0$,
\begin{equation} [t,st,t] = [J + s - s] \expr [J - t + s - s + t] = [t,\mt,s,\mt,t], \end{equation} 
and this is an example of our switchback relation. \end{ex}

\begin{ex} \label{ex:dihedralss} More generally, let $S = \{s,t\}$ and $m_{st} = m < \infty$ and $J = t$. If $m$ is odd let $u = s$ and $v = t$, and if $m$ is even let $u = t$ and $v = s$. More abstractly, we have $u = w_0 t w_0$ and $v = w_0 s w_0$, so that $S = \{u,v\}$. Then $[J + s - v]$ is the unique reduced expression for the $(t,u)$-coset containing $w_0$. However, there are two reduced expressions for the $(t,v)$-coset containing $w_0$,
\begin{equation} [t,st,v] = [J + s - u] \expr [J - t + s - s + t - t + s \cdots - u + v] = [t,\mt,s,\mt,t,\mt, \ldots, \mt ,v] \end{equation}
where a total of $m-1$ operations ``subtract-then-add'' appear in the right hand expression. \end{ex}


Many times in this section we use the observation that, for a finite Coxeter system $(W,S)$, conjugation by $w_0$ preserves $S$, on which it acts by an involution.

\begin{defn} \label{def:useq}
Fix the data of $s \notin J \subset S$ with $Js$ finitary, and $t \in Js$ (we allow $t = s$), such that $s \ne w_{Js} t w_{Js}$. Define elements $u_0, u_{-1} \in S$ and finitary subsets $I_0, I_{-1} \subset S$ by the formulas
\begin{equation} \label{uinitialize} u_0 = s, \quad u_{-1} = w_{Js} t w_{Js}, \quad I_0 = J, \quad I_{-1} = Js \setminus u_{-1}. \end{equation}
Now define $u_i \in S$ and $I_i \subset S$ for all $i \in \Z$ recursively by the formulas
\begin{equation} \label{urecurse} u_{i+1} = w_{I_i} u_{i-1} w_{I_i}, \quad I_i = Js \setminus u_i. \end{equation}
We call $(u_i)_{i \in \Z}$ the \emph{$u$-sequence} or \emph{rotation sequence} associated to the triple $(J,s,t)$.
\end{defn}

Motivation for the rotation sequence comes from the reflection representation of $(W,S)$, see \S\ref{moregeometry2}. It is not immediately obvious that the elements $u_i$ are actually simple reflections, or that they live in $Js$ so that $I_i$ is well-defined.

\begin{lem} Definition \ref{def:useq} makes sense, giving a well-defined sequence $(u_i)$ of elements of $Js$. \end{lem}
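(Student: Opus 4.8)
The plan is to prove, by a two-directional induction seeded at the pair $(u_{-1}, u_0)$, the following combined invariant: for every $i \in \Z$, the elements $u_i$ and $u_{i+1}$ are simple reflections lying in $Js$, and moreover $u_i \ne u_{i+1}$. The inequality of consecutive terms is the crucial part of the invariant, because it is exactly what guarantees that $u_{i-1} \in I_i = Js \setminus u_i$, so that the conjugation $w_{I_i} u_{i-1} w_{I_i}$ defining $u_{i+1}$ is the conjugation of a simple reflection of $W_{I_i}$ by the longest element of that same parabolic.

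The only input from Coxeter theory I would invoke is the standard fact recalled just before the statement: for a finite Coxeter system, conjugation by the longest element preserves the set of simple reflections and acts on it as an involution. Applied to the finite parabolic $W_{I_i}$ (finite since $I_i \subset Js$ and $Js$ is finitary), with simple system $I_i$, this says that $w_{I_i} u w_{I_i} \in I_i$ whenever $u \in I_i$. Hence, assuming the invariant at index $i$, the element $u_{i-1}$ lies in $I_i$, so $u_{i+1} = w_{I_i} u_{i-1} w_{I_i}$ is again a simple reflection of $I_i \subset Js$; and since $u_{i+1} \in I_i = Js \setminus u_i$ we get $u_{i+1} \ne u_i$. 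This propagates the invariant one step forward, yielding the valid pair $(u_i, u_{i+1})$.

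For the base case, $u_0 = s$ and $u_{-1} = w_{Js} t w_{Js}$ are both simple reflections of $Js$ (the latter by the same involution fact applied to $W_{Js}$ with $t \in Js$), and the hypothesis $s \ne w_{Js} t w_{Js}$ is precisely the statement $u_{-1} \ne u_0$. The forward induction then produces $u_1, u_2, \ldots$ as above. The backward direction is symmetric: since conjugation by $w_{I_i}$ is an involution, the recursion of Definition \ref{def:useq} inverts to $u_{i-1} = w_{I_i} u_{i+1} w_{I_i}$, so from a valid pair $(u_i, u_{i+1})$ with $u_i \ne u_{i+1}$ one recovers a simple reflection $u_{i-1} \in I_i$ with $u_{i-1} \ne u_i$, extending the sequence to negative indices. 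Together these give the whole sequence $(u_i)_{i \in \Z}$ with values in $Js$.

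I do not expect a serious obstacle here. The one point requiring care is to keep straight the two distinct roles played by the subscripts in the recursion: it is $u_{i-1}$ that gets conjugated, while $u_i$ enters only through the subset $I_i = Js \setminus u_i$. One must check at each step that the invariant $u_i \ne u_{i+1}$ is both \emph{used} (to place the conjugated reflection inside $I_i$) and \emph{regenerated} (because the conjugate lands in $I_i$, hence avoids $u_i$). Once this bookkeeping is organized, well-definedness of the sequence and membership in $Js$ follow immediately.
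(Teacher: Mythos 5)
Your proof is correct and follows essentially the same route as the paper: both arguments run a two-directional induction on the claim that consecutive terms are distinct simple reflections of $Js$ (equivalently, that $u_{i-1}, u_{i+1} \in I_i$), using only the fact that conjugation by the longest element of a finite parabolic preserves its simple system. The only nitpick is a harmless off-by-one in your phrasing ("the invariant at index $i$" should refer to the pair $(u_{i-1},u_i)$ when you conclude $u_{i-1}\in I_i$), which does not affect the argument.
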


\begin{proof} We claim that $u_{i-1}, u_{i+1} \in I_i \subset Js$, which we prove by induction. This suffices to show that $u_i$ and $I_i$ are well-defined. Let us first treat the
case of $i \ge 0$. Note that $u_{-1} \in Js$ since it is the conjugation by $w_{Js}$ of a simple reflection inside $Js$. Since $u_{-1} \ne s$ by assumption, we must have $u_{-1} \in
J = I_0$. Now, if $u_{i-1} \in I_i$ then conjugation by $w_{I_i}$ produces another element of $I_i$, namely $u_{i+1}$. Since $I_i = Js \setminus u_i$ we deduce that $u_{i+1} \ne
u_i$. Since $I_{i+1} = Js \setminus u_{i+1}$ and $u_i \in Js$ we deduce that $u_i \in I_{i+1}$, completing the induction step.

For $i<0$ we use the formula $u_{i-1} = w_{I_i} u_{i+1} w_{I_i}$ which is equivalent to \eqref{urecurse}. Working backwards (i.e. using decreasing induction on $i \le -1$), the
same argument will work. \end{proof}

Because $S$ is finite, eventually the pair $(u_i, u_{i+1})$ is equal to $(u_k,u_{k+1})$ for some $i \ne k$, implying that the rotation sequence is periodic.

\begin{ex} Use the same notation from Example \ref{ex:dihedralss}. There is no rotation sequence associated to $(J,s,v)$ since $w_0 v w_0 = s$. The rotation sequence associated to $(J,s,u)$ is merely the alternating sequence
\begin{equation} \ldots, s, t = u_{-1}, s = u_0, t = u_1, s, \ldots. \end{equation}
We have $u_{i+1} = u_{i-1}$ since $I_i = \{u_{i-1}\}$. \end{ex}

\begin{ex} \label{typeAssEx1} Let $W = S_n$ and $S = \{s_1, \ldots, s_{n-1}\}$ be the Coxeter group of type $A_{n-1}$. Let $s = s_i$ and $J = S \setminus s$. There is no rotation sequence associated to $(J,s_i,s_{n-i})$ since $w_0 s_i w_0 = s_{n-i}$. The rotation sequence associated to $(J,s_i, s_{n-j})$ for $j \ne i$ is actually six-periodic (for all $i, j, n$), and $u_{i+3} = w_0 u_i w_0$. We leave this as an exercise to the reader.
	
For example, when $n = 10$ and $i = 3$ and $j= 4$ we have sequence
\begin{equation} \ldots, s_4 = u_{-1}, s_3 = u_0, s_9, s_6, s_7, s_1, s_4 = u_5, s_3 = u_6, \ldots.  \end{equation}
Moreover, $u_{i+3} = w_0 u_i w_0$.
\end{ex}

\begin{prop}[Switchback relations]\label{prop:sss}
Let $s \notin J \subset S$ with $Js$ finitary, and $t \in Js$. Let $I = Js \setminus t$. Let $p$ be the $(J,I)$-coset containing $w_{Js}$.
\begin{enumerate}
    \item\label{uniqrex} If $s=w_{Js}tw_{Js}$, then $[J+s-t]$ is the unique reduced expression of $p$.
	\item\label{eqsss} If $s\neq w_{Js}tw_{Js}$, then $p$ has exactly two reduced expressions, given in \eqref{sssform}.
\end{enumerate}
To elaborate in case \eqref{eqsss}, let $(u_i)$ be the rotation sequence associated with $(J,s,t)$, see Definition \ref{def:useq}. Let $I_i = Js \setminus u_i$ as in Definition \ref{def:useq}, and $L_i = Js \setminus \{u_i, u_{i-1}\}$. Let $\snum$ be the largest integer such that
\begin{equation} \label{alternatingrex} [I_0, L_1, I_1, \ldots, L_k, I_k] \end{equation}
is reduced for all $1 \le k \le \snum$. Then $u_\snum = t$ and
\begin{equation} \label{sssform} 
\begin{split} 
    [J +s -t] \expr & [J-u_1+s -u_2 +u_1 -u_3+u_2-u_4 \cdots -u_{\snum-1}+u_{\snum-2} -t+u_{\snum-1}]  \\& = [J= I_0,L_1,I_1,L_2,I_2,L_3,I_3,L_4,\cdots,L_\snum,I_\snum = I].
\end{split}
\end{equation}
Moreover, $u_{\snum+1} = w_{Js} s w_{Js}$.
\end{prop}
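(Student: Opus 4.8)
The plan is to work throughout inside the finite Coxeter group $W_{Js}$, writing $w_0 = w_{Js}$ for its longest element and $\sigma$ for the involution $r \mapsto w_0 r w_0$ of $Js$ (conjugation by $w_0$ permutes the simple reflections). One first checks, using Corollary~\ref{cor:nottoobig} and the fact that left descent sets of maximal elements are weakly increasing along a forward path, that every reduced expression for $p$ involves only finitary subsets of $Js$, so no generality is lost. Since $\ma{p} = w_0$ we have $\leftdes(\ma{p}) = Js$, whence by Proposition~\ref{thm:favoritebig} and Corollary~\ref{cor:nottoobig} the only simple reflection a reduced expression for $p$ may add first is $s$; thus every reduced expression begins either with the up-step $+s$ or with a down-step. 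I would dispose of the up-step case first: if a reduced expression begins $[J, Js, \ldots]$, then by Proposition~\ref{prop:locality2} its tail is a reduced $Js$-expression for the minimal $(Js,I)$-coset, which has $\ell^+ = 0$; hence the tail has no up-steps and, being monotone down from $Js$ to $I = Js \setminus t$, must equal $[Js, I]$. So $[J+s-t]$ is reduced (the ``high road'' of Proposition~\ref{thm:favoritebig}) and is the unique reduced expression for $p$ beginning by adding a reflection.

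Next I would compute the left redundancy $K = J \cap \mi{p}\,I\,\mi{p}\inv$ and show that $s = \sigma(t)$ is equivalent to $K = J$. Granting this, case~\eqref{uniqrex} is immediate: when $K = J$ the set $J \setminus K$ of reflections removable first (the discussion following Proposition~\ref{thm:favoritesmall}, via Corollary~\ref{cor:nottoosmall}) is empty, so no reduced expression begins with a down-step, and $[J+s-t]$ is the only one. For case~\eqref{eqsss} we have $s \ne \sigma(t)$, so $K \subsetneq J$ and the rotation sequence of Definition~\ref{def:useq} is genuinely defined; the remaining task is to analyse reduced expressions beginning with a down-step and to prove the zigzag \eqref{sssform} is the only one. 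By Theorem~\ref{thm:betterred}, reducedness of the truncation $[I_0, L_1, I_1, \ldots, L_k, I_k]$ is equivalent to the lengths adding in
\[ b_k := w_{I_0}\,.\,(w_{L_1}\inv w_{I_1})\,.\,\cdots\,.\,(w_{L_k}\inv w_{I_k}), \]
and when the truncation is reduced $b_k$ is the maximal element of the $(J,I_k)$-coset it expresses (this is \eqref{mapyz} of Proposition~\ref{thm:betterforward}, centered at $0$). I would then show that while the zigzag stays reduced the lengths $\ell(b_k)$ strictly increase, so there is a well-defined first index at which $b_k = w_0$; this index is $\snum$, since a reduced expression can never increase length past $w_0$, forcing failure of reducedness at the following step.

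The combinatorial heart --- and the main obstacle --- is to prove that this first return to $w_0$ occurs exactly at the ray $I_\snum = Js \setminus t$ (equivalently $u_\snum = t$), that $u_{\snum+1} = \sigma(s)$, and that the rotation is \emph{forced}: the down-step and the ensuing up-step are uniquely determined at every stage, with no branching permitted. The key local point is that at an intermediate ray $I_k$ ($1 \le k < \snum$) one cannot reducedly jump back up to the full parabolic $Js$: by Corollary~\ref{cor:updatemax} such a step would require $b_k . (w_{I_k}\inv w_{Js}) = w_0$ with lengths adding, i.e. $\ell(b_k) = \ell(w_{I_k})$, which fails since $\ell(b_k) > \ell(w_{I_k})$ once $k \ge 1$. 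This simultaneously rules out premature detours to the high road and, by tracking that $\rightdes(b_k) = I_k$ until the terminal step, pins the terminal ray to $F_I$. Establishing that the down- and up-reflections at each stage are themselves forced (so that the sequence of subsets is exactly the $I_i, L_i$ of Definition~\ref{def:useq}) is where I expect the real work to lie.

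To organize this I would lean on the reflection-representation picture promised in \S\ref{moregeometry2}: the $1$-dimensional flats $F_{I_i} = \mathbb{R}\,\varpi_{u_i}$ all lie in the $2$-plane $P = \mathrm{span}(\varpi_s, \varpi_t)$, the rotation sequence records the rays of the line arrangement $\{H_r \cap P\}$ met as one turns within $P$, and the $2$-dimensional flats $F_{L_i}$ are all equal to $P$. In this picture $\sigma$ is the antipodal symmetry induced by $w_0$, the condition $\ma{} = w_0$ detects the ``opposite'' position, and the two reduced expressions for $p$ are precisely the two arcs in $P$ joining the cone of $F_J$ to the cone of $F_I$: the short arc across the apex (the high road $[J+s-t]$) and the long arc sweeping the rays $F_{I_1}, \ldots, F_{I_\snum}$ (the zigzag). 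The half-turn symmetry $u_{i+(\snum+1)} = \sigma(u_i)$ carried by $P$, once established, combines with the seed $u_{-1} = \sigma(t)$ to give $u_\snum = \sigma(u_{-1}) = t$ and $u_{\snum+1} = \sigma(u_0) = \sigma(s) = w_{Js}\,s\,w_{Js}$, exactly the two asserted identities; the reducedness of each stage and the absence of branching would then be confirmed algebraically through Theorem~\ref{thm:betterred} and the right-descent bookkeeping above.
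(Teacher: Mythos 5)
Your reduction to $W_{Js}$, your analysis of the case beginning with an up-step, and your treatment of case~\eqref{uniqrex} via the redundancy computation (your ``$s=\sigma(t)$ iff $K=J$'' is exactly Lemma~\ref{lem:uniquerexcriterion}, and the first removable reflections are $J\setminus K$) all match the paper's argument and are sound. The problem is case~\eqref{eqsss}, where you correctly locate the difficulty --- showing that at every stage the down-step $-u_{k+1}$ and the up-step $+u_k$ are \emph{forced} and that the process terminates exactly at $I=Js\setminus t$ --- and then do not supply an argument for it. Your length-monotonicity observation shows only that the zigzag must eventually stop; it does not identify \emph{which} reflection may be removed at an intermediate vertex $I_k$, because Corollary~\ref{cor:nottoosmall}/Proposition~\ref{thm:favoritesmall} constrain the first step of a reduced expression for a coset starting at its own left parabolic, whereas here you are in the middle of an expression whose partial endpoint $r_k$ is not a coset containing $w_0$. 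Deferring to the reflection-representation picture of \S\ref{moregeometry2} is circular: the paper explicitly states there that the geometric intuition (including the half-turn symmetry $u_{i+\snum+1}=\sigma(u_i)$ you invoke ``once established'') is \emph{justified by} Proposition~\ref{prop:sss}\eqref{eqsss}, not the other way around; and the identity $u_\snum=t$ is derived in the paper as an output of the proof, so assuming the symmetry to get $u_\snum=\sigma(u_{-1})=t$ begs the question.

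The missing ingredient is the rotation lemma (Lemma~\ref{lem:rotation}): if $[J-u_1+s]\circ M_{\bullet}$ is a reduced expression for $p$, then $M_{\bullet}\circ[I-t'+t]$ is a reduced expression for a \emph{new} $(I_1, S\setminus t')$-coset containing $w_0$, with $t'=w_0sw_0$. This re-frames the unknown suffix $M_{\bullet}$ as a prefix of a reduced expression for a coset of the same shape as $p$, so that the redundancy computation (Lemma~\ref{lem:whoareu} and Corollary~\ref{cor:onlyu}) applies again and forces the next removed reflection to be $u_2=w_{I_1}u_0w_{I_1}$; iterating, the width drops by two each time, the process terminates, and the identities $u_\snum=t$ and $u_{\snum+1}=w_{Js}sw_{Js}$ fall out of the final rotation. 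Without this lemma (or an equivalent device for controlling redundancies of the intermediate cosets $r_k$ directly), your outline does not close.
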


We prove the result in the next section, after some remarks and easy lemmas. First we discuss the number $\snum$ associated to the triple $(J,s,t)$.

\begin{rem} Note that $u_{\snum} = t = w_{Js} u_{-1} w_{Js}$, and $u_{\snum + 1} = w_{Js} u_0 w_{Js}$. Using the recursive definition of the $u$-sequence we see that
\begin{equation} \label{addd} u_{i + \snum + 1} = w_{Js} u_i w_{Js} \end{equation}
for all $i \in \Z$. Since $w_{Js}$ is an involution, \eqref{addd} implies that the $u$-sequence is $2(\snum+1)$-periodic. When $w_{Js}$ is central in $W_{Js}$ then the $u$-sequence must
actually be $(\snum+1)$-periodic. However, the period of the $u$-sequence can be smaller still, as we already saw in dihedral type where it was $2$-periodic. This also happens
sporadicaly in type $EFH$. In particular, $\snum$ is \emph{not} the smallest index for which $u_\snum = t$, or for which \eqref{addd} holds. \end{rem}

\begin{rem} In \S\ref{ssstypes} we give explicit formulas for $\snum$ and the sequence $(u_1, u_2, \ldots, u_\snum)$, when $Js$ is an irreducible finite Coxeter group. In classical types,
the value of $\snum$ is relatively small. For example, $\snum = 2$ in type $A$, and $2 \le \snum \le 3$ for types $BCD$. \end{rem}

\begin{rem} Lemma \ref{lem:easynonrex2} gives an example where $\snum=1$. Conversely, every example where $\snum=1$ will be a consequence of Lemma \ref{lem:easynonrex2}, as we justify shortly. \end{rem}

\begin{rem} \label{rmk:cantsimplifyfurther} Within the reduced expression in \eqref{sssform} one has contiguous subwords of the form
\begin{equation} \label{intermediatenotbad} [L_k, I_k, L_{k+1}] = [L_k + u_{k-1} - u_{k+1}]. \end{equation}
While also being of the form $[J+s-t]$ for a different triple $(J,s,t)$, this subword will always fall into the regime of Proposition \ref{prop:sss}\eqref{uniqrex}, because $u_{k+1} = w_{I_k} u_{k-1} w_{I_k}$ by construction. In particular, we can not apply the switchback relation to subwords of the right hand expression in \eqref{sssform}. \end{rem}

\begin{rem} It sometimes helps to think about the switchback relation as being associated not with the triple $(J,s,t)$ where $s \ne w_{Js} t
w_{Js}$, but with the triple $(J,s,u_1)$ where $s \notin J$ and $u_1 \in J$ (so $s \ne u_1$). If we start at $J$ one might begin a reduced expression by adding $s$ or by removing
$u_1$. The switchback relation explains how to extend these beginnings to reduced expressions for the same coset. \end{rem}

Now let us continue Example \ref{typeAssEx1}.

\begin{ex} \label{typeAssEx2} Recall that when $n = 10$ and $s = s_3$ and $t= s_6$ we have $u_1 = s_9$, $u_2 = t$. The switchback relation is
\begin{equation} [J + s_3 - s_6] \expr [J - s_9 + s_3 - s_6 + s_9]. \end{equation}
The right hand side is a reduced expression if and only if $w_0 = w_{\hat{3}}. (w_{\hat{39}}\inv w_{\hat{9}}) . (w_{\hat{69}}\inv w_{\hat{6}})$, or equivalently, iff
\begin{equation} w_{\hat{3}}\inv w_0 = (w_{\hat{39}}\inv w_{\hat{9}}) . (w_{\hat{69}}\inv w_{\hat{6}}). \end{equation}
Here is the picture in $S_{10}$ which justifies this equality.
\[ {
\labellist
\tiny\hair 2pt
 \pinlabel {$=$} [ ] at 45 15
\endlabellist
\centering
\ig{1.5}{switchbackexample}
} \]
\end{ex}

The next lemma will simplify our notation.

\begin{lem} \label{lem:stayinK} Suppose that $p$ is a parabolic double coset in $(W,S)$ and $M\subset S$ is finitary such that $\ma{p} \in W_M$. Then in any expression $I_{\bullet} = [I_0, \ldots,
I_d]$ for $p$, $I_i \subset M$ for all $0 \le i \le d$. \end{lem}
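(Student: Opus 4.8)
The plan is to reduce everything to the star-product formula for the maximal element. By Corollary~\ref{cor:starformap}, any expression $I_\bullet$ for $p$ yields
\[ \ma{p} = w_{I_0} * w_{I_1} * \cdots * w_{I_d}. \]
For a fixed index $j$ I want to show $I_j \subseteq M$. The idea is that $I_j$ sits inside the right descent set of the partial product $w_{I_0} * \cdots * w_{I_j}$, and that this information survives into $\ma{p}$ because the remaining star product only appends letters on the right (by Lemma~\ref{starissubexp}) without shortening what has already accumulated.

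Concretely, I would set $y = w_{I_0} * \cdots * w_{I_j}$ and $z = w_{I_{j+1}} * \cdots * w_{I_d}$ (with $z = e$ when $j = d$), so that $\ma{p} = y * z$ by associativity. First, since $\rightdes(w_{I_j}) = I_j$, Lemma~\ref{lem:dontchangeleftdescent} gives $I_j \subseteq \rightdes(y)$. Second, Lemma~\ref{starissubexp} writes $y * z = y . z'$ for some $z' \le z$; because the lengths add in this product, concatenating a reduced word for $y$ with one for $z'$ produces a reduced word for $\ma{p}$, so $\operatorname{supp}(y) \subseteq \operatorname{supp}(\ma{p})$, where $\operatorname{supp}(w)$ denotes the (well-defined) set of simple reflections occurring in a reduced word for $w$. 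Using the standard inclusions $\rightdes(y) \subseteq \operatorname{supp}(y)$ and $\operatorname{supp}(\ma{p}) \subseteq M$ (the latter being precisely the hypothesis $\ma{p} \in W_M$), I obtain
\[ I_j \subseteq \rightdes(y) \subseteq \operatorname{supp}(y) \subseteq \operatorname{supp}(\ma{p}) \subseteq M. \]
Since $j$ was arbitrary, this proves the lemma.

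The routine inputs here are standard Coxeter facts (support is well-defined, $w \in W_M$ iff $\operatorname{supp}(w) \subseteq M$, and descents lie in the support; alternatively one could argue that $y \le \ma{p}$ in Bruhat order and that $W_M$ is downward closed). The one piece of genuine content --- and the step I expect to be the crux --- is the observation that star-multiplying $y$ on the right by $z$ cannot destroy the support already present in $y$: a priori a $*$-product can cancel letters, but Lemma~\ref{starissubexp} guarantees that $y * z$ only deletes letters from the \emph{right} factor, so everything in $\operatorname{supp}(y)$ persists into $\ma{p}$. Framing the statement through this lens --- descent sets feed into support, and support only grows as one multiplies on the left --- is what makes the single inclusion chain work uniformly for every $j$, instead of having to track separately how each $I_i$ interacts with its neighbors in the expression.
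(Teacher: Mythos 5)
Your proof is correct and takes essentially the same route as the paper: your partial product $y = w_{I_0} * \cdots * w_{I_j}$ is precisely $\ma{p_j}$ for the forward path, and both arguments combine $I_j \subseteq \rightdes(\ma{p_j})$ with $\operatorname{supp}(\ma{p_j}) \subseteq \operatorname{supp}(\ma{p}) \subseteq M$. The only cosmetic difference is in how the middle inclusion is obtained: the paper uses Bruhat monotonicity of support via $\ma{p_j} \le \ma{p}$, while you use the length-additive factorization $\ma{p} = y.z'$ coming from Lemma~\ref{starissubexp}.
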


\begin{proof} If $x \le y$ in the Bruhat order on $W$, and the simple reflection $s$ appears in a reduced expression for $x$, then it also appears in any reduced expression for
$y$. Letting $p_{\bullet}$ be the forward path of $I_{\bullet}$, we know that $\ma{p_{i-1}} \le \ma{p_i}$ in the Bruhat order for all $i$, and thus $\ma{p_i} \le \ma{p} = \ma{p_d}$ for all
$i$. If $I_i \not\subset M$ then $\ma{p_i}$ has some $s \notin M$ in its right descent set, and this $s$ must appear in a reduced expression for $\ma{p}$, contradicting the
assumption $\ma{p} \in W_M$. \end{proof}

By Lemma \ref{lem:stayinK} with $M = Js$, it suffices when studying Proposition \ref{prop:sss} to restrict to the parabolic subgroup $W_{Js}$. Below we adopt the convention that $S
= Js$ and $W$ is finite and $\ma{p} = w_0$. Thus $J = S \setminus s$ and $I = S \setminus t$. Using the results of \S\ref{ss:ssirred} we could reduce further to the case when $W$
is irreducible, though the assumption of irreducibility does not simplify the proof so we do not make it here.

%

\subsection{Proof of the switchback relations}\label{ss:specialswitchproof}

Before embarking on the proof, one more lemma.

\begin{lem} \label{lem:uniquerexcriterion}  Let $s, t \in S$ for $S$ finitary (possibly $s = t$). For $J = S \setminus s$ and $I = S \setminus t$ and $p = W_J w_0 W_I$, we have $J \cap \mi{p} I \mi{p}\inv = J$ if and only if $\mi{p} = w_0 w_I\inv$ if and only if $s = w_0 t w_0$. \end{lem}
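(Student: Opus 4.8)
The plan is to prove the two biconditionals $(\mathrm{A}) \Leftrightarrow (\mathrm{B})$ and $(\mathrm{B}) \Leftrightarrow (\mathrm{C})$ separately, where $(\mathrm{A})$ is the redundancy condition $J \cap \mi{p} I \mi{p}\inv = J$, $(\mathrm{B})$ is $\mi{p} = w_0 w_I\inv$, and $(\mathrm{C})$ is $s = w_0 t w_0$. Throughout I work inside the finite group $W = W_S$, so that $\ma{p} = w_0$, and I write $K = J \cap \mi{p} I \mi{p}\inv$ for the left redundancy of $p$; thus $(\mathrm{A})$ reads $K = J$.

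For $(\mathrm{A}) \Leftrightarrow (\mathrm{B})$ I would use \eqref{maxformula}, which gives $w_0 = \ma{p} = (w_J w_K\inv) . \mi{p} . w_I$ with lengths adding. If $K = J$ then $w_J w_K\inv = e$, so $w_0 = \mi{p} . w_I$ and hence $\mi{p} = w_0 w_I\inv$. Conversely, substituting $\mi{p} = w_0 w_I\inv$ into \eqref{maxformula} gives $w_0 = (w_J w_K\inv).(w_0 w_I\inv).w_I$; since $(w_0 w_I\inv).w_I = w_0$ with lengths adding, this reads $w_0 = (w_J w_K\inv).w_0$ with lengths adding, forcing $\ell(w_J w_K\inv) = 0$, i.e. $w_K = w_J$ and $K = J$. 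This direction is routine.

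The substance is in $(\mathrm{B}) \Leftrightarrow (\mathrm{C})$. Here I would note that $w_0 w_I\inv = w_S w_I\inv$ is the longest element of $W^I = W^I \cap W_S$ (as in the discussion preceding Lemma \ref{doublecosetlem}), and that it lies in $p$ since $w_0 \in p$. By \eqref{beingminimal}, $w_0 w_I\inv = \mi{p}$ if and only if $J \cap \leftdes(w_0 w_I\inv) = \emptyset$ and $I \cap \rightdes(w_0 w_I\inv) = \emptyset$. Being a minimal coset representative in $W^I$, the element $w_0 w_I\inv$ automatically has $\rightdes(w_0 w_I\inv) \cap I = \emptyset$, so only the left-descent condition matters. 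The crux is then to compute $\leftdes(w_0 w_I\inv)$ exactly; I claim it equals the singleton $\{w_0 t w_0\}$.

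To establish this claim — which I expect to be the main obstacle — I would argue as follows. Since $w_I\inv = w_I$ and $I = S \setminus t$ is a proper subset of $S$, the element $w_0 w_I \neq e$ is the longest element of $W^I$, so its right descent set is contained in $S \setminus I = \{t\}$ and, being nonempty, equals $\{t\}$. Passing to left descents via $\leftdes(x) = \rightdes(x\inv)$ gives $\leftdes(w_0 w_I\inv) = \rightdes(w_I w_0)$. Writing $I' = w_0 I w_0$ and using that conjugation by $w_0$ is an automorphism preserving $S$ (so $w_0 w_I w_0 = w_{I'}$), I get $w_I w_0 = w_0 w_{I'}$, and the previous computation applied to $I'$ yields $\rightdes(w_0 w_{I'}) = S \setminus I' = \{w_0 t w_0\}$. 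Hence $\leftdes(w_0 w_I\inv) = \{w_0 t w_0\}$. Since $w_0 t w_0 \in S = J \sqcup \{s\}$, the condition $J \cap \leftdes(w_0 w_I\inv) = \emptyset$ holds if and only if $w_0 t w_0 = s$, which is exactly $(\mathrm{C})$, completing the chain of equivalences.
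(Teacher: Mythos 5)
Your proof is correct. The equivalence $(\mathrm{A})\Leftrightarrow(\mathrm{B})$ is handled exactly as in the paper, via \eqref{maxformula} and a length count. For $(\mathrm{B})\Leftrightarrow(\mathrm{C})$ you take a genuinely different route: you pin down the left descent set of the longest element of $W^I$, showing $\leftdes(w_0 w_I\inv)=\{w_0 t w_0\}$ by the conjugation trick $w_I w_0 = w_0 w_{I'}$, and then invoke the descent-set characterization \eqref{beingminimal} of $\mi{p}$ (correctly noting that the right-descent half is automatic for an element of $W^I$). The paper instead argues in one direction by computing the conjugate $\mi{p} I \mi{p}\inv = w_0 w_I I w_I w_0 = w_0 I w_0$ and comparing cardinalities in $J \subset w_0 I w_0$, and in the other direction by observing that $w_0 t w_0 = s$ forces the set-theoretic identity $W_J w_0 = w_0 W_I$, so that $p = w_0 W_I$ is a single left coset with obvious minimal element. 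Your version buys a clean standalone fact (the exact left descent set of the maximal element of $W^I$, a singleton) and treats both directions of $(\mathrm{B})\Leftrightarrow(\mathrm{C})$ uniformly through one computation; the paper's version is shorter and makes the structural reason for the degenerate case ($p$ collapses to a one-sided coset when $w_0 J w_0 = I$) more transparent. Both are complete.
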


\begin{proof} Clearly $w_0 = \ma{p}$. The formula \eqref{maxformula} implies that $\mi{p} w_I = \ma{p}$ if and only if $J \cap \mi{p} I \mi{p}\inv = J$. If $\mi{p} w_I = w_0$ then $J \cap w_0 I w_0 = J \cap \mi{p} I \mi{p}\inv = J$, which implies that $w_0 t w_0 = s$. Conversely, if $w_0 t w_0 = s$ then $w_0 J w_0 = I$. Thus there is an equality of sets $W_J w_0 = w_0 W_I$, and $p = w_0 W_I$. Consequently $\mi{p} = w_0 w_I\inv$. \end{proof}
	
\begin{proof}[Proof of Proposition \ref{prop:sss}\eqref{uniqrex}]
By Lemma \ref{lem:stayinK} we can assume $Js = S$. We write $p = W_Jw_0W_I$ and note that $[J+s-t] \expr p$.

Suppose that there exists a reduced expression $J_{\bullet} = [J = J_0,\ldots, J_{m-1} = K,J_m = I]$ for $p$ ending in $+u$, that is, satisfying $I = Ku$. The fact that $[K,I]$ is
reduced implies that \begin{equation} \label{ssredundancy} J \cap \mi{p} K \mi{p}\inv = J \cap \mi{p} I \mi{p}\inv. \end{equation} The size of $K$ is one less than the size of $I$, which equals the size of $J$. Thus $J \cap \mi{p} K \mi{p}\inv$ is necessarily a proper subset of $J$. By Lemma \ref{lem:uniquerexcriterion}, this implies that $s \ne w_0 t w_0$. Taking the contrapositive, if $s = w_0 t w_0$ then there is no reduced expression for $p$ ending in $+u$ for any $u \in S$.

To finish proving \eqref{uniqrex}, we need to prove that $p$ has no reduced expressions aside from $[J+s-t]$ which end in $-u$ for some $u \in S$. Of course, since $I = S \setminus
t$ one must have $u = t$, and the penultimate parabolic in the expression is $S$ itself. Preceding this is a reduced expression for the unique $(J,S)$-coset $q$. There are a number of easy ways to argue that $[J+s]$ is the unique reduced expression for $q$; here is one. We know that $\mi{q} = e$, whereas the minimum element $\mi{p_i}$ strictly increases whenever a simple reflection is removed in a reduce expression. So we can only add simple reflections and the only possibility is $[J+s]$. \end{proof}

Now we introduce more lemmas on route to proving Proposition \ref{prop:sss}\eqref{eqsss}. The first lemma helps to justify the importance of the first term $u_{+1}$ in the rotation sequence.

\begin{lem} \label{lem:whoareu} Let $S$ be finitary, with $s, t \in S$ (possibly equal) satisfying $w_0 t w_0 \ne s$. Let $J = S \setminus s$ and $I = S \setminus t$. Let $p$ be
the $(J,I)$-coset containing the longest element $w_0$ of $(W,S)$. Then the left redundancy of $p$ is $J \setminus u$, where $u = w_J w_0 t w_0 w_J$. \end{lem}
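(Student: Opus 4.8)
The plan is to compute $\mi{p}$ explicitly and then evaluate the left redundancy $K = J \cap \mi{p}I\mi{p}\inv$ directly. First I would rewrite the coset. Setting $t' := w_0 t w_0$ and $I' := S \setminus t' = w_0 I w_0$, the hypothesis $w_0 t w_0 \ne s$ guarantees $t' \in J$, and the identity $w_0 W_I = W_{I'} w_0$ gives $p = W_J w_0 W_I = (W_J W_{I'})w_0$. Since right multiplication by $w_0$ reverses length, $\mi{p} = g w_0$, where $g$ is the longest element of the minimal $(J,I')$-coset $q_0 = W_J W_{I'}$. Writing $C := J \cap I' = J \setminus t'$ and applying \eqref{maxformula} to $q_0$ (whose minimal element is $e$, so whose left redundancy is $J \cap I' = C$), I obtain $g = w_J w_C w_{I'}$ as an element of $W$.

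Next I would translate the redundancy into a conjugation condition on $g$. By Kilmoyer's theorem (Lemma \ref{J WJ same}), $K = \mi{p}W_I\mi{p}\inv \cap J = \{r \in J : \mi{p}\inv r \mi{p} \in W_I\}$. Substituting $\mi{p} = g w_0$ and using $w_0 W_I w_0 = W_{I'}$, this becomes $K = \{r \in J : g\inv r g \in W_{I'}\}$. Now I expand $g\inv r g = w_{I'}\, w_C\,(w_J r w_J)\, w_C\, w_{I'}$, and note that conjugation by $w_J$ is an automorphism of $W_J$ permuting $J$; write $\sigma_J(r) = w_J r w_J \in J$. The key case distinction is whether $\sigma_J(r) \in C$, equivalently whether $r \ne u$, since $\sigma_J(r) = t'$ exactly when $r = w_J t' w_J = u$.

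For $r \in J$ with $r \ne u$, one has $\sigma_J(r) \in C$; conjugating by $w_C$ keeps it in $C \subseteq I'$, and conjugating again by $w_{I'}$ keeps it in $I'$, so $g\inv r g \in I' \subseteq W_{I'}$ and hence $r \in K$. For $r = u$, the middle term becomes $t' \notin C$, so $g\inv u g = w_{I'}(w_C t' w_C)w_{I'}$; here $w_C t' w_C \in W_J$, but it can lie in $W_{I'}$ only if it lies in $W_J \cap W_{I'} = W_C$, which would force $t' \in C$, a contradiction. Hence $u \notin K$, and together these show $K = J \setminus u$, as claimed. The main obstacle is controlling the conjugation $g\inv r g$; factoring $g$ through the three longest elements $w_J$, $w_C$, $w_{I'}$ --- each acting on the simple reflections of its own parabolic by a diagram automorphism --- reduces everything to the single transparent case analysis at $r = u$, with the standard fact $W_J \cap W_{I'} = W_C$ supplying the final step.
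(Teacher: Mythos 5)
Your proof is correct. It shares the paper's skeleton — both arguments reduce, via Kilmoyer's theorem (Lemma \ref{J WJ same}) and \eqref{maxformula}, to identifying $J \cap \mi{p}W_I\mi{p}\inv$ — but the endgame is genuinely different. The paper computes $\mi{p} = w_J w_0 w_I w_L$ directly from \eqref{maxformula} applied to $p$, observes that $K = J \cap w_J W_{S\setminus u_{-1}} w_J$ visibly contains $J\setminus u$, and then rules out $K = J$ by citing Lemma \ref{lem:uniquerexcriterion} (which characterizes $K=J$ as equivalent to $s = w_0 t w_0$). You instead obtain $\mi{p} = gw_0$ with $g = w_J w_C w_{I'}$ by passing to the minimal $(J,I')$-coset $W_J W_{I'}$ and using that right multiplication by $w_0$ reverses length, and then decide membership $r \in K$ for each $r \in J$ by pushing $r$ through the three conjugations by $w_J$, $w_C$, $w_{I'}$; the exceptional case $r = u$ is excluded by hand using $W_J \cap W_{I'} = W_{J\cap I'} = W_C$ and the fact that a simple reflection lying in a standard parabolic $W_C$ must belong to $C$. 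Your route is self-contained (it does not need Lemma \ref{lem:uniquerexcriterion}) and makes the exceptional simple reflection $u$ visible as the unique $r$ with $w_J r w_J = t'$, at the cost of a slightly longer conjugation computation; the paper's route is shorter given that Lemma \ref{lem:uniquerexcriterion} has already been established for the proof of Proposition \ref{prop:sss}\eqref{uniqrex}.
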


\begin{proof} Let $u_{-1} = w_0 t w_0$, and $u = u_{+1} = w_J u_{-1} w_J$. Note that $u_{+1} \in J$, i.e. $u_{+1} \ne s$. This is because $u_{-1} \ne s$ by assumption, so $u_{-1} \in J$, and conjugation by $w_J$ preserves $J$.

Let $L$ be the right redundancy of $p$. Then \eqref{maxformula} states that $\ma{p} = w_J \mi{p} w_L w_I$. Since $\ma{p} = w_0$, we deduce that
\begin{equation} \mi{p} = w_J w_0 w_I w_L. \end{equation}
By Kilmoyer's theorem (Lemma~\ref{J WJ same}), the left redundancy $K$ is equal to $J \cap \mi{p} W_I \mi{p}\inv$. Since $L \subset I$ we have 
\begin{equation} K = J \cap w_J w_0 W_I w_0 w_J.\end{equation}
Now $I = S \setminus t$, so $w_0 W_I w_0 = W_{S \setminus u_{-1}}$. Also, $w_J (J \setminus u_{-1}) w_J = J \setminus u_{+1}$. Thus
\begin{equation} J \setminus u_{+1} \subset J \cap w_J W_{S \setminus u_{-1}} w_J = K. \end{equation}
By Lemma \ref{lem:uniquerexcriterion}, we can not have $J = K$, so we must have $J \setminus u_{+1} = K$. \end{proof}

\begin{cor} \label{cor:onlyu} Continue the notation of Lemma \ref{lem:whoareu}. For any $v \in J$, there is a reduced expression for $p$ of the form $[J - v] \circ M_{\bullet}$ if
and only if $v = u$. \end{cor}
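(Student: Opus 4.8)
The plan is to deduce this corollary directly by combining Lemma \ref{lem:whoareu} with the two structural results of \S\ref{subsec:getsmall}. The key observation is that Lemma \ref{lem:whoareu} identifies the left redundancy of $p$ as $K = J \setminus u$, so that $J \setminus K = \{u\}$ is a single reflection. Everything then reduces to the principle (recorded in the discussion after Theorem \ref{thm:favoritesmall}) that the reflections removable first in a reduced expression for $p$ are exactly those lying in $J \setminus \mi{p} I \mi{p}\inv = J \setminus K$.

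For the ``only if'' direction, I would suppose that $[J - v] \circ M_{\bullet}$ is a reduced expression for $p$ and apply Corollary \ref{cor:nottoosmall} with $I_0 = J$ and $I_1 = J \setminus v$. That corollary forces $v \notin K_d$, where $K_d$ is the left redundancy of the endpoint, which here is precisely the left redundancy $K = J \setminus u$ of the coset $p$ computed in Lemma \ref{lem:whoareu}. Since $v \in J$, the condition $v \notin J \setminus u$ leaves only the possibility $v = u$.

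For the ``if'' direction, I would invoke Theorem \ref{thm:favoritesmall}, which produces a reduced expression for $p$ of the form $[[J,K]] \circ M_{\bullet} \circ [[L,I]]$, where $M_{\bullet}$ is any reduced expression for the core $p^{\core}$ and $[[J,K]]$ removes one reflection at a time from $J$ to $K$. Because $K = J \setminus u$ differs from $J$ by the single reflection $u$, the initial block $[[J,K]]$ is literally the one-step expression $[J - u]$; hence this reduced expression has the required form $[J - u] \circ M'_{\bullet}$ with $M'_{\bullet} = M_{\bullet} \circ [[L,I]]$.

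I expect essentially no obstacle here beyond assembling the pieces, since the genuine content already lives in Lemma \ref{lem:whoareu} (the redundancy computation) and in the machinery of \S\ref{subsec:getsmall}. The one point worth stating carefully is that the quantity $K_d$ appearing in Corollary \ref{cor:nottoosmall} is the left redundancy of the full coset $p$, so that Lemma \ref{lem:whoareu} is exactly what is needed to evaluate it as $J \setminus u$.
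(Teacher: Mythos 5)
Your proposal is correct and is essentially the paper's own proof: the paper proves Corollary \ref{cor:onlyu} by exactly the same citation of Proposition \ref{thm:favoritesmall} and Corollary \ref{cor:nottoosmall}, with Lemma \ref{lem:whoareu} supplying the identification of the left redundancy as $J \setminus u$; you have simply spelled out the details. (The paper also records an alternative second proof via Proposition \ref{p to w0}, but that is presented only as supplementary.)
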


\begin{proof} This follows from Theorem \ref{thm:favoritesmall} and Corollary \ref{cor:nottoosmall}. \end{proof}
	
To help cement these ideas in the reader's mind, let us give another proof of Corollary \ref{cor:onlyu}, this time using Proposition \ref{p to w0} instead.

\begin{proof}[Second proof of Corollary \ref{cor:onlyu}] Proposition \ref{p to w0} states that $[J-v]$ can be extended to a reduced expression for $p$ if and only if $I$ is
contained in the right descent set of $x := w_{J \setminus v} w_J w_0$. In other words, we must show that $t \notin \rightdes(x)$, or equivalently $x t > x$. Right multiplication
by $w_0$ reverses the Bruhat order, so \[ w_{J \setminus v} w_J u_{-1} = w_{J \setminus v} w_J w_0 t w_0 < w_{J \setminus v} w_J. \] Both sides of this inequality live in $W_J$,
and right multiplication by $w_J$ reverses the Bruhat order on $W_J$, so \[ w_{J \setminus v} u_{+1} = w_{J \setminus v} w_J u_{-1} w_J > w_{J \setminus v}. \] This means that
$u_{+1} \notin J \setminus v$. Since $u_{+1} \in J$, we deduce that $v = u_{+1}$. \end{proof}

The second lemma introduces a ``rotation'' operation on reduced expressions for certain double cosets containing $w_0$.

\begin{lem} \label{lem:rotation} Let $S$ be finitary, containing elements $s, t$ with $w_0 t w_0 = s' \ne s$ and $w_0 s w_0 = t' \ne t$. Let $J = S \setminus s$ and $I = S \setminus t$. If $[J - u + s] \circ M_{\bullet}$ is a reduced expression for $p$, the $(J,I)$-coset containing $w_0$, then $M_{\bullet} \circ [I - t' + t]$ is a reduced expression for $p'$, the $(S \setminus u, S \setminus t')$-coset containing $w_0$. Note: Lemma \ref{lem:whoareu} gives a formula for $u$. \end{lem}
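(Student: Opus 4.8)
The plan is to reduce everything to the concatenation criterion of Proposition \ref{prop:concatreduced} together with a single conjugation identity involving $w_0$. First, by the locality of reduced expressions (Proposition \ref{prop:locality2}), the subword $M_\bullet$ is itself reduced; say it expresses the $(S\setminus u, I)$-coset $q$. Both ``end caps'' $[J-u+s]$ and $[I-t'+t]$ are reduced by Lemma \ref{lem:easyrex} (the hypotheses $s'\neq s$ and $t'\neq t$ guarantee these make sense), and a short computation with \eqref{reducedmaxformula} identifies their maximal elements: the endpoint $p_1$ of $[J-u+s]$ satisfies $\ma{p_1} = w_J.(w_{J\setminus u}\inv w_{S\setminus u})$, and the endpoint $p_2$ of $[I-t'+t]$ satisfies $\ma{p_2} = w_I.(w_{I\setminus t'}\inv w_{S\setminus t'})$. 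Setting $c := \ma{p_1} w_{S\setminus u}\inv = w_J w_{J\setminus u}\inv$ and $d := w_I\inv \ma{p_2} = w_{I\setminus t'}\inv w_{S\setminus t'}$, these are the factors left over after stripping $w_{S\setminus u}$ from the right of $\ma{p_1}$ and $w_I$ from the left of $\ma{p_2}$, and the lengths add in $\ma{p_1} = c.w_{S\setminus u}$ and $\ma{p_2} = w_I.d$.

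Applying Proposition \ref{prop:concatreduced} to the hypothesis that $[J-u+s]\circ M_\bullet$ is reduced (with junction $S\setminus u$), I obtain $w_0 = \ma{p} = c.\ma{q}$ with the lengths adding, so that $\ma{q} = c\inv w_0$ and $\ell(\ma{q}) = \ell(w_0) - \ell(c)$. For the conclusion I would again apply Proposition \ref{prop:concatreduced}, now to $M_\bullet \circ [I-t'+t]$ (with junction $I$): this concatenation is reduced, with the resulting $(S\setminus u, S\setminus t')$-coset having maximal element $\ma{q} w_I\inv \ma{p_2} = \ma{q}.d$, precisely when the lengths add in $\ma{q}.d$. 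Thus the whole statement comes down to proving the identity $d = w_0 c w_0$. Granting this, $\ma{q}\,d = c\inv w_0 (w_0 c w_0) = w_0$, and since conjugation by $w_0$ preserves length we get $\ell(\ma{q}) + \ell(d) = (\ell(w_0)-\ell(c)) + \ell(c) = \ell(w_0)$, so the lengths do add; hence $M_\bullet \circ [I-t'+t]$ is reduced and its endpoint has maximal element $w_0$, i.e.\ it is the $(S\setminus u, S\setminus t')$-coset $p'$ containing $w_0$, as desired.

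The crux, and the step I expect to be the main obstacle, is the conjugation identity $d = w_0 c w_0$. Here I would use that conjugation by $w_0$ is a length-preserving involution permuting $S$, with $w_0 s w_0 = t'$ and $w_0 t w_0 = s'$. From these, $w_0(S\setminus t')w_0 = S\setminus s = J$ and $w_0 (I\setminus t') w_0 = S\setminus\{s,s'\} = J\setminus s'$, whence
\[ w_0 d w_0 = (w_0 w_{I\setminus t'}\inv w_0)(w_0 w_{S\setminus t'} w_0) = w_{J\setminus s'}\inv w_J. \]
To finish I would invoke the explicit formula for $u$ from Lemma \ref{lem:whoareu}, namely $u = w_J s' w_J$ (using $w_0 t w_0 = s'$). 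Since $w_J$ is an involution, this gives $w_J(J\setminus u)w_J = J\setminus s'$ and therefore $w_{J\setminus s'} = w_J w_{J\setminus u} w_J$; substituting yields $w_{J\setminus s'}\inv w_J = w_J w_{J\setminus u}\inv = c$. Thus $w_0 d w_0 = c$, equivalently $d = w_0 c w_0$, completing the argument. The only delicate points are keeping track of which simple reflection is the image of which under conjugation by $w_0$ versus by $w_J$, and confirming at each stage that the relevant products are length-additive so that Proposition \ref{prop:concatreduced} applies cleanly.
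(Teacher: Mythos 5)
Your proposal is correct and follows essentially the same route as the paper: both apply Proposition \ref{prop:concatreduced} at each junction to reduce the claim to a length-additivity statement, and both resolve it via the conjugation identities $w_J u w_J = s'$ (from Lemma \ref{lem:whoareu}) and $w_0(\cdot)w_0$ acting on subsets of $S$. Your packaging of the computation as the single identity $d = w_0 c w_0$ is just a tidier phrasing of the paper's inline rewriting of $\ma{r} = w_L w_J w_0 = w_0 w_{I'} w_{L''}$.
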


\begin{proof} By locality, $M_{\bullet} \expr r$ and $[J - u + s]$ are both reduced expressions. Let $J' = S \setminus u$ and $L = S \setminus \{s,u\}$. We have
\begin{equation} w_0 = \ma{p} \stackrel{\eqref{yay}}{=} w_J w_L\inv \ma{r}.\end{equation}
Moreover, the lengths add, in that $\ell(w_0) = \ell(J) - \ell(L) + \ell(\ma{r})$.
Now $L = J \setminus u$ and, by Lemma \ref{lem:whoareu}, we have $w_J u w_J = s'$. Hence $w_L w_J = w_J w_{L'}$ where $L' = S \setminus {s,s'}$. So
\begin{equation} \ma{r} = w_L w_J w_0 = w_J w_{L'} w_0 = w_0 w_{w_0 J w_0} w_{w_0 L' w_0}. \end{equation}
Letting $I' = w_0 J w_0 = S \setminus t'$ and $L'' = w_0 L' w_0 = S \setminus \{t,t'\}$ we get
\begin{equation} w_0 = \ma{r} w_{L''}\inv w_{I'}, \end{equation}
and the lengths add in that $\ell(w_0) = \ell(\ma{r}) - \ell(L'') + \ell(I')$. By Proposition \ref{prop:concatreduced} this implies that $M_{\bullet} \circ [I-t'+t]$ is reduced.
\end{proof}

Now we prove the switchback relation. 

\begin{proof}[Proof of Proposition \ref{prop:sss}\eqref{eqsss}]
As before, by Lemma \ref{lem:stayinK} we can assume $Js = S$. Suppose $s\neq w_0 tw_0 =: u_{-1}$. We leave it to the reader to verify that $[J+s - t]$ is the unique reduced expression for $p$ which extends $[J + s]$. The other reduced expressions begin with $[J - v]$ for some $v \in J$. By Corollary \ref{cor:onlyu}, we must have $v = u_1$. Moreover, by Corollary \ref{cor:onlyu}, there exists a reduced expression for $p$ which extends $[J - u_{1}]$.
In the following paragraphs, we give an algorithm to extend $[J-u_1]$ towards a reduced expression of $p$.

By Lemma \ref{lem:whoareu} and Corollary \ref{cor:nottoosmall}, there is no reduced expression for $p$ which extends $[J - u_1 - v]$ for any $v \in J$. So the reduced expression must continue by adding some simple reflection not in $J \setminus u_1$, and there are only two choices: $s$ and $u_1$. Also, $[J - u_1 + u_1]$ is not reduced by Lemma \ref{lem:easynonrex}. Thus $p$ must have a reduced expression of the form $[J - u_1 + s] \circ M_{\bullet}$ for some $M_{\bullet}$.

Lemma \ref{lem:easyrex} verifies that $[J - u_1 + s]$ is reduced. It is possible that $[J - u_1 + s]$ is a reduced expression for $p$, at which point $M_{\bullet}$ has width zero
and our algorithm terminates. 
Thus assume that $M_{\bullet}$ has positive width. We note that $M_{\bullet}$ can not begin by adding a simple
reflection, since the only available choice is $u_1$, and $[J - u_1 + s + u_1]$ is not reduced by Lemma \ref{lem:easynonrex2}. So $M_{\bullet}$ begins $[I_1 - v]$ for some $v \in
I_1 = S \setminus u_1$.

By Lemma \ref{lem:rotation} (and using the notation therein), $M_{\bullet} \circ [I - t' + t]$ is a reduced expression for $p'$, the $(I_1, S \setminus t')$-coset containing $w_0$.
Since $w_0 t' w_0 = s \ne u_1$, we are back exactly in the situation of Proposition \ref{prop:sss}\eqref{eqsss} again! This time $u_1$ is playing the role of $s$, and $t'$ plays
the role of $t$. Thus the new $u_0$ is the old $u_1$, and the new $u_{-1} = w_0 t' w_0 = s$ is the old $u_0$.

Again, $M_{\bullet}$ begins $[I_1 - v]$ for some $v$. Repeating the arguments used already in this proof (citing Corollary \ref{cor:onlyu} etcetera) we see that $M_{\bullet}$ necessarily has the form $[I_1 - u_2 + u_1] \circ N_{\bullet}$ for some $N_{\bullet}$. So 
\[ [J - u_1 + s - u_2 + u_1] \circ N_{\bullet} \expr p, \qquad [I_1 - u_2 + u_1] \circ N_{\bullet} \circ [I - t' + t] \expr p'. \]
Rotating again with Lemma \ref{lem:rotation}, we see that
\[ N_{\bullet} \circ [I - t' + t - t'' + t'] \]
is a reduced expression for a double coset containing $w_0$, and $t'' = w_0 u_1 w_0$.

When $N_{\bullet}$ is width zero our algorithm terminates. Otherwise we can continue, repeating until we have exhausted the reduced expression. Note that the width of $M_{\bullet}$
is two less than the original reduced expression, the width of $N_{\bullet}$ is two smaller still, etcetera. This process will eventually terminate (let $\snum$ be the number of
steps required), resulting in a reduced expression for $p$ of the form \eqref{sssform}. Our choices were constrained at every step, so this is the unique reduced expression for $p$
which extends $[J - u_1]$.

Necessarily we have $u_{\snum} = t$. Furthermore, we know that the rotation \[ [I_1 - u_2 + u_1 - \ldots - t + u_{\snum - 1} - t' + t] \] is a reduced expression for $p'$, yielding
$u_{\snum + 1} = t' = w_0 s w_0$.
\end{proof}

\subsection{Switchback and products of Coxeter systems} \label{ss:ssirred}

\begin{lem} \label{lem:samecomponent} Let $(W,S)$ and $(W',S')$ be finite Coxeter systems, with $J = S \sqcup S' \setminus s$ and $I = S \sqcup S' \setminus t$ inside the Coxeter system $(W \times W', S
\sqcup S')$. If $s, t \in S$ then the rotation sequence associated to $(J, s, t)$ agrees with the rotation sequence associated to $(J \cap S, s, t)$. Moreover, the relation
\eqref{sssform} for $(J,s,t)$ follows from the relation \eqref{sssform} for $(J \cap S, s, t)$ via the operation $(-)^{(+S')}$, see Proposition \ref{prop:reduceproduct}. \end{lem}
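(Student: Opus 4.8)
The plan is to reduce everything to the single structural fact that for a disjoint union of finitary subsets $K = K_1 \sqcup K_2$ with $K_1 \subset S$ and $K_2 \subset S'$, the longest element factors as $w_K = w_{K_1} . w_{K_2}$, and conjugation by $w_K$ acts independently on the two commuting factors $W$ and $W'$. In particular $w_{S \sqcup S'} = w_S . w_{S'}$, and for $t \in S$ one has $w_{S \sqcup S'} t w_{S \sqcup S'} = w_S t w_S$, which again lies in $S$.

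First I would verify that the two rotation sequences coincide by induction on $|i|$. The initialization \eqref{uinitialize} gives $u_0 = s$ and $u_{-1} = w_{S \sqcup S'} t w_{S \sqcup S'} = w_S t w_S$, which is exactly the initialization for the triple $(J \cap S, s, t)$ inside $(W,S)$; note that $u_{-1} \in S$, and that the genericity hypothesis $s \ne w_{Js} t w_{Js}$ holds in the product if and only if it holds in the component, since $w_{Js} t w_{Js} = w_S t w_S$ in both cases. For the inductive step, assume $u_i, u_{i-1} \in S$. Then $I_i = (S \sqcup S') \setminus u_i = (S \setminus u_i) \sqcup S'$, so $w_{I_i} = w_{S \setminus u_i} . w_{S'}$; since $u_{i-1} \in S$, conjugation by $w_{S'}$ fixes it, and the recursion \eqref{urecurse} collapses to $u_{i+1} = w_{S \setminus u_i} u_{i-1} w_{S \setminus u_i}$, which is precisely the recursion for $(J \cap S, s, t)$. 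Running the same argument downward for $i < 0$ shows the sequence is independent of the factor $S'$ and equals the rotation sequence of $(J \cap S, s, t)$; in particular every $u_i$ lies in $S$.

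With the sequences identified, the subsets in \eqref{sssform} satisfy $I_i = I_i^S \sqcup S'$ and $L_i = L_i^S \sqcup S'$, where $I_i^S, L_i^S$ are the corresponding component subsets, and likewise $J = (J \cap S) \sqcup S'$ and $I = (I \cap S) \sqcup S'$. Comparing with Definition \ref{def:plusJ}, the whole product expression in \eqref{sssform}, as well as the left-hand expression $[J + s - t]$, is obtained index by index from the component expression by applying $(-)^{(+S')}$. Next I would invoke Proposition \ref{prop:reduceproduct}: part (5) shows that $[I_0, L_1, I_1, \ldots, L_k, I_k]$ is reduced in the product iff its component version is, so the defining maximality condition for $\snum$ is unchanged and the integer $\snum$ agrees in both settings; part (4) shows that the two component expressions, which express the $(J \cap S, I \cap S)$-coset $p'$ containing $w_S$, pass under $(-)^{(+S')}$ to expressions for $(p')^{(+S')}$. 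Since $\ma{(p')^{(+S')}} = \ma{p'} . w_{S'} = w_S . w_{S'} = w_{S \sqcup S'} = \ma{p}$ and the parabolic data matches, $(p')^{(+S')}$ is exactly $p$; hence the product relation \eqref{sssform} is the image under $(-)^{(+S')}$ of the component relation.

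I do not expect a genuine obstacle here, as the content is entirely in the factorization $w_K = w_{K_1} . w_{K_2}$ together with Proposition \ref{prop:reduceproduct}. The only place demanding care is the bookkeeping in the induction, namely keeping straight that $I_i$ in the product really is $I_i^S \sqcup S'$ so that $w_{I_i}$ splits and the conjugation recursion decouples from $S'$, and confirming that both the genericity hypothesis and the integer $\snum$ transfer correctly, so that we land in case \eqref{eqsss} of Proposition \ref{prop:sss} simultaneously in the product and in the component.
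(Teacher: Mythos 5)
Your argument is correct and is exactly the "straightforward exercise in the definitions" that the paper's proof consists of: the factorization $w_{K_1 \sqcup K_2} = w_{K_1}.w_{K_2}$ makes the rotation recursion decouple from $S'$, and Proposition \ref{prop:reduceproduct} transfers reducedness, the value of $\snum$, and the coset containing the longest element. Nothing is missing.
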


\begin{proof} This is a straightforward exercise in the definitions. \end{proof}

\begin{lem} \label{lem:disjointmeansm2} Let $(W,S)$ and $(W',S')$ be finite Coxeter systems, with $J = S \sqcup S' \setminus s$ and $I = S \sqcup S' \setminus t$ inside the Coxeter system $(W \times W', S
\sqcup S')$. If $s \in S$ and $t \in S'$ then the rotation sequence associated to $(J,s,t)$ is
\begin{equation} \ldots, \quad u_{-1} = w_{S'} t w_{S'}, \quad  u_0 = s, \quad u_1 = t, \quad u_2 = w_S s w_S, \quad \ldots \end{equation}
and $u_{i+2} = w_{S \sqcup S'} u_i w_{S \sqcup S'}$. The relation \eqref{sssform} holds with $\snum = 1$, because it agrees with \eqref{m2braid}. \end{lem}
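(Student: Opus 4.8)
The plan is to verify the claimed rotation sequence directly from Definition \ref{def:useq} and then recognize the resulting relation \eqref{sssform} as an instance of the already-proven commutation relation \eqref{m2braid} from Lemma \ref{m=2}. First I would set $T := S \sqcup S'$ and note that since $s \in S$ and $t \in S'$ lie in different irreducible components, they commute, i.e. $m_{st} = 2$; more generally every element of $S$ commutes with every element of $S'$, so $w_T = w_S \cdot w_{S'} = w_{S'} \cdot w_S$ and conjugation by $w_T$ preserves each of $S$ and $S'$ separately (acting on $S$ as conjugation by $w_S$, and on $S'$ as conjugation by $w_{S'}$).

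Next I would compute the initial terms. By \eqref{uinitialize} we have $u_0 = s$ and $u_{-1} = w_{Js} t w_{Js}$; here $Js = T$, and since $t \in S'$, conjugation by $w_T$ sends $t$ to $w_{S'} t w_{S'} \in S'$, giving $u_{-1} = w_{S'} t w_{S'}$ as claimed. The subsets are $I_0 = J = T \setminus s$ and $I_{-1} = T \setminus u_{-1}$. To get $u_1$ I would apply the recursion \eqref{urecurse}: $u_1 = w_{I_0} u_{-1} w_{I_0}$. Since $I_0 = T \setminus s = (S \setminus s) \sqcup S'$, the element $w_{I_0}$ factors as $w_{S \setminus s} \cdot w_{S'}$, and conjugating $u_{-1} \in S'$ by this product is conjugation by $w_{S'}$ (as $w_{S\setminus s}$ commutes with $S'$). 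Thus $u_1 = w_{S'} u_{-1} w_{S'} = w_{S'}(w_{S'} t w_{S'})w_{S'} = t$, as claimed. Similarly $u_2 = w_{I_1} u_0 w_{I_1}$ with $I_1 = T \setminus t = S \sqcup (S' \setminus t)$, so $w_{I_1} = w_S \cdot w_{S'\setminus t}$ and conjugating $u_0 = s \in S$ gives $u_2 = w_S s w_S$. The periodicity statement $u_{i+2} = w_T u_i w_T$ then follows by an easy induction, or directly from the general periodicity $u_{i+\snum+1} = w_{Js} u_i w_{Js}$ proved in the remark after Proposition \ref{prop:sss} once we establish $\snum = 1$.

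It remains to show $\snum = 1$ and that \eqref{sssform} reduces to \eqref{m2braid}. By definition $\snum$ is the largest integer for which \eqref{alternatingrex} is reduced for all $1 \le k \le \snum$; since $u_1 = t$ we have $u_\snum = t$ forcing the identification, and I would check that the $k=1$ expression $[I_0, L_1, I_1]$ is reduced while confirming this is precisely the right-hand side of \eqref{m2braid}. Indeed $L_1 = T \setminus \{u_1, u_0\} = T \setminus \{t,s\}$, so \eqref{sssform} becomes $[J + s - t] \expr [J - u_1 + s] = [T \setminus s,\ T \setminus \{s,t\},\ T \setminus t]$, which is exactly $[J - t + s]$ in the notation of \eqref{m2braid} (with $I_1 = S$, $I_2 = S'$, the roles of $s$ and $u = t$). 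Since Lemma \ref{m=2} already establishes that $[J+s-t] \expr [J-t+s]$ is a length-preserving relation between reduced expressions for the $(J,I)$-coset containing $w_T$, both sides are reduced, confirming $\snum = 1$ (the algorithm of Proposition \ref{prop:sss}\eqref{eqsss} terminates after one step).

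I expect the main obstacle to be purely bookkeeping: keeping careful track of which $w_{I_i}$ factors through which component so that each conjugation collapses correctly, and matching the indexing and subset conventions of \eqref{sssform} against those of \eqref{m2braid}. There is no real combinatorial difficulty here — the statement is, as the lemma says, ``a straightforward exercise in the definitions'' — but the verification that $[J+s-t]$ and $[J-t+s]$ genuinely coincide with the two ends of \eqref{sssform} requires unwinding the $L_k, I_k$ notation, and the cleanest route is simply to cite Lemma \ref{m=2} for reducedness rather than re-deriving it.
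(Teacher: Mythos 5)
Your proof is correct and is essentially the argument the paper has in mind: the paper's own proof of this lemma is just ``Again, this is straightforward,'' and your computation of $u_{-1},u_0,u_1,u_2$ via the component-wise factorization $w_{I_i}=w_{I_i\cap S}\cdot w_{I_i\cap S'}$, followed by matching $[I_0,L_1,I_1]=[J-t+s]$ with the right-hand side of \eqref{m2braid} and invoking Lemma \ref{m=2} to see that the algorithm of Proposition \ref{prop:sss}\eqref{eqsss} terminates after one step, is exactly the intended routine verification. The only phrase worth tightening is ``since $u_1=t$ we have $u_\snum=t$ forcing the identification'' (the rotation sequence is periodic, so $u_i=t$ for infinitely many $i$); but you immediately supply the correct justification that $\snum=1$, namely that $[I_0,L_1,I_1]$ is already a reduced expression for the coset containing $w_{S\sqcup S'}$, so no further extension can express that same coset.
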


\begin{proof} Again, this is straightforward. \end{proof}

\begin{lem} Let $(W,S)$ be a finitary Coxeter system, and $s, t \in S$. The following are equivalent. \begin{enumerate}
\item There is a switchback relation with $\snum = 1$ for the triple $(S \setminus s, s,t)$.
\item $w_S = w_{S \setminus s} . (w_{S \setminus s,t}\inv w_{S \setminus t})$,
\item $\ell(S) = \ell(S \setminus s) - \ell(S \setminus s,t) + \ell(S \setminus t)$,
\item $s$ and $t$ belong in different irreducible components of $S$, i.e. we are in the setting of Lemma \ref{lem:disjointmeansm2}.
\end{enumerate} \end{lem}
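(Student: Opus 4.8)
The plan is to establish the cycle of implications $(1)\Rightarrow(2)\Rightarrow(3)\Rightarrow(4)\Rightarrow(1)$, writing $A=S\setminus s$, $B=S\setminus t$, $C=S\setminus\{s,t\}=A\cap B$, and letting $p$ be the $(A,B)$-coset containing $w_S$. Three of the four links are short bookkeeping; the real content sits in $(3)\Rightarrow(4)$.

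For $(1)\Rightarrow(2)$ I would invoke Proposition~\ref{prop:sss}: a switchback relation exists only in case \eqref{eqsss}, where $s\neq w_Stw_S$, and then \eqref{sssform} is a reduced expression for $p$. When $\snum=1$ it reads $[I_0,L_1,I_1]$, and since $u_\snum=u_1=t$ we get $I_1=S\setminus t=B$ and $L_1=C$, so \eqref{sssform} collapses to $[A,C,B]$, i.e. $[A-t+s]$. Viewing this as the multistep expression $[[A\supset C\subset B]]$ and applying \eqref{mapintroreddef} (equivalently Theorem~\ref{thm:betterred}), reducedness says exactly that $w_S=\ma{p}=w_A.(w_C\inv w_B)$ with the lengths adding, which is $(2)$. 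Then $(2)\Rightarrow(3)$ is immediate, since the dot in $(2)$ is by definition the additivity $\ell(S)=\ell(A)-\ell(C)+\ell(B)$.

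The crux is $(3)\Rightarrow(4)$, which I would prove in the reflection representation by counting positive roots. Write $\Phi^+$ for the positive roots and $\Phi^+_K$ for those supported in $K\subseteq S$, so that $\ell(K)=|\Phi^+_K|$. Using $\Phi_A\cap\Phi_B=\Phi_{A\cap B}=\Phi_C$ (standard for parabolic subsystems, as the simple roots are linearly independent), inclusion--exclusion yields $\ell(A)-\ell(C)+\ell(B)=|\Phi^+_A\cup\Phi^+_B|$. Thus $(3)$ asserts $|\Phi^+_A\cup\Phi^+_B|=|\Phi^+|$, i.e. every positive root omits $s$ or $t$ from its support. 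I would finish with two standard facts: the support of any root is connected, and every irreducible finite Coxeter system has a positive root of full support. If $s$ and $t$ shared an irreducible component, that component would carry a root supported on all of it, hence on both $s$ and $t$, contradicting the previous sentence; therefore $s$ and $t$ lie in different components, which is $(4)$. The degenerate case $s=t$ is covered as well: then $\alpha_s\in\Phi^+\setminus\Phi^+_A$ forces $|\Phi^+_A\cup\Phi^+_B|<|\Phi^+|$, so $(3)$ fails, matching $(4)$.

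Finally $(4)\Rightarrow(1)$ is essentially Lemma~\ref{lem:disjointmeansm2}: take $S=S_1\sqcup S_2$ with $S_1$ the irreducible component of $s$ and $t\in S_2$, so that $m_{uv}=2$ across the two blocks; the lemma then computes the rotation sequence for $(S\setminus s,s,t)$ and records $\snum=1$, and since $w_Stw_S$ lies in the component of $t$ it differs from $s$, placing us genuinely in case \eqref{eqsss}. The main obstacle is the implication $(3)\Rightarrow(4)$, and within it the root-theoretic input that an irreducible finite Coxeter system admits a positive root whose support is all of $S$; once that is granted, the remaining steps are routine applications of the results already in hand.
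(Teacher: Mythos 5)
Your proposal is correct, and three of its four links coincide with the paper's own argument: (4)$\Rightarrow$(1) via Lemma \ref{lem:disjointmeansm2}, (1)$\Rightarrow$(2) by reading the $\snum=1$ switchback as the reduced expression $[A,C,B]$ and applying Theorem \ref{thm:betterred}, and (2)$\Rightarrow$(3) by taking lengths. The genuine divergence is in (3)$\Rightarrow$(4). The paper splits the length identity over irreducible components (arriving at \eqref{thiswonthappen}) and then disposes of the irreducible case by checking all finite types against the explicit switchback data of \S\ref{ssstypes} --- a classification-dependent verification, consistent with the paper's observation that $\snum\ge 2$ always holds in the irreducible tables. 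You instead argue uniformly: writing $\ell(K)=|\Phi^+_K|$ and using $\Phi^+_A\cap\Phi^+_B=\Phi^+_C$, condition (3) becomes the statement that every positive root omits $s$ or $t$ from its support, which is contradicted by a full-support positive root of the irreducible component containing both. This buys a classification-free proof of exactly the step the paper handles by brute force; the cost is importing the fact that every irreducible finite Coxeter system has a positive root of full support, which does the real work and deserves a citation or the short inductive argument along the Coxeter graph (the connectivity of supports you also mention is not actually used). Your added check in (4)$\Rightarrow$(1) that $w_Stw_S\neq s$, so that a rotation sequence genuinely exists, is a detail the paper leaves implicit and is worth keeping, as is your treatment of the degenerate case $s=t$.
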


\begin{proof} Let $J = S \setminus s$. Lemma \ref{lem:disjointmeansm2} says that (4) implies (1). Meanwhile, (1) says that $[J - t + s]$ is a reduced expression for the double coset containing $w_S$, which by Theorem \ref{thm:betterred} implies (2). By taking lengths, (2) implies (3). It remains to prove that (3) implies (4).
	
If $s$ and $t$ were in the same irreducible component of $S$ (call it $L$) and $S = L \sqcup L'$ then $\ell(S) = \ell(L) + \ell(L')$, $\ell(S \setminus s) = \ell(L \setminus s) + \ell(L')$, etcetera. Thus (3) implies that
\begin{equation} \label{thiswonthappen} \ell(L) = \ell(L \setminus s) - \ell(L \setminus s,t) + \ell(L \setminus t), \end{equation}
which reduces to the case when $W = W_L$ and the Coxeter group is irreducible. One can now check all cases to verify that \eqref{thiswonthappen} never occurs. \end{proof}

\begin{rem} Relatedly, in \S\ref{ssstypes}, we provide explicitly (by checking all cases) the switchback relations for finitary irreducible Coxeter systems. One always has $\snum \ge 2$. The previous two lemmas reduce the switchback relation to irreducible Coxeter systems, so we can directly confirm that (1) and (4) are equivalent. \end{rem}

\subsection{Matsumoto's Theorem}\label{mats}

We prove the singular Matsumoto theorem by repeated use of the techniques developed in \S\ref{subsec:getbig} and \S\ref{subsec:getsmall}.

\begin{thm} \label{thm:matsumoto}
Let $(W,S)$ be a Coxeter group and let $p$ be a double coset.
The braid relations from Lemma \ref{lem:upupdowndown} and Proposition \ref{prop:sss} generate all relations between reduced expressions for $p$.
\end{thm}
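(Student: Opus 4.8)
The plan is to argue by strong induction on $\ell(p)$. This is legitimate: by Proposition~\ref{p.rexlength} every reduced expression for $p$ has length exactly $\ell(p)$, and since consecutive parabolics in an expression differ by a single simple reflection, each index contributes a strictly positive amount to $\ell(I_\bullet)$. Hence, given two reduced expressions $I_\bullet$ and $I'_\bullet$ for the $(J,I)$-coset $p$, it suffices to bring them (by braid relations) to a common first step: after deleting that common step, the two tails are reduced expressions — by the locality Proposition~\ref{prop:locality2} — for a single double coset of strictly smaller length, so the inductive hypothesis applies to them, and the resulting equivalence of tails lifts back to $I_\bullet \expr I'_\bullet$ by Proposition~\ref{prop:locality1}.

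By the results of \S\ref{subsec:getbig} and \S\ref{subsec:getsmall}, the first step of any reduced expression for $p$ is either an \emph{up}-step adding some $s \in \leftdes(\ma{p})\setminus J$ (Corollary~\ref{cor:nottoobig}, Proposition~\ref{thm:favoritebig}) or a \emph{down}-step removing some $u \in J \setminus \mi{p}I\mi{p}\inv$ (Corollary~\ref{cor:nottoosmall}, Proposition~\ref{thm:favoritesmall}). I would first clear the two pure cases. Suppose both expressions begin with up-steps, adding $s$ and $s'$. If $s = s'$ we strip at once; if $s \neq s'$, then $s$ still lies in $\leftdes(\ma{p})$ and hence, by Proposition~\ref{thm:favoritebig}, begins a reduced expression for the $(Js',I)$-coset reached after $[J+s']$ (whose maximal element is still $\ma{p}$). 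The inductive hypothesis then rewrites the tail of $I'_\bullet$ to begin with $+s$, and the up-up relation \eqref{upup} converts $[J+s'+s]$ into $[J+s+s']$; now both expressions begin $[J+s]$, and we strip. The case of two down-steps is symmetric, using Proposition~\ref{thm:favoritesmall} and the down-down relation \eqref{downdown}. (Proposition~\ref{prop:addable} packages the key fact that if two reflections may each be added first, they may be added consecutively.)

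The remaining \emph{mixed} case, where $I_\bullet$ begins with an up-step $[J+s]$ and $I'_\bullet$ with a down-step $[J-u]$, is where the switchback relation is indispensable and is the main obstacle. The plan is to manufacture a bridging reduced expression that begins $[J+s]$ yet is braid-equivalent to one beginning with a down-step; combined with the two pure cases this closes the argument. The essential observation is that whenever an up-step is immediately followed by a down-step, the resulting length-two subword $[J+s-t]$ automatically expresses the coset with maximal element $w_{Js}$, the longest element of the parabolic $W_{Js}$ — precisely the situation of Proposition~\ref{prop:sss}. By Lemma~\ref{lem:stayinK} the relevant combinatorics lives inside $W_{Js}$, so Proposition~\ref{prop:sss} applies: unless we are in the degenerate case \eqref{uniqrex} (where $s = w_{Js}t w_{Js}$), the switchback relation \eqref{sssform} rewrites $[J+s-t]$ as a zigzag $[J-u_1+s-u_2+u_1-\cdots]$ beginning with a down-step. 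Transporting this local rewrite back through the locality Propositions~\ref{prop:locality1} and~\ref{prop:locality2} turns the up-first expression into a down-first one, reducing the mixed case to the already-settled down/down case.

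The delicate point — the crux of the write-up — is to guarantee that the mixed configuration can always be routed through a \emph{non-degenerate} switchback. Concretely, one must arrange (after reordering up-steps by \eqref{upup}, reordering down-steps by \eqref{downdown}, and stripping via the inductive hypothesis) that the turning point forms an honest switchback triple $(J,s,t)$ with $s\neq w_{Js}t w_{Js}$, whose first descent $u_1 = w_{J}(w_{Js}t w_{Js})w_{J}$ from Lemma~\ref{lem:whoareu} matches an actually-available first down-step $u \in J\setminus \mi{p}I\mi{p}\inv$, and whose rotation sequence then accounts for every intermediate step of the low road. I expect this matching of the abstractly-available descent with the reflection produced by the rotation sequence to be the hard part; once it is in place, the pure up/up and down/down cases together with the inductive stripping are comparatively routine.
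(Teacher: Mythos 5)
Your overall architecture is exactly the paper's: induction on $\ell(p)$, stripping a common first step via locality, handling the up/up and down/down cases with the high-road/low-road results of \S\ref{subsec:getbig}--\S\ref{subsec:getsmall} together with \eqref{upup} and \eqref{downdown}, and bridging the mixed case with a switchback. But the mixed case is where the theorem actually lives, and you have correctly located --- and then left open --- its crux. What must be proved is: given that $I'_\bullet$ begins $[J-u]$ (so $u \in J \setminus \mi{p}I\mi{p}\inv$ by Corollary~\ref{cor:nottoosmall}) and $I_\bullet$ begins $[J+s]$, the specific $t := w_{Js}\,w_J u w_J\, w_{Js}$ (the unique choice making $u_1 = u$ in the rotation sequence of $(J,s,t)$) can actually be \emph{removed} immediately after the up-step, i.e.\ that $p$ admits a reduced expression beginning $[J+s-t]$. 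Without this the switchback relation has nothing to act on and the bridge collapses. (The non-degeneracy check that a rotation sequence for $(J,s,t)$ exists, i.e.\ $w_{Js}tw_{Js}\neq s$, is immediate: $w_{Js}tw_{Js}=w_Juw_J\in J$ while $s\notin J$. Also, you do not actually need the rotation sequence to ``account for every intermediate step of the low road'': once the switchback converts the up-first expression into one beginning $[J-u]$, induction after stripping that step finishes the case.)

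The missing verification is short but genuinely nontrivial, and it is the heart of the paper's proof. Writing $r$ for the $(Js,I)$-coset expressed by the tail of the up-first expression, Proposition~\ref{thm:favoritesmall} reduces the claim to showing $t \notin \mi{r}\,W_I\,\mi{r}\inv$ (clearly $t\in Js$). The paper deduces this from the hypothesis $u \notin \mi{p}\,W_I\,\mi{p}\inv$ by a chain of conjugations: using Kilmoyer's theorem (Lemma~\ref{J WJ same}) and \eqref{maxformula} one trades $\mi{p}$ for $w_J\ma{p}$ and $\mi{r}$ for $w_{Js}\ma{r}$ in these membership conditions, notes that $\ma{r}=\ma{p}$ because the first step is an up-step, and then conjugates by $w_J$ and $w_{Js}$ to convert the statement about $u$ into exactly the statement about $t$. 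You should supply this (or an equivalent) argument; as written, your proposal is a correct plan whose decisive step is announced rather than executed. A second, smaller omission: when you strip a common first step you should justify that the two tails express the same coset, which follows from Proposition~\ref{prop:concatreduced} by cancelling the common factor $w_Jw_{J\cap K_1}\inv$ against $\ma{p}$.
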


\begin{proof} Suppose that $p$ is a $(J,I)$-coset. Let $K_{\bullet} = [J = K_0, K_1, \ldots, K_{d-1}, K_d = I]$ and $L_{\bullet} = [J = L_0, L_1, \ldots, L_{c-1}, L_c = I]$ be two
reduced expression for $p$. We prove by induction on $\ell(p)$ (\emph{not} on the widths $d$ and $c$) that $K_{\bullet}$ and $L_{\bullet}$ are related by the braid relations. The
base case is trivial: the only expressions of length zero are identity expressions $[J]$. For the rest of this proof we write $K_\bullet \sim L_\bullet$ to indicate that $K_\bullet$ and $L_\bullet$ are related by the braid relations.

Let $K'_{\bullet} = K_{[1,d]} \expr q$ and $L'_{\bullet} = L_{[1,c]} \expr r$; these are both reduced expressions by locality. If $K_1 = L_1$ then $q = r$, since Proposition \ref{prop:concatreduced} implies
\[ w_J w_{J \cap K_1}\inv \ma{q} = \ma{p} =  w_J w_{J \cap L_1}\inv \ma{r}. \]
The double coset $q$ also has shorter length than $p$, since extending a reduced expression increases length. By induction, $K'_{\bullet} \sim L'_{\bullet}$, hence $K_{\bullet} \sim L_{\bullet}$.

We now examine three cases where $K_1 \ne L_1$. In the first case, $K_1 = J \setminus s$ and $L_1 = J \setminus t$ for $s \ne t \in J$. By Corollary \ref{cor:nottoosmall}, both $s$
and $t$ are in $J \setminus \mi{p} I \mi{p}\inv$. But Proposition~\ref{thm:favoritesmall} implies the existence of a reduced expression $M_{\bullet} \expr p$ starting with $[J - s
- t] = [J,K_1, K_1 \cap L_1]$. Applying the down-down relation, there is also a reduced expression $N_{\bullet} \expr p$ starting with $[J - t - s] = [J,L_1, K_1 \cap L_1]$. By
induction (and the previous paragraph) we have \[ K_{\bullet} \sim M_{\bullet} \sim N_{\bullet} \sim L_{\bullet}.\]

In the second case, $K_1 = Js$ and $L_1 = Jt$ for $s,t\notin J$ with $s \ne t$. We argue similarly, this time using Corollary \ref{cor:nottoobig} and Proposition~\ref{thm:favoritebig}.
Since both $s$ and $t$ live inside $\leftdes(\ma{p}) \setminus J$, we are permitted to add both and interpolate between $K_{\bullet}$ and $L_{\bullet}$.

In the final case, $K_1 = J \setminus u$ and $L_1 = Js$ for some $u \in J$ and $s \notin J$. Let $t \in J$ be such that the rotation sequence of $(J,s,t)$ has $u_1 = u$. If we can prove that $[J + s - t] \circ M_{\bullet}$ is a reduced expression for $p$ for some $M_{\bullet}$, then we will obtain a chain
\[ L_\bullet \sim [J + s - t] \circ M_{\bullet} \sim [J-u+s-u_2+u - \ldots -t + u_{\snum-1}] \circ M_{\bullet} \sim K_\bullet.\]
The middle relation is the switchback relation, while the other two relations follow by induction.

The formula for $t$ is straightforward from Definition \ref{def:useq}, we have $t = w_{L_1} w_J u w_J w_{L_1}$. However, we must rule out the possibility that $w_J u w_J = s$, so
that no rotation sequence exists. This is easy, since by assumption $u \in J$ (so $w_J u w_J \in J$) while $s \notin J$.

As before let $L'_{\bullet} = L_{[1,d]}$ express the $(L_1, I)$-coset $r$. If $t \in L_1 \setminus \mi{r} I \mi{r}\inv$ then there is some reduced expression for $r$ which starts with $[L_1 - t]$, by Proposition~\ref{thm:favoritesmall}. Composing this with $[J,L_1]$, we get the desired reduced expression for $p$ starting with $[J + s - t]$. Clearly $w_J u w_J \in J \subset L_1$ so $t = w_{L_1} w_J u w_J w_{L_1} \in L_1$. We need only prove that $t \notin \mi{r} I \mi{r}\inv$. The other thing we know, from Corollary \ref{cor:nottoosmall}, is that $u \in J \setminus \mi{p} I \mi{p}\inv$. The rest of the proof is tricks with conjugation.

By Lemma \ref{J WJ same}, it suffices to examine $J \cap \mi{p} W_I \mi{p}\inv$ and $L_1 \cap \mi{r} W_I \mi{r}\inv$. By \eqref{maxformula}, the right $W_I$ coset of $\mi{p}$ agrees with the right $W_I$ coset of $w_J\inv \ma{p}$, and similarly for $r$. Also, by Theorem \ref{thm:betterred}, $\ma{r} = \ma{p}$ (since $w_J w_{J \cap L_1}\inv = e$). So
\begin{equation}
\begin{split} u & \notin \mi{p} W_I \mi{p}\inv \iff u \notin w_J \ma{p} W_I \ma{p}\inv w_J \iff u \notin w_J \ma{r} W_I \ma{r}\inv w_J \\ \iff w_J u w_J & \notin \ma{r} W_I \ma{r}\inv \iff t = w_{L_1} w_J u w_J w_{L_1} \notin w_{L_1} \ma{r} W_I \ma{r}\inv w_{L_1} \iff t \notin \mi{r} W_I \mi{r}\inv. \end{split} \end{equation}
	
\end{proof}

\subsection{Presentation theorem}\label{ss:presentation}

Let us recall the list of relations. We have the $\pm$-associativity relations from Lemma \ref{lem:upupdowndown}
\begin{subequations} \label{relationslist}
\begin{equation} [J+s+t] \expr [J+t+s], \qquad [J - s - t] \expr [J - t - s], \end{equation}
the switchback relation (details are in Proposition \ref{prop:sss})
\begin{equation} [J + s - t] \expr [J - u_1 + s - u_2 + u_1 - \ldots - t + u_{\snum-1}], \end{equation}
and the $*$-quadratic relation
\begin{equation} [J - s + s] \expr [J]. \end{equation}
The $\pm$-associativity and switchback relations are called braid relations.
\end{subequations}

\begin{thm}\label{thm:reducing} Let $I_{\bullet}$ be a non-reduced expression. By applying the braid relations, and applying the $*$-quadratic relation in the length-reducing
direction, one can relate $I_{\bullet}$ to a reduced expression. \end{thm}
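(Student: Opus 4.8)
The plan is to prove this by induction on the length $\ell(I_\bullet)$ of the expression, relying on the length function from Definition \ref{def:lengths} and Proposition \ref{p.rexlength}, which guarantees that a non-reduced expression has strictly greater length than the reduced expression(s) for the double coset it expresses. First I would check the base case: an expression of length zero must be an identity expression $[J]$, which is reduced, so the statement is vacuous. For the inductive step, suppose $I_\bullet$ is non-reduced. Since reducedness is detected index-by-index (Theorem \ref{thm:betterred} together with Corollary \ref{cor:updatemax}), there is some smallest index $i$ at which $I_\bullet$ fails to be reduced, while the prefix $I_{[0,i-1]}$ \emph{is} reduced by locality (Proposition \ref{prop:locality2}).

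The key move is to understand what failure of reducedness at index $i$ looks like. By Definition \ref{defn:pqred}, failure can only happen at an $+s$ step, i.e. when $I_i = I_{i-1} \sqcup s$, since every $-s$ step is automatically reduced. So the first non-reduced step has the form $[I_{i-1}, I_{i-1}s]$, where the prefix up to $I_{i-1}$ is reduced. My goal is to use the braid relations to rewrite $I_{[0,i]}$ so that the offending $+s$ is cancelled against an earlier step via the $*$-quadratic relation, strictly decreasing the length. The crucial input is Lemma \ref{lem:easynonrex2}, which says $[J - u + s + u]$ is not reduced and, through $+$-associativity and the $*$-quadratic relation, equals $[J+s]$ with strictly smaller length. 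More generally, I would argue that when the prefix is reduced but adding $s$ is not, the simple reflection $s$ has, in some sense, already been ``used up'': by Corollary \ref{cor:nottoosmall} (applied after possibly reversing with Proposition \ref{invrex}) or by the descent-set analysis of \S\ref{subsec:getbig}, a reduced expression cannot legally add $s$, because either the redundancy would increase (violating \eqref{sameK}) or $s$ is forced to be a descent already. In either case, one can use the braid relations to migrate the $+s$ leftward until it meets a complementary step, producing a subword of the form $[\ldots - s + s \ldots]$ or $[\ldots - u + s + u \ldots]$ to which the $*$-quadratic relation applies in the length-reducing direction.

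After performing that single length-reducing rewrite, I obtain an expression $I'_\bullet$ with $I'_\bullet \expr I_\bullet$ (expressing the same double coset) and $\ell(I'_\bullet) < \ell(I_\bullet)$. If $I'_\bullet$ is reduced, we are done; otherwise I apply the induction hypothesis to $I'_\bullet$, since its length is strictly smaller, concluding that it too can be driven to a reduced expression using braid relations and the length-reducing $*$-quadratic relation. Composing the two stages gives the result for $I_\bullet$.

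The main obstacle I anticipate is making the ``migrate the $+s$ leftward'' step rigorous and ensuring it can always be carried out using only the three families of braid relations (up-up, down-down, switchback) rather than some ad hoc move. The switchback relation is the delicate ingredient here: when $s$ interacts with an earlier subtraction $-u$ where $u$ and $s$ lie in a common finite parabolic, one may need to invoke Proposition \ref{prop:sss} to reorganize the expression into a form where a cancellation becomes visible. I would therefore isolate a clean combinatorial lemma—essentially that any non-reduced expression contains a contiguous subword that is itself non-reduced and length-reducible by braid relations plus one application of $*$-quadratic—and prove it by reducing to the first point of failure and using the favorite-reduced-expression results of \S\ref{subsec:getbig} and \S\ref{subsec:getsmall} to control which simple reflections can legitimately be added or removed first. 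Getting the bookkeeping of redundancy and descent sets to line up so that the $*$-quadratic relation genuinely applies is where the real work lies.
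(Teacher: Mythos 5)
Your skeleton — induct, isolate the first failure after a reduced prefix, and engineer a $*$-quadratic cancellation — matches the paper's, but the central step you defer to a ``clean combinatorial lemma'' is exactly where the proof lives, and the mechanism you sketch for it would not work as described. The paper does not ``migrate the $+s$ leftward'' by local commutations. It invokes the singular Matsumoto theorem (Theorem \ref{thm:matsumoto}, proved beforehand): since the prefix $J_\bullet$ preceding the offending $+t$ is a \emph{reduced} expression for some coset $q$, it may be replaced wholesale, via braid relations, by any other reduced expression for $q$. The work is then to show a suitable replacement exists, and this splits into cases you do not address. If the right redundancy $L$ of $q$ is a proper subset of $I_{d-1}$, Proposition~\ref{thm:favoritesmall} only yields a reduced expression for $q$ ending in $+s$; after the up-up swap $[J_\bullet + s + t] \expr [J_\bullet + t + s]$ there is \emph{no} length reduction at that spot — one recurses into the shorter subword $[J_\bullet + t]$, and termination rests on Proposition~\ref{prop:addable} (if $[J_\bullet+s]$ and $[J_\bullet+t]$ were both reduced, so would be $[J_\bullet+s+t]$, a contradiction). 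Only when $L = I_{d-1}$ and $t \in \rightdes(\ma{q})$ does Proposition~\ref{thm:favoritebig} produce a reduced expression for $q$ ending in $-t$, making the subword $[\cdots -t+t]$ and hence the length-reducing $*$-quadratic move available; in the remaining case ($L = I_{d-1}$, $t \notin \rightdes(\ma{q})$) one must show the expression was reduced after all.

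Concretely, then: your proposed lemma — that at the first point of failure a single braid-plus-quadratic rewrite strictly drops the length — is false in the first case above, so the induction on length does not close without a secondary recursion (the paper inducts on width precisely to handle this). And without citing Theorem~\ref{thm:matsumoto} you have no license to rearrange the reduced prefix at all; Corollary~\ref{cor:nottoosmall} and the descent-set analysis tell you which reduced expressions for $q$ exist, but Matsumoto is what lets you reach one of them by braid moves. Supplying the case analysis on $L$ and $\rightdes(\ma{q})$, the appeal to Matsumoto, and the use of Proposition~\ref{prop:addable} would turn your outline into the paper's proof.
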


\begin{proof} We prove the result by induction on the 
width $d$ of $I_{\bullet} = [I_0, \ldots, I_d]$, where the base case is trivial. Suppose that $I_{[m,n]}$ is not reduced for a
proper interval $[m,n]$ (meaning $m \ne 0$ or $n \ne d$). By induction we can reduce $I_{[m,n]}$, and applying the same relations within $I_{\bullet}$ produces an expression $I'_{\bullet}$ with smaller width.  
By induction we can also reduce $I'_{\bullet}$, so we win.

Now suppose $I_{[0,d-1]}$ is a reduced expression for some $(J,I_{d-1})$-coset $q$ where $J = I_0$. 
Note that $I_{\bullet}$ is reduced unless $I_{d-1}t = I$ for
some $t \in I$. We can also assume that $d \ge 2$, since any expression of width $d \le 1$ is reduced.

If the right redundancy $L = \mi{q}\inv J \mi{q} \cap I$ of $q$ is a proper subset of $I$, then Proposition~\ref{thm:favoritesmall} implies that $q$ has a reduced expression ending in
$+s$ for some $s \in I \setminus L$. By Theorem \ref{thm:matsumoto}, we can get from $I_{[0,d-1]}$ to this expression using braid relations. So we can assume that $I_{\bullet} =
[J_{\bullet} + s + t]$ for some reduced expression $J_{\bullet}$. The braid relation \eqref{upup} produces $I'_{\bullet} := [J_{\bullet} + t + s]$. If 
$I'_{[0,d-1]}$ is not reduced then we conclude by induction as above. If $[J_{\bullet} + s]$ and $[J_{\bullet} + t]$ are both reduced, then by Proposition \ref{prop:addable} so is
$I_{\bullet}$, a contradiction.

So we assume that $L = I$ henceforth. In this case 
\begin{equation} \label{fooforproof} \ma{q} = w_J . \mi{q}\end{equation} for some $x \in W_J$, by \eqref{maxformula}. In particular, \eqref{fooforproof} implies $\rightdes(\mi{q}) \subset \rightdes(\ma{q})$, see Lemma \ref{lem:dontchangeleftdescent}.

Suppose that $t \in \rightdes{(\ma{q})}$. Then by Proposition~\ref{thm:favoritebig}, there is a reduced expression for $q$ ending in $-t$. Again, by Theorem \ref{thm:matsumoto} we can use braid relations to assume that $I_{\bullet} = [J_{\bullet} - t + t]$ for some reduced expression $J_{\bullet}$. Now the $*$-quadratic relation reduces $I_{\bullet}$ to a reduced expression.

So suppose that $t \notin \rightdes(\ma{q})$ and $L = I$. We claim that $I_{\bullet}$ is reduced, a contradiction which concludes the proof. If $I_{\bullet} \expr p$, then $p$ is the $(J,It)$-coset containing $q$. But $t \notin \rightdes(\mi{q})$, so by \eqref{beingminimal} we know that $\mi{q} = \mi{p}$. Moreover, $\mi{q}\inv W_J \mi{q} = \ma{q}\inv W_J \ma{q}$ by \eqref{fooforproof}. If $t \in \ma{q}\inv W_J \ma{q}$, then $\ma{q} t = x \ma{q}$ for some $x \in W_J$. But $x \ma{q} \le \ma{q} < \ma{q} t$, a contradiction. So $t \notin \ma{q}\inv W_J \ma{q}$, and the redundancy of $p$ equals that of $q$. Hence $[p,q]$ is reduced. \end{proof}

\begin{thm} \label{thm:presentation}
The relations from \eqref{relationslist} give a presentation of the singular Coxeter monoid.
\end{thm}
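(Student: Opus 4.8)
The plan is to show that the two main results already proved in this section---Theorem \ref{thm:reducing} and the singular Matsumoto theorem (Theorem \ref{thm:matsumoto})---combine to give the presentation almost immediately, so that Theorem \ref{thm:presentation} is essentially their common corollary, requiring no new hard input.

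First I would set up the category $\mathcal{C}$ formally presented by the generators of \S\ref{subsec-intro-singcox} (one pair $Is \to I$ and $I \to Is$ for each finitary $Is$ with $s \notin I$) subject to the relations collected in \eqref{relationslist}. By Proposition \ref{prop:expressesinSC} every expression determines a morphism of $\SC$, and each relation in \eqref{relationslist} was checked to hold in $\SC$: the up--up and down--down relations in Lemma \ref{lem:upupdowndown}, the switchback relation in Proposition \ref{prop:sss}, and the $*$-quadratic relation in Lemma \ref{lem:easynonrex}. Hence there is a well-defined functor $F \colon \mathcal{C} \to \SC$ that is the identity on objects and sends each generating expression to the corresponding generating double coset. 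Surjectivity of $F$ on morphisms is exactly Proposition \ref{p has rex}, which guarantees that every double coset has a (reduced) expression. It remains to prove that $F$ is faithful, i.e.\ that whenever two expressions $I_\bullet$ and $I'_\bullet$ express the same double coset $p$, they are related by the relations of \eqref{relationslist}.

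The faithfulness argument is short. Given $I_\bullet \expr p$ and $I'_\bullet \expr p$, I would apply Theorem \ref{thm:reducing} to each: using only the braid relations and the $*$-quadratic relation in its length-reducing direction, $I_\bullet$ is transformed into a reduced expression $R_\bullet$ and $I'_\bullet$ into a reduced expression $R'_\bullet$. Since every relation in \eqref{relationslist} preserves the expressed double coset, both $R_\bullet$ and $R'_\bullet$ still express $p$. The singular Matsumoto theorem (Theorem \ref{thm:matsumoto}) then asserts that any two reduced expressions for $p$ are connected by the braid relations, so $R_\bullet$ and $R'_\bullet$ are related within \eqref{relationslist}. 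Chaining these steps gives $I_\bullet \sim R_\bullet \sim R'_\bullet \sim I'_\bullet$, which is precisely faithfulness of $F$ and hence the presentation.

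The only genuine obstacle here is bookkeeping rather than mathematics. One must confirm that the equivalence relation generated by \eqref{relationslist} is what a presentation of a category demands---namely that relations may be inserted within any composition (contiguous subword), and that the faithfulness direction is allowed to reverse the $*$-quadratic relation even though Theorem \ref{thm:reducing} applies it only one way. The first point is the locality established in Propositions \ref{prop:locality1} and \ref{prop:locality2}, where $\expr$ is shown to be preserved under enlarging an expression on either side; the second is automatic, since the equivalence relation generated by a set of relations is symmetric by definition. The substantive content---straightening non-reduced expressions, and interconnecting reduced expressions for a fixed coset---lives entirely in Theorems \ref{thm:reducing} and \ref{thm:matsumoto}, so no further estimate is needed.
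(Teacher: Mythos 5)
Your proposal is correct and follows exactly the paper's own argument: apply Theorem \ref{thm:reducing} to bring each expression to a reduced one, then invoke the singular Matsumoto theorem (Theorem \ref{thm:matsumoto}) to connect the reduced expressions. The additional bookkeeping you spell out (well-definedness via Lemmas \ref{lem:upupdowndown}, \ref{lem:easynonrex} and Proposition \ref{prop:sss}, surjectivity via Proposition \ref{p has rex}, locality via Propositions \ref{prop:locality1} and \ref{prop:locality2}) is all consistent with what the paper establishes earlier and leaves implicit in its two-line proof.
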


\begin{proof}
We need only prove that two expressions for the same double coset are related. By Theorem \ref{thm:reducing}, each of them can be related to a reduced expression. By Theorem \ref{thm:matsumoto}, any two reduced expressions are related. \end{proof}

\section{Switchback relations} \label{ssstypes}

In this section, we describe explicitly all relations guaranteed by and classified in Proposition~\ref{prop:sss}. By Lemmas \ref{lem:samecomponent} and \ref{lem:disjointmeansm2},
we can reduce to the case where $(W,S)$ is a finite irreducible Coxeter group.

In all cases, we know that a switchback relation must exist. We explicitly state the relevant part of the rotation sequence. The interesting thing to explain is what $\snum$
is, i.e. how far along the rotation sequence to travel. The algorithm from \S\ref{ss:specialswitchproof} states that we should continue working along the rotation sequence until
one has expressed a double coset containing $w_0$. We can thus determine if we have gone the right distance merely be computing the length of the purported RHS of \eqref{sssform}.
We did this by computer in types $EFH$. We give as many details in the computations below as we find interesting.

The following lemma cuts in half the amount of work needed.

\begin{lem} \label{lem:ssflipped}
Suppose that $s, t \in S$ and $w_0 t w_0 \ne s$. If the switchback relation for the triple $(S \setminus s, s, t)$ has the form
\begin{subequations} \label{ssflipped}
\begin{equation} \label{ssflipped1} [(S \setminus s) + s - t] \expr [(S \setminus s) - c_1 + s - c_2 + c_1 - \ldots - t + c_{\snum-1}], \end{equation}
then the switchback relation for the triple $(S \setminus t, t, s)$ has the form
\begin{equation} \label{ssflipped2} [(S \setminus t) + t - s] \expr [(S \setminus t) - c_{\snum-1} + t \ldots  - c_1 + c_2 - s + c_1]. \end{equation}
\end{subequations}
In other words, the rotation sequences in these two cases are reversed.
\end{lem}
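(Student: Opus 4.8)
The plan is to deduce the flipped relation \eqref{ssflipped} from the unflipped one by reversing expressions, via Proposition~\ref{invrex}. First I would record the elementary symmetry of the hypothesis: since conjugation by $w_0$ is an involution on $S$, the condition $w_0 t w_0 \ne s$ is equivalent to $w_0 s w_0 \ne t$. Hence the triple $(S \setminus t, t, s)$ also falls under case \eqref{eqsss} of Proposition~\ref{prop:sss}, so the $(S \setminus t, S \setminus s)$-coset containing $w_0$ has \emph{exactly two} reduced expressions. This ``exactly two'' is the fact I will exploit: I need only exhibit two distinct reduced expressions for that coset, and they are then forced to be the two sides of the switchback relation for $(S\setminus t, t, s)$.

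Next I would set $p$ to be the $(S \setminus s, S \setminus t)$-coset with $\ma{p} = w_0$, so that both sides of \eqref{ssflipped1} are reduced expressions for $p$ by Proposition~\ref{prop:sss}\eqref{eqsss}. By Proposition~\ref{invrex}, reversal preserves reducedness and sends a reduced expression for $p$ to one for $p\inv$; and since $w_0\inv = w_0$, the coset $p\inv$ is precisely the $(S \setminus t, S \setminus s)$-coset with maximal element $w_0$, i.e. the coset governed by the $(S \setminus t, t, s)$ switchback. The reversal of the left-hand side $[(S\setminus s) + s - t] = [S\setminus s, S, S \setminus t]$ is $[S \setminus t, S, S \setminus s] = [(S \setminus t) + t - s]$, which is the left-hand side of \eqref{ssflipped2}. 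Reversing the right-hand side amounts to reading its sequence of subsets backwards: writing \eqref{sssform} as $[I_0, L_1, I_1, \ldots, L_\snum, I_\snum]$ with $I_i = S \setminus c_i$ and $L_i = S \setminus \{c_i, c_{i-1}\}$ (and $c_0 = s$, $c_\snum = t$), the reversed sequence $[I_\snum, L_\snum, I_{\snum - 1}, \ldots, L_1, I_0]$ is, in the $\pm$-notation, exactly the right-hand side of \eqref{ssflipped2}.

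To conclude, I would observe that these two reversed expressions are \emph{distinct} reduced expressions for $p\inv$: the reversed left-hand side extends $[(S\setminus t) + t]$, whereas the reversed right-hand side begins by \emph{removing} $c_{\snum - 1}$. Because $p\inv$ has exactly two reduced expressions, these must be precisely the two expressions produced by Proposition~\ref{prop:sss}\eqref{eqsss} for the triple $(S \setminus t, t, s)$, and the switchback relation relates them; this is \eqref{ssflipped}. Matching the reversed right-hand side against the canonical form \eqref{sssform} for $(S \setminus t, t, s)$ simultaneously identifies its rotation sequence $(u'_j)$ as $u'_j = c_{\snum - j}$, which is the precise meaning of ``the rotation sequences are reversed.''

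I expect the only genuine obstacle to be the bookkeeping in the reversal of the subset sequence \eqref{sssform}: one must check that it lands on the displayed form \eqref{ssflipped2}, including the exact placement of plus and minus signs and of the indices $c_i$, since a misalignment by one index would spoil the identification of the rotation sequences. A clean sanity check that essentially forces the reversal to work out is the behavior of the endpoints of the rotation sequence: for $(S\setminus s, s, t)$ one has $u_\snum = t$ and $u_{\snum+1} = w_0 s w_0$, and under the proposed correspondence $u'_j = c_{\snum - j}$ these become $u'_0 = t$ and $u'_{-1} = w_0 s w_0$, which is exactly the initialization \eqref{uinitialize} of Definition~\ref{def:useq} for the triple $(S\setminus t, t, s)$.
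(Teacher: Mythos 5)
Your proposal is correct and follows essentially the same route as the paper: the paper's proof also observes that both sides of \eqref{ssflipped2} are the reversals of the corresponding sides of \eqref{ssflipped1} and concludes by Proposition~\ref{invrex}. Your additional step of invoking the ``exactly two reduced expressions'' clause of Proposition~\ref{prop:sss}\eqref{eqsss} to force the reversed pair to be the switchback relation for $(S\setminus t, t, s)$, together with the endpoint check $u'_0 = t$, $u'_{-1} = w_0 s w_0$ against \eqref{uinitialize}, just makes explicit the bookkeeping the paper leaves to the reader.
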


\begin{proof} Writing the expressions in \eqref{ssflipped} using the notation $[I_0, I_1, \ldots, I_d]$ rather than the notation of addition and subtraction, we see that the left-hand side of \eqref{ssflipped1} is just the reverse of the left-hand side of \eqref{ssflipped2}, and similarly with the right-hand side. Now the result follows from Proposition \ref{invrex}. \end{proof}

One can also deduce many switchback relations from others using Lemma \ref{lem:rotation}. One can find a detailed discussion of some examples of both Lemma \ref{lem:rotation} and Lemma \ref{lem:ssflipped} in type $E_7$ below.

%

\subsection{Switchback in type A}

Let $(W,S)$ be of type $A_n$, with  $S=\{s_1,\cdots,s_n\}$ in the usual numbering.
We have for each $1\leq a,b\leq n$ such that $a+b\neq n+1$ (i.e., $s_a\neq w_0s_bw_0$),
\begin{equation}\label{sssA}
  [J-s_c+s_a-s_b+s_c] = [J+s_a-s_b]  
\end{equation}
with $c = a+b$ (if $a+b\leq n$) or $c=a+b-n-1$ (if $a+b>n+1$), where $J= S\setminus\{s_a\}$.
 
Confirming the value of $c$ is straightforward, so we verify that the lengths of both sides of \eqref{sssA} agree. By symmetry, we need only treat the case $a+b\leq n+1$.

Since $W_S = S_{n+1}$ and $W_J = S_a \times S_{n+1-a}$, we have 
\[ \ell(S) - \ell(J) = a(n+1-a),\] the length of the relative longest element.  Similarly,
$W_{S \setminus s_c} = S_{a+b} \times S_{n+1-a-b}$, and $W_{S \setminus s_a, s_c} = S_a \times S_b \times S_{n+1-a-b}$. The relative longest element agrees with the relative longest element of $S_{a+b}$ over $S_a \times S_b$ so we have
\[ \ell(S \setminus s_c) - \ell(S \setminus s_a, s_c) = ab. \]
Finally, $W_{S \setminus s_b} = S_b \times S_{n+1-b}$ and $W_{S \setminus s_b, s_c} = S_b \times S_{a} \times S_{n+1-a-b}$, so
\[ \ell(S \setminus s_b) - \ell(S \setminus s_a, s_c) = a(n+1-a-b). \]
That the lengths of the two expressions in \eqref{sssA} are equal is equivalent to
\[ a(n+1-a) = ab + a(n+1-a-b). \]

\begin{rem} If one knew that $\snum = 2$ and $c > a,b$, one could have solved for $c$ using the above argument. One requires
\[ a(n+1-a) = a(c-a) + (n+1-c)(c-b). \]
The solutions to this quadratic equation are $c = a+b$ and the nonsensical $c = n+1$. \end{rem}

%
%
%
%
%

\subsection{Switchback in dihedral types}

Let $(W,S)$ be of rank 2 with $S=\{s,t\}$. 
We have for $J=\{s\}$
\[[J+ k(-s+t-t+s)]=[J+t-t]\]
if $m_{st}=2k+1$ is odd, where the left hand side denotes $[J-s+t-t+s \cdots -s+t-t+s]$ where $(-s+t-t+s)$ appears $k$ times, and
\[[J+(k-1)(-s+t-t+s)-s+t]=[J+t-s]\] if $m_{st}=2k$ is even.
Note that $m_{st}=\infty$ cannot be the case, since $S$ is then not finitary.

Via the embedding to $\emptyset$ on both sides (Proposition \ref{embedding of expression}), the above relation is exactly the braid relation in the Coxeter group.


\subsection{Switchback in type BC}

Let $(W,S)$ be of type $B_{n+1}$ where the elements in $S$ are indexed according to the Dynkin diagram
\[\begin{tikzpicture}[scale=0.4,baseline=-3]
\protect\draw (4 cm,0) -- (2 cm,0);
\protect\draw (2 cm,0) -- (0 cm,0);
\protect\draw (0 cm,0) -- (-2 cm,0);
\protect\draw (-2 cm,0.1cm) -- (-4 cm,0.1 cm);
\protect\draw (-2 cm,-0.1cm) -- (-4 cm,-0.1 cm);
\protect\draw[fill=white] (4 cm, 0 cm) circle (.15cm) node[above=1pt]{\scriptsize $n$};
\protect\draw[fill=white] (2 cm, 0 cm) circle (0cm) node[above=1pt]{\scriptsize $\cdots$};
\protect\draw[fill=white] (0 cm, 0 cm) circle (.15cm) node[above=1pt]{\scriptsize $2$};
\protect\draw[fill=white] (-2 cm, 0 cm) circle (.15cm) node[above=1pt]{\scriptsize $1$};
\protect\draw[fill=white] (-4 cm, 0 cm) circle (.15cm) node[above=1pt]{\scriptsize $0$};
\end{tikzpicture}.\]
For each $0\leq a< b\leq n$, let $J = S \setminus s_a$. We have
\[ [J+s_a-s_b] = [J-s_c+s_a -s_d +s_c-s_b +s_d] \]
where $c =n+1-b+a$ and $d=a$.
The case $a>b$ is similar and determined from Lemma \ref{lem:ssflipped}. (The case $a=b$, i.e., $s_a = w_0s_bw_0$ is where there is no braid relation. See Proposition~\ref{prop:sss}~\eqref{uniqrex}.)

We follow the algorithm described in Proposition~\ref{prop:sss}~\eqref{eqsss}. We have $w_0s_bw_0=s_b=u_{-1}$, so $s_c = w_Js_bw_J$. Since $b$ belongs to the connected component $\{s_{a+1},\cdots ,s_n\}$ of $J$ of type $A$, we get $c = n+1-b+a$. 
The second step gives $s_d = w_K s_a w_K$ with $K=S\setminus\{s_c\}$. Here, $a$ belongs to the connected component of type $B$, yielding $d=a$.
In the next step, we conjugate $s_c$ by $w_J$ and are back to $s_b$. Thus by Proposition~\ref{prop:sss}, it remains to check that the endpoint of $[J-s_c+s_a -s_d +s_c-s_b +s_d]$ contains $w_0$, or equivalently (see Theorem \ref{thm:betterred}), that $w_J.(w_{J\cap K}w_K).(w_{K\cap J}w_J).(w_{J\cap I}w_I) = w_0$, where $I=S\setminus\{s_b\}$.
The latter is checked using the formula for the length of longest element in a Coxeter group of type $A$ and $B$.

\subsection{Switchback in type D}

Let $(W,S)$ be of type $D_{n+2}$ where $S =\{s_0,s_{\bar{0}},\cdots,s_n\}$ is indexed according to
\[ \begin{tikzpicture}[scale=0.4,baseline=-3]
\protect\draw (4 cm,0) -- (2 cm,0);
\protect\draw (2 cm,0) -- (0 cm,0);
\protect\draw (0 cm,0) -- (-2 cm,0);
\protect\draw (-2 cm,0) -- (-4 cm,0.7 cm);
\protect\draw (-2 cm,0) -- (-4 cm,-0.7 cm);
\protect\draw[fill=white] (4 cm, 0 cm) circle (.15cm) node[above=1pt]{\scriptsize $n$};
\protect\draw[fill=white] (2 cm, 0 cm) circle (0cm) node[above=1pt]{\scriptsize $\cdots$};
\protect\draw[fill=white] (0 cm, 0 cm) circle (.15cm) node[above=1pt]{\scriptsize $2$};
\protect\draw[fill=white] (-2 cm, 0 cm) circle (.15cm) node[above=1pt]{\scriptsize $1$};
\protect\draw[fill=white] (-4 cm, 0.7 cm) circle (.15cm) node[above=1pt]{\scriptsize $0$};
\protect\draw[fill=white] (-4 cm, -0.7 cm) circle (.15cm) node[above=1pt]{\scriptsize $\bar{0}$};
\end{tikzpicture}.\]
We view $a\mapsto \bar{a}$ as an involution on the indices, where $\bar{a}=a$ for $a\geq 1$. If $n$ is odd then this agrees with conjugation by $w_0$. If $n$ is even then conjugation by $w_0$ is the identity on $W_0$.

We have:
\begin{enumerate}
    \item For $a\in\{0,\bar{0}\}$ and either $b = \bar{a}$ (if $n$ is even) or $b=a$ (if $n$ is odd)
    \[ [J+s_a-s_b] = [J-s_{n}+s_a-s_b +s_n] ;\]
    \item 
For $a\in\{0,\bar{0}\}$ and $1\leq b < n$ 
\[ [J+s_a-s_b] = [J-s_{n-b}+s_a -s_c +s_{n-b}-s_b +s_c] ,\]
where $c = \bar{a}$ if $n-b$ is odd and $c=\bar{a}$ if $n-b$ is even.
    \item
For $a\in\{0,\bar{0}\}$
\[ [J+s_a-s_n] = [J-s_{\bar{a}}+s_a-s_n +s_{\bar{a}}] ;\]
    \item
For $1\leq a < b\leq n$
\[ [J+s_a-s_b] = [J-s_{n+1-b+a}+s_a -s_{a} +s_{n+1-b+a}-s_b +s_{a}].\]
\end{enumerate}
To obtain the case where $1 \le b < a \le n$ merely apply Lemma \ref{lem:ssflipped}. The arguments here are analogous to the case of type $B$, only with more cases to consider.

\subsection{Switchback in types EFH}
 
For each Coxeter type, we index the elements in $S$ by integers $1,\cdots,n$, and record for each $1\leq a\leq b \leq n$ (such that $s_a\neq w_0s_bw_0$) the sequence $c_\bullet$ which satisfies the relation 
\[[J +s_a -s_b] = [J-s_{c_1}+s_a -s_{c_2} +s_{c_1} -s_{c_3}+
\cdots -s_{c_{\snum - 1}}+s_{c_{\snum-2}}
-s_b+s_{c_{\snum-1}}]\]
where $J= S\setminus\{s_a\}$. The size of $c_{\bullet}$ is $\snum - 1$. To obtain the switchback when $1 \le b \le a \le n$, merely apply Lemma \ref{lem:ssflipped}. The result was obtained by computer computation. 

\subsubsection{Type $E_6$}
\begin{tikzpicture}[scale=0.4,baseline=-3]
\protect\draw (8 cm,0) -- (6 cm,0);
\protect\draw (6 cm,0) -- (4 cm,0);
\protect\draw (4 cm,0) -- (2 cm,0);
\protect\draw (2 cm,0) -- (0 cm,0);
\protect\draw (4 cm,0) -- (4 cm,1.5 cm);

\protect\draw[fill=white] (8 cm, 0 cm) circle (.15cm) node[below=1pt]{\scriptsize $6$};
\protect\draw[fill=white] (6 cm, 0 cm) circle (.15cm) node[below=1pt]{\scriptsize $5$};
\protect\draw[fill=white] (4 cm, 0 cm) circle (.15cm) node[below=1pt]{\scriptsize $4$};
\protect\draw[fill=white] (2 cm, 0 cm) circle (.15cm) node[below=1pt]{\scriptsize $3$};
\protect\draw[fill=white] (4 cm, 1.5 cm) circle (.15cm) node[right=1pt]{\scriptsize $2$};
\protect\draw[fill=white] (0 cm, 0 cm) circle (.15cm) node[below=1pt]{\scriptsize $1$};
\end{tikzpicture}

The following is a full list $(a,b:c_\bullet)$ where $c_\bullet$ are displayed without commas.


\[
\begin{tabular}{|l|l|l|} \hline
 $a$ & $b$ & $c_\bullet$ \\ \hline\hline
 \begin{tabular}{@{}l@{}}
 1 \\ 1 \\ 1 \\ 1 \\ 1  \\6  \end{tabular} & \begin{tabular}{@{}l@{}} 1\\ 2 \\3 \\ 4 \\ 5 \\6 \end{tabular} & \begin{tabular}{@{}l@{}} 6 \\ 31 \\ 52 \\435 \\26\\1 \end{tabular} \\ \hline
\end{tabular}
\qquad
\begin{tabular}{|l|l|l|} \hline
 $a$ & $b$ & $c_\bullet$ \\ \hline\hline
 \begin{tabular}{@{}l@{}}
2 \\2 \\2 \\ 2 \\ 5\\5   \end{tabular} & \begin{tabular}{@{}l@{}}  3 \\ 4 \\ 5 \\ 6 \\ 5\\6 \end{tabular} & \begin{tabular}{@{}l@{}} 36 \\ 4242 \\ 51 \\ 65 \\464 \\23 \end{tabular} \\ \hline
\end{tabular}
\qquad
\begin{tabular}{|l|l|l|} \hline
 $a$ & $b$ & $c_\bullet$ \\ \hline\hline
 \begin{tabular}{@{}l@{}}
3\\3 \\3 \\4 \\ 4    \end{tabular} & \begin{tabular}{@{}l@{}}  3\\4 \\ 6 \\ 5 \\ 6  \end{tabular} & \begin{tabular}{@{}l@{}}  414\\546 \\ 12 \\ 143 \\ 354  \end{tabular} \\ \hline
\end{tabular}
\]

\subsubsection{Type $E_7$}
 \begin{tikzpicture}[scale=0.4,baseline=-3]
\protect\draw (10 cm,0) -- (8 cm,0);
\protect\draw (8 cm,0) -- (6 cm,0);
\protect\draw (6 cm,0) -- (4 cm,0);
\protect\draw (4 cm,0) -- (2 cm,0);
\protect\draw (2 cm,0) -- (0 cm,0);
\protect\draw (4 cm,0) -- (4 cm,1.5 cm);

\protect\draw[fill=white] (10 cm, 0 cm) circle (.15cm) node[below=1pt]{\scriptsize $7$};
\protect\draw[fill=white] (8 cm, 0 cm) circle (.15cm) node[below=1pt]{\scriptsize $6$};
\protect\draw[fill=white] (6 cm, 0 cm) circle (.15cm) node[below=1pt]{\scriptsize $5$};
\protect\draw[fill=white] (4 cm, 0 cm) circle (.15cm) node[below=1pt]{\scriptsize $4$};
\protect\draw[fill=white] (2 cm, 0 cm) circle (.15cm) node[below=1pt]{\scriptsize $3$};
\protect\draw[fill=white] (4 cm, 1.5 cm) circle (.15cm) node[right=1pt]{\scriptsize $2$};
\protect\draw[fill=white] (0 cm, 0 cm) circle (.15cm) node[below=1pt]{\scriptsize $1$};
\end{tikzpicture}

\[
\begin{tabular}{|l|l|l|} \hline
 $a$ & $b$ & $c_\bullet$ \\ \hline\hline
 \begin{tabular}{@{}l@{}}
 1 \\ 1 \\ 1 \\ 1 \\ 1 \\ 1 \\ 6   \end{tabular} & \begin{tabular}{@{}l@{}} 2\\ 3 \\4 \\ 5 \\ 6 \\ 7 \\ 7  \end{tabular} & \begin{tabular}{@{}l@{}} 27 \\ 3131 \\ 4363 \\ 5242 \\ 61 \\ 76 \\ 71 \end{tabular} \\ \hline
\end{tabular}
\qquad
\begin{tabular}{|l|l|l|} \hline
 $a$ & $b$ & $c_\bullet$ \\ \hline\hline
 \begin{tabular}{@{}l@{}}
2 \\ 2 \\ 2 \\ 2 \\ 2 \\ 5 \\ 5   \end{tabular} & \begin{tabular}{@{}l@{}}  3 \\ 4 \\ 5 \\ 6 \\ 7 \\ 6 \\ 7 \end{tabular} & \begin{tabular}{@{}l@{}} 657 \\ 5152 \\ 4251 \\ 375 \\ 12 \\ 732 \\ 623 \end{tabular} \\ \hline
\end{tabular}
\qquad
\begin{tabular}{|l|l|l|} \hline
 $a$ & $b$ & $c_\bullet$ \\ \hline\hline
 \begin{tabular}{@{}l@{}}
3 \\ 3 \\ 3 \\ 3 \\ 4 \\ 4 \\ 4   \end{tabular} & \begin{tabular}{@{}l@{}}  4 \\ 5 \\ 6 \\ 7 \\ 5\\ 6 \\ 7 \end{tabular} & \begin{tabular}{@{}l@{}} 6341 \\ 5474 \\ 4143 \\ 265 \\ 7453 \\ 6464 \\ 5354 \end{tabular} \\ \hline
\end{tabular}
\]


Type $E_7$ gives many opportunities to see Lemma \ref{lem:rotation} in action.

\begin{ex} The entry $a = 1$ and $b=5$ gives the rotation sequence $(1,5,2,4,2,5)$ which ends up being $6$-periodic because conjugation by $w_0$ is trivial, see \eqref{addd}. The entry $a = 2$ and $b=5$ gives the rotation sequence $(2,4,2,5,1,5)$, which is the rotation of the previous. The entry $a = 2$ and $b=4$ gives the third rotation, $(2,5,1,5,2,4)$. There are three other rotations as well, though they are not pictured in the table above because $a > b$. For example, the rotation sequence for $a = 5$ and $b=2$ is $(5,1,5,2,4,2)$, which is both the reverse of the sequence for $a = 2$ and $b=5$, and its rotation. \end{ex}

\begin{ex} The rotation sequence for $a = 2$ and $b=3$ is $(2,6,5,7,3)$, and is $5$-periodic. When $a = 5$ and $b = 6$ we get the rotation $(5,7,3,2,6)$. Another rotation
comes from $a = 6$ and $b=2$, which is not pictured in the table above. Applying Lemma \ref{lem:ssflipped} to the case of $a = 2$ and $b=6$, we see that the case of $a = 6$ and
$b=2$ has rotation sequence $(6,5,7,3,2)$ as expected. The five rotations of $a = 2$ and $b=6$ are related to the five rotations of $a = 6$ and $b=2$ by taking the reversed
sequence. \end{ex}

\begin{ex} Finally, the rotation sequence of $a=4$ and $b=6$ is $(4,6,4,6,4,6)$, which is $2$-periodic. It both rotates and flips to give the rotation sequence of $a=6$ and $b=4$. This gives an example where the periodicity is strictly less than $\snum + 1$. \end{ex}

\subsubsection{Type $E_8$} \label{sss:E8}

 \begin{tikzpicture}[scale=0.4,baseline=-3]
\protect\draw (12 cm,0) -- (10 cm,0);
\protect\draw (10 cm,0) -- (8 cm,0);
\protect\draw (8 cm,0) -- (6 cm,0);
\protect\draw (6 cm,0) -- (4 cm,0);
\protect\draw (4 cm,0) -- (2 cm,0);
\protect\draw (2 cm,0) -- (0 cm,0);
\protect\draw (4 cm,0) -- (4 cm,1.5 cm);

\protect\draw[fill=white] (12 cm, 0 cm) circle (.15cm) node[below=1pt]{\scriptsize $8$};
\protect\draw[fill=white] (10 cm, 0 cm) circle (.15cm) node[below=1pt]{\scriptsize $7$};
\protect\draw[fill=white] (8 cm, 0 cm) circle (.15cm) node[below=1pt]{\scriptsize $6$};
\protect\draw[fill=white] (6 cm, 0 cm) circle (.15cm) node[below=1pt]{\scriptsize $5$};
\protect\draw[fill=white] (4 cm, 0 cm) circle (.15cm) node[below=1pt]{\scriptsize $4$};
\protect\draw[fill=white] (2 cm, 0 cm) circle (.15cm) node[below=1pt]{\scriptsize $3$};
\protect\draw[fill=white] (4 cm, 1.5 cm) circle (.15cm) node[right=1pt]{\scriptsize $2$};
\protect\draw[fill=white] (0 cm, 0 cm) circle (.15cm) node[below=1pt]{\scriptsize $1$};
\end{tikzpicture}

\[
\begin{tabular}{|l|l|l|} \hline
 $a$ & $b$ & $c_\bullet$ \\ \hline\hline
 \begin{tabular}{@{}l@{}}
 1 \\ 1 \\ 1 \\ 1 \\ 1 \\ 1 \\ 1    \end{tabular} & \begin{tabular}{@{}l@{}} 2\\ 3 \\4 \\ 5 \\ 6 \\ 7 \\ 8   \end{tabular} & \begin{tabular}{@{}l@{}} 3128 \\ 2821 \\ 437573 \\ 525152 \\ 6161 \\ 7686 \\ 81  \end{tabular} \\ \hline
\end{tabular}
\qquad
\begin{tabular}{|l|l|l|} \hline
 $a$ & $b$ & $c_\bullet$ \\ \hline\hline
 \begin{tabular}{@{}l@{}}
2 \\ 2 \\ 2 \\ 2 \\ 2 \\ 2 \\ 7    \end{tabular} & \begin{tabular}{@{}l@{}}  3 \\ 4 \\ 5 \\ 6 \\ 7  \\ 8 \\ 8 \end{tabular} & \begin{tabular}{@{}l@{}} 7238 \\ 658562 \\ 515251 \\ 426585 \\ 3832 \\ 1312 \\ 8787 \end{tabular} \\ \hline
\end{tabular}
\qquad
\begin{tabular}{|l|l|l|} \hline
 $a$ & $b$ & $c_\bullet$ \\ \hline\hline
 \begin{tabular}{@{}l@{}}
3 \\ 3 \\ 3 \\ 3 \\ 3 \\ 6 \\ 6    \end{tabular} & \begin{tabular}{@{}l@{}}  4 \\ 5 \\ 6 \\ 7  \\ 8  \\ 7 \\ 8  \end{tabular} & \begin{tabular}{@{}l@{}} 757341 \\ 635484 \\ 548453 \\ 414375 \\ 2723  \\ 8671 \\ 7176  \end{tabular} \\ \hline
\end{tabular}
\qquad
\begin{tabular}{|l|l|l|} \hline
 $a$ & $b$ & $c_\bullet$ \\ \hline\hline
 \begin{tabular}{@{}l@{}}
4 \\ 4 \\ 4 \\ 4 \\ 5 \\ 5 \\ 5   \end{tabular} & \begin{tabular}{@{}l@{}}  5 \\ 6 \\ 7  \\ 8 \\ 6 \\ 7 \\ 8 \end{tabular} & \begin{tabular}{@{}l@{}} 845363 \\ 746474 \\ 647464 \\ 536354 \\ 856242 \\ 734143 \\ 624265 \end{tabular} \\ \hline
\end{tabular}
\]

%

\subsubsection{Type $F_4$}

\begin{tikzpicture}[scale=0.4,baseline=-3]
\protect\draw (6 cm,0) -- (4 cm,0);
\protect\draw (4 cm,0.1cm) -- (2 cm,0.1 cm);
\protect\draw (4 cm,-0.1cm) -- (2 cm,-0.1 cm);
\protect\draw (2 cm,0) -- (0 cm,0);

\protect\draw[fill=white] (6 cm, 0 cm) circle (.15cm) node[above=1pt]{\scriptsize $4$};
\protect\draw[fill=white] (4 cm, 0 cm) circle (.15cm) node[above=1pt]{\scriptsize $3$};
\protect\draw[fill=white] (2 cm, 0 cm) circle (.15cm) node[above=1pt]{\scriptsize $2$};
\protect\draw[fill=white] (0 cm, 0 cm) circle (.15cm) node[above=1pt]{\scriptsize $1$};
\end{tikzpicture}
\begin{tabular}{|l|l|l|} \hline
 $a$ & $b$ & $c_\bullet$ \\ \hline\hline
 \begin{tabular}{@{}l@{}}
 1 \\ 1 \\1 \\ 2 \\2 \\3   \end{tabular} & \begin{tabular}{@{}l@{}} 2\\ 3 \\4 \\ 3\\ 4 \\ 4 \end{tabular} & \begin{tabular}{@{}l@{}} 2121 \\ 3242 \\ 41 \\4231 \\3132 \\4343 \end{tabular} \\ \hline
\end{tabular}

\subsubsection{Type $H_3$}

\begin{tikzpicture}[scale=0.4,baseline=-3]

\protect\draw (4 cm,0cm) -- (2 cm,0 cm);
\protect\draw (4 cm,0.05cm) -- (2 cm,0.05 cm);
\protect\draw (4 cm,-0.05cm) -- (2 cm,-0.05 cm);
\protect\draw (2 cm,0) -- (0 cm,0);

\protect\draw[fill=white] (3 cm, 0 cm) circle (0cm) node[below=0pt]{\tiny $5$};

\protect\draw[fill=white] (4 cm, 0 cm) circle (.15cm) node[above=1pt]{\scriptsize $3$};
\protect\draw[fill=white] (2 cm, 0 cm) circle (.15cm) node[above=1pt]{\scriptsize $2$};
\protect\draw[fill=white] (0 cm, 0 cm) circle (.15cm) node[above=1pt]{\scriptsize $1$};
\end{tikzpicture}
\begin{tabular}{|l|l|l|} \hline
 $a$ & $b$ & $c_\bullet$ \\ \hline\hline
 \begin{tabular}{@{}l@{}}
 1 \\ 1 \\ 2   \end{tabular} & \begin{tabular}{@{}l@{}} 2\\ 3\\ 3 \end{tabular} & \begin{tabular}{@{}l@{}} 3231 \\ 2132 \\ 3121 \end{tabular} \\ \hline
\end{tabular}

\subsubsection{Type $H_4$}

\begin{tikzpicture}[scale=0.4,baseline=-3]
\protect\draw (4 cm,0.05cm) -- (2 cm,0.05 cm);
\protect\draw (4 cm,-0.05cm) -- (2 cm,-0.05 cm);
\protect\draw (4 cm,0cm) -- (2 cm,0 cm) ;
\protect\draw (2 cm,0) -- (0 cm,0);
\protect\draw (0 cm,0) -- (-2 cm,0);

\protect\draw[fill=white] (3 cm, 0 cm) circle (0cm) node[below=0pt]{\tiny $5$};

\protect\draw[fill=white] (4 cm, 0 cm) circle (.15cm) node[above=1pt]{\scriptsize $4$};
\protect\draw[fill=white] (2 cm, 0 cm) circle (.15cm) node[above=1pt]{\scriptsize $3$};
\protect\draw[fill=white] (0 cm, 0 cm) circle (.15cm) node[above=1pt]{\scriptsize $2$};
\protect\draw[fill=white] (-2 cm, 0 cm) circle (.15cm) node[above=1pt]{\scriptsize $1$};
\end{tikzpicture}
\begin{tabular}{|l|l|l|l|} \hline
 $a$ & $b$ & $c_\bullet$ \\ \hline\hline
 \begin{tabular}{@{}l@{}}
 1 \\ 1 \\1 \\ 2 \\2 \\3   \end{tabular} & \begin{tabular}{@{}l@{}} 2\\ 3 \\4 \\ 3\\ 4 \\ 4 \end{tabular} & \begin{tabular}{@{}l@{}} 21212121\\3242313242 \\4341434143  \\4231324231  \\3132423132 \\4143414341 \\ \end{tabular} \\ \hline
\end{tabular}

\section{Type $A$: $\SC$ and webs} \label{sec:typeAwebs}

\subsection{Definition of the diagrammatic category}

In type $A$, one can give a diagrammatic presentation for $\SC$ similar to the presentation of Type $A$ webs from \cite{CKM}. More precisely, we get a presentation of the monoidal
category obtained by gluing $\SC(S_n)$ together for all $n \ge 0$.

\begin{defn} Let $\SCWebAlg$ be the monoidal category defined as follows. The objects are sequences $\un = (n_1, n_2, \ldots, n_k)$ of non-negative integers, and the monoidal structure on objects is concatenation. Let $\sum \un := n_1 + \cdots + n_k$. There are no morphisms from $\un$ to $\um$ if $\sum \un \ne \sum \um$. If $\sum \un = \sum \um = N$, then we view $\un$ as a subset in $S$ consisting of the simple generators of the parabolic subgroup $S_{\un} := S_{n_1} \times S_{n_2} \times \cdots \times S_{n_k}$ inside $S_N$, and similarly for $\um$. Then
\[ \Hom_{\SCWebAlg}(\un, \um) = \Hom_{\SC(S_N)}({\un}, {\um})= S_{\um}\setminus S_N / S_{\un},\]
with $\SC(S_N)$ controlling composition as well. The monoidal structure on morphisms is given by the product of parabolic double cosets
\begin{equation*}
    \begin{split}
     S_{\um}\setminus S_N / S_{\un}\times S_{\um'}\setminus S_{N'} / S_{\un'} &\to  S_{\um}\times S_{\um'}\setminus S_N\times S_{N'} / S_{\un}\times S_{\un'} \cong S_{\um\;\um'}\setminus S_{N+N'} / S_{\un\;\un'} \\
            (p,q) &\mapsto p\times q
    \end{split}
\end{equation*} \end{defn}

\begin{lem} The monoidal structure is well-defined. \end{lem}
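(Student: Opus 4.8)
The plan is to verify the axioms of a (strict) monoidal category one piece at a time, reducing everything to two facts about direct products of Coxeter groups: that the longest element of a product double coset is the product of the factors' longest elements, and that $*$-multiplication in $S_N \times S_{N'}$ is computed componentwise. Throughout I identify $S_N \times S_{N'}$ with the standard parabolic subgroup of $S_{N+N'}$ generated by $S \sqcup S'$, where $S$ and $S'$ denote the simple reflections of the two factors.

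First I would check that the assignment $(p,q) \mapsto p \times q$ lands in the correct $\Hom$-space. Since $S_{\um} \times S_{\um'}$ and $S_{\un} \times S_{\un'}$ lie inside the parabolic $S_N \times S_{N'}$, a double coset of $S_N \times S_{N'}$ for these subgroups is automatically a genuine $(S_{\um\,\um'}, S_{\un\,\un'})$-coset of $S_{N+N'}$: forming $W_J g W_I$ yields the same subset whether computed in the ambient group or in any parabolic containing $W_J$ and $W_I$. The one computation I record is
\[ \ma{p \times q} = \ma{p} . \ma{q}, \]
the product of the two longest elements, with lengths adding since the factors involve disjoint, commuting simple reflections. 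This is immediate from \eqref{beingmaximal} (or from Proposition \ref{prop:reduceproduct}(1) applied in each factor), as $\ma{p} . \ma{q}$ has $S_{\um} \sqcup S_{\um'}$ in its left descent set and $S_{\un} \sqcup S_{\un'}$ in its right descent set.

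The main step, and the only one requiring real care, is the interchange law: for $p_1, p_2$ composable in $\SC(S_N)$ and $q_1, q_2$ composable in $\SC(S_{N'})$, I must show
\[ (p_1 \times q_1) * (p_2 \times q_2) = (p_1 * p_2) \times (q_1 * q_2). \]
By Theorem \ref{thm:Karoubi}, within a fixed $\Hom$-space a double coset is determined by its maximal element, so it suffices to compare maximal elements. Using Lemma \ref{lem:maxofcoset} together with the displayed formula for $\ma{p \times q}$, the left side has maximal element $(\ma{p_1} . \ma{q_1}) * (\ma{p_2} . \ma{q_2})$ and the right side has maximal element $(\ma{p_1} * \ma{p_2}) . (\ma{q_1} * \ma{q_2})$. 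These agree once I establish that $*$-multiplication in $S_N \times S_{N'}$ is componentwise, i.e. $(a,a') * (b,b') = (a*b,\, a'*b')$ for $a,b \in S_N$ and $a',b' \in S_{N'}$. This follows by induction on a reduced word for $(b,b')$ via \eqref{basicstar}: right multiplication by a generator of one factor changes only that coordinate and leaves the other untouched, so each application of \eqref{basicstar} acts in a single coordinate. Writing $\ma{p_i}$ and $\ma{q_i}$ in the two coordinates then identifies the two maximal elements, proving the interchange law. I expect this componentwise behaviour of $*$ to be the (modest) crux of the argument.

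It remains to verify the strict structure, which is routine. The identity of $\un$ is the minimal $(\un,\un)$-coset, with maximal element $w_{S_{\un}}$; since $w_{S_{\un}} . w_{S_{\un'}} = w_{S_{\un\,\un'}}$, we get $\id_{\un} \times \id_{\un'} = \id_{\un\,\un'}$, and together with the interchange law this makes $\times$ a bifunctor. Associativity on objects is associativity of concatenation, and on morphisms it follows from the canonical identification $(S_N \times S_{N'}) \times S_{N''} \cong S_N \times (S_{N'} \times S_{N''})$ inside $S_{N+N'+N''}$, under which triple products of double cosets and their maximal elements match exactly. The unit object is the empty sequence (with $\SC(S_0)$ trivial), and the unit constraints are identities. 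Hence the monoidal structure is well-defined.
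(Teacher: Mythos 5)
Your proof is correct and follows the same route as the paper: the paper's entire argument is to reduce everything to the interchange law and observe that it follows from $(\ma{p}\times\ma{p'})*(\ma{q}\times\ma{q'})=(\ma{p}*\ma{q})\times(\ma{p'}*\ma{q'})$, which is exactly your key computation. You simply spell out the details (the componentwise behaviour of $*$ in a product and the routine unit/associativity checks) that the paper leaves implicit.
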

	
\begin{proof} We need only verify the interchange law. It follows from 
\[(\ma{p}\times\ma{p'})*(\ma{q}\times\ma{q'})=(\ma{p}*\ma{q})\times(\ma{p'}*\ma{q'}).\] \end{proof}

The generating morphisms of $\SC(S_N)$ are maps $I \to Is$ and $Is \to I$ coming from the addition or subtraction of a single simple reflection. The map $I \to Is$ corresponds to a morphism  of the form
\[ S_{n_1} \times \cdots \times S_a \times S_b \times \cdots \times S_{n_k}  \to S_{n_1} \times \cdots \times S_{a+b} \times \cdots \times S_{n_k}. \]
It can be obtained by taking the morphism
\[ S_a \times S_b \to S_{a+b} \]
inside $\SC(S_{a+b})$, and tensoring on the right and left with identity maps. Thus the generators of $\SCWebAlg$ as a \emph{monoidal} category should be the maps
\[ S_a \times S_b \to S_{a+b}, \qquad S_{a+b} \to S_a \times S_b \]
for each $a, b \ge 1$. This motivates the following definition.

\begin{defn}\label{def:scwebdiag} Let $\SCWebDiag$ be the monoidal category defined as follows. The objects are the same as in $\SCWebAlg$. The morphisms are monoidally generated by two
diagrams:
\begin{equation}\label{diagpms} {
\labellist
\tiny\hair 2pt
 \pinlabel {$a$} [ ] at 7 7
 \pinlabel {$b$} [ ] at 32 7
 \pinlabel {$a+b$} [ ] at 27 18
\endlabellist
\centering
\ig{2}{merge}
} \qquad {
\labellist
\tiny\hair 2pt
 \pinlabel {$a$} [ ] at 7 18
 \pinlabel {$b$} [ ] at 32 18
 \pinlabel {$a+b$} [ ] at 27 7
\endlabellist
\centering
\ig{2}{split}
}. \end{equation} The relations are as follows. When no labels are given, the relation holds for all valid labels.
\begin{subequations} \label{webrelations}
\begin{equation} \label{bigon} \textbf{Bigon:} \quad \ig{1}{bigon} \; = \; \ig{1}{id} \end{equation}
\begin{equation} \label{assoc} \textbf{Associativity:} \quad \igv{1}{coassocL} \; = \; \ighv{1}{coassocL} \end{equation}
\begin{equation} \label{coassoc} \textbf{Coassociativity:} \quad \ig{1}{coassocL} \; = \; \igh{1}{coassocL} \end{equation}
\begin{equation} \label{square1} \textbf{Square:} \qquad \qquad {
\labellist
\tiny\hair 2pt
 \pinlabel {$a$} [ ] at 3 6
 \pinlabel {$b$} [ ] at 3 66
 \pinlabel {$N-a$} [ ] at 43 6
 \pinlabel {$N-b$} [ ] at 43 66
 \pinlabel {$b$} [ ] at 20 15
 \pinlabel {$a$} [ ] at 20 60
 \pinlabel {$a+b$} [ ] at -3 39
 \pinlabel {$N-a-b$} [ ] at 50 39
\endlabellist
\centering
\ig{1.2}{square1}
} \qquad \qquad = \qquad {
\labellist
\tiny\hair 2pt
 \pinlabel {$a$} [ ] at 7 12
 \pinlabel {$N-a$} [ ] at 37 12
 \pinlabel {$b$} [ ] at 7 60
 \pinlabel {$N-b$} [ ] at 37 60
 \pinlabel {$N$} [ ] at 27 35
\endlabellist
\centering
\ig{1.2}{mergesplit}
} \quad \text{ if } a+b < N, \end{equation}
\begin{equation} \label{square2} \textbf{Square:} \qquad \qquad {
\labellist
\tiny\hair 2pt
 \pinlabel {$a$} [ ] at 3 6
 \pinlabel {$b$} [ ] at 3 66
 \pinlabel {$N-a$} [ ] at 43 6
 \pinlabel {$N-b$} [ ] at 43 66
 \pinlabel {$N-b$} [ ] at 20 15
 \pinlabel {$N-a$} [ ] at 20 62
 \pinlabel {$a+b-N$} [ ] at -10 39
 \pinlabel {$2N-a-b$} [ ] at 52 39
\endlabellist
\centering
\ig{1.2}{square2}
} \qquad \qquad = \qquad {
\labellist
\tiny\hair 2pt
 \pinlabel {$a$} [ ] at 7 12
 \pinlabel {$N-a$} [ ] at 37 12
 \pinlabel {$b$} [ ] at 7 60
 \pinlabel {$N-b$} [ ] at 37 60
 \pinlabel {$N$} [ ] at 27 35
\endlabellist
\centering
\ig{1.2}{mergesplit}
} \quad \text{ if } a+b > N, \end{equation} 
\end{subequations}
\end{defn}

It is standard practice to also permit webs where edges are labeled with $0$. The object $0$ is treated as the monoidal identity, and can be erased from
a diagram freely. For example, the generating trivalent vertex $m \otimes 0 \to m$ is equal to the identity map of $m$, and the pictures become the same after the $0$-labeled edge
is erased. With this convention, one can also make sense of various relations when they have zero labels. However, no new relations are introduced in this fashion. For example,
\begin{equation} \bigon{m}{m}{0} = {
\labellist
\tiny\hair 2pt
 \pinlabel {$m$} [ ] at 28 13
\endlabellist
\centering
\ig{1}{id}
} \end{equation}
is already true by definition.

\begin{ex} \label{allow0} To given another example where edges are labeled $0$, consider \eqref{square1} and \eqref{square2} where $a+b = N$. Both sides are already equal by definition. We call these the \emph{trivial} square relations. \end{ex}

Let us match up the relations in \eqref{webrelations} with the relations in $\SC$. The bigon relation matches the $*$-quadratic relation $[I - s + s] \expr [I]$. The associativity
relation \eqref{assoc} matches the up-up relation $[I + s + t] \expr [I + t + s]$, and coassociativity \eqref{coassoc} matches the down-down relation. Most interesting are the square relations \eqref{square1} and \eqref{square2}, which match the switchback relation from \eqref{sssA}.

\begin{rem}\label{diagcommute} There are also other instances of the up-up, down-down, and switchback relations which correspond to equalities of diagrams which hold purely from the interchange law. For example,
\[ \ig{1}{splitsplit} \quad = \quad \igh{1}{splitsplit} \] is an instance of up-up, and 
\[ \ig{1}{sidebyside} \quad = \quad \ig{1}{sidebyside2} \] is a switchback relation as in Lemma \ref{m=2}. \end{rem}

\begin{prop} The categories $\SCWebAlg$ and $\SCWebDiag$ are monoidally equivalent. \end{prop}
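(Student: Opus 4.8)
The plan is to construct a monoidal functor $\Phi \colon \SCWebDiag \to \SCWebAlg$ and show it is an isomorphism. Since $\SCWebDiag$ is defined by generators and relations, the main work is to specify $\Phi$ on the two generating morphisms from \eqref{diagpms} and to verify that $\Phi$ respects the defining relations \eqref{webrelations}. First I would set $\Phi$ to be the identity on objects (both categories share the same objects, sequences of non-negative integers). On the merge generator $S_a \times S_b \to S_{a+b}$ I would send it to the minimal $(a+b, \{a,b\})$-coset with underlying set $W_{S_{a+b}}$, i.e. the generating morphism $I \to Is$ of $\SC$; on the split generator $S_{a+b} \to S_a \times S_b$ I would send it to the minimal $(\{a,b\}, a+b)$-coset, the generating morphism $Is \to I$. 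Because the generating morphisms of $\SCWebAlg$ (viewed through $\SC(S_N)$) are precisely these addition/subtraction generators tensored with identities, $\Phi$ is automatically surjective on generators, hence (by monoidality) full.

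The central step is checking the relations, and here I would lean heavily on the presentation theorem already in hand (Theorem~\ref{thm:presentation}). The discussion immediately preceding this proposition already matches each web relation with a relation in $\SC$: the bigon relation \eqref{bigon} is the $*$-quadratic relation $[I-s+s]\expr[I]$ (Lemma~\ref{lem:easynonrex}); associativity \eqref{assoc} and coassociativity \eqref{coassoc} are the up-up and down-down relations of Lemma~\ref{lem:upupdowndown}; and the two square relations \eqref{square1}, \eqref{square2} are the type $A$ switchback relation \eqref{sssA} of Proposition~\ref{prop:sss}, in its two cases $a+b<N$ and $a+b>N$ (with the boundary case $a+b=N$ being the trivial square relation of Example~\ref{allow0}, true in both categories by definition). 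Concretely, for each web relation I would read off the two singular expressions obtained by composing the images of the generators on each side, and invoke the corresponding $\SC$-relation to conclude the two morphisms agree in $\SCWebAlg$. This shows $\Phi$ is a well-defined monoidal functor.

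For the inverse direction — that $\Phi$ is faithful, equivalently that every relation holding in $\SCWebAlg$ already follows from \eqref{webrelations} — I would argue that the relations \eqref{webrelations} are a complete list. By Theorem~\ref{thm:presentation}, the relations generating $\SC(S_N)$ are exactly the $\pm$-associativity, switchback, and $*$-quadratic relations, and in type $A$ the switchback relation takes the explicit form \eqref{sssA} with $\snum=2$. Since the generators of $\SCWebDiag$ map bijectively to the monoidal generators of $\SCWebAlg$, and the relations \eqref{webrelations} are in bijection (via the matching above) with the complete list of $\SC$ relations from Theorem~\ref{thm:presentation}, any relation in $\SCWebAlg$ is a consequence of \eqref{webrelations}. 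The one subtlety is accounting for relations that hold ``for free'' by the interchange law in the monoidal category, flagged in Remark~\ref{diagcommute}: instances of up-up, down-down, and the $m_{su}=2$ switchback (Lemma~\ref{m=2}) that come from distant-commutation of generators. These are genuinely implied by the monoidal structure and so do not require separate web relations; I would verify that the presentation of $\SCWebAlg$ via $\SC(S_N)$ together with the monoidal (interchange) structure uses only relations covered by \eqref{webrelations} after accounting for interchange.

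The main obstacle will be the faithfulness direction: making rigorous the claim that \eqref{webrelations} together with the interchange law generate \emph{all} relations of $\SCWebAlg$. The difficulty is bookkeeping — a relation in $\SC(S_N)$ involves parabolics of arbitrary shape $S_{n_1}\times\cdots\times S_{n_k}$, whereas the web generators only see a single merge or split $S_a\times S_b \leftrightarrow S_{a+b}$ acting in one position. I would resolve this by showing that each $\SC$-generating relation, applied at an arbitrary position inside a larger parabolic, is the monoidal (tensor) product of the corresponding local web relation with identity morphisms on the untouched factors; this is exactly the content of Proposition~\ref{prop:reduceproduct} (the $(-)^{(+J)}$ operation) combined with locality (Propositions~\ref{prop:locality1} and~\ref{prop:locality2}). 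Once every $\SC$-relation is exhibited as a tensor-with-identity of a web relation, faithfulness follows from Theorem~\ref{thm:presentation}, completing the proof that $\Phi$ is a monoidal equivalence.
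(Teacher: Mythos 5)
Your proposal is correct and follows essentially the same route as the paper: the paper's (much terser) proof also builds the functor $\SCWebDiag\to\SCWebAlg$ by sending merge/split to the $\pm s$ generators, matches the web relations \eqref{webrelations} to the list \eqref{relationslist} exactly as you do, and deduces fullness from surjectivity on generators and faithfulness from Theorem~\ref{thm:presentation}, with Remark~\ref{diagcommute} and Lemma~\ref{m=2} absorbing the interchange-law instances. Your extra care with the bookkeeping — exhibiting each $\SC$-relation applied inside a larger parabolic as a local web relation tensored with identities via Proposition~\ref{prop:reduceproduct} and locality — is precisely the detail the paper leaves implicit in the word ``monoidal.''
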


\begin{proof} 
As noted above, the relations given in Definition~\ref{def:scwebdiag} agree with those in \eqref{relationslist} (for the restriction to the objects $\un$ with $\sum\un=N$, for each $N$).
This gives a well-defined full functor $\SCWebDiag\to\SCWebAlg$ which is the identity on objects and maps `$\pm s$' to the generators \eqref{diagpms}. 
It is clearly monoidal (see Remark~\ref{diagcommute} and Lemma~\ref{m=2}) and is faithful by
Theorem~\ref{thm:presentation}. The claim follows.  \end{proof}
	
\begin{rem} We can assign a degree to each diagram, matching the length function of an expression from \S\ref{ss:length}. We have
\begin{equation} \label{degreeofweb} \deg \left( {
\labellist
\tiny\hair 2pt
 \pinlabel {$a$} [ ] at 7 7
 \pinlabel {$b$} [ ] at 32 7
 \pinlabel {$a+b$} [ ] at 27 18
\endlabellist
\centering
\ig{2}{merge}
} \right) = ab, \qquad \deg \left( {
\labellist
\tiny\hair 2pt
 \pinlabel {$a$} [ ] at 7 18
 \pinlabel {$b$} [ ] at 32 18
 \pinlabel {$a+b$} [ ] at 27 7
\endlabellist
\centering
\ig{2}{split}
} \right) = ab. \end{equation}
With the exception of \eqref{bigon}, the relations of \eqref{webrelations} preserve degree. \end{rem}

\subsection{More on squares}

Suppose that $a + b < N$, so that \eqref{square1} holds for $a, b$. Let $a' = N-a$ and $b' = N-b$, so that $a' + b' > N$ and \eqref{square2} holds for $a'$ and $b'$. One can verify easily
that \eqref{square2} for $a', b'$ is none other than the horizontal flip of \eqref{square1} for $a, b$. As a consequence, the relations \eqref{webrelations} are closed under taking the horizontal flip.

For the rest of this section we let $x = N-a$ and $y = N-b$. Given a diagram of the following shape \begin{equation} \label{mergesplittemplate} \mergesplit{a}{b}{x}{y}{N}{.75}, \end{equation} when is it the right-hand side of a (nontrivial) square relation? Precisely when $a + b \ne N$, or equivalently, $b \ne x$. If $b < x$ then we are in the regime of \eqref{square1}, and if $b > x$ then we are in the regime of \eqref{square2}. When $b=x$ we have the picture
\begin{equation} \label{uniqrexweb} \mergesplit{a}{b}{b}{a}{\; \; \; a+b}{.75} \end{equation}
which is the unique reduced expression for its double coset, see Proposition \ref{prop:sss}\eqref{uniqrex}. Note that \eqref{uniqrexweb} appears as a subdiagram inside the left-hand side of \eqref{square1}! This corresponds to the observation in Remark \ref{rmk:cantsimplifyfurther}.

Given a diagram of the following shape \begin{equation} \label{squaretemplate} \squareone{a}{b}{x}{y}{f}{g}{j}{k}{.75}, \end{equation} when is it the left-hand side of a square
relation? Certainly $f = b \iff g = a$ and both imply $a + b \le N$, because $f + a = g + b = j \le N$. If $f = a$ and $a + b = N$ then we have a trivial square relation, see Remark \ref{allow0}. If $f = a$ and $a+b < N$ then we have \eqref{square1}.

Now let us examine diagrams of the form \eqref{squaretemplate} which are not part of a square relation.

In the first case, $f > b$ and $g > a$. Note that $a+b < N$. In this case, we claim that the result is a non-reduced expression for the maximal coset. That is, there is an equality in $\SCWebDiag$ of the form
\begin{equation} \label{nonredsquare} \squareone{a}{b}{x}{y}{>b}{>a}{}{}{.85} \qquad = \qquad \mergesplit{a}{b}{x}{y}{}{.85}, \end{equation}
where only the right-hand side is reduced. We call this the \emph{non-reduced square relation}.
In particular, we should be
able to use the relations \eqref{webrelations}, with at least one use of \eqref{bigon}, to deduce the non-reduced square relation. The reason this works is the opposite to the observation above. Above we said that the subdiagram \eqref{uniqrexweb} inside left-hand side of \eqref{square1} can not be reduced further when $f = b$ and $g=a$. In this case, since $f > b$ and $g > a$, it can be reduced further.

\begin{lem} The equality \eqref{nonredsquare} is a consequence of \eqref{webrelations}. \end{lem}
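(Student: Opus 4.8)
The plan is to prove the non-reduced square relation \eqref{nonredsquare} by peeling off a bigon. The left-hand side is a diagram of shape \eqref{squaretemplate} where the internal edges satisfy $f>b$ and $g>a$, with $a+b<N$. The key structural observation is that the composite internal picture contains a configuration which, after one application of associativity/coassociativity, exhibits a bigon that can be cancelled via \eqref{bigon}. So the overall strategy is: \textbf{(1)} use coassociativity \eqref{coassoc} and associativity \eqref{assoc} to rebracket the left-hand side of \eqref{nonredsquare} so that a split immediately follows a merge along the internal edge of label $j = a+f = b+g$, and \textbf{(2)} recognize the resulting merge-split bigon and collapse it with \eqref{bigon}, leaving the reduced merge-split diagram on the right-hand side.

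First I would set up notation following \eqref{squaretemplate}: the left diagram has bottom-left input $a$, top-left input $b$, internal vertical edges labelled $f$ (lower) and $g$ (upper), where $f>b$, $g>a$, and the outer edges $x=N-a$, $y=N-b$. The middle horizontal edge is labelled $j$ on the left and $k$ on the right with $j+k=N$, and by how the diagram composes we have $j = a+f = b+g$ and $k = x - f = y - g$. The point where $f>b$ means exactly that in the square, the strand of label $b$ coming in does not ``saturate'' the lower vertex — there is extra thickness $f-b>0$ coming from the middle — so we are genuinely in the non-trivial, non-reduced regime. I would redraw the picture to isolate the central four-valent portion as a merge (of $a$ and $f$ into $j$) stacked against a split (of $j$ into $b$ and $g$), or the horizontally-mirrored version, depending on orientation.

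The main obstacle, and the real content, is Step (1): extracting the bigon. The honest way to do this is to apply coassociativity \eqref{coassoc} to the lower-left trivalent vertex to expose an internal edge of some intermediate label, then apply associativity \eqref{assoc} to the upper trivalent vertex, producing a subdiagram in which an edge is merged and then immediately split (or split and immediately merged) with matching labels — precisely a bigon \eqref{bigon}. Concretely, I expect the rebracketing to produce a $\ig{1}{bigon}$ shaped subdiagram carrying labels whose ``inner'' value is $f-b = g-a > 0$ (this positivity is exactly the hypothesis $f>b$, $g>a$), which \eqref{bigon} then erases. I would verify at each rebracketing that all edge labels remain non-negative and that the associativity moves are genuinely instances of \eqref{assoc}/\eqref{coassoc} (including possibly the zero-labelled conventions from Example \ref{allow0} at the boundary). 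After the bigon is cancelled, the two remaining trivalent vertices compose into exactly the reduced merge-split $\mergesplit{a}{b}{x}{y}{}{}$ of the right-hand side.

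Finally, to keep the argument clean I would phrase it as: decompose the left-hand side of \eqref{nonredsquare}, apply \eqref{assoc} and \eqref{coassoc} once each to create the bigon, apply \eqref{bigon} to remove it (this is the unique length-reducing step, consistent with the claim that only the right-hand side is reduced), and read off the merge-split on the right. The degree bookkeeping of \eqref{degreeofweb} gives a useful sanity check: the left side has degree $af + bg$ while the right has degree $ab + xy$, and the bigon contributes the discrepancy, confirming that a single \eqref{bigon} application accounts for the entire drop in length. The one subtlety to watch is orientation/chirality — whether the exposed bigon sits on the left or right of the central edge — which determines whether \eqref{assoc} or its horizontal flip (available since \eqref{webrelations} is closed under horizontal flip, as noted at the start of this subsection) is the move to apply; I would simply invoke the horizontal-flip symmetry to reduce to one case.
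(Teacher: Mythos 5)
There is a genuine gap at your Step (1), and it is the heart of the proof. The left-hand side of \eqref{nonredsquare} consists of four trivalent vertices arranged in a cycle that strictly alternates split, merge, split, merge: the split $x \to f + k$, the merge $a + f \to j$, the split $j \to b + g$, and the merge $g + k \to y$. Neither \eqref{assoc} nor \eqref{coassoc} applies to this diagram, because each of those relations requires two \emph{composed} vertices of the \emph{same} type (two merges feeding into one another, or two splits), and no such pair exists here: the two merges are not composed with each other, nor are the two splits. So the claimed rebracketing "apply \eqref{assoc} and \eqref{coassoc} once each to create the bigon" is not an available move. There is also a conceptual slip in what you are aiming for: a configuration where "a split immediately follows a merge along the internal edge of label $j$" is the merge--split diagram \eqref{mergesplittemplate}, which is \emph{not} a bigon and is not killed by \eqref{bigon}; the bigon relation only collapses a split immediately followed by a merge back to the \emph{same} strand.

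The missing idea is that the argument must begin (and end) with the square relation itself. The interior merge--split, with inputs $(a,f)$ and outputs $(b,g)$, satisfies $a+b < a+f = j$ precisely because $f>b$, so \eqref{square1} (read right to left, with bottom boundary $(a,f)$ rather than $(a,x)$) opens it into a small square with rungs $b$ and $a$ and an internal vertical edge of label $f-b$. Only \emph{after} this step does the diagram contain composed splits and composed merges, so that \eqref{assoc} and \eqref{coassoc} apply and expose a genuine bigon carrying the $f-b$ edge; cancelling it with \eqref{bigon} yields the square with rungs exactly $b$ and $a$, and a \emph{second} application of \eqref{square1} converts that into the merge--split on the right-hand side. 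Your intuition that the bigon's inner label is $f-b=g-a$ is correct, but your route to it skips both applications of the square relation that make the manipulation possible. (As a secondary point, your degree bookkeeping is off: the left-hand side has degree $fk+af+bg+gk$ and the right-hand side $ax+by$, not $af+bg$ and $ab+xy$; the discrepancy is indeed accounted for by the single cancelled bigon, but only once its labels are correctly identified.)
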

	
\begin{proof} Continuing the notation of \eqref{squaretemplate}, we assume $f > b$ and $g >a$. 
\begin{equation}
\squareone{a}{b}{x}{y}{f}{g}{}{}{1}	\quad = \quad {
	\labellist
	\tiny\hair 2pt
	 \pinlabel {$a$} [ ] at 0 6
	 \pinlabel {$b$} [ ] at 0 67
	 \pinlabel {$x$} [ ] at 58 6
	 \pinlabel {$y$} [ ] at 58 67
	 \pinlabel {$f$} [ ] at 35 12
	 \pinlabel {$g$} [ ] at 35 65
	 \pinlabel {$b$} [ ] at 14 21
	 \pinlabel {$a$} [ ] at 13 60
	 \pinlabel {$f-b$} [ ] at 35 39
	\endlabellist
	\centering
	\ig{1}{squarecomp1}
	} \quad = \quad {
\labellist
\tiny\hair 2pt
 \pinlabel {$a$} [ ] at 0 6
 \pinlabel {$b$} [ ] at 0 67
 \pinlabel {$x$} [ ] at 58 6
 \pinlabel {$y$} [ ] at 58 67
 \pinlabel {$a$} [ ] at 28 62
 \pinlabel {$b$} [ ] at 28 16
 \pinlabel {$f-b$} [ ] at 26 40
\endlabellist
\centering
\ig{1}{squarecomp2}
} \quad \stackrel{\eqref{bigon}}{=} \quad \squareone{a}{b}{x}{y}{b}{a}{}{}{1} \quad \stackrel{\eqref{square1}}{=} \quad \mergesplit{a}{b}{x}{y}{}{1}. \end{equation}
The first equality is \eqref{square1} with bottom boundary $(a,f)$ instead of $(a,x)$. The second equality follows from \eqref{assoc} and \eqref{coassoc}. \end{proof}

In the second case, $f < b$ and $g < a$. It is possible that $a + b < N$ or $a + b > N$ but it will not matter in the end. We claim that the result is a reduced expression, for some non-maximal coset $q$. One can verify this by showing that the lengths add in a product of relative longest elements. An example is demonstrative.

\begin{ex} Let $a = 8$, $b = 5$, $f = 2$, $g=5$, and $N = 12$. Starting with the longest element of $S_8 \times S_4$, we multiply by the element below to get $\ma{q}$.
\[ \ig{1}{coset1} \]
First, $f$ blue strands are pulled left across the initial $a$ strands. Then $g$ red strands are pulled right across $x-f$ remaining strands.
	
However, this coset $q$ also has another reduced expression!
\[ \ig{1}{coset2} \]
First one pulls $g$ red strands right across the final $x$ strands. Then $f$ blue strands are pulled left across $a-g$ remaining strands. \end{ex}

More generally, there is a equality between two reduced expressions which we might call the \emph{rung swap}, which holds whenever $f < b$:
\begin{equation} \label{rungswap} \squareone{a}{b}{x}{y}{f}{g}{}{}{1} \qquad = \qquad \squaretwo{a}{b}{x}{y}{g}{f}{}{}{1}. \end{equation}
The square relations themselves can be viewed as special instances of the rung swap, when $f = b$ and $g = a$.
	
\begin{lem} The equality \eqref{rungswap} is a consequence of \eqref{webrelations}. \end{lem}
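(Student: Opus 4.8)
The plan is to prove the rung swap \eqref{rungswap} for the case $f < b$ by decomposing both sides into compositions of squares already covered by the established relations, reducing to the non-trivial square relation \eqref{square1} together with the associativity and coassociativity relations. The key observation is that the diagram $\squareone{a}{b}{x}{y}{f}{g}{}{}{.75}$ with $f<b$ and $g<a$ represents a reduced expression for a non-maximal coset $q$, and both sides of \eqref{rungswap} express this same $q$; by the presentation theorem (Theorem \ref{thm:presentation}) and faithfulness established in the equivalence $\SCWebDiag \simeq \SCWebAlg$, it suffices to produce a diagrammatic chain of relations realizing this equivalence.

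First I would split the left-hand diagram of \eqref{rungswap} by inserting an intermediate ``rung.'' Concretely, the composite that pulls $f$ strands left and then $g$ strands right can be factored through a middle horizontal slice carrying the appropriate labels, so that the left-hand side is written as a vertical stacking of two smaller merge-split squares. Each smaller square has top or bottom boundary for which $f' = b'$ (equivalently $g' = a'$) holds, so it is precisely in the regime of the square relation \eqref{square1} (or its trivial/flipped variant \eqref{square2}). I would then apply \eqref{square1} to the upper sub-square to convert it into a merge-split, use associativity \eqref{assoc} and coassociativity \eqref{coassoc} to slide the resulting trivalent vertices past one another, and finally reassemble the diagram into the form $\squaretwo{a}{b}{x}{y}{g}{f}{}{}{.75}$, which is the right-hand side. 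This mirrors exactly the two-reduced-expression picture displayed in the example just preceding the statement (the \texttt{coset1}/\texttt{coset2} pictures): one side pulls the $f$ blue strands first, the other pulls the $g$ red strands first, and the square relation is the local move interchanging the order.

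The hard part will be bookkeeping the labels through the intermediate decomposition and verifying that each sub-square genuinely satisfies the hypothesis $a'+b' \lessgtr N'$ needed to invoke \eqref{square1} or \eqref{square2} (rather than landing on a trivial square, which is harmless but must be recognized). Since the paper has already shown in the previous lemma that the non-reduced square relation \eqref{nonredsquare} follows from \eqref{webrelations}, and since \eqref{square1} and \eqref{square2} are related by horizontal flip, I expect the combinatorial content to reduce to a single application of the square relation sandwiched between associativity moves, exactly as in the proof of the non-reduced square case. The cleanest route is to observe that \eqref{rungswap} with $f=b$, $g=a$ is literally a square relation, and that the general $f<b$ case is obtained from this special case by composing on the top and bottom with merge-split diagrams whose compatibility is governed by \eqref{assoc} and \eqref{coassoc}; I would present the argument as a short displayed chain of equalities citing \eqref{square1}, \eqref{assoc}, and \eqref{coassoc} at each step, deferring the routine strand-counting to the reader since it parallels the preceding lemma.
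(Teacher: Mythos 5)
Your general instinct --- localize a square relation inside the bigger diagram --- is the right one, but the specific plan has a gap at its central step. You claim the left-hand side of \eqref{rungswap} can be ``factored through a middle horizontal slice'' into a vertical stacking of two smaller squares, each in the regime $f'=b'$. As stated this does not work: each half of the left-hand diagram (pull $f$ strands left, then pull $g$ strands right) is a \emph{single} rung, i.e.\ one split followed by one merge, not a square, and none of the relations in \eqref{webrelations} lets you insert an extra pair of rungs in the middle to manufacture two squares. The observation you need instead is that each side of \eqref{rungswap} already contains exactly \emph{one} merge--split sub-diagram in the sense of the right-hand side of a square relation: on the left-hand side it is the column with bottom inputs $(a,f)$ and top outputs $(b,g)$ (so $N'=a+f=b+g$, and $b>f$ puts you in the regime of \eqref{square2}); on the right-hand side it is the column with bottom inputs $(g,x)$ (in the regime of \eqref{square1}). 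Expanding each of these by the appropriate square relation produces one and the same intermediate diagram \eqref{middleofrungswap}, and that is the entire proof --- two applications of the square relations and no use of \eqref{assoc} or \eqref{coassoc} at all. Your proposed ``cleanest route'' (deduce the general case from $f=b$, $g=a$ by composing on top and bottom with merge--splits) is also unsubstantiated: composing the square relation with extra boundary vertices does not visibly yield the general rung swap, and you give no mechanism for it.

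Your opening suggestion --- that both sides are reduced expressions for the same coset $q$, so faithfulness of $\SCWebDiag\to\SCWebAlg$ (hence Theorem \ref{thm:presentation}) forces them to be related by the relations --- is logically available at this point in the paper, but you would then owe a genuine verification that the two webs express the same double coset (e.g.\ by checking that the lengths add in the relevant products of relative longest elements); you assert this but do not prove it, and the example preceding the lemma is only illustrative. Since the point of the lemma is precisely to exhibit the explicit diagrammatic derivation, deferring ``the routine strand-counting to the reader'' leaves the actual content unproved.
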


\begin{proof} Both sides of \eqref{rungswap} are equal to
\begin{equation} \label{middleofrungswap} {
\labellist
\tiny\hair 2pt
 \pinlabel {$a$} [ ] at 0 6
 \pinlabel {$b$} [ ] at 0 67
 \pinlabel {$x$} [ ] at 53 6
 \pinlabel {$y$} [ ] at 53 67
 \pinlabel {$g$} [ ] at 17 10
 \pinlabel {$f$} [ ] at 17 63
 \pinlabel {$f$} [ ] at 36 11
 \pinlabel {$g$} [ ] at 36 62
\endlabellist
\centering
\ig{1}{squarecomp3}
}. \end{equation}
Starting from the left-hand side of \eqref{rungswap}, and noting that $b > f$, we can apply \eqref{square2} with bottom inputs $(a,f)$ to get \eqref{middleofrungswap}. Similarly, we can apply \eqref{square1} with bottom inputs $(g,x)$ to the right-hand side of \eqref{rungswap}.
\end{proof}

\subsection{Relationship to ordinary type $A$ webs}

This section is written for the reader familiar with type $A$ webs.

In \cite{CKM}, Cautis-Kamnitzer-Morrison construct a $\Z[q,q\inv]$-linear monoidal category $\Webs(\mathfrak{sl}_n)$ which describes morphisms between tensor products of
fundamental representations of $\mathfrak{sl}_n$. Variants on their construction include a category for $\mathfrak{gl}_n$ as well, and a category $\Webs^\infty$ which is the limit
as $n \to \infty$ of $\Webs(\mathfrak{gl}_n)$ (see \cite{QueSar,TVW}). The objects in $\Webs^\infty$ agree with the objects of $\SCWebDiag$, where $1$ represents the
standard representation $V$. By Schur-Weyl duality, the endomorphism ring of $(\C^n)^{\otimes m}$ has the same dimension as $S_m$ when $m \le n$. In $\Webs^\infty$, the
endomorphism ring of $(1,1,\ldots,1)$, having $m$ copies of $1$, always has the same dimension as $S_m$. Meanwhile, in $\SCWebDiag$, the endomorphism algebra of the object
$(1,1,\ldots,1)$ is a set, having the same size as $S_m$. By the same token, other morphism spaces in $\Webs^\infty$ have dimensions equal to the size of the corresponding morphism
spaces in $\SCWebDiag$.

The degree function on webs which we introduce in \eqref{degreeofweb} can be used to equip $\Webs(\mathfrak{gl}_n)$ or $\Webs^\infty$ or (the $\Z$-linearization\footnote{To define the associated graded of a category one must first linearize it. This introduces a zero morphism.} of) $\SCWebDiag$ with a
filtration. A morphism is in degree $\le k$ if it can be represented by a linear combination of webs whose degrees are at most $k$. Since the dimension of an associated graded algebra agrees with the original dimension, the following result might be expected from the discussion of dimensions above.

\begin{thm} The $\Z[q,q\inv]$-linearization of $\SCWebDiag$ and $\Webs^\infty$ have isomorphic associated graded categories. \end{thm}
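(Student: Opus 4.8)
The plan is to construct an explicit comparison functor between the two associated graded categories and show it is an isomorphism by a dimension/basis count at each graded piece. First I would set up both sides carefully. On the one side, take the $\Z[q,q\inv]$-linearization of $\SCWebDiag$, introduce the zero morphism as noted in the footnote, and form the associated graded with respect to the length filtration of \S\ref{ss:length}, using the degree assignment from \eqref{degreeofweb}. The crucial structural fact, already recorded in the excerpt, is that all relations in \eqref{webrelations} \emph{except} the bigon \eqref{bigon} are degree-preserving, while the bigon relation $\bigon{m}{m}{0} = \id$ strictly drops degree (it equates a degree-$m^2$ bigon with the degree-zero identity). Passing to the associated graded therefore kills the bigon, replacing it with the relation that the degree-$m^2$ bigon equals $0$. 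This is precisely the ``associated graded'' variant of $\SC$ mentioned in the introductory remark where the quadratic relation $[Is,I,Is]=0$ replaces $[Is,I,Is]\expr[I]$.

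Next I would recall the presentation of $\Webs^\infty$ (equivalently $\Webs(\gl_n)$ stabilized as $n\to\infty$) from \cite{CKM,QueSar,TVW}, whose generators are the same trivalent merge/split vertices and whose defining relations are the bigon (which in the $q$-setting equals a \emph{nonzero scalar} times the identity, a $q$-binomial coefficient), associativity, coassociativity, and the two square relations. The associated graded of $\Webs^\infty$ for the same degree filtration again preserves all relations except the bigon: the $q$-binomial coefficient multiplying the identity has strictly smaller $q$-degree than the bigon web, so in the associated graded the bigon web becomes $0$. Thus both associated graded categories have literally the same generators, the same degree-preserving relations (associativity, coassociativity, square \eqref{square1}–\eqref{square2}), and the same degenerate bigon relation (bigon $=0$). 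I would therefore define the functor $\Phi$ sending objects identically and each generating vertex to the corresponding generating vertex, and verify it is well-defined by checking it respects the finite list of graded relations — which it does, relation by relation, since the relation lists coincide.

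To prove $\Phi$ is an isomorphism I would argue it is bijective on each graded Hom-space. Fullness is immediate since $\Phi$ hits all generators. For faithfulness / injectivity I would compare graded dimensions. Here Proposition~\ref{thm:presentation} and the singular Matsumoto theorem (Theorem~\ref{thm:matsumoto}) supply a basis for $\Hom_{\SCWebDiag}(\un,\um)$: reduced expressions (webs) modulo braid relations, indexed by the double cosets $S_{\um}\backslash S_N/S_{\un}$, and the length function gives the grading. On the web side, the standard double-ladder or Libedinsky–Williamson-type basis for $\Webs^\infty$ is indexed by the same double cosets with matching $q$-degree (this is the Schur–Weyl/double-coset parametrization alluded to in the paragraph preceding the theorem). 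Since $\Phi$ sends reduced-expression basis elements to leading terms of the corresponding web basis elements, and the graded dimensions agree in each degree, $\Phi$ is a graded isomorphism on every Hom-space, hence an isomorphism of graded categories. Monoidality of $\Phi$ follows from the interchange law exactly as in the proof that $\SCWebAlg\cong\SCWebDiag$, together with Proposition~\ref{prop:reduceproduct}.

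The main obstacle I anticipate is the dimension count on the $\Webs^\infty$ side: one must pin down an explicit basis of the (non-graded) morphism spaces whose degrees, under the web degree function \eqref{degreeofweb}, match the length function $\ell$ on double cosets. The cleanest route is to exhibit a reduced-web (double-ladder) basis indexed by $S_{\um}\backslash S_N/S_{\un}$ and to check that the associated graded does not collapse any two distinct basis elements into the same degree in a way that loses rank — i.e. that the leading (top-degree) terms of the chosen web basis remain linearly independent after passing to the associated graded. Establishing that the top-degree part of a reduced web equals (up to a unit) the image under $\Phi$ of the corresponding reduced singular expression, and that these leading terms are independent, is the technical heart; everything else is a mechanical relation-by-relation verification and an application of Theorems~\ref{thm:matsumoto} and~\ref{thm:presentation}.
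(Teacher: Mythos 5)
There is a genuine gap, and it sits exactly where the real content of this theorem lives. You assert that after passing to associated gradeds ``both categories have literally the same generators \emph{and} the same degree-preserving relations (associativity, coassociativity, square \eqref{square1}--\eqref{square2})\ldots'' But the Cautis--Kamnitzer--Morrison presentation of $\Webs^\infty$ does \emph{not} have \eqref{square1} and \eqref{square2} as defining relations; it has the \emph{square-flop} relations \cite[(2.10)]{CKM}, an entire family equating an arbitrary square web (any rung thicknesses $f,g$) with a $q$-binomially weighted sum $\sum_{t\ge 0}c_t(\cdots)$ of squares with the rungs swapped. The relations \eqref{square1}--\eqref{square2} are only the special leading instances. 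The theorem is therefore not a tautological matching of relation lists: one must compute the top-degree part of every square flop and show it is already a consequence of the graded relations of $\SCWebDiag$. This is the paper's actual argument: the degree of the left side of a square flop exceeds that of the $t$-th term on the right by $2t(k+l)\ge 0$, and a case analysis ($f>b$ with $k\ne 0$, $f>b$ with $k=0$, $f<b$) shows the surviving top-degree relation is respectively the vanishing of a non-reduced square (the graded form of \eqref{nonredsquare}), the trivial square, or the rung swap \eqref{rungswap} with coefficient exactly $c_0=1$ --- and both \eqref{nonredsquare} and \eqref{rungswap} are proved beforehand to be consequences of \eqref{webrelations}. Without this computation your comparison functor is not known to be well defined out of the graded $\Webs^\infty$, and the two presentations are not known to agree.

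Your fallback --- proving faithfulness by a graded dimension count against a double-ladder basis whose web degrees match the length function on double cosets --- is also not available: the paper explicitly states that the identification of (minimal-degree) double ladders with reduced expressions is only a \emph{conjecture}, and that the combinatorics of these minimal-degree representatives ``has not yet been developed.'' The ungraded dimensions do match (via the double-coset parametrization), but a graded isomorphism needs the degrees of the basis elements to match, which is precisely the unproved point. What you do have right is the framing: present each associated graded by the top-degree parts of the original relations, observe that the bigon degenerates to $0$ on both sides (since the $q$-binomial scalar has strictly smaller degree), and note that all other $\SCWebDiag$ relations are homogeneous. The missing piece is the degree analysis of the square-flop family.
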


\begin{proof} Given a category presented by generators and relations, one obtains a presentation of the associated graded category by taking only the top degree terms in each relation. For example, both \eqref{bigon} and the corresponding bigon relation \cite[(2.4)]{CKM} descend to the relation
\begin{equation} \label{bigongr} \ig{1}{bigon} = 0 \end{equation}
in the associated graded. All the remaining (braid) relations of $\SCWebDiag$ are homogeneous, and descend verbatim to the associated graded. The associativity and coassociativity relations \cite[(2.6)]{CKM} are homogeneous as well.

In the associated graded, \eqref{nonredsquare} becomes \begin{equation} \label{nonredsquaregr} \squareone{a}{b}{x}{y}{>b}{>a}{}{}{1} = 0, \end{equation} and \eqref{rungswap} is
unchanged.  We will conclude by proving that the associate graded of the only remaining relations from Cautis-Kamnitzer-Morrison, the square flop relations
\cite[(2.10)]{CKM}, is either \eqref{nonredsquaregr} or the rung swap relation \eqref{rungswap}. With our indexing conventions, the square flop relation has the form
\begin{equation} \label{squareflop} \squareone{a}{b}{x}{y}{f}{g}{j}{k}{1.2} \quad = \sum_{t \ge 0} c_t \qquad \squaretwo{a}{b}{x}{y}{g-t}{f-t}{l}{m}{1.2} \end{equation}
for some coefficients $c_t$.

It it easy to verify that the degree of the left-hand side of \eqref{squareflop} is $f(N-f) + g(N-g)$. The reader should verify that $N+t-f-g = k+l$. Thus
\begin{equation} \label{degreediffflop}  f(N-f) + g(N-g) - \left[ (f-t)(N-(f-t)) + (g-t)(N-(g-t))\right] = 2t(k+l) \ge 0. \end{equation}
In particular, the degree of the left-hand side of \eqref{squareflop} is at least as large as any term on the right-hand side, and only those terms with $2t(k+l)=0$ contribute to the associated graded.

If $f > b$ then the right-hand side of \eqref{squareflop} has no $t=0$ term. If $k \ne 0$ then there is no term with $2t(k+l)=0$, and in the associated graded we get
\eqref{nonredsquaregr}. If $k=0$ then there is a unique term where $2t(k+l)=0$, namely $t = f-b$ and $l=0$. In the associated graded we get the trivial square relation.

If $f < b$ then the right-hand side of \eqref{squareflop} has a $t=0$ term, which matches the degree of the left-hand side. Moreover, there are no terms where $l = 0$, so no other
terms with $2t(k+l) = 0$. Thus in the associated graded we get the relation \eqref{rungswap}, but with some coefficient $c_0$. Thankfully, $c_0 = 1$, so we get \eqref{rungswap} with no scalars.\end{proof}

This associated graded category is the singular analogue of the nilCoxeter algebra, where $s^2 = 0$ (rather than $s^2 = 1$ as in $W$, or $s^2 = s$ as in $(W,*,S)$). For the rest of this section we paint with broad brush strokes, in order to state a rough conjecture.

In \cite[Definition 2.37]{ELLCC}, a basis for morphisms in $\Webs^\infty$ is constructed, the so-called \emph{double ladders basis}. We wish to conjecture that double ladders are
reduced expressions, and provide an enumeration of the morphisms in $\SCWebDiag$, though this statement requires some clarification. Double ladders are constructed by gluing
together elementary light ladders (see \cite[(2.17)]{ELLCC}) and neutral ladders (see \cite[p22]{ELLCC}), and these neutral ladders can be placed willy-nilly. In $\Webs^\infty$,
neutral ladders are isomorphisms modulo ``lower weights,'' and composing with them will alter the basis by an upper-triangular change of basis matrix. In $\SCWebDiag$ however,
neutral ladders have positive degree and can not be placed arbitrarily within a reduced expression.

The construction \cite[(2.17)]{ELLCC} explains how to construct an elementary light ladder when the input sequences are sorted (i.e. when the condition $y_1 < x_1 < \ldots$ is
satisfied). When the input sequences are not sorted, the procedure from \cite{ELLCC} is to apply neutral ladders to sort the sequences (and also to sort any extraneous
strands out of the picture), and then apply the elementary light ladder for sorted sequences. However, the resulting web can often be simplified modulo lower terms, and this
simplified web will have lower degree. We give an example below. The combinatorics of this minimal-degree representative of the light ladder (modulo lower terms) has not yet been
developed. We conjecture that, once they are well-defined, the minimal-degree double ladders will be reduced expressions.

\begin{ex} The (sorted) elementary light ladder giving a morphism $(2,2) \to (1,3)$ is
\[ \rungNE{2}{2}{1}{3}{1}{1}. \]
The construction in \cite{ELLCC} gives the (unsorted) light ladder $(2,2) \to (3,1)$ below, but it can be simplified modulo lower weights to a lower degree diagram, which is reduced.
\[ \squaretwo{2}{3}{2}{1}{1}{2}{1}{3}{1} \qquad \rightsquigarrow \qquad \rungNW{2}{2}{3}{1}{1}{1} \]
The construction in \cite{ELLCC} gives the (unsorted) light ladder $(2,5,2) \to (1,3,5)$ below, but it can be simplified modulo lower weights to a lower degree diagram, which is reduced.
\[ {
\labellist
\tiny\hair 2pt
 \pinlabel {$2$} [ ] at 0 5
 \pinlabel {$5$} [ ] at 23 5
 \pinlabel {$2$} [ ] at 40 5
 \pinlabel {$5$} [ ] at 0 27
 \pinlabel {$2$} [ ] at 23 19
 \pinlabel {$1$} [ ] at 17 37
 \pinlabel {$3$} [ ] at 40 45
 \pinlabel {$1$} [ ] at 0 63
 \pinlabel {$3$} [ ] at 17 63
 \pinlabel {$5$} [ ] at 40 63
 \pinlabel {$3$} [ ] at 11 19
 \pinlabel {$4$} [ ] at 11 46
 \pinlabel {$1$} [ ] at 29 34
 \pinlabel {$2$} [ ] at 29 62
\endlabellist
\centering
\ig{1.1}{nasty}
} \quad \rightsquigarrow \quad {
\labellist
\tiny\hair 2pt
 \pinlabel {$2$} [ ] at 0 5
 \pinlabel {$5$} [ ] at 16 5
 \pinlabel {$2$} [ ] at 39 5
 \pinlabel {$1$} [ ] at 0 21
 \pinlabel {$3$} [ ] at 16 21
 \pinlabel {$5$} [ ] at 39 21
 \pinlabel {$1$} [ ] at 10 16
 \pinlabel {$3$} [ ] at 26 16
\endlabellist
\centering
\ig{1.1}{nice}
} \]
\end{ex}

\section{Singular Coxeter complexes} \label{sec:coxcomplex}

The dual Coxeter complex is an important combinatorial CW complex, and we would like to suggest the existence of a singular analogue. We are able to construct the $2$-skeleton, but
do not know the combinatorial structure of the higher cells.

\subsection{Completed dual Coxeter complexes}

We will be brief in recalling the classical definitions, as \cite{AbBr} is an excellent reference (see \S 3 and \S 12.3). Fix a Coxeter system $(W,S)$ where $n$ is the size of $S$. There is a well-known $(n-1)$-dimensional simplicial complex called the Coxeter complex. Its dual is a CW complex or polytope known as the dual Coxeter complex. By adding in an $n$-cell when $W$ is finite, one obtains the \emph{(completed) dual Coxeter complex} $\Cox(W)$, which is always contractible. The completed dual Coxeter complex in type $A$ is
called the \emph{(solid) permutohedron}, which the reader can easily find pictures of online.

The complex $\Cox(W)$ has one $0$-cell for each $w \in W$, each of which is attached to $n$ $1$-cells. The $0$-cell associated to $w$ is connected to the $0$-cell for $ws$ by a
$1$-cell labeled by $s$. More generally, for each $0 \le k \le n$ there is a $k$-cell associated to each coset in $W/W_I$ for each finitary subset $I \subset S$ of size $k$. The
$0$-cells in the closure of this $k$-cell correspond to the elements in this coset.

Let $\hg$ denote the reflection representation of $W$ over $\R$, which has a hyperplane $H_{\alpha}$ for each root $\alpha$, and a chamber for each $w \in W$. When $W$ is finite,
the Coxeter complex can be obtained by intersecting this hyperplane arrangement with the unit sphere in $\hg$. For example, a chamber intersects the unit sphere in a simplex, while
a wall intersects the unit sphere in a face of a simplex. The hyperplane interpretation equips the Coxeter complex and its dual with extra structure. For example, we know what it
means for two facets to lie in the same flat (see \S\ref{subsec:coxeterintro}) of the hyperplane arrangement. Viewing $\Cox(W)$ as a purely topological structure forgets this inherent
``linear'' structure, which will be significant for us.

The $1$-skeleton of $\Cox(W)$ is the covering diagram of the weak right Bruhat order. In particular, one can orient each $1$-cell, so that $w \to ws$ when
$\ell(w) < \ell(ws)$. This orientation is a phenomenon special\footnote{In type $A$ one can place semiorientations on the higher cells, related to the higher Bruhat orders of
Manin-Schechtmann \cite{ManSch}.} to the $1$-skeleton. Oriented paths from the source to $w$ can be interpreted as reduced expressions for $w$, and any two such paths are related by
the braid relations. In fact, the $2$-cells precisely correspond to such braid relations.

\subsection{The singular $2$-skeleton: one-sided cosets}

Let us construct an oriented graph, which we wish to consider as the $1$-skeleton of a singular analogue of the completed dual Coxeter complex. For lack of a better name, we will
call it the \emph{singular Coxeter complex}. The example of type $A_2$ was given in \S\ref{subsec:coxeterintro}. Before we introduce it, let us discuss what we will \emph{not} do.

\begin{defn} Let $\Cox_{\cube}$ denote the CW complex constructed in \cite[Proposition 12.63]{AbBr}. We only describe the $1$-skeleton. There is one vertex for each $(\mt,I)$-coset
$p$. Whenever $I' = Is$ and the $(\mt,I)$-coset $p$ is contained in the $(\mt,I')$-coset $q$, there is an edge from $p$ to $q$. \end{defn}

The complex $\Cox_{\cube}$ is a refinement of $\Cox(W)$ and is homeomorphic to it. The complex is \emph{cubical} (see \cite[Proposition 12.63]{AbBr}) in that the closure of each
top-dimensional cell is a cube with its standard CW structure. One can interpret $\Cox_{\cube}$ within $\hg$ as follows. There is one point on each facet of the hyperplane
arrangement for which the stabilizer is a finite parabolic subgroup. All edges occur between facets which differ in dimension by $1$, where one is contained in the closure of the
other.

We think of $\Cox_{\cube}$ as being relatively boring. The $2$-cells are unrelated to the braid relations. Instead, we throw away those edges not involved in any reduced expression, and get a different CW complex, where the $2$-cells correspond to braid relations.

\begin{defn} Let $J = \mt$. Let $\Cox^1_{\mt}(W,S)$ denote the oriented graph defined as follows. There is one vertex for each $(J,I)$-coset $p$, for all finitary $I \subset S$.
There is an edge $p \to q$ if $[p,q]$ is reduced, see Definition \ref{defn:pqred}. This can be made into a graded graph, where the length of each double coset is defined as in Definition \ref{def:lengths}. \end{defn}

\begin{ex} Here is $\Cox^2_{\mt}$ in type $B_2$. The orientation goes from bottom to top.
\begin{equation}\label{B2cox} \ig{1}{SingCoxB2} \end{equation} \end{ex}

\begin{subequations}
\begin{ex} \label{ex:coxA3} Let us describe $\Cox^1_{\mt}$ in type $A_3$ \footnote{This nice depiction is due to Janelle Currey.}. Recall that
the ordinary Coxeter complex of $S_4$ is a barycentrically-subdivided tetrahedron. Cut this tetrahedron along all the edges adjacent to one vertex, and unfold it to make it flat.
The result is a large equilateral triangle built from four smaller equilateral triangles (the original faces of the tetrahedron), which is what we draw below. 
\begin{equation} \label{A3cox} {
\labellist
\tiny\hair 2pt
 \pinlabel {$e$} [ ] at 56 39
 \pinlabel {$t$} [ ] at 46 53
 \pinlabel {$s$} [ ] at 96 38
 \pinlabel {$st$} [ ] at 106 53
 \pinlabel {$ts$} [ ] at 66 88
 \pinlabel {$sts$} [ ] at 84 88
 \pinlabel {$stsu$} [ ] at 102 94
 \pinlabel {$stu$} [ ] at 125 59
 \pinlabel {$stut$} [ ] at 146 59
 \pinlabel {$w_0 s$} [ ] at 114 106
 \pinlabel {$w_0$} [ ] at 156 105
 \pinlabel {$w_0 t$} [ ] at 167 94
 \pinlabel {$usts$} [ ] at 188 87
 \pinlabel {$ust$} [ ] at 165 51
 \pinlabel {$su$} [ ] at 176 39
 \pinlabel {$u$} [ ] at 215 39
 \pinlabel {$ut$} [ ] at 227 51
 \pinlabel {$uts$} [ ] at 204 87
 \pinlabel {$w_0 u$} [ ] at 157 124
 \pinlabel {$tuts$} [ ] at 166 137
 \pinlabel {$tut$} [ ] at 144 172
 \pinlabel {$tu$} [ ] at 125 172
 \pinlabel {$tsu$} [ ] at 104 137
 \pinlabel {$tsut$} [ ] at 114 123
 \pinlabel {$\mt$} [ ] at 238 197
 \pinlabel {$s$} [ ] at 238 189
 \pinlabel {$t$} [ ] at 238 181
 \pinlabel {$u$} [ ] at 238 172
 \pinlabel {$st$} [ ] at 238 160
 \pinlabel {$su$} [ ] at 238 148
 \pinlabel {$tu$} [ ] at 238 136
 \pinlabel {$stu$} [ ] at 238 122
\endlabellist
\centering
\ig{1.5}{SingCoxA3}
}
\end{equation}
One can recover the
original tetrahedron by gluing back together the cut edges as indicated (one arrow to one arrow, two arrows to two arrows, etc). We label only the black vertices, corresponding to  $(\mt,\mt)$-cosets. The labels on every other coset are determined by which black vertices are nearby. The singular Coxeter complex also has a vertex
corresponding to the coset $W / W$, the \emph{generic point}. We do not draw the generic point, which fills in the tetrahedron, but we circle the three vertices with edges to the generic point, and square the three vertices with edges from the generic point. The gray edges\footnote{The reader can observe that, with the gray edges included, every pictured face is a square. Including the generic point and the gray edges to it, every $3$-cell is a cube, as expected in $\Cox_{\cube}$.} should be deleted, as they correspond to edges in $\Cox_{\cube}$ which do not appear in $\Cox_{\mt}$. For the $2$-cells, see Example \ref{ex:coxA3redux}.
\end{ex}

If $p$ is an $(\mt,I)$-double coset, then a reduced expression for $p$ is the same data as an oriented path to $p$ from the $(\mt, \mt)$-coset $\{e\}$. Any two such reduced
expressions are related by the braid relations (Theorem \ref{thm:matsumoto}), and we can use these as our $2$-cells.

\begin{defn} (Loosely worded) Let $\Cox^2_{\mt}(W,S)$ be the $2$-dimensional CW complex defined by adding $2$-cells to $\Cox^1_{\mt}(W,S)$. These $2$-cells are attached along the
braid relations, for any application of a braid relation within any reduced expression. \end{defn}

\begin{ex}\label{ex:coxA3redux} Let us continue Example \ref{ex:coxA3} by indicating the $2$-cells. The duller colors (gray, tan, slate) represent $\pm$-associativity relations. The brighter colors (fuschia, lemon, lime) represent switchback relations. There are six $2$-cells we could not draw, the switchback relations involving the generic point. These $2$-cells are attached to the pictured region along a path of length four, from a circled vertex to a squared vertex. We've indicated these six paths with thick brown or blue lines.
\begin{equation} \ig{1}{SingCoxA3w2cells} \end{equation}
\end{ex}

\begin{rem} Because $\Cox^1_{\mt}$ is obtained from $\Cox^1_{\cube}$ by throwing away some edges, it seems plausible that there is some formal way to adjoint higher cells (gluing
together various cells of $\Cox_{\cube}$) to construct a CW complex $\Cox_{\mt}$ which is homeomorphic to $\Cox$, but we do not know how. Formal constructions aside, initial
explorations reveal that there is interesting combinatorics involved in describing the attaching maps of the higher cells explicitly. \end{rem}

\subsection{The singular $2$-skeleton: general case}

Let us construct the analogous $2$-dimensional complex for an arbitrary finitary subset $J \subset S$.

\begin{defn} Let $J \subset S$ be finitary. Let $\Cox^1_J(W,S)$ denote the oriented graph defined as follows. There is one vertex for each $(J,I)$-coset $p$, for all finitary $I
\subset S$. There is an edge $p \to q$ if $[p,q]$ is reduced, see Definition \ref{defn:pqred}. This can be made into a graded graph, where the length of each double coset is
defined as in Definition \ref{def:lengths}. Let $\Cox^2_J(W,S)$ be the $2$-dimensional CW complex defined by adding $2$-cells to $\Cox^1_J(W,S)$. These $2$-cells are attached along
the braid relations, for any application of a braid relation within any reduced expression. \end{defn}
	
\begin{ex} Here are the singular Coxeter complexes in type $A_3$ with $J$ equal to $s$, $t$, and $st$ respectively. We've labeled the source $a$ (the $(J,J)$-coset containing $e$) and the sink $z$ (the $(J,\mt)$-coset containing $w_0$); the orientation of arrows is the same as in \eqref{A3cox}. In $\Cox_{stu}$ the source $a$ is the undrawn generic vertex.

\begin{equation} \label{coxA3t} \Cox_s := {
\labellist
\small\hair 2pt
 \pinlabel {$a$} [ ] at 0 36
 \pinlabel {$z$} [ ] at 84 101
\endlabellist
\centering
\ig{1}{SingCoxA3s}
}, \qquad \Cox_t := {
\labellist
\small\hair 2pt
 \pinlabel {$a$} [ ] at 50 19
 \pinlabel {$z$} [ ] at 144 83
\endlabellist
\centering
\ig{1}{SingCoxA3t}
}, \end{equation}
\begin{equation} \Cox_{st} := {
\labellist
\small\hair 2pt
 \pinlabel {$a$} [ ] at 16 5
 \pinlabel {$z$} [ ] at 84 54
\endlabellist
\centering
\ig{1}{SingCoxA3st}
}, \qquad \Cox_{stu} := {
\labellist
\small\hair 2pt
 \pinlabel {$z$} [ ] at 23 22
\endlabellist
\centering
\ig{1}{SingCoxA3stu}
}. \end{equation}\end{ex}

\end{subequations}

All these examples above have one thing in common: the complex $\Cox^2_J(W,S)$ looks a portion of $\Cox_{\mt}(W,S)$, though with the labels changed! Inside $\hg$, we interpret $\Cox_J$ as the portion of $\Cox_{\mt}$ living within a fundamental domain for the left action of $W_J$.

\begin{prop} \label{singcoxembeds} Let $X \subset \Cox^2_{\mt}(W,S)$ denote the sub-CW-complex defined as follows. It contains all the $0$-cells associated to $(\mt,I)$-cosets $p$
such that $J \subset \leftdes(\ma{p})$. It contains all the $k$-cells whose closure only contains these $0$-cells. Then there is a cell-preserving homeomorphism $X \to
\Cox^2_J(W,S)$, which on $0$-cells sends each $(\mt,I)$-coset $p$ to the $(J,I)$-coset $q$ with $\ma{p} = \ma{q}$. \end{prop}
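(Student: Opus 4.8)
The plan is to realize $\Phi$ explicitly on cells and check it is a cellular isomorphism, which for $2$-dimensional complexes gives a homeomorphism. The guiding principle is that both $\Cox^2_J(W,S)$ and the subcomplex $X$ are governed \emph{entirely} by maximal elements of double cosets together with the right-hand index. First I would set up the $0$-cell bijection. Given a $(\mt,I)$-coset $p$ with $J \subset \leftdes(\ma{p})$, the element $\ma{p}$ satisfies both $J \subset \leftdes(\ma{p})$ and $I \subset \rightdes(\ma{p})$, so by \eqref{beingmaximal} it is the maximal element of a unique $(J,I)$-coset $q$; set $\Phi(p) = q$. Conversely, any $(J,I)$-coset $q$ has $\ma{q}$ with $I \subset \rightdes(\ma{q})$, hence $\ma{q}$ is maximal in the $(\mt,I)$-coset $\ma{q}\,W_I$, which lies in $X$ since $J \subset \leftdes(\ma{q})$. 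Thus $\Phi$ is a bijection of $0$-cells, characterized by $\ma{\Phi(p)} = \ma{p}$.

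Next I would establish the isomorphism of $1$-skeleta. An edge $p_1 \to p_2$ in $\Cox^1_{\mt}$ exists precisely when $[p_1,p_2]$ is reduced, and by Lemma \ref{lem:reducedmaxs} (summarized in \eqref{reducedmaxformula}) together with Definition \ref{defn:pqred}, this condition involves only maximal elements and right-indices: in the adding case $\ma{p_2} = \ma{p_1}.(w_{I}\inv w_{I'})$, and in the removing case $\ma{p_1} = \ma{p_2}$. Since $\Phi$ preserves maximal elements and right-indices, $[p_1,p_2]$ is reduced if and only if $[\Phi(p_1),\Phi(p_2)]$ is reduced. Moreover an edge cannot leave $X$: in the removing case left descents are unchanged, and in the adding case $\ma{p_2} = \ma{p_1} * (w_I\inv w_{I'})$, so Lemma \ref{lem:dontchangeleftdescent} gives $\leftdes(\ma{p_1}) \subset \leftdes(\ma{p_2})$; hence $X$ is closed under outgoing reduced edges and its edge set is exactly the set of reduced edges between $X$-vertices. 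Therefore the $1$-skeleton of $X$ is isomorphic, as an oriented graded graph, to $\Cox^1_J(W,S)$, respecting the length function via Definition \ref{def:lengths}.

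For the $2$-cells, the crucial observation is that the generating braid relations from \eqref{relationslist} are \emph{universal}: each of up-up, down-down, and switchback is an equality of contiguous $I_m$-expressions depending only on the sequence of subsets, not on the ambient left parabolic. Fixing an enumeration of $J$, Proposition \ref{embedding of expression} shows that $\iota_J$ sends reduced $J$-expressions to reduced $\mt$-expressions while preserving all maximal elements of forward paths, carrying the source $a$ of $\Cox_J$ to the vertex $\widehat{a} = \Phi^{-1}(a) = W_J$ and identifying reduced paths in $\Cox_J$ with reduced paths in $X$ emanating from $\widehat{a}$. A $2$-cell of $\Cox^2_J$ is an application of a braid relation at a contiguous position of a reduced expression; by locality (Proposition \ref{prop:locality2}) and universality the same relation is valid at the corresponding position of the $\iota_J$-image, yielding a $2$-cell of $X$, and the construction is reversible. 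The singular Matsumoto theorem (Theorem \ref{thm:matsumoto}) guarantees that these braid $2$-cells account for \emph{all} relations among reduced expressions on both sides, so the $2$-cell sets correspond compatibly with the $1$-skeleton isomorphism, and a cellular bijection respecting incidence is a cellular homeomorphism.

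The hard part will be the $2$-cell bookkeeping. Making precise that braid moves in $\Cox_J$ match those in $X$ requires controlling the prepended path $\rho$ from the global source $\{e\}$ (which lies outside $X$ when $J \neq \mt$) up to $\widehat{a}$, and checking that the choice of enumeration of $J$ does not spuriously create or destroy cells; since different enumerations yield $\rho$'s related by up-up relations, I would argue that $X$ is independent of these choices and that every reduced $\mt$-expression for a vertex of $X$ factors, up to braid relations, through $\widehat{a}$, using Proposition \ref{thm:favoritebig} (the high road) to route through $J \subset \leftdes(\ma{p})$ first. Pinning down the attaching maps of the $2$-cells precisely is the delicate point, fully consistent with the paper's own caveat that $\Cox^2$ is only ``loosely worded''.
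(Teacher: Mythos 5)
Your proposal is correct and follows essentially the same route as the paper: the $0$-cell bijection from \eqref{beingmaximal}, transfer of reducedness of single steps between $J$-cosets and $\mt$-cosets sharing a maximal element, and matching of $2$-cells by transporting braid relations via $\iota_J$ and locality. The only (harmless) variation is that you justify the edge correspondence directly from Lemma \ref{lem:reducedmaxs}, whose criterion depends only on $\ma{p}$ and the right-hand index sets, whereas the paper deduces the same fact from Proposition \ref{embedding of expression} by prepending $[[\mt,J]]$ to reduced expressions.
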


\begin{proof} That there is some (unique) $q$ with $\ma{q} = \ma{p}$ is immediate from \eqref{beingmaximal}. By Proposition \ref{embedding of expression}, we can take any reduced
expression of $q$ and compose it with $[[\mt,J]]$, to obtain a reduced expression of $p$. An implication is that $[p,p']$ is reduced if and only if $[q,q']$ is reduced. Said
another way, every edge in $\Cox^1_J$ appears along some reduced expression, and we can send it to the corresponding edge in the corresponding reduced expression in $\Cox^1_{\mt}$.
We can match up the $2$-cells (braid relations between reduced expressions) using the same technique. \end{proof}

\subsection{Reduced expressions and hyperplanes} \label{moregeometry}

Let $H$ be a root hyperplane in $\hg$. Then $H$ divides the real vector space $\hg$ into two halves, the \emph{negative half} $H_-$ containing the chamber associated to $e$, and
the \emph{positive half} $H_+$. It is well-known that ordinary reduced expressions are paths between chambers which always cross walls from the negative side to the positive side.
Each wall that is crossed is an ``inversion'' which contributes to the length of the element being expressed. If $x$ and $xs$ are the chambers on opposite sides of a facet inside
$H$, and $x < xs$, then $x \in H_-$ and $xs \in H_+$. Now we reinterpret these ideas in $\Cox_{\mt}$, where we do not just walk between
chambers but also walk on the walls. A given double coset will either lie within $H_-$, within $H_+$, or on $H$ itself.

\begin{prop} \label{thm:inandout} Let $I' = Is$ and $J$ be finitary subsets of $S$. Let $p$ be a $(J,I)$-coset and $q$ a $(J,I')$-coset with $p \subset q$. Then $[p,q]$ is reduced
if and only if $p \notin H_+$ for every root hyperplane $H$ containing $q$. Similarly, $[q,p]$ is reduced if and only if $p \notin H_-$ for every root hyperplane $H$ containing
$q$. \end{prop}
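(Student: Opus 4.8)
The plan is to reduce to the case $J=\mt$, where a double coset is literally a facet of the Coxeter arrangement in $\hg$, and then to translate both reducedness and "lies on a prescribed side of $H$" into a statement about a single element of a parabolic quotient. Throughout I draw a $(J,I)$-coset $p$ at the facet $\ma{p}F_I$, where $F_I\subset\overline{C_0}$ is the relatively open face of the fundamental chamber of type $I$; equivalently, $p$ sits where the $(\mt,I)$-coset $\tilde p:=\ma{p}W_I$ sits, since $\ma{\tilde p}=\ma{p}$. This is exactly the identification underlying Proposition~\ref{singcoxembeds}, and it is the correct choice: using the \emph{minimal}-element facet $\mi{p}F_I$ instead gives the wrong sides and breaks the equivalence, as one already sees in Example~\ref{nonrex}.

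First I would carry out the reduction to $J=\mt$. Set $\tilde q:=\ma{q}W_{I'}$. By the maximal-element half of Lemma~\ref{lem:onlyneedright} we have $\ma{q}=\ma{p}.y'$ for some $y'\in W_{I'}$, whence $\ma{q}W_{I'}=\ma{p}W_{I'}\supseteq\ma{p}W_I$, so $\tilde p\subseteq\tilde q$; moreover $\ma{\tilde p}=\ma{p}$ and $\ma{\tilde q}=\ma{q}$, so $\tilde p,\tilde q$ occupy the same facets as $p,q$ and lie on the same hyperplanes. Finally, Lemma~\ref{lem:reducedmaxs} characterizes "$[p,q]$ reduced" purely by $\ma{q}=(\ma{p}w_I\inv)w_{I'}$, a condition on $\ma{p},\ma{q}$ alone; hence $[p,q]$ is reduced iff $[\tilde p,\tilde q]$ is, and by Definition~\ref{defn:pqred} $[q,p]$ is reduced iff $\ma{p}=\ma{q}$ iff $[\tilde q,\tilde p]$ is. So it suffices to treat $J=\mt$.

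For $J=\mt$ both $p$ and $q$ are single right cosets, with minimal representatives $\mi{p}\in W^I$ and $\mi{q}\in W^{I'}$, and $p$ is the facet $\mi{p}F_I$. The inputs I would use are standard: for $w\in W^{I'}$ one has $w(\Phi_{I'}^+)\subseteq\Phi^+$; by Lemma~\ref{lem:onlyneedright} $\mi{p}=\mi{q}.y$ with $y\in W_{I'}\cap W^I=(W_{I'})^I$ (so its inversion set $N(y):=\{\beta\in\Phi^+: y\beta\in\Phi^-\}$ lies in $\Phi_{I'}^+\setminus\Phi_I$); and the facet rule that, for a positive root $\alpha$, the facet $\mi{p}F_I$ lies on $H_\alpha$, on $H_-$, or on $H_+$ according as $\mi{p}\inv\alpha$ lies in $\Phi_I$, in $\Phi^+\setminus\Phi_I$, or in $\Phi^-\setminus\Phi_I$. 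The hyperplanes containing $q$ are exactly the $H_\alpha$ with $\alpha=\mi{q}\gamma$ for $\gamma\in\Phi_{I'}^+$, and for such $\alpha$ one gets $\mi{p}\inv\alpha=y\inv\gamma\in\Phi_{I'}$. Thus, as $\gamma$ runs over $\Phi_{I'}^+$: $\,p\in H_+$ iff $y\inv\gamma\in\Phi_{I'}^-\setminus\Phi_I$, and $p\in H_-$ iff $y\inv\gamma\in\Phi_{I'}^+\setminus\Phi_I$. Reading off the extremes: "$p\notin H_+$ for all $H\supseteq q$" rules out any $\gamma$ with $y\inv\gamma\in\Phi_{I'}^-\setminus\Phi_I$, which (taking $\gamma=-y\delta$) is equivalent to $N(y)=\emptyset$, i.e. $y=e$, i.e. $\mi{p}=\mi{q}$, i.e. $[p,q]$ reduced; while "$p\notin H_-$ for all $H\supseteq q$" forces every $\delta\in\Phi_{I'}^+\setminus\Phi_I$ to satisfy $y\delta\in\Phi_{I'}^-$, i.e. $N(y)=\Phi_{I'}^+\setminus\Phi_I$, i.e. $y=w_{I'}w_I$, i.e. $\ma{p}=\mi{p}w_I=\mi{q}w_{I'}=\ma{q}$, i.e. $[q,p]$ reduced.

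The step I expect to be the main obstacle is the bookkeeping in the reduction to $J=\mt$: one must be sure that the facet assigned to a $(J,I)$-coset is governed by its \emph{maximal} element, so that $\tilde p\subseteq\tilde q$ and the hyperplane data for $p,q$ genuinely coincide with those for $\tilde p,\tilde q$. The key technical inputs making this work are precisely the maximal-element clause of Lemma~\ref{lem:onlyneedright} (for the containment $\tilde p\subseteq\tilde q$) and the maximal-element characterization of reducedness in Lemma~\ref{lem:reducedmaxs}. Once the positions are correctly identified, the remaining root-system arguments are routine manipulations with inversion sets and minimal coset representatives.
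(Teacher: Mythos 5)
Your proof is correct and follows essentially the same route as the paper: reduce to $J=\mt$ by identifying a $(J,I)$-coset with the facet attached to its \emph{maximal} element (this is exactly what Proposition~\ref{singcoxembeds} provides, and which you re-derive from Lemmas~\ref{lem:onlyneedright} and \ref{lem:reducedmaxs}), then determine which side of each hyperplane through $q$ the facet of $p$ lies on. The paper carries out the $J=\mt$ step in the language of descent sets and \eqref{beingminimal} where you use the inversion set of the element $y$ with $\mi{p}=\mi{q}.y$, but the content is identical.
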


\begin{proof} If we can prove this result for $J = \mt$, then it will hold for arbitrary $J$ using Proposition~\ref{singcoxembeds}. So assume $J = \mt$. Now $[p,q]$ is reduced if and only if $\mi{p} = \mi{q}$.

Suppose $\mi{p} = \mi{q}$. Since $p$ is a facet adjacent to the chamber containing $\mi{p}$, it is contained in the closure of that chamber. If $q$ lies on $H$ then the chamber of
$\mi{q}$ lies in $H_-$ (because $\rightdes(\mi{q}) \cap I' = \mt$, and this facet of $H$ corresponds to right multiplication by some $t \in I'$). Hence $p$ is in the closure of
$H_-$, and is disjoint from $H_+$.

Conversely, suppose that $p \notin H_+$ for any $H$ containing $q$. Then $\mi{p} \in H_-$, and hence $\rightdes(\mi{p}) \cap I' = \mt$. Thus by
\eqref{beingminimal}, $\mi{p} = \mi{q}$.

Similarly, $[q,p]$ is reduced if and only if $\ma{q} = \ma{p}$. The arguments in this case are nearly identical. \end{proof}

Now let us give some intuition why Proposition~\ref{thm:inandout} should match with Definition \ref{defn:pqred} when $J \ne \mt$. The geometric version of the condition \eqref{sameK}
states that a reduced expression will never return to the flat determined by $J$ after leaving it. For example, consider the complex $\Cox_t$ from \eqref{coxA3t}, which takes the
half of $\Cox_{\mt}$ living on one side of the reflection hyperplane of $t$. The edges can be oriented precisely as a subgraph of $\Cox_{\mt}$, see \eqref{A3cox}. The important
thing to notice is that no edges point into the reflection hyperplane of $t$! Once you leave this fixed hyperplane, you can never return.

Let us justify this statement about \eqref{sameK}. This mostly involves trying to better understand the connection between the label on a facet and its stabilizer.

Let $F$ be a facet in $\hg$ adjacent to the dominant chamber. If the stabilizer of $F$ is $W_I$ (because $F$ lies on the intersection of the reflecting hyperplanes of $s \in I$)
then $F$ is labeled with the minimal $(\mt, I)$-coset in $\Cox_{\mt}$. In $\Cox_I$, $F$ would be labeled by the minimal $(I,I)$-coset instead.

If $F$ is a facet adjacent to the dominant chamber, when $w \cdot F$ is a facet adjacent to the $w$ chamber. If $I \subset S$ is the stabilizer of $F$, then the non-standard
parabolic subgroup $w W_I w\inv$ is the stabilizer of $w \cdot F$. Note that $w \cdot F$ still corresponds to an $(\mt, I)$-coset, because right multiplication by $W_I$ is
intertwined with left multiplication by $w w_I w\inv$. In fact, $w \cdot F$ is the unique $(\mt, I)$-coset containing $w$. Working backwards, suppose that a facet $F$ is labeled by
the $(\mt, I)$-coset $q$, and $w \in q$. Then the stabilizer of $F$ is $w W_I w\inv$.

An edge which goes onto the $s$-hyperplane would add $s$ to the set $w w_I w\inv$. If $s \in J$, this would increase the redundancy. In conclusion, violating \eqref{sameK} means
that one returned to an $s$-hyperplane, where $s \in J$.

\subsection{Geometric intuition for the switchback relation} \label{moregeometry2}

Let $(W,S)$ be a finite Coxeter system, acting on the left on its reflection representation $\hg$. Then $S$ generates the stabilizer of the origin in $\hg$. For any $s \in S$,
$\hat{s} := S \setminus s$ generates the stabilizer of a line in $\hg$, the intersection of all the root hyperplanes for $t \ne s$. This line splits into two rays $R$ and $R'$,
where $R$ is adjacent to the dominant chamber, and $R'$ to the antidominant chamber. The ray $R$ is labeled with the minimal $(\mt, \hat{s})$-coset $p$ in $\Cox_{\mt}$. Note that
$R'$ need not equal $w_0 \cdot R$, as the former has stabilizer $W_{\hat{s}}$ and the latter has stabilizer $W_{\hat{w_0 s w_0}}$. For any $t \in S$, we are interested in
describing reduced paths from $p$ to $q$, where $q$ is the maximal $(\mt, \hat{t})$-coset.

When $t = w_0 s w_0$ then $q$ is the label on $R'$, which is colinear with $R$. A reduced expression can never leave this line by Proposition~\ref{thm:inandout}, so one expects a
single reduced expression from $p$ to $q$, which passes through the origin. This matches up with Proposition \ref{prop:sss}\eqref{uniqrex}.

If $t \ne w_0 s w_0$ then $q$ is the label on some other ray $T$. The rays $R$ and $T$ span a two-dimensional subspace $P$ of $\hg$, and identify a ``quadrant'' of this plane. By
Proposition~\ref{thm:inandout}, a reduced path from $p$ to $q$ stays within (the closure of) this quadrant. The intersection of the hyperplane arrangement with $P$ is a collection of lines, so as one passes from $R$ to $T$ one crosses a number of these lines, which we call $T_1, T_2, \ldots, T_{\snum} = T$. The picture is as below.
\begin{equation} {
\labellist
\tiny\hair 2pt
 \pinlabel {$R$} [ ] at 14 6
 \pinlabel {$R'$} [ ] at 56 60
 \pinlabel {$T$} [ ] at 29 59
 \pinlabel {$T_{-1}$} [ ] at 58 6
 \pinlabel {$T_1$} [ ] at 0 13
 \pinlabel {$T_2$} [ ] at -5 33
 \pinlabel {$T_3$} [ ] at 0 53
\endlabellist
\centering
\ig{1}{quadrant}
} \end{equation}

The ray opposite $T$, which we call $T_{-1}$, has stabilizer $\hat{u_{-1}}$ where $u_{-1} = w_0 t w_0$. Visually, it appears that $T_1$ is the reflection (in the plane) of $T_{-1}$
over the ray $R$. The longest element $w_{\hat{s}}$ does not usually act on $\hg$ as a reflection, but it does act by a reflection on the plane $P$, since it is an involution whose
fixed points form a line. This reflection sends $T_{-1}$ to $T_1$, so the stabilizer of $T_1$ is $\hat{u_1}$ where $u_1 = w_{\hat{s}} u_{-1} w_{\hat{s}}$. To obtain the
stabilizer of $T_2$, we reflect $R$ across $T_1$. Similar procedures indicate that the stabilizer of $T_i$ is $\hat{u_i}$, where $(u_i)$ is the rotation sequence. That this
intuition holds true is justified by Proposition \ref{prop:sss}\eqref{eqsss}.

\bibliographystyle{alpha}
\bibliography{bib}

\newcommand{\etalchar}[1]{$^{#1}$}
\begin{thebibliography}{BKP{\etalchar{+}}18}

\bibitem[AB08]{AbBr}
Peter Abramenko and Kenneth~S. Brown.
\newblock {\em Buildings}, volume 248 of {\em Graduate Texts in Mathematics}.
\newblock Springer, New York, 2008.
\newblock Theory and applications.

\bibitem[BKP{\etalchar{+}}18]{BKPST}
Sara~C. Billey, Matja\v{z} Konvalinka, T.~Kyle Petersen, William Slofstra, and
  Bridget~E. Tenner.
\newblock Parabolic double cosets in {C}oxeter groups.
\newblock {\em Electron. J. Combin.}, 25(1):Paper No. 1.23, 66, 2018.

\bibitem[CKM14]{CKM}
Sabin Cautis, Joel Kamnitzer, and Scott Morrison.
\newblock Webs and quantum skew {H}owe duality.
\newblock {\em Math. Ann.}, 360(1-2):351--390, 2014.

\bibitem[DHP18]{DHP}
Aram Dermenjian, Christophe Hohlweg, and Vincent Pilaud.
\newblock The facial weak order and its lattice quotients.
\newblock {\em Trans. Amer. Math. Soc.}, 370(2):1469--1507, 2018.

\bibitem[EK10]{EKho}
Ben Elias and Mikhail Khovanov.
\newblock Diagrammatics for {S}oergel categories.
\newblock {\em Int. J. Math. Math. Sci.}, pages Art. ID 978635, 58, 2010.

\bibitem[EL17]{ELosev}
Ben Elias and Ivan Losev.
\newblock Modular representation theory in type {A} via {S}oergel bimodules.
\newblock preprint, 2017.
\newblock arXiv 1701.00560.

\bibitem[Eli15]{ELLCC}
Ben Elias.
\newblock Light ladders and clasp conjectures.
\newblock Preprint, 2015.
\newblock arXiv 1510.06840.

\bibitem[Eli17]{EQuantumI}
Ben Elias.
\newblock Quantum {S}atake in type {A}: part {I}.
\newblock {\em J. Comb. Algebra}, 1(1):63--125, 2017.
\newblock arXiv:1403.5570.

\bibitem[EW14]{EWHodge}
Ben Elias and Geordie Williamson.
\newblock The {H}odge theory of {S}oergel bimodules.
\newblock {\em Ann. of Math. (2)}, 180(3):1089--1136, 2014.

\bibitem[EW16]{EWGr4sb}
Ben Elias and Geordie Williamson.
\newblock Soergel calculus.
\newblock {\em Represent. Theory}, 20:295--374, 2016.
\newblock arXiv:1309.0865.

\bibitem[EWar]{EWFenn}
Ben Elias and Geordie Williamson.
\newblock Diagrammatics for {C}oxeter groups and their braid groups.
\newblock {\em Quantum Topol.}, to appear.
\newblock arXiv:0902.4700.

\bibitem[Gau01]{Gaussent}
St\'{e}phane Gaussent.
\newblock The fibre of the {B}ott-{S}amelson resolution.
\newblock {\em Indag. Math. (N.S.)}, 12(4):453--468, 2001.

\bibitem[GS84]{GarsiaStanton}
A.~M. Garsia and D.~Stanton.
\newblock Group actions of {S}tanley-{R}eisner rings and invariants of
  permutation groups.
\newblock {\em Adv. in Math.}, 51(2):107--201, 1984.

\bibitem[Lib08]{LibLL}
Nicolas Libedinsky.
\newblock Sur la cat\'egorie des bimodules de {S}oergel.
\newblock {\em J. Algebra}, 320(7):2675--2694, 2008.

\bibitem[MS89]{ManSch}
Yu.~I. Manin and V.~V. Schechtman.
\newblock Arrangements of hyperplanes, higher braid groups and higher {B}ruhat
  orders.
\newblock In {\em Algebraic number theory}, volume~17 of {\em Adv. Stud. Pure
  Math.}, pages 289--308. Academic Press, Boston, MA, 1989.

\bibitem[QS19]{QueSar}
Hoel Queffelec and Antonio Sartori.
\newblock Mixed quantum skew {H}owe duality and link invariants of type {$A$}.
\newblock {\em J. Pure Appl. Algebra}, 223(7):2733--2779, 2019.

\bibitem[Rea06]{Reading}
Nathan Reading.
\newblock Cambrian lattices.
\newblock {\em Adv. Math.}, 205(2):313--353, 2006.

\bibitem[RW16]{RicWil}
Simon Riche and Geordie Williamson.
\newblock Tilting modules and the $p$-canonical basis.
\newblock Preprint, 2016.
\newblock arXiv 1512.08296.

\bibitem[Soe90]{Soer90}
Wolfgang Soergel.
\newblock Kategorie {$\mathcal{O}$}, perverse {G}arben und {M}oduln \"uber den
  {K}oinvarianten zur {W}eylgruppe.
\newblock {\em J. Amer. Math. Soc.}, 3(2):421--445, 1990.

\bibitem[Soe07]{Soer07}
Wolfgang Soergel.
\newblock Kazhdan-{L}usztig-{P}olynome und unzerlegbare {B}imoduln \"uber
  {P}olynomringen.
\newblock {\em J. Inst. Math. Jussieu}, 6(3):501--525, 2007.

\bibitem[TVW17]{TVW}
Daniel Tubbenhauer, Pedro Vaz, and Paul Wedrich.
\newblock Super {$q$}-{H}owe duality and web categories.
\newblock {\em Algebr. Geom. Topol.}, 17(6):3703--3749, 2017.

\bibitem[Wem21]{WemyssSurvey}
Michael Wemyss.
\newblock A lockdown survey on c{DV} singularities.
\newblock Preprint, 2021.
\newblock arXiv 2103.16990.

\bibitem[Wil08]{GWthesis}
Geordie Williamson.
\newblock Singular {S}oergel bimodules, {P}h{D} thesis.
\newblock 2008.

\bibitem[Wil11]{WillSingular}
Geordie Williamson.
\newblock Singular {S}oergel bimodules.
\newblock {\em Int. Math. Res. Not. IMRN}, (20):4555--4632, 2011.

\end{thebibliography}

\end{document}